\newtheorem{theorem}{Theorem}[section]
\newtheorem{lemma}[theorem]{Lemma}
\newtheorem{corollary}[theorem]{Corollary}
\theoremstyle{definition}
\newtheorem{definition}[theorem]{Definition}
\theoremstyle{remark}
\newtheorem{remark}[theorem]{Remark}
\newtheorem{example}[theorem]{Example}
\newtheorem{problem}[theorem]{Problem}
\newtheorem*{claim}{Claim}
\newtheorem*{ack}{Acknowledgements}
\newtheoremstyle{defnbreak}
  {}
  {}
  {}
  {}
  {\bfseries}
  {.}
  {\parindent}
  {}
\theoremstyle{defnbreak}
\newtheorem*{mdefinition}{Metadefinition}
\let\inf\relax \DeclareMathOperator*\inf{\vphantom{p}inf}
\newcommand{\cC}{\mathcal{C}}
\newcommand{\cF}{\mathcal{F}}
\newcommand{\cG}{\mathcal{G}}
\newcommand{\ccG}{{\overline{\mathcal{G}}}}
\newcommand{\cGp}{{\mathcal{G}^\perp}}
\newcommand{\cH}{\mathcal{H}}
\newcommand{\cL}{\mathcal{L}}
\newcommand{\cM}{\mathcal{M}}
\newcommand{\cP}{\mathcal{P}}
\newcommand{\cQ}{\mathcal{Q}}
\newcommand{\cS}{\mathcal{S}}
\newcommand{\cT}{\mathcal{T}}
\newcommand{\cTp}{{\mathcal{T}^\perp}}
\newcommand{\cQp}{{\mathcal{Q}^\perp}}
\newcommand{\cU}{\mathcal{U}}
\newcommand{\cY}{\mathcal{Y}}
\newcommand{\RR}{\mathbb{R}}
\newcommand{\ccS}{\overline{\mathcal{S}}}
\newcommand{\ccT}{{\overline{\mathcal{T}}}}
\newcommand{\cSccT}{{\cS\setminus\ccT}}
\newcommand{\cPcQ}{{\cP\setminus\cQ}}
\newcommand{\ball}{B}
\newcommand{\dist}{\mathop\mathrm{dist}\nolimits}
\newcommand{\spt}{\mathop\mathrm{spt}\nolimits}
\newcommand{\diam}{\mathop\mathrm{diam}\nolimits}
\newcommand{\Tan}{\mathop\mathrm{Tan}\nolimits}
\newcommand{\PsTan}{\mathop{\Psi\mbox{-}\mathrm{Tan}}\nolimits}
\newcommand{\bPsTan}{\mathop{\mathrm{b}\Psi\mbox{-}\mathrm{Tan}}\nolimits}
\newcommand{\uPsTan}{\mathop{\mathrm{u}\Psi\mbox{-}\mathrm{Tan}}\nolimits}
\newcommand{\D}[2]{\mathop{\mathrm{D}^{#1,#2}}\nolimits}
\newcommand{\ue}{\mathop{\mathrm{ex}^{}}\nolimits}
\newcommand{\mD}[2]{\mathop{\widetilde{\mathrm{D}}^{#1,#2}}\nolimits}
\newcommand{\mud}[2]{\mathop{\widetilde{\mathrm{d}}^{\,#1, #2}}\nolimits}
\newcommand{\covplain}{\mathop\mathrm{N}\nolimits}
\newcommand{\cov}[2]{\mathop{{\mathrm{N}}^{#1, #2}}\nolimits}
\newcommand{\udim}{\mathop{\overline{\mathrm{dim}}}\nolimits}
\newcommand{\cl}[1]{{\overline{#1}}}
\newcommand{\sing}[1]{\mathop\mathrm{sing}_{#1}\nolimits}
\newcommand{\CL}[1]{\mathfrak{C}(#1)}
\numberwithin{equation}{section}
\numberwithin{figure}{section}
\begin{document}

\title[Local Set Approximation]{Local set approximation: Mattila-Vuorinen type sets, Reifenberg type sets, and tangent sets}
\author{Matthew Badger}
\author{Stephen Lewis}
\thanks{M.~Badger was partially supported by an NSF postdoctoral fellowship, NSF DMS 1203497. S.~Lewis was partially supported by an RTG fellowship, NSF DMS 0838212.}
\date{September 27, 2014}
\subjclass[2010]{Primary 49J52, Secondary 28A75, 35R35, 49Q20}
\keywords{Local set approximation, Attouch-Wets topology and Walkup-Wets distance, tangent and pseudotangent sets,  bilateral approximation, Reifenberg type sets, unilateral approximation, Mattila-Vuorinen type sets, linear approximation property, Minkowski dimension}
\address{Department of Mathematics\\ University of Connecticut\\ Storrs, CT 06269-3009}
\email{matthew.badger@uconn.edu}
\address{School of Mathematics\\ University of Minnesota\\ Minneapolis, MN 55455}

\begin{abstract} We investigate the interplay between the local and asymptotic geometry of a set $A\subseteq\RR^n$ and the geometry of model sets $\cS\subset\mathcal{P}(\RR^n)$, which approximate $A$ locally uniformly on small scales. The framework for local set approximation developed in this paper unifies and extends ideas of Jones, Mattila and Vuorinen, Reifenberg, and Preiss. We indicate several applications of this framework to variational problems that arise in geometric measure theory and partial differential equations. For instance, we show that the singular part of the support of an $(n-1)$-dimensional asymptotically optimally doubling measure in $\RR^n$ ($n\geq 4$) has upper Minkowski dimension at most $n-4$.
\end{abstract}

\maketitle

\setcounter{tocdepth}{1}
\tableofcontents

\section{Introduction}\label{s:intro}

In this article, we investigate the structure and size of sets $A\subseteq\RR^n$ that admit uniform local approximations by a class of model sets $\cS$. The sets $A$ that we consider have one of the following forms.
\begin{mdefinition} \ 
\begin{itemize}
\item $A$ is \emph{locally $\varepsilon$-approximable by $\cS$} if for every compact set $K\subseteq A$ there is some initial scale $r_K>0$ such that for all $x\in K$ and $0<r\leq r_K$ there is a model set $S_{x,r}\in \cS$ such that $A$ is $\varepsilon$-close to $S_{x,r}$ near $x$ at scale $r$.
\item $A$ is \emph{locally well approximated by $\cS$} if $A$ is  locally $\varepsilon$-approximable by $\cS$ for all $\varepsilon>0$.
\end{itemize}
\end{mdefinition}

For example, when $\cS=\cG(n,m)$ is the Grassmannian of $m$-dimensional subspaces of $\RR^n$, an embedded $C^1$ submanifold $M^m\subseteq \RR^n$ is locally well approximated by $\cS$.
However, since no stability conditions are imposed on the approximating sets $S_{x,r}$ in the metadefinition, sets that are locally well approximated by $\cG(n,m)$ in general may not admit classical tangent planes or may even have locally infinite $m$-dimensional Hausdorff measure.

Different meanings may be attached to the phrase ``$A$ is $\varepsilon$-close to $S_{x,r}$ near $x$ at scale $r$" appearing in the metadefinition, resulting in different models of local set approximation.  The principal distinction between models of local set approximation that have appeared in the literature is the directionality of approximation; that is, the symmetry or asymmetry of approximation measurements. On one hand, if  distance between an approximated set $A$ and an approximating set $S_{x,r}$ is measured by how close $A$ is to $S_{x,r}$ \emph{and} by how close $S_{x,r}$ is to $A$, then the approximation is \emph{bilateral}. On the other hand, if distance between $A$ and its approximant $S_{x,r}$ is measured only by how close $A$ is to $S_{x,r}$, then the approximation is \emph{unilateral}. The decision to use a bilateral or unilateral approximation model should depend on the application of the model.

\begin{example}[$\cS=G(n,m)$, bilateral approximation: Reifenberg flat sets] The prototypical example \label{Rex} of local set approximation is due to Reifenberg \cite{Reifenberg}, who considered sets that admit uniform local bilateral approximations by $m$-dimensional planes in $\RR^n$ ($1\leq m\leq n-1$) to study regularity of solutions of the Plateau problem in arbitrary codimension $n-m$. Following \cite{KT97}, these sets are now called Reifenberg flat sets: \begin{quotation} A set $A\subseteq\RR^n$ is \emph{$(\varepsilon,r_0)$ Reifenberg flat} if for all $x\in A$ and $0<r\leq r_0$ there exists a plane $P_{x,r}\in \cG(n,m)$ such that $\dist(a,x+P_{x,r})\leq \varepsilon r$ for all $a\in A\cap \ball(x,r)$ and $\dist(p,A)\leq \varepsilon r$ for all $p\in (x+P_{x,r})\cap \ball(x,r)$. \end{quotation} The main result on Reifenberg flat sets in \cite{Reifenberg} is \emph{Reifenberg's Topological Disk Theorem}: for all $1\leq m\leq n-1$ there exists $\delta=\delta(n,m)>0$ such that if $A\subseteq\RR^n$ is closed and $(\delta, r_0)$ Reifenberg flat for some $r_0>0$, then $A$ is locally homeomorphic to a ball in $\RR^m$. For a detailed formulation of Reifenberg's Topological Disk Theorem, see \cite{DT}.
\end{example}

\begin{example}[$\cS=G(n,m)$, unilateral approximation: linear approximation property]  Mattila and Vuorinen \cite{MV} introduced \label{MVex} a unilateral local approximation scheme in the context of obtaining dimension bounds on quasiconformal images of spheres: \begin{quotation} A set $A\subseteq\RR^n$ has the \emph{$m$-dimensional $(\varepsilon,r_0)$ linear approximation property} if for all $x\in A$ and $0<r\leq r_0$ there exists a plane $P_{x,r}\in \cG(n,m)$ such that $\dist(a,x+P_{x,r})\leq \varepsilon r$ for all $a\in A\cap \ball(x,r)$.\end{quotation}
Mattila and Vuorinen proved that for all $1\leq m\leq n-1$ there exists $C=C(n,m)>1$ such that if $A\subseteq\RR^n$ has the $m$-dimensional $(\varepsilon,r_0)$ linear approximation property for some $r_0>0$, then the upper Minkowski dimension of $A$ is at most $m + C\varepsilon^2$. Simple examples (variations on the Von Koch snowflake) show that the dependence on $\varepsilon$ in the dimension bound is sharp.\end{example}

Additional approximation schemes of Reifenberg type have appeared in several contexts. The common feature of these models is that local errors are measured bilaterally in terms of the mutual distance between the approximated set $A$ and the approximating sets $S_{x,r}$.

\begin{example}[$\cS=$ $2$-dimensional Almgren minimal cones in $\RR^3$, bilateral approximation] Let $\cM$ \label{Mex} denote the collection of 2-dimensional Almgren minimal cones in $\RR^3$, which by the classification of Taylor \cite{Taylor} have one of three fundamental types (see Figure \ref{figure:minimalCones1}).
The first type of minimal cones are planes. The second type of minimal cones are $Y$-type sets, which are unions of three half planes whose boundaries meet at $120^\circ$ along a line. The third and final type of minimal cones are $T$-type sets, which can be described as six-sheeted cones over the spine of a regular tetrahedron.
\begin{figure}[htb]
  \begin{center}\includegraphics[width=.7\textwidth]{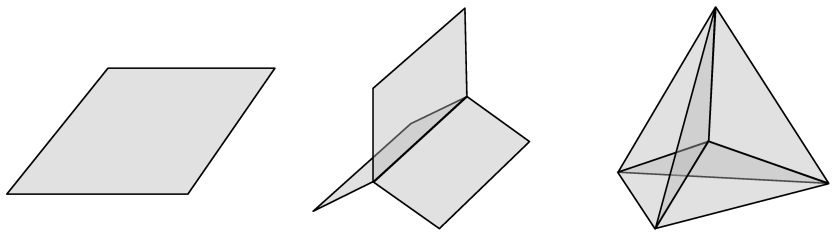}\end{center}
  \caption{The class $\cM$ consists of planes, $Y$-type sets, and $T$-type sets.}
\label{figure:minimalCones1}
\end{figure}

In \cite{DDT} David, De Pauw and Toro generalize Reifenberg's topological disk theorem to sets that are locally approximable by $\cM$. That is, they prove that there exists $\delta>0$ such that if $A\subseteq\RR^3$ is closed and locally $\delta$-approximable in the sense of Example \ref{Rex}, but with approximating planes $P_{x,r}\in\cG(n,m)$ replaced by approximating minimal cones $M_{x,r}\in\cM$, then $A$ is locally homeomorphic to an open subset of a plane, a $Y$-type set, or a $T$-type set. Moreover, they show that $A$ decomposes into three sets $A=A_1\cup A_2\cup A_3$, where $A_1$ is locally Reifenberg flat, $A$ looks like a $Y$-type set whose spine contains $x$ near each $x\in A_2$, and $A$ looks like a $T$-type set with vertex at $x$ near each $x\in A_3$.
\end{example}

\begin{example}[$\cS=$ graphs of Lipschitz functions $f:\RR^{n-1}\rightarrow \RR$, bilateral approximation] Let $\cL=\cL(n,N)$ denote the collection of all (rotations of) graphs of Lipschitz functions $f:\RR^{n-1}\rightarrow\RR$ such that $f(0)=0$ and $f$ has Lipschitz constant at most $N$. In \cite{LN}, J.~ L.~ Lewis and Nystr\"om investigated the boundary behavior of solutions to the $p$-Laplace equation (a canonical nonlinear degenerate elliptic equation) in certain rough domains whose boundaries admit local uniform approximations by $\cL$ in a Hausdorff distance sense. This class includes Lipschitz domains and domains whose boundaries are Reifenberg flat. \end{example}

\begin{example}[$\cS=$ zero sets of  harmonic polynomials in $\RR^n$, bilateral approximation] Let $\cH=\cH(n,d)$ denote \label{Bex} the class of all zero sets of nonconstant harmonic polynomials $p:\RR^n\rightarrow\RR$ of degree at most $d$ such that $p(0)=0$. In \cite{KT06}, Kenig and Toro showed that solutions for a certain two-phase free boundary problem for harmonic measures on two-sided domains in $\RR^n$ are locally well approximated by $\cH$ in a Hausdorff distance sense. Refined information about the structure and size of the free boundary was obtained by Badger \cites{Badger1, Badger3} by studying the geometry of abstract sets approximated by $\cH$ and measures on their support. For instance, in \cite{Badger3} Badger showed that if $A\subseteq\RR^n$ is closed, $A$ is locally well approximated by $\cH$, and $A$ looks pointwise on small scales like the zero set of a homogeneous harmonic polynomial, then $A=A_1\cup A_2$, where $A_1$ is locally well approximated by $\cG(n,n-1)$, while near each $x\in A_2$ the set $A$ looks locally like the zero set of a homogeneous harmonic polynomial of degree at least 2.
 \end{example}

\begin{example}[$\cS=$ supports of $m$-uniform measures in $\RR^n$, bilateral approximation] A Borel \label{KPex} measure $\mu$ on $\RR^n$ is \emph{$m$-uniform} if $\mu(\ball(x,r))= c\,r^m$ for all $x$ in the support of $\mu$ and for all $r>0$. A Borel measure $\mu$ on $\RR^n$ is \emph{$m$-asymptotically optimally doubling} if $$R(\mu,K,r):=\sup_{x\in K}\sup_{1/2\leq \tau\leq 1}\left|\frac{\mu(\ball(x,\tau r))}{\mu(\ball(x,r))} -\tau^m\right|\rightarrow 0\quad\text{as }r\downarrow 0$$ for every compact set $K$ in the support of $\mu$. Let $\cU=\cU(n,m)$ denote the collection of supports of $m$-uniform measures in $\RR^n$. The support of an $m$-asymptotically optimally doubling measure in $\RR^n$ is locally well approximated by $\cU$ (see \cite{Lewis}*{Theorem 3.8}).

In \cite{KP}, Kowalski and Preiss showed that the support of an $(n-1)$-uniform measure in $\RR^n$ is either an $(n-1)$-dimensional plane or the  light cone $C=\{x\in\RR^n: x_1^2+x_2^2+x_3^2=x_4^2\}$ (in some orthonormal coordinates). Uniform measures in codimension $n-m\geq 2$ have resisted a complete classification, but partial descriptions of them have been given by Preiss \cite{Preiss}, Kirchheim and Preiss \cite{KiP}, and Tolsa \cite{T-uniform}. The structure of $m$-asymptotically optimally doubling measures $\mu$ whose doubling characteristic decays locally uniformly at a H\"older rate (i.e.~ $R(\mu,K,r)\leq C_Kr^\alpha$ for all $0<r\leq r_K$) was studied by David, Kenig, and Toro \cite{DKT}, Preiss, Tolsa, and Toro \cite{PTT}, and S. Lewis \cite{Lewis}. In \cite{DKT} and \cite{PTT}, it was proved that the support of $\mu$ is an $m$-dimensional $C^{1,\beta}$ submanifold of $\RR^n$ away from a closed set of zero $m$-dimensional Hausdorff measure. Furthermore, Preiss, Tolsa, and Toro prove that when $m=n-1$ the support of $\mu$ is an $m$-dimensional $C^{1,\beta}$ submanifold away from a closed set of Hausdorff dimension at most $n-4$. Recently, in \cite{Lewis}, Lewis has proved that when $n=4$ and $m=3$ the support of $\mu$ admits local $C^{1,\beta}$ parameterizations at every point \emph{either} by open subsets of a 3-dimensional plane \emph{or} by open subsets of the light cone $C$.
\end{example}

Unilateral local approximation of sets by $\cG(n,m)$ was introduced independently by Jones \cite{JonesTSP} and is now an important tool in the theory of quantitative rectifiability (e.g. see \cite{DS93}).

\begin{example} In \cite{JonesTSP}, Jones introduced the idea of a unilateral approximation number, now called a \emph{Jones beta number} $\beta_E(Q)$, which measures how closely the set $E\subseteq\RR^n$ is to lying on a straight line inside a cube $Q$ in a scale-invariant fashion. Specifically, $$\beta_E(Q)=(\diam Q)^{-1}\inf_{\ell} \sup_{x\in E\cap Q} \dist(x,\ell),$$ where the infimum ranges over all straight lines (i.e.~ 1-dimensional affine subspaces) in $\RR^n$. Jones (when $n=2$) and Okikiolu \cite{O-TSP} (when $n\geq 3$) proved that a bounded set $E\subseteq\RR^n$ is a subset of a rectifiable curve in $\RR^n$  if and only if $\beta(E)^2:=\sum_{Q} \beta_E(3Q)^2 \diam Q<\infty$, where the sum ranges over all dyadic cubes $Q\subset\RR^n$; moreover, the length of the shortest curve containing $E$ is comparable (up to dimensional constants) to $\diam E+\beta(E)^2$.  Bilateral variants of Jones beta numbers were later introduced in \cite{DS93} and \cite{Toro}. \end{example}

The goal of this essay is to initiate the study of Reifenberg and Jones-Mattila-Vuorinen models of local set approximation in fuller generality than has been previously done. We develop a framework for describing bilateral and unilateral approximation of a set $A\subseteq\RR^n$ by a general class $\cS$ of closed sets in $\RR^n$. Then we direct our attention to two questions. How does the geometry of model sets transfer to the geometry of an approximated set? And, how does a set look like if it is approximated by several types of model sets? For instance, with respect to the first question, we generalize Mattila and Vuorinen's dimension bound from Example \ref{MVex} to sets that are unilaterally approximated by model sets $\cS$ that possess a uniform ``covering profile" (see \S8). And, with respect to the second question, we establish decomposition theorems for Reifenberg type sets that encompass the decompositions in Examples \ref{Mex} and \ref{Bex} as special cases (see \S6). Some new applications and examples illustrating these general results are given in \S9. For example, we show that the singular part of the support of an $(n-1)$-dimensional asymptotically optimally doubling measure in $\RR^n$ (see Example \ref{KPex}) has upper Minkowski dimension at most $n-4$ for all $n\geq 4$. At the end of \S9, we discuss a few open problems and directions for future research.

A central tool in our analysis that we wish to highlight is the notion of a tangent set of a closed set in $\RR^n$, which is modeled on Preiss' notion of a tangent measure of a Radon measure on $\RR^n$ from \cite{Preiss}. Roughly speaking, a tangent object (set or measure) is the limiting object obtained by zooming in on a point of the object along a sequence of shrinking scales going to zero. A precise definition of tangent sets appears in \S3. It turns out that many properties of tangent measures also hold for tangent sets. In particular, in \S5 we show that the collection $\Tan(A,x)$ of tangent sets of a closed set $A$ at $x\in A$ is connected in a certain strong sense: if $\Tan(A,x)\subseteq\cS$ and $\cT$ is ``separated at infinity" in $\cS$, then either $\Tan(A,x)\subseteq \ccT$ or $\Tan(A,x)\subseteq \cS\setminus\ccT$. See \S5 for a precise formulation.  This feature of tangent sets is used to analyze the structure of Reifenberg type sets in \S6. Tangent sets and pseudotangents sets (a related notion obtained by zooming in on a set along a convergent sequence of points in the set) are also used to provide characterizations of Reifenberg type sets in \S4 and characterizations of Mattila-Vuorinen type sets in \S7.

\begin{remark}\label{r:geo1} In this paper, although we only develop the theory of local set approximation in the Euclidean spaces $\RR^n$ ($n\geq 1$), we do not overly rely on the affine structure of $\RR^n$. Rather the limited geometric features of $\RR^n$ that we use are that $\RR^n$ is a \emph{proper} metric space (i.e.~ closed balls are compact), $\RR^n$ is \emph{translation invariant} (i.e.~ for all $z,w\in\RR^n$ there exists an isometry $\tau_{z,w}:\RR^n\rightarrow\RR^n$ such that $\tau_{z,w}(z)= w$), and $\RR^n$ is \emph{dilation invariant} (i.e.~ for all $r>0$ there exists a similarity $\delta_r:\RR^n\rightarrow\RR^n$ such that $\dist(\delta_r(x),\delta_r(y))=r\dist(x,y)$ for all $x,y\in\RR^n$). Because these geometric properties are shared by finite-dimensional Banach spaces and  (sub-Finsler) Carnot groups (e.g., see Le Donne \cite{LeDonne-MetricCarnot}), it is the authors' belief that the theory of local set approximation developed below should transfer to these spaces upon making the appropriate notational changes.\end{remark}

\begin{ack}  A portion of this research was completed while both authors visited the Institute for Pure and Applied Mathematics (IPAM), during the long program on Interactions between Analysis and Geometry in Spring 2013. Additional research was carried out during reciprocal visits to the math departments at Stony Brook University and the University of Washington. Some of the results in this article first appeared in the second author's PhD thesis at the University of Washington.\end{ack}

\section{Distances between sets and convergence of closed sets}
\label{s:dist}

For any $A\subseteq\RR^n$, let $\cl A$ denote the closure of $A$ in $\RR^n$. For all $x\in\RR^n$ and $r>0$, let $\ball(x,r)=\{y\in\RR^n:|x-y|\leq r\}$ denote the \emph{closed} ball in $\RR^n$ with center $x$ and radius $r$. Let $\CL{x}$ denote the collection of all closed subsets of $\RR^n$ that contain $x$. Also let $\CL{A}=\bigcup_{x\in A}\CL{x}$ denote the collection of all closed subsets of $\RR^n$ that contain some point of $A$.

The basic building block that we use below to construct distances between sets is the excess of one set over another.  Let $A,B\subseteq\RR^n$ be nonempty. The \emph{excess of $A$ over $B$} is the quantity defined by  \begin{equation}\ue(A,B) = \sup_{a\in A} \inf_{b\in B} |a-b|\in[0,\infty].\end{equation} By convention, we also assign $\ue(\emptyset, B)=0$, but leave the expression $\ue(A,\emptyset)$ undefined. When $A=\{x\}$, the excess of $\{x\}$ over $B$ is precisely the \emph{distance $\dist(x,B)$ of $x$ to $B$}. Geometrically the excess of $A$ over $B$ is less than (at most) some $\varepsilon>0$ precisely when $A$ is contained in the open (closed) $\varepsilon$-tubular neighborhood of $B$.   Some key properties of excess include \begin{itemize}
\item \emph{closure}: $\ue(A,B)=\ue(A,\cl{B})=\ue(\cl{A},\cl{B})=\ue(\cl{A},B)$; if $A\neq\emptyset$ and $A$ is bounded, then there exist $\bar a\in\cl{A}$ and $\bar b\in \cl{B}$ such that $\ue(A,B)=\left|\bar a-\bar b\right|$;
\item \emph{containment}: $\ue(A,B)=0$ if and only if $A\subseteq \cl{B}$;
\item \emph{monotonicity}: if $A\subseteq A'$ and $B\supseteq B'$, then $\ue(A,B)\leq \ue(A',B')$; and
\item \emph{triangle inequality}: $\ue(A,C)\leq \ue(A,B)+\ue(B,C)$.
\end{itemize} We leave the proof of these properties as an exercise for the reader. We emphasize that excess is an asymmetric quantity and is allowed to be infinite.

For all $x\in\RR^n$ and $r>0$ and for all $A,B\subseteq\RR^n$ with $B$ nonempty, we define the \emph{relative excess of $A$ in $\ball(x,r)$ over $B$} as \begin{equation}\mud x r (A,B):= \frac 1r \ue(A\cap \ball(x,r), B)\in[0,\infty).\end{equation}  We include the factor $1/r$ in the definition of relative excess so that $\mud x r$ is scale invariant in the sense that \begin{equation}\mud{x}{r} (A,B)=\mud{\lambda x}{\lambda r}(\lambda A,\lambda B)\quad \text{for all }\lambda>0.\end{equation} Relative excess is also translation invariant in the sense that \begin{equation}\mud{x}{r}(A,B)=\mud{x+z}{r}(z+A,z+B)\quad\text{for all }z\in\RR^n.\end{equation} In contrast to the excess, the relative excess of one set over another is always finite. Moreover, if $\cl{B}$ contains $x$, then $\mud{x}{r}(A,B)\leq 1$; and if $\cl{B}\cap \ball(x,r)\neq\emptyset$, then $\mud{x}{r}(A,B)\leq 2$. Relative excess inherits the following additional properties from  excess.
\begin{lemma} \label{l:relativeExcess} Let $A,B,C\subseteq\RR^n$ with $B$ and $C$ nonempty, let $x,y\in\RR^n$ and let $r,s>0$.
\begin{itemize}
\item \emph{closure}: $\mud{x}{r}(A,B)=\mud{x}{r}(A,\cl{B})\leq \mud{x}{r}(\cl{A},\cl{B})=\mud{x}{r}(\cl{A},B)$, and for all $\delta>0$, $\mud{x}{r}(\cl{A},B)\leq (1+\delta) \mud{x}{r(1+\delta)}(A,B)$.
\item \emph{containment}: $\mud{x}{r}(A,B)=0$ if and only if $A\cap \ball(x,r)\subseteq \cl{B}$.
\item \emph{monotonicity}: If $\ball(x,r)\subseteq \ball(y,s)$, $A\subseteq A'$ and $B\supseteq B'$, then  \begin{equation}\label{osm}\mud{x}{r}(A,B) \leq \frac{s}{r} \mud {y}{s}(A',B').\end{equation}
\item \emph{strong quasitriangle inequality}: If $\mud{x}{r} (A,B) \leq \varepsilon$, then
    \begin{equation}\label{osqti}\mud{x}{r}(A,C)\leq\mud {x}{r} (A,B) +  (1+\varepsilon) \mud {x}{r(1+\varepsilon)} (\cl{B},C).\end{equation}
\item \emph{weak quasitriangle inequalities}: If $x\in \cl{B}$, then \begin{equation}\label{oswqti}\mud{x}{r}(A,C)\leq \mud{x}{r}(A,B)+2\mud{x}{2r}(\cl{B},C).\end{equation}
If $\cl{B}\cap \ball(x,r)\neq\emptyset$, then \begin{equation}\label{oswqti3}\mud{x}{r}(A,C)\leq \mud{x}{r}(A,B)+3\mud{x}{3r}(\cl{B},C).\end{equation}
\end{itemize} \end{lemma}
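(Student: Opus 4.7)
The plan is to reduce each claim to the corresponding property of the excess $\ue(\cdot,\cdot)$ already listed in the text, by unwinding the definition $\mud{x}{r}(A,B)=(1/r)\,\ue(A\cap\ball(x,r),B)$ and keeping track of scaling factors.

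For the closure identities, the two equalities $\mud{x}{r}(A,B)=\mud{x}{r}(A,\cl{B})$ and $\mud{x}{r}(\cl{A},\cl{B})=\mud{x}{r}(\cl{A},B)$ are immediate from closure of the excess, while $\mud{x}{r}(A,B)\leq\mud{x}{r}(\cl{A},\cl{B})$ follows from monotonicity since $A\cap\ball(x,r)\subseteq\cl{A}\cap\ball(x,r)$. The enlargement bound $\mud{x}{r}(\cl{A},B)\leq(1+\delta)\mud{x}{r(1+\delta)}(A,B)$ rests on the point-set inclusion $\cl{A}\cap\ball(x,r)\subseteq\cl{A\cap\ball(x,r(1+\delta))}$: any $y\in\cl{A}\cap\ball(x,r)$ is a limit of $a_n\in A$, and because $|y-x|\leq r<r(1+\delta)$, eventually $a_n\in A\cap\ball(x,r(1+\delta))$. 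Monotonicity and closure of $\ue$ then give $\ue(\cl{A}\cap\ball(x,r),B)\leq\ue(A\cap\ball(x,r(1+\delta)),B)$, and absorbing the factor $1/r=(1+\delta)/(r(1+\delta))$ yields the stated constant. Containment is immediate from the containment property of $\ue$, and monotonicity follows from $A\cap\ball(x,r)\subseteq A'\cap\ball(y,s)$ together with monotonicity of $\ue$, with the prefactor $s/r$ arising when one rewrites $1/r=(s/r)\cdot(1/s)$.

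The main step is the strong quasitriangle inequality, which is the only place where geometric (rather than purely formal) content enters. Assume $\mud{x}{r}(A,B)\leq\varepsilon$. For each $a\in A\cap\ball(x,r)$, properness of $\RR^n$ lets us realize the distance to the closed set $\cl{B}$: there is $b^*\in\cl{B}$ with $|a-b^*|=\dist(a,\cl{B})\leq\ue(A\cap\ball(x,r),B)\leq\varepsilon r$. The triangle inequality then forces $|b^*-x|\leq r+\varepsilon r=r(1+\varepsilon)$, so $b^*\in\cl{B}\cap\ball(x,r(1+\varepsilon))$. Thus the nearest-point calculation performed against $\cl{B}$ is unchanged when restricted to $\cl{B}\cap\ball(x,r(1+\varepsilon))$, giving
\[
\ue\bigl(A\cap\ball(x,r),\,\cl{B}\cap\ball(x,r(1+\varepsilon))\bigr)=\ue\bigl(A\cap\ball(x,r),\,\cl{B}\bigr)=\ue\bigl(A\cap\ball(x,r),B\bigr).
\]
Applying the triangle inequality for $\ue$ with the middle set $\cl{B}\cap\ball(x,r(1+\varepsilon))$, dividing by $r$, and rewriting $1/r=(1+\varepsilon)/(r(1+\varepsilon))$ in the second term yields \eqref{osqti}.

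Finally, the two weak quasitriangle inequalities are corollaries of \eqref{osqti}. If $x\in\cl{B}$, then for every $a\in A\cap\ball(x,r)$ one has $\dist(a,\cl{B})\leq|a-x|\leq r$, so $\mud{x}{r}(A,B)\leq 1$ and \eqref{osqti} applies with $\varepsilon=1$, producing \eqref{oswqti}. If instead $\cl{B}\cap\ball(x,r)\neq\emptyset$, pick any $b_0$ in this intersection; then $\dist(a,\cl{B})\leq|a-b_0|\leq 2r$ for all $a\in A\cap\ball(x,r)$, so $\mud{x}{r}(A,B)\leq 2$ and \eqref{osqti} applies with $\varepsilon=2$, producing \eqref{oswqti3}. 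The only delicate point in the entire lemma is the nearest-point realization used in the strong quasitriangle inequality; everything else is bookkeeping.
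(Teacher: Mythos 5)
Your proof is correct and follows essentially the same route as the paper: each property is reduced to the corresponding property of the excess, and the strong quasitriangle inequality rests on the same observation that the nearest point of $\cl{B}$ to any $a\in A\cap\ball(x,r)$ lies in $\ball(x,r(1+\varepsilon))$. Your packaging of that step via the identity $\ue(A\cap\ball(x,r),\cl{B}\cap\ball(x,r(1+\varepsilon)))=\ue(A\cap\ball(x,r),B)$ and the excess triangle inequality, instead of the paper's pointwise $\delta$-slack argument with an almost-nearest point of $C$, is only a cosmetic variant (and the trivial case $A\cap\ball(x,r)=\emptyset$, which the paper notes explicitly, is harmless in your version as well).
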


\begin{proof} To start, note that \begin{equation*} \ue(A\cap \ball(x,r), B) = \ue (A\cap \ball(x,r),\cl{B}) \leq \ue(\cl{A}\cap \ball(x,r),\cl{B})=\ue(\cl{A}\cap \ball(x,r),B) \end{equation*} by the closure and monotonicity prosperities of excess. Similarly, for each $\delta>0$, \begin{equation*} \ue(\cl{A}\cap \ball(x,r),B) \leq \ue(\cl{A \cap \ball(x,r(1+\delta))},B)=(1+\delta)\frac{1}{1+\delta}\ue(A\cap \ball(x,r(1+\delta)),B).\end{equation*} Dividing through each of the two displayed lines by $r$ establishes the first and second parts of the closure property of relative excess, respectively. Likewise the containment property of relative excess follows immediately from the containment property of excess.

If $\ball(x,r)\subseteq \ball(y,s)$, $A\subseteq A'$ and $B\supseteq B'$, then \begin{equation*} \mud{x}{r}(A,B) = \frac{1}{r}\ue(A\cap \ball(x,r),B) \leq \frac{1}{r}\ue(A'\cap \ball(y,s),B')=\frac{s}{r}\mud{y}{s}(A',B')\end{equation*} by monotonicity of excess.  This establishes the monotonicity property of relative excess.

The strong quasitriangle inequality holds trivially if $A\cap \ball(x,r)=\emptyset$. Thus suppose that $A\cap \ball(x,r)$ is nonempty and $\mud{x}{r}(A,B)\leq \varepsilon$ for some $\varepsilon\geq 0$.  Fix $a\in A\cap \ball(x,r)$ and fix $\delta>0$. By the closure and monotonicity properties of excess, there exists $\bar b\in \cl{B}$ such that \begin{equation*} \left|a-\bar b\right| = \ue(\{a\},B)\leq \ue(A\cap \ball(x,r),B)= r\mud{x}{r}(A,B)\leq r\varepsilon. \end{equation*} Similarly, there exists $c\in C$ such that \begin{equation*} \left|\bar b-c\right|\leq \ue(\{\overline{b}\},C)+\delta \leq \ue(\cl{B}\cap \ball(x,r(1+\varepsilon)),C)+\delta = r(1+\varepsilon)\mud{x}{r(1+\varepsilon)}(\cl{B},C)+\delta.\end{equation*} Hence, combining the two displayed equations, \begin{equation*} \ue(\{a\},C)\leq \left|a-\bar b\right|+\left|\bar b-c\right| \leq r \mud{x}{r}(A,B)+r(1+\varepsilon)\mud{x}{r(1+\varepsilon)}(\cl{B},C)+\delta.\end{equation*} Letting $\delta\rightarrow 0$ and then taking the supremum over all $a$ in  $A\cap \ball(x,r)$, we obtain \begin{equation*} \ue(A\cap \ball(x,r),C) \leq r\mud{x}{r}(A,B)+r(1+\varepsilon)\mud{x}{r(1+\varepsilon)}(\cl{B},C).\end{equation*} The strong quasitriangle inequality follows by dividing the last line through by $r$.

If $x\in \cl{B}$, then $\mud{x}{r}(A,B)\leq 1$. Similarly, if $\ball(x,r)\cap \cl{B}\neq\emptyset$, then $\mud{x}{r}(A,B)\leq 2$ Thus, the weak quasitriangle inequalities follows from the strong quasitriangle inequality. \end{proof}

For all $x\in\RR^n$ and $r>0$, define \begin{equation}\mD x r [A,B] = \max\left\{\mud x r (A,B), \mud x r (B,A)\right\}\in[0,\infty)\end{equation} for all nonempty sets $A,B\subseteq\RR^n$. To the authors' knowledge, the quantity $\mD{x}{r}[\cdot,\cdot]$ (without the normalization factor $1/r$) first appeared in a paper by Walkup and Wets \cite{WW}. (For further context, see the bibliographic notes in \cite{RWbook}*{Chapter 4}.) Thus, we call $\mD{x}{r}[\cdot,\cdot]$ the \emph{relative Walkup-Wets distance in $\ball(x,r)$}.

If $A$ and $B$ both contain $x$, then $\mD{x}{r}[A,B]\leq 1$; if $A$ and $B$ both intersect $\ball(x,r)$, then $\mD{x}{r}[A,B]\leq 2$. The relative Walkup-Wets distance inherits the following properties from the relative excess.

\begin{lemma} \label{l:relativeWW} Let $A,B,C\subseteq\RR^n$ be nonempty sets, let $x,y\in\RR^n$ and let $r,s>0$.
\begin{itemize}
\item \emph{closure}: $\mD{x}{r}[A,B] \leq \mD{x}{r}[\cl{A},\cl{B}]\leq (1+\delta)\mD{x}{r(1+\delta)}[A,B]$ for all $\delta>0$.
\item \emph{containment}: $\mD{x}{r}[A,B]=0$ if and only if $A\cap \ball(x,r)\subseteq\cl{B}$ and $B\cap \ball(x,r)\subseteq \cl{A}$. In particular, $\mD{x}{r}[\cl{A},\cl{B}]=0$ if and only if $\cl{A}\cap \ball(x,r)=\cl{B}\cap \ball(x,r)$.
\item \emph{monotonicity}: If $\ball(x,r)\subseteq \ball(y,s)$, then \begin{equation}\label{Dmono} \mD{x}{r}[A,B] \leq \frac{s}{r} \mD{y}{s}[A,B].\end{equation}
\item \emph{strong quasitriangle inequality}: If $\mud{x}{r} (A,B) \leq \varepsilon_1$ and  $\mud{x}{r}(C,B) \leq \varepsilon_2$, then
\begin{equation}\label{sqti}\mD {x}{r}[A, C]\leq (1+\varepsilon_2)\mD {x}{r(1+\varepsilon_2)} [A, \cl{B}] + (1+\varepsilon_1)\mD{x}{r(1+\varepsilon_1)} [\cl{B}, C].\end{equation}
\item \emph{weak quasitriangle inequalities}: If $x\in \cl{B}$, then
\begin{equation}\label{qti}\mD x r [A, C]\leq 2 \mD x {2r} [A, \cl{B}] + 2\mD x {2r} [\cl{B}, C].\end{equation} If $\cl{B}\cap \ball(x,r)\neq \emptyset$, then \begin{equation}\label{qti3}\mD x r [A, C]\leq 3 \mD x {3r} [A, \cl{B}] + 3\mD x {3r} [\cl{B}, C].\end{equation}
\item \emph{scale invariance}: \begin{equation}\label{si}\mD{x}{r}[A,B]=\mD{\lambda x}{\lambda r}[\lambda A,\lambda B]\quad\text{for all }\lambda>0.\end{equation}
\item \emph{translation invariance}: \begin{equation}\label{ti}\mD{x}{r}[A,B]=\mD{x+z}{r}[z+ A,z+ B]\quad\text{for all }z\in\RR^n.\end{equation}
\end{itemize} \end{lemma}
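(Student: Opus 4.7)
The plan is to derive every clause of Lemma \ref{l:relativeWW} directly from the corresponding clause of Lemma \ref{l:relativeExcess} by exploiting the identity $\mD{x}{r}[A,B]=\max\{\mud{x}{r}(A,B),\mud{x}{r}(B,A)\}$. The closure, containment, monotonicity, scale invariance, and translation invariance statements are essentially one-line consequences: for each, apply the corresponding property of $\mud$ to both orderings $(A,B)$ and $(B,A)$, then take the maximum. For closure, one also invokes $\mud{x}{r}(A,B)=\mud{x}{r}(A,\cl B)$ to see that $\mD{x}{r}[A,B]\leq \mD{x}{r}[\cl A,\cl B]$, and then the $(1+\delta)$-inflated inequality follows from the second inclusion in the closure property of $\mud$ applied in each direction.

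The main content lies in deducing the strong quasitriangle inequality \eqref{sqti}, and the tactic is symmetric bookkeeping. Assume $\mud{x}{r}(A,B)\leq\varepsilon_1$ and $\mud{x}{r}(C,B)\leq\varepsilon_2$. Applying \eqref{osqti} with $\varepsilon=\varepsilon_1$ yields
\begin{equation*}
\mud{x}{r}(A,C)\leq \mud{x}{r}(A,B)+(1+\varepsilon_1)\mud{x}{r(1+\varepsilon_1)}(\cl B,C),
\end{equation*}
and applying \eqref{osqti} with the roles of $A$ and $C$ swapped, using $\varepsilon=\varepsilon_2$, yields
\begin{equation*}
\mud{x}{r}(C,A)\leq \mud{x}{r}(C,B)+(1+\varepsilon_2)\mud{x}{r(1+\varepsilon_2)}(\cl B,A).
\end{equation*}
Now promote each $\mud$ to a $\mD$ and shift to the appropriate inflated ball via monotonicity \eqref{osm} of $\mud$: in the first line, bound $\mud{x}{r}(A,B)\leq (1+\varepsilon_2)\mud{x}{r(1+\varepsilon_2)}(A,\cl B)\leq (1+\varepsilon_2)\mD{x}{r(1+\varepsilon_2)}[A,\cl B]$ and bound $(1+\varepsilon_1)\mud{x}{r(1+\varepsilon_1)}(\cl B,C)\leq (1+\varepsilon_1)\mD{x}{r(1+\varepsilon_1)}[\cl B,C]$; symmetrically in the second line. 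Both right-hand sides collapse to the same quantity, and taking the maximum on the left produces exactly \eqref{sqti}.

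The weak quasitriangle inequalities \eqref{qti} and \eqref{qti3} are then obtained as specializations of \eqref{sqti}: if $x\in\cl B$ then by the observation made just before Lemma \ref{l:relativeExcess} one may take $\varepsilon_1=\varepsilon_2=1$, and if merely $\cl B\cap\ball(x,r)\neq\emptyset$ one may take $\varepsilon_1=\varepsilon_2=2$.

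The only moderately delicate step is keeping the inflation factors aligned in the strong quasitriangle inequality, since the two directional bounds from \eqref{osqti} naturally inflate the ball by different amounts $(1+\varepsilon_1)$ and $(1+\varepsilon_2)$; the rest of the lemma is routine bookkeeping on top of Lemma \ref{l:relativeExcess}.
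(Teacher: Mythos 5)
Your proposal is correct and follows essentially the same route as the paper: the quasitriangle inequality is obtained by applying the strong quasitriangle inequality and monotonicity for the relative excess in both directions and taking maxima (the paper organizes the final step via $\max\{a+b,c+d\}\leq\max\{a,d\}+\max\{b,c\}$, which is the same bookkeeping you do by bounding each excess by the corresponding $\mD{x}{\cdot}[\cdot,\cdot]$), the weak versions follow by taking $\varepsilon_1=\varepsilon_2=1$ or $2$, and the remaining properties are the routine two-sided consequences of Lemma \ref{l:relativeExcess} that the paper leaves to the reader.
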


\begin{proof} We will derive the quasitriangle inequalities for  relative Walkup-Wets distance, but leave verification of the other properties of $\mD{x}{r}[\cdot,\cdot]$ to the reader.

 Suppose that $\mud{x}{r}(A,B) \leq \varepsilon_1$ and $\mud{x}{r}(C,B)\leq \varepsilon_2$. Since $\mud{x}{r}(A,B)\leq \varepsilon_1$,
\begin{align*} \mud x r (A, C) &\leq \mud x r (A,\cl{B}) + (1+\varepsilon_1) \mud x {r(1+\varepsilon_1)} (\cl{B}, C) \\ &\leq (1+\varepsilon_2)\mud x {r(1+\varepsilon_2)}(A, \cl{B})+ (1+\varepsilon_1) \mud x {r(1+\varepsilon_1)} (\cl{B}, C)\end{align*} by the strong quasitriangle inequality and monotonicity of relative excess. Similarly, since $\mud{x}{r}(C,B)\leq \varepsilon_2$,
\begin{equation*}\mud {x}{r} (C, A) \leq (1+\varepsilon_1) \mud x {r(1+\varepsilon_1)} (C,\cl{B}) + (1+\varepsilon_2)\mud x {r(1+\varepsilon_2)}(\cl{B}, A).\end{equation*} Thus, noting that $\max\{a+b, c+d\}\leq \max\{a, d\} + \max\{b,c\}$, we obtain
\begin{align*} \mD x r [A, C] &= \max\{\mud x r(A, C),  \mud x r (C, A)\} \\
&\leq \max\{  (1+\varepsilon_2)\mud x {r(1+\varepsilon_2)}(A, \cl{B})+ (1+\varepsilon_1) \mud x {r(1+\varepsilon_1)} (\cl{B}, C), \\
 &\qquad\qquad (1+\varepsilon_1) \mud x {r(1+\varepsilon_1)} (C,\cl{B}) + (1+\varepsilon_2)\mud x {r(1+\varepsilon_2)}(\cl{B}, A)\}\\
&\leq \max\{  (1+\varepsilon_2)\mud x {r(1+\varepsilon_2)}(A, \cl{B}), (1+\varepsilon_2)\mud x {r(1+\varepsilon_2)}(\cl{B}, A)\}\\
&\qquad+\max\{(1+\varepsilon_1) \mud x {r(1+\varepsilon_1)} (\cl{B}, C), (1+\varepsilon_1) \mud x {r(1+\varepsilon_1)} (C,\cl{B}) \}\\
&= (1+\varepsilon_2)\mD x {r(1+\varepsilon_2)} [A, \cl{B}] + (1+\varepsilon_1)\mD x {r(1+\varepsilon_1)} [\cl{B}, C].\end{align*}
Therefore, the relative Walkup-Wets distance satisfies the strong quasitriangle inequality. The weak quasitriangle inequalities then follow, since $\mud{x}{r}(A,B)\leq 1$ and $\mud{x}{r}(C,B)\leq 1$ if $x\in\cl{B}$, while $\mud{x}{r}(A,B)\leq 2$ and $\mud{x}{r}(C,B)\leq 2$ if $\cl{B}\cap \ball(x,r)\neq\emptyset$.
\end{proof}

\begin{remark} It is an unfortunate, but unavoidable fact that the relative Walkup-Wets distance $\mD{x}{r}[\cdot,\cdot]$ does not satisfy the triangle inequality. To rectify this, one might be tempted to instead work with the \emph{relative Hausdorff distance $\D{x}{r}[\cdot,\cdot]$} defined by \begin{equation}\D{x}{r}[A,B]=\frac{1}{r}\max\left\{\ue(A\cap \ball(x,r), B\cap \ball(x,r)), \ue(B\cap \ball(x,r),A\cap \ball(x,r))\right\}\end{equation} for all sets $A$ and $B$ that intersect $\ball(x,r)$. Although the relative Hausdorff distance does satisfy the triangle inequality $\D{x}{r}[A,C]\leq \D{x}{r}[A,B]+ \D{x}{r}[B,C]$, the quantity is deficient in other respects. One of the main defects in terms of our applications below is that the relative Hausdorff distance does not satisfy the monotonicity property (\ref{Dmono}). To see this, take $A=\{0,1\}\subset\RR^1$ and $B_i=\{0,1+1/i\}\subset\RR^1$. Then $\D{0}{1}[A,B_i]=1$ and  $\D{0}{1+1/i}[A,B_i]=1/(i+1)$, so that  $$\frac{\D{0}{1}[A,B_i]}{\D{0}{1+1/i}[A,B_i]}=i+1\rightarrow\infty\quad\text{ as }i\rightarrow\infty.$$ For additional reasons to work with the relative Walkup-Wets distance instead of with the relative Hausdorff distance, see Remarks \ref{r:naive} and \ref{theta_justification}.
\end{remark}

\begin{remark} We could also have defined the relative excess and Walkup-Wets distance using intersections with open balls $U(x,r)$ instead of intersections with closed balls $\ball(x,r)$; the decision of which convention to use is largely a matter of taste. If the definition with open balls is selected, then Lemmas \ref{l:relativeExcess} and \ref{l:relativeWW} hold as is, except that inequality in the closure properties becomes equality: $\mud{x}{r}(A,B)=\mud{x}{r}(\cl{A},\cl{B})$ and $\mD{0}{r}[A,B]=\mD{0}{r}[\cl{A},\cl{B}]$ (intersections with open balls). \end{remark}

We now recall a convenient topology on the space of all nonempty closed sets in $\RR^n$.

\begin{theorem}\label{t:aw} There exists a metrizable topology on the collection $\CL{\RR^n}$ of all nonempty closed sets in $\RR^n$ in which a sequence $(A_i)_{i=1}^\infty$ of nonempty closed sets converges to a set $A\in \CL{\RR^n}$ if and only if \begin{equation} \label{e:aw} \lim_{i\rightarrow\infty} \ue(A_i\cap \ball(0,r),A)=0\quad\text{and}\quad \lim_{i\rightarrow\infty} \ue(A\cap \ball(0,r),A_i)=0\quad\text{for all }r>0.\end{equation} Moreover, for every compact set $K\subset\RR^n$, the subcollection $\CL{K}$ of $\CL{\RR^n}$, consisting of all closed sets that intersect $K$, is sequentially compact; that is, for any sequence $(A_i)_{i=1}^\infty$ of sets in $\CL{K}$ there exist a subsequence $(A_{i_j})_{j=1}^\infty$ and a set $A\in\CL{K}$ such that $(A_{i_j})_{j=1}^\infty$ converges to $A$ in the sense of (\ref{e:aw}).\end{theorem}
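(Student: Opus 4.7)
The plan is to realize the topology explicitly via a metric built from uniform convergence of distance functions on bounded sets, and then to establish compactness by applying Arz\'ela--Ascoli to those distance functions. For each $A\in\CL{\RR^n}$, the function $\dist(\cdot,A):\RR^n\to[0,\infty)$ is $1$-Lipschitz, and $A$ is recovered as its zero set. I would therefore set
\[
d(A,B) \;:=\; \sum_{k=1}^{\infty} 2^{-k}\min\Bigl\{1,\; \sup_{x\in \ball(0,k)} \bigl|\dist(x,A)-\dist(x,B)\bigr|\Bigr\}\qquad\text{for }A,B\in \CL{\RR^n}.
\]
Symmetry is clear, $d(A,B)=0$ forces $\dist(\cdot,A)=\dist(\cdot,B)$ on all of $\RR^n$ and hence $A=B$, and the triangle inequality follows from that of the sup norm together with subadditivity of $t\mapsto\min\{1,t\}$.

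Next I would verify that $d(A_i,A)\to 0$ is equivalent to (\ref{e:aw}). Because of the $2^{-k}$ weighting, this reduces to showing for each $r>0$ that $\sup_{|x|\leq r}|\dist(x,A_i)-\dist(x,A)|\to 0$ if and only if both excesses in (\ref{e:aw}) (with radius $r$) tend to $0$. The ``only if'' direction is immediate: for $a\in A\cap \ball(0,r)$ we have $\dist(a,A_i)=|\dist(a,A_i)-\dist(a,A)|$, and symmetrically for the reverse excess. For the ``if'' direction, given $x\in\ball(0,r)$ one compares $\dist(x,A)$ and $\dist(x,A_i)$ through nearest-point arguments applied on a slightly larger ball: since the relevant nearest points lie in $\ball(0,r+C)$ for some $C$ independent of $i$ once $i$ is large, the excess hypothesis on $\ball(0,r+C)$ yields the required uniform estimate.

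For sequential compactness, let $(A_i)\subset\CL{K}$ and set $f_i:=\dist(\cdot,A_i)$. Each $f_i$ is $1$-Lipschitz, and picking $x_i\in A_i\cap K$ gives $f_i(x)\leq |x-x_i|$, so the $f_i$ are pointwise bounded on every ball. Arz\'ela--Ascoli on each $\ball(0,k)$ combined with a standard diagonal extraction yields a subsequence $(f_{i_j})$ converging locally uniformly to a $1$-Lipschitz function $f:\RR^n\to[0,\infty)$. Define $A:=f^{-1}(0)$. Compactness of $K$ gives a cluster point $x^*\in K$ of the $x_{i_j}$; equicontinuity then forces $f(x^*)=\lim_j f_{i_j}(x_{i_j})=0$, so $A\cap K\neq\emptyset$.

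The main obstacle is the identification $f=\dist(\cdot,A)$, after which the equivalence established above converts the locally uniform convergence $f_{i_j}\to f$ into the statement that $A_{i_j}\to A$ in the sense of (\ref{e:aw}). The inequality $f\leq\dist(\cdot,A)$ is automatic from the $1$-Lipschitz property of $f$ together with $f|_A\equiv 0$. For the reverse, pick $y_{i_j}\in A_{i_j}$ with $|x-y_{i_j}|\leq f_{i_j}(x)+1/j$; these points stay bounded because $f_{i_j}(x)\to f(x)<\infty$, so after passing to a further subsequence $y_{i_j}\to y$, and equicontinuity combined with $f_{i_j}(y_{i_j})=0$ yields $f(y)=0$, i.e.\ $y\in A$. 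Hence $\dist(x,A)\leq |x-y|=\lim_j|x-y_{i_j}|\leq f(x)$. Properness of $\RR^n$ (so that bounded sequences admit convergent subsequences) is used essentially in this final step, consistent with Remark \ref{r:geo1}.
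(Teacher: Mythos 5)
Your proposal is correct, and it is essentially the standard argument: the paper itself does not prove Theorem \ref{t:aw} but defers to \cite{Beer}, \cite{RWbook}, and \cite{DS}, where the Attouch--Wets topology is likewise realized as the topology of uniform convergence of the distance functionals $\dist(\cdot,A)$ on bounded sets, with compactness obtained from Arz\'ela--Ascoli and the $1$-Lipschitz bound. The two points you flag as needing care are indeed the right ones and both go through: in the ``if'' direction the enlarged radius $r+C$ may depend on $r$ and on $\dist(0,A)$ (first use $\ue(A\cap\ball(0,\dist(0,A)+1),A_i)\to 0$ to bound $\dist(x,A_i)$ uniformly on $\ball(0,r)$ for large $i$, then both near-nearest points stay in a fixed larger ball), and in the identification $f=\dist(\cdot,A)$ the further subsequence extracted for a fixed $x$ is harmless since the inequality being proved does not depend on the subsequence. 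So your write-up supplies a correct, self-contained proof of exactly the statement the paper imports from the references.
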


In variational analysis, the topology on $\CL{\RR^n}$ described in Theorem \ref{t:aw} is called the \emph{Attouch-Wets topology}; for further information, including a proof of Theorem \ref{t:aw}, see \cite{Beer}*{Chapter 3}, \cite{RWbook}*{Chapter 4}, or \cite{DS}*{Chapter 8}. Below we always endow $\CL{\RR^n}$ with the Attouch-Wets topology and write $A_i\rightarrow A$ or $A=\lim_{i\rightarrow\infty} A_i$ (in $\CL{\RR^n}$) to denote that a sequence $(A_i)_{i=1}^\infty$ of sets in $\CL{\RR^n}$ converges to a set $A\in\CL{\RR^n}$ in the sense of (\ref{e:aw}). If each set $A_i$ belongs to $\CL{K}$ for some compact set $K\subset\RR^n$, then we may also write \emph{$A_i\rightarrow A$ in $\CL{K}$} to emphasize that the limit $A$ belongs to $\CL{K}$, as well.

\begin{lemma}\label{l:modes} Let $A,A_1,A_2,\dots\in\CL{\RR^n}$. The following statements are equivalent: \begin{enumerate}
 \item $A_i\rightarrow A$ in $\CL{\RR^n}$;
 \item $\lim_{i\rightarrow \infty}\mD{x}{r}[A_i,A]=0$ for all $x\in\RR^n$ and for all $r>0$;
 \item $\lim_{i\rightarrow \infty}\mD{x_0}{r_j}[A_i,A]=0$ for some $x_0\in\RR^n$ and for some sequence $r_j\rightarrow\infty$.
 \end{enumerate}
 \end{lemma}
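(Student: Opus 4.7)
The plan is to prove the chain of implications $(1) \Rightarrow (2) \Rightarrow (3) \Rightarrow (1)$, relying only on Theorem \ref{t:aw} together with the monotonicity property of excess/relative excess recorded in Lemma \ref{l:relativeExcess}. The underlying observation is that all three conditions are really measuring ``excess over balls tending to $0$''; they differ only in which family of balls is used. Since every bounded ball is contained in a centered ball of sufficiently large radius and vice versa, it should be possible to trade one family for another at negligible cost.

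For $(1) \Rightarrow (2)$, I would fix $x \in \RR^n$ and $r > 0$ and observe that $\ball(x, r) \subseteq \ball(0, |x|+r)$. Monotonicity of excess then gives
\[\ue(A_i \cap \ball(x, r), A) \leq \ue(A_i \cap \ball(0, |x|+r), A),\]
and the right-hand side tends to $0$ by (1) and Theorem \ref{t:aw}. Swapping the roles of $A$ and $A_i$ yields the corresponding bound for $\ue(A \cap \ball(x, r), A_i)$, and dividing by $r$ and taking the maximum produces (2). The implication $(2) \Rightarrow (3)$ is immediate by specializing $x_0 = 0$ and $r_j = j$.

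For $(3) \Rightarrow (1)$, I would fix $r > 0$ and choose $j$ large enough that $\ball(0, r) \subseteq \ball(x_0, r_j)$, which is possible because $r_j \to \infty$. Monotonicity of excess then yields
\[\ue(A_i \cap \ball(0, r), A) \leq \ue(A_i \cap \ball(x_0, r_j), A) \leq r_j\, \mD{x_0}{r_j}[A_i, A],\]
and the right-hand side tends to $0$ as $i \to \infty$ with $j$ fixed, by hypothesis. The same argument with $A$ and $A_i$ interchanged handles $\ue(A \cap \ball(0, r), A_i)$, so Theorem \ref{t:aw} delivers (1).

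No step looks to present a substantive obstacle. The only conceptual point worth flagging is that $\mD{x}{r}[A, B]$ truncates only one of its two arguments with $\ball(x, r)$, which is exactly what makes the monotonicity of the plain excess directly applicable in both directions of the argument. Everything else is bookkeeping with the scaling factor $1/r$.
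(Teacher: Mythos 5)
Your proposal is correct and uses essentially the same ingredients as the paper's proof: the identification of Attouch--Wets convergence with $\mD{0}{r}[A_i,A]\to 0$ for all $r>0$, plus the monotonicity of (relative) excess under enlarging the ball to trade arbitrary balls $\ball(x,r)$ for balls of the form $\ball(x_0,r_j)$ and vice versa. The only difference is organizational: the paper notes that (i) is literally (ii) restricted to $x=0$, so it needs the ball-containment monotonicity argument only once (for (iii)$\Rightarrow$(ii)), whereas your cyclic chain invokes it twice, which is harmless.
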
 \begin{proof} By definition $A_i\rightarrow A$ in $\CL{\RR^n}$ if and only if (\ref{e:aw}) holds. Since for each $r>0$, $$\mD{0}{r}[A_i,A] = \frac{1}{r}\max\{ \ue(A_i\cap \ball(0,r),A), \ue(A\cap \ball(0,r),A_i)\},$$ we immediately obtain $A_i\rightarrow A$ in $\CL{\RR^n}$ if and only if $\lim_{i\rightarrow\infty}\mD{0}{r}[A_i,A]=0$ for all $r>0$. In particular, $(ii)\Rightarrow (i)\Rightarrow (iii)$.

Suppose that $(iii)$ holds for some $x_0\in\RR^n$ and for some sequence $r_j\rightarrow \infty$. Fix $x\in\RR^n$ and $r>0$. Since $r_j\rightarrow\infty$, we can pick $k$ such that $\ball(x,r)\subseteq \ball(x_0,r_k)$. Hence $$\limsup_{i\rightarrow\infty} \mD{x}{r}[A_i,A] \leq \frac{r_k}{r} \limsup_{i\rightarrow\infty} \mD{x_0}{r_k}[A_i,A] =0$$ by  monotonicity  of the relative Walkup-Wets distance (see (\ref{Dmono})). Therefore, since $x\in\RR^n$ and $r>0$ were fixed arbitrarily, $(iii)\Rightarrow(ii)$.
 \end{proof}

\begin{lemma}\label{QtriangleVsLimits} Let $A,A_1,A_2,\dots \in\CL{\RR^n}$. If $A_i\to A$ in $\CL{\RR^n}$,  then for all nonempty $B\subseteq\RR^n$, for all $x\in\RR^n$ and for all $r>0$,
\begin{equation}\begin{split}\label{QtriangleVsLimits.1}
\frac{1}{1+\varepsilon}\limsup_{i\to\infty}&\mD x {r/(1+\varepsilon)}[A_i, B] \\ \leq &\mD x r [A, B] \leq (1+\varepsilon)\liminf_{i\to\infty}\mD x {(1+\varepsilon)r} [A_i, B]\quad \text{for all }\varepsilon>0\end{split}\end{equation}  and \begin{equation} \label{QtriangleVsLimits.2} \limsup_{\varepsilon\downarrow 0} \limsup_{i\rightarrow\infty} \mD{x}{r/(1+\varepsilon)}[A_i,B] \leq \mD{x}{r}[A,B]
\leq \liminf_{\varepsilon\downarrow 0}\liminf_{i\rightarrow\infty} \mD{x}{r(1+\varepsilon)}[A_i,B].\end{equation}\end{lemma}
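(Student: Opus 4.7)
The plan is to derive both halves of (\ref{QtriangleVsLimits.1}) from the strong quasitriangle inequality (\ref{sqti}) of Lemma \ref{l:relativeWW}, using either $A_i$ or $A$ as the intermediate set; the two halves of (\ref{QtriangleVsLimits.2}) will then follow by letting $\varepsilon\downarrow 0$. The key input is that $A_i\to A$ in $\CL{\RR^n}$ translates, via Lemma \ref{l:modes}, into $\mD{x}{s}[A_i,A]\to 0$ for every $s>0$; in particular $\mud{x}{s}(A,A_i)\to 0$ and $\mud{x}{s}(A_i,A)\to 0$.

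For the upper bound in (\ref{QtriangleVsLimits.1}), fix $\varepsilon>0$ and apply (\ref{sqti}) to the triple $(A,A_i,B)$ with $A_i$ playing the role of the intermediate set ``$B$'' in (\ref{sqti}). Since $\mud{x}{r}(A,A_i)\to 0$, for all sufficiently large $i$ we may take $\varepsilon_1=\varepsilon$; writing $\varepsilon_{2,i}:=\mud{x}{r}(B,A_i)$, we obtain
\[
\mD{x}{r}[A,B]\leq (1+\varepsilon_{2,i})\mD{x}{r(1+\varepsilon_{2,i})}[A,A_i]+(1+\varepsilon)\mD{x}{r(1+\varepsilon)}[A_i,B].
\]
Since $A_i\to A$, the distances $\dist(x,A_i)$ are uniformly bounded in $i$, hence $\varepsilon_{2,i}\leq 1+\dist(x,A_i)/r\leq M$ for some $M<\infty$. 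The monotonicity property (\ref{Dmono}) then gives
\[
(1+\varepsilon_{2,i})\mD{x}{r(1+\varepsilon_{2,i})}[A,A_i]\leq (1+M)\mD{x}{r(1+M)}[A,A_i],
\]
which tends to $0$ by Lemma \ref{l:modes}. Taking $\liminf_{i\to\infty}$ of both sides of the displayed inequality yields the upper bound.

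The lower bound in (\ref{QtriangleVsLimits.1}) is obtained analogously by applying (\ref{sqti}) to the triple $(A_i,A,B)$ at scale $s=r/(1+\varepsilon)$, now with $A$ as the intermediate set. Since $\mud{x}{s}(A_i,A)\to 0$, for $i$ large we may take $\varepsilon_1=\varepsilon$; the other parameter $\varepsilon_2:=\mud{x}{s}(B,A)$ is a fixed constant independent of $i$. The resulting inequality
\[
\mD{x}{r/(1+\varepsilon)}[A_i,B]\leq (1+\varepsilon_2)\mD{x}{r(1+\varepsilon_2)/(1+\varepsilon)}[A_i,A]+(1+\varepsilon)\mD{x}{r}[A,B]
\]
has first term tending to $0$, so taking $\limsup_{i\to\infty}$ and dividing by $1+\varepsilon$ yields the lower bound. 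Finally, (\ref{QtriangleVsLimits.2}) drops out by sending $\varepsilon\downarrow 0$ in the two halves of (\ref{QtriangleVsLimits.1}); since the quantities $L_\varepsilon$ involved are nonnegative, one checks directly that $\liminf_{\varepsilon\downarrow 0}(1+\varepsilon)L_\varepsilon=\liminf_{\varepsilon\downarrow 0}L_\varepsilon$ and $\limsup_{\varepsilon\downarrow 0}(1+\varepsilon)L_\varepsilon=\limsup_{\varepsilon\downarrow 0}L_\varepsilon$.

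The main potential obstacle is the parameter $\varepsilon_{2,i}$ in the upper-bound argument, which is not small in general. The resolution combines its uniform boundedness (from boundedness of $\dist(x,A_i)$) with the monotonicity of the relative Walkup-Wets distance, so that the ``error'' term $(1+\varepsilon_{2,i})\mD{x}{r(1+\varepsilon_{2,i})}[A,A_i]$ nonetheless vanishes as $i\to\infty$.
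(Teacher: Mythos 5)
Your proof is correct and follows essentially the same route as the paper's: both halves of (\ref{QtriangleVsLimits.1}) come from the strong quasitriangle inequality (\ref{sqti}) with $A_i$, respectively $A$, serving as the intermediate set, a uniform bound $M$ on $\mud{x}{r}(B,A_i)$ controlling the error term that vanishes by Lemma \ref{l:modes}, and (\ref{QtriangleVsLimits.2}) obtained by letting $\varepsilon\downarrow 0$. The only cosmetic difference is how the uniform bound is produced — you use $\mud{x}{r}(B,A_i)\leq 1+\dist(x,A_i)/r$ together with boundedness of $\dist(x,A_i)$, while the paper runs the strong quasitriangle inequality for relative excess through $A$ — which does not change the argument.
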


\begin{proof} Suppose that $A_i\rightarrow A$ in $\CL{\RR^n}$. Fix a nonempty set $B\subseteq\RR^n$, $x\in\RR^n$ and $r>0$.

Let $\varepsilon>0$ be arbitrary. On one hand, $\mud{x}{r}(A,A_i)\leq \varepsilon$ for all $i$ sufficiently large, since $A_i\rightarrow A$ in $\CL{\RR^n}$. On the other hand, writing $L=\mud{x}{r}(B,A)$, $$\mud{x}{r}(B,A_i)\leq \mud{x}{r}(B,A)+(1+L) \mud{x}{r(1+L)}(A,A_i)$$ by the strong quasitriangle inequality for the relative excess. In particular, $$M=\sup_i \mud{x}{r}(B,A_i) <\infty,$$  since $A_i\rightarrow A$ in $\CL{\RR^n}$.  Hence, by the strong quasitriangle inequality for the Walkup-Wets distance, $$\mD{x}{r}[A,B] \leq (1+M)\mD{x}{r(1+M)}[A,A_i] + (1+\varepsilon)\mD{x}{r(1+\varepsilon)}[A_i,B]$$ for all $i$ sufficiently large. Thus, since $A_i\rightarrow A$ in $\CL{\RR^n}$, $$\mD{x}{r}[A,B] \leq (1+\varepsilon) \liminf_{i\rightarrow\infty} \mD{x}{r(1+\varepsilon)}[A_i,B].$$ This establishes the upper bound for $\mD{x}{r}[A,B]$ in (\ref{QtriangleVsLimits.1}). We can reach the lower bound by a similar argument:

Note that $\mud{x}{r/(1+\varepsilon)}(A_i,A)\leq \varepsilon$ for all $i$ sufficiently large, since $A_i\rightarrow A$ in $\CL{\RR^n}$. Let $N=\mud{x}{r/(1+\varepsilon)}(B,A)<\infty$. Then, by the strong quasitriangle inequality for the Walkup-Wets distance, $$\mD{x}{r/(1+\varepsilon)}[A_i,B] \leq (1+N) \mD{x}{r(1+N)/(1+\varepsilon)}[A_i,A]+(1+\varepsilon)\mD{x}{r}[A,B]$$ for all $i$ sufficiently large. Rearranging terms yields $$\mD{x}{r}[A,B] \geq \frac{1}{1+\varepsilon} \mD{x}{r/(1+\varepsilon)}[A_i,B] - \frac{1+N}{1+\varepsilon}\mD{x}{r(1+N)/(1+\varepsilon)}[A_i,A]$$ for all $i$ sufficiently large. Thus, since $A_i\rightarrow A$ in $\CL{\RR^n}$, $$\mD{x}{r}[A,B]\geq \frac{1}{1+\varepsilon}\limsup_{i\rightarrow \infty} \mD{x}{r/(1+\varepsilon)}[A_i,B],$$ as desired.

Finally, suppose that $0<\varepsilon\leq \varepsilon'$. Then, by (\ref{QtriangleVsLimits.1}),  $$\mD{x}{r}[A,B]\leq (1+\varepsilon)\liminf_{i\rightarrow\infty} \mD{x}{r(1+\varepsilon)}[A_i,B]\leq (1+\varepsilon')\liminf_{i\rightarrow\infty}\mD{x}{r(1+\varepsilon)}[A_i,B].$$ Hence $$\mD{x}{r}[A,B]\leq (1+\varepsilon') \liminf_{\varepsilon\downarrow 0} \liminf_{i\rightarrow\infty} \mD{x}{r(1+\varepsilon)}[A_i,B].$$ Thus, letting $\varepsilon'\downarrow 0$, we obtain $$\mD{x}{r}[A,B] \leq \liminf_{\varepsilon\downarrow 0}\liminf_{i\rightarrow\infty}\mD{x}{r(1+\varepsilon)}[A_i,B].$$ This establishes the upper for $\mD{x}{r}[A,B]$ in (\ref{QtriangleVsLimits.2}). The lower bound in (\ref{QtriangleVsLimits.2}) follows from the lower bound in (\ref{QtriangleVsLimits.1}) by a parallel argument.
\end{proof}

\begin{remark}\label{r:naive} Suppose that we declare $A_i\rightarrow A$ in \emph{$\CL{0}$ relative to the Hausdorff distance} if $\lim_{i\rightarrow\infty} \D{0}{r}[A_i,A]=0$ for all $r>0$. Then, with respect to convergence relative to the Hausdorff distance, $\CL{0}$ is not sequentially compact. Indeed, let $A_i=\{0,1+1/i\}\subset\RR^1$ and suppose that $(A_i)_{i=1}^\infty$ has a subsequence $(A_{ij})_{j=1}^\infty$ which converges relative to the Hausdorff distance to some $A\in\CL{0}$. On one hand, since $$\lim_{i\rightarrow\infty} \D{0}{r}[A_i,A]=0\quad\text{and}\quad\lim_{i\rightarrow\infty} \D{0}{r}[A_i,\{0,1\}]=0\quad\text{for all }r>1,$$ the triangle inequality for the Hausdorff distance yields $\D{0}{r}[A,\{0,1\}]=0$ for all $r>1$. This implies that $A=\{0,1\}$. On the other hand, $(A_{ij})_{j=1}^\infty$ does not converge relative to the Hausdorff distance to $\{0,1\}$, because $\D{0}{1}[A_i,\{0,1\}]= 1$ for all $i\geq 1$. Nevertheless, $A_i\rightarrow\{0,1\}$ in the Attouch-Wets topology, because $$\lim_{i\rightarrow\infty} \mD{0}{r}[A_i,\{0,1\}]=0\quad\text{for all }r>0.$$ This example indicates another advantage of working with the relative Walkup-Wets distance instead of the relative Hausdorff distance. \end{remark}

\section{Tangent sets and pseudotangent sets}
\label{s:tangents}

In this section, we define and establish some essential properties of tangent sets and pseudotangent sets, which are modeled on tangent measures (introduced by Preiss \cite{Preiss}) and pseudotangent measures (introduced by Kenig and Toro \cite{KT99}). The main novelty in our presentation appears in Lemma \ref{l:bdtan}.

\begin{definition}[tangent sets and pseudotangent sets]\label{blowups} Let $A, T, D\subseteq\RR^n$ be nonempty sets with $A$ and $T$ closed and let $x\in A$.
We say that $T$ is a \emph{pseudotangent set of $A$ at $x$ directed along $D$} if there exist sequences $x_i\in A$ and $r_i>0$ such that $x_i\rightarrow x$, $r_i\rightarrow 0$, \begin{equation}\frac{A-x_i}{r_i}\rightarrow T \quad\text{in }\CL{0}\end{equation} and \begin{equation}\frac{x_i-x}{r_i}\in D\quad\text{for all }i.\end{equation} If $D=\RR^n$, i.e.~ if no restrictions are imposed on $x_i-x$, then we call $T$ a \emph{pseduotangent set of $A$ at $x$}. If $D=\{0\}$, i.e.~ if $x_i=x$ for all $i$, then we call $T$ a \emph{tangent set of $A$ at $x$}. Let $\Tan_D(A,x)$, $\PsTan(A,x)$, and $\Tan(A,x)$ denote the collections of all pseudotangent sets of $A$ at $x$ directed along $D$, all pseudotangent sets of $A$ at $x$, and all tangent sets of $A$ at $x$, respectively.
\end{definition}

\begin{remark} \label{r:GH} A closed set $A\in\CL{x}$ containing $x\in\RR^n$ can be identified with a pointed metric space $(A,d|_A,x)$, where $d|_A$ denotes the restriction of the Euclidean metric to $A$. In geometry, a \emph{metric tangent} of $A$ at $x$ (see e.g.~ \cite{CCol1}, \cite{LeDonne}) is a Gromov-Hausdorff limit of a sequence of pointed metric spaces $(A,r_i^{-1} d|_{A},x)$ for some $r_i\rightarrow 0$. Although related, the concepts of metric tangents and tangent sets are distinct, because metric tangents are identified up to isometry, whereas tangent sets are not identified by isometries.
\end{remark}

\begin{remark}
Since $\CL{0}$ is sequentially compact, $\PsTan(A,x)\subseteq\CL{0}$ and the collection $\Tan_D(A,x)$ of pseudotangent sets of $A$ at $x$ directed along $D$ is nonempty as long as \begin{equation*} \frac{A-x}{r_i}\cap D\cap B(0,s)\neq\emptyset\quad\text{for some sequence }r_i\rightarrow 0\text{ and some }s>0.\end{equation*} In particular, $\PsTan(A,x)$ and $\Tan(A,x)$ are nonempty for all $A\in\CL{x}$.
\end{remark}

\begin{lemma}\label{l:TanDclosed} $\Tan_D(A,x)$ is closed in $\CL{0}$ for all $A\in\CL{x}$ and nonempty $D\subseteq\RR^n$.\end{lemma}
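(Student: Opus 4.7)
The plan is a standard diagonal argument, exploiting that convergence in $\CL{0}$ can be tracked by the relative Walkup-Wets distance $\mD{0}{r}[\cdot,\cdot]$ for each radius $r>0$ (Lemma \ref{l:modes}).

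Suppose $T^{(k)} \in \Tan_D(A,x)$ and $T^{(k)} \to T$ in $\CL{0}$; we aim to show $T \in \Tan_D(A,x)$. For each $k$, choose witnessing sequences $x_i^{(k)} \in A$ and $r_i^{(k)} > 0$ with $x_i^{(k)} \to x$, $r_i^{(k)} \to 0$, $(x_i^{(k)}-x)/r_i^{(k)} \in D$ for all $i$, and $(A-x_i^{(k)})/r_i^{(k)} \to T^{(k)}$ in $\CL{0}$ as $i\to\infty$. For each $k$, using that $\mD{0}{k}[(A-x_i^{(k)})/r_i^{(k)}, T^{(k)}] \to 0$ as $i\to\infty$ together with $x_i^{(k)}\to x$ and $r_i^{(k)}\to 0$, select $i_k$ large enough that
\begin{equation*}
\mD{0}{k}\!\left[\frac{A-x_{i_k}^{(k)}}{r_{i_k}^{(k)}}, T^{(k)}\right] < \frac{1}{k^2}, \qquad |x_{i_k}^{(k)}-x| < \frac{1}{k}, \qquad r_{i_k}^{(k)} < \frac{1}{k}.
\end{equation*}
Set $y_k := x_{i_k}^{(k)}$ and $s_k := r_{i_k}^{(k)}$. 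Then $y_k \to x$, $s_k \to 0$, and $(y_k-x)/s_k \in D$, so it remains only to verify $(A-y_k)/s_k \to T$ in $\CL{0}$.

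Fix $r>0$. For $k\geq r$, the monotonicity property \eqref{Dmono} of the relative Walkup-Wets distance gives
\begin{equation*}
\mD{0}{r}\!\left[\frac{A-y_k}{s_k}, T^{(k)}\right] \leq \frac{k}{r}\, \mD{0}{k}\!\left[\frac{A-y_k}{s_k}, T^{(k)}\right] < \frac{1}{rk} \xrightarrow[k\to\infty]{} 0.
\end{equation*}
Since $0$ lies in each of the three sets in question, the weak quasitriangle inequality \eqref{qti} of Lemma \ref{l:relativeWW} yields
\begin{equation*}
\mD{0}{r}\!\left[\frac{A-y_k}{s_k},\,T\right] \leq 2\,\mD{0}{2r}\!\left[\frac{A-y_k}{s_k},\,T^{(k)}\right] + 2\,\mD{0}{2r}\bigl[T^{(k)},\,T\bigr],
\end{equation*}
and both terms on the right tend to $0$ as $k\to\infty$ (the first by what we just showed, applied with $2r$ in place of $r$; the second because $T^{(k)}\to T$ in $\CL{0}$ and so each radius-$2r$ Walkup-Wets distance vanishes by Lemma \ref{l:modes}). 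Hence $\mD{0}{r}[(A-y_k)/s_k,T]\to 0$ for every $r>0$, which by Lemma \ref{l:modes} means $(A-y_k)/s_k \to T$ in $\CL{0}$. This exhibits $T$ as an element of $\Tan_D(A,x)$.

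The only real choice in the argument is the tolerance $1/k^2$ (rather than $1/k$) in selecting $i_k$: this overcomes the factor $k/r$ from monotonicity when shrinking from scale $k$ to the fixed inspection scale $r$. The condition $(y_k-x)/s_k \in D$ is automatic from the witnessing sequences, so the directedness of the tangent is preserved in the limit without any hypothesis on $D$ other than nonemptiness.
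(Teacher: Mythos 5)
Your proof is correct and follows essentially the same route as the paper: a diagonal selection with tolerance $1/k^2$ at scale $k$, then monotonicity plus the weak quasitriangle inequality (valid since $0\in T^{(k)}$) to pass the convergence to the limit set $T$. The only difference is cosmetic bookkeeping of radii ($r$ versus $2r$ rather than $r/2$ versus $r$), so nothing further is needed.
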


\begin{proof} Let $D\subseteq\RR^n$ be a nonempty set and let $A\subset\RR^n$ be a closed set containing $x$. Suppose that $T_i\in \Tan_D(A,x)$ for all $i\geq 1$ and $\lim_{i\rightarrow\infty}T_i=T$ for some $T\in \CL{0}$. By definition of pseudotangents directed along $D$, for each $i\geq 1$ we can find sequences $x^{i}_j\in A$ and $r^{i}_j>0$ such that $\lim_{j\rightarrow\infty} x^{i}_j=x$, $\lim_{j\rightarrow\infty} r^{i}_j=0$, $\lim_{j\rightarrow\infty}(A-x^{i}_j)/r^{i}_j= T_i$ and $(x^{i}_j-x)/r^{i}_j\in D$ for all $j\geq 1$. For each $i\geq 1$, pick $k(i)\geq 1$ large enough so that $x_i:=x^{i}_{k(i)}$ and $r_i:=r^{i}_{k(i)}$ satisfy $|x_i-x|\leq 1/i$, $r_i\leq 1/i$ and \begin{equation}\label{e:TanD1}\mD{0}{i}\left[\frac{A-x_i}{r_i}, T_i\right] \leq \frac{1}{i^2}.\end{equation} Then $x_i\in A$ and $r_i>0$ for all $i\geq 1$, $\lim_{i\rightarrow\infty} x_i=x$, $\lim_{i\rightarrow\infty} r_i=0$, and $(x_i-x)/r_i\in D$ for all $i\geq 1$. Moreover, for all $r>0$ and for all $i\geq r$,
\begin{equation*}\begin{split} \mD{0}{r/2}\left[\frac{A-x_i}{r_i},T\right] &\leq 2\mD{0}{r}\left[\frac{A-x_i}{r_i},T_i\right]+2\mD{0}{r}[T_i,T]\\
&\leq \frac{2i}{r}\mD{0}{i}\left[\frac{A-x_i}{r_i},T_i\right]+2\mD{0}{r}[T_i,T]\leq \frac{2}{ri}+2\mD{0}{r}[T_i,T],\end{split}\end{equation*} where the first line holds by the weak quasitriangle inequality since $0\in T_i$ and the second line holds by monotonicity and (\ref{e:TanD1}). Hence, for all $r>0$, $$ \limsup_{i\rightarrow\infty} \mD{0}{r/2}\left[\frac{A-x_i}{r_i},T\right] \leq \frac{2}{r}\limsup_{i\rightarrow\infty} \frac{1}{i}+2\limsup_{i\rightarrow\infty} \mD{0}{r}[T_i,T]=0.$$ Thus, $T=\lim_{i\rightarrow\infty}(A-x_i)/r_i$ is a pseudotangent set of $A$ at $x$ directed along $D$. Therefore, $\Tan_D(A,x)$ is closed in $\CL{0}$.
\end{proof}

\begin{lemma}\label{inv_of_tan} Let $A\in\CL{x}$ and let $D\subseteq\RR^n$ be nonempty.
\begin{itemize}
\item If $B\in \Tan_D(A,x)$ and $\lambda > 0$, then $\lambda B\in \Tan_{\lambda D}(A,x)$.
\item If $B\in \PsTan(A,x)$ and $y\in B$, then ${B-y}\in\PsTan(A,x)$.
\end{itemize}
\end{lemma}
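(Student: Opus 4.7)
For part (i), suppose $B\in\Tan_D(A,x)$ is witnessed by sequences $x_i\in A$ and $r_i>0$ with $x_i\to x$, $r_i\to 0$, $(A-x_i)/r_i\to B$ in $\CL{0}$, and $(x_i-x)/r_i\in D$. The plan is simply to rescale: set $x_i'=x_i$ and $r_i'=r_i/\lambda$. Then $x_i'\in A$, $x_i'\to x$, $r_i'\to 0$, and $(x_i'-x)/r_i'=\lambda(x_i-x)/r_i\in \lambda D$. By scale invariance of the Walkup-Wets distance (equation (\ref{si})), for each $s>0$ we have $\mD{0}{s}[\lambda(A-x_i)/r_i,\lambda B]=\mD{0}{s/\lambda}[(A-x_i)/r_i,B]\to 0$, so Lemma \ref{l:modes} gives $(A-x_i')/r_i'=\lambda(A-x_i)/r_i\to \lambda B$ in $\CL{0}$.

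For part (ii), suppose $B\in\PsTan(A,x)$ is witnessed by sequences $x_i\in A$ and $r_i>0$ with $x_i\to x$, $r_i\to 0$, and $C_i:=(A-x_i)/r_i\to B$ in $\CL{0}$. Fix $y\in B$. Since $y\in B\cap \ball(0,|y|)$, the Attouch-Wets convergence gives $\ue(B\cap \ball(0,|y|+1),C_i)\to 0$, so I can select $a_i\in A$ such that $w_i:=(a_i-x_i)/r_i$ satisfies $w_i\to y$. Set $x_i'=a_i$ and $r_i'=r_i$. Since $w_i$ is bounded and $r_i\to 0$, we get $a_i=x_i+r_iw_i\to x$, and of course $r_i\to 0$.

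The heart of the argument is to verify $(A-a_i)/r_i\to B-y$ in $\CL{0}$, which rests on the algebraic identity $(A-a_i)/r_i=C_i-w_i$. Fix $r>0$ and a bound $M$ with $|w_i|\leq M$ eventually. For the excess of $(A-a_i)/r_i\cap \ball(0,r)$ over $B-y$: any element has the form $z-w_i$ with $z\in C_i$ and $|z-w_i|\leq r$, so $z\in C_i\cap \ball(0,r+M)$; using $\ue(C_i\cap \ball(0,r+M),B)\to 0$ I can select $b\in B$ with $|z-b|$ uniformly small, whence $|(z-w_i)-(b-y)|\leq |z-b|+|w_i-y|\to 0$. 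For the reverse excess: $b-y\in (B-y)\cap \ball(0,r)$ forces $b\in B\cap \ball(0,r+|y|)$, so $\ue(B\cap \ball(0,r+|y|),C_i)\to 0$ yields $z_i\in C_i$ with $|b-z_i|$ small; then $z_i-w_i\in C_i-w_i=(A-a_i)/r_i$ and $|(b-y)-(z_i-w_i)|\leq |b-z_i|+|w_i-y|\to 0$.

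The only real obstacle is the bookkeeping in part (ii): Attouch-Wets convergence is expressed via excesses with respect to balls centered at the origin, while translating by the sequence $w_i\to y$ shifts things off the origin. Because $w_i$ is bounded, the shift can be absorbed by enlarging the radius by a fixed constant, which is exactly the role of $M$ and $|y|$ above. Part (i) is entirely routine given scale invariance.
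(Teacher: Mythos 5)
Your proof is correct and takes essentially the same route as the paper: part (i) is the same rescaling argument via scale invariance of $\mD{x}{r}[\cdot,\cdot]$, and part (ii) uses the same witnesses $a_i\in A$ with $(a_i-x_i)/r_i\to y$ and then shows $(A-a_i)/r_i\to B-y$. The only cosmetic difference is that the paper absorbs the translation error using the weak quasitriangle inequality for the relative Walkup--Wets distance, whereas you verify the Attouch--Wets convergence directly from the excess characterization (\ref{e:aw}) by enlarging radii by the fixed constants $M$ and $|y|$; both handle the same bookkeeping.
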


\begin{proof} Let $A\in\CL{x}$ and let $D\subseteq\RR^n$ be nonempty. Suppose $B\in \Tan_D(A,x)$ and $\lambda >0$. Since $B$ is a pseudotangent of $A$ at $x$ directed along $D$, there exist sequences $x_i\in A$ and $r_i>0$ with $(x_i-x)/r_i\in D$ such that $x_i\rightarrow x$, $r_i\rightarrow 0$ and $(A-x_i)/r_i\rightarrow B$. For all $r>0$, $$\lim_{i\rightarrow\infty} \mD{0}{r}\left[\frac{A-x_i}{r_i/\lambda},\lambda B\right] =\lim_{i\rightarrow \infty}\mD{0}{r/\lambda}\left[\frac{A-x_i}{r_i},B\right]=0.$$ Hence $\lambda B=\lim_{i\rightarrow \infty} (A-x_i)/(r_i/\lambda)$ and $(x_i-x)/(r_i/\lambda)\in \lambda D$ for all $i\geq 1$. Therefore, $\lambda B\in\Tan_{\lambda D}(A,x)$.

Now suppose $B\in \PsTan(A,x)$ and let $y\in B$. Since $B$ is a pseudotangent of $A$ at $x$, there exist sequences $x_i\in A$ and $r_i>0$ such that $x_i\to x$, $r_i\rightarrow 0$, and $(A-x_i)/r_i\to B$. Since $y\in B=\lim_{i\rightarrow\infty} (A-x_i)/r_i$, there is also a sequence $z_i\in A$ such that $(z_i-x_i)/r_i\to y$. On one hand, $z_i\to x$, because $|z_i-x|\leq |z_i-x_i|+|x_i-x|\leq r_i(|y|+1) + |x_i-x|$ for $i\gg 1$, $x_i\rightarrow x$ and $r_i\rightarrow 0$. On the other hand, $(A-z_i)/r_i\rightarrow B-y$, because for all $r>0$,
\begin{align*}
\mD{0}{r}\left[\frac{A-z_i}{r_i},B-y\right]&\leq 2 \mD{0}{2r}\left[\frac{A-z_i}{r_i},\frac{A-x_i}{r_i}-y\right]+2\mD{0}{2r}\left[\frac{A-x_i}{r_i}-y, B-y\right]\\
&\leq \frac{1}{r}\left|\frac{z_i-x_i}{r_i}-y\right| + 2\mD{y}{2r}\left[\frac{A-x_i}{r_i}, B\right],
\end{align*} $(z_i-x_i)/r_i\rightarrow y$ and $(A-x_i)/r_i\rightarrow B$. Therefore, $B-y$ is a pseudotangent of $A$ at $x$.
\end{proof}

\begin{lemma}\label{l:iterate} Let $A\in\CL{x}$. \begin{itemize}
\item  If $B\in\Tan(A,x)$ and $C\in \Tan(B,0)$, then $C\in \Tan(A,x)$.
\item If $B\in \PsTan(A,x)$ and $C\in\PsTan(B,y)$ for some $y\in B$, then $C\in\PsTan(A,x)$.
\end{itemize}
\end{lemma}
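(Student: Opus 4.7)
The overall plan is a standard diagonal extraction in the metrizable space $\CL{0}$ (Theorem \ref{t:aw}), where scale invariance (\ref{si}) of the relative Walkup--Wets distance is used to rewrite the inner blow-up as a further rescaling of the outer one. For the first bullet, unpack the hypotheses: there exist $r_i\downarrow 0$ with $B_i:=(A-x)/r_i\to B$ in $\CL{0}$, and $s_j\downarrow 0$ with $B/s_j\to C$ in $\CL{0}$. By scale invariance, for every fixed $j$ and every $r>0$,
$$
\mD{0}{r}\left[B_i/s_j,\; B/s_j\right]=\mD{0}{rs_j}\left[B_i, B\right]\to 0\quad\text{as } i\to\infty,
$$
so $B_i/s_j\to B/s_j$ in $\CL{0}$ as $i\to\infty$ for each $j$. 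Since $\CL{0}$ is metrizable and $B/s_j\to C$, a diagonal argument produces $i(j)\uparrow\infty$ such that $B_{i(j)}/s_j\to C$ in $\CL{0}$; concretely, one chooses $i(j)$ so large that $B_{i(j)}/s_j$ lies within metric distance $1/j$ of $B/s_j$ in $\CL{0}$ and $r_{i(j)}\le 1/j$. Setting $t_j:=r_{i(j)}s_j\downarrow 0$, we have $(A-x)/t_j=B_{i(j)}/s_j\to C$, so $C\in\Tan(A,x)$.

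For the pseudotangent case, pick $x_i\in A$, $r_i\downarrow 0$ with $x_i\to x$ and $B_i:=(A-x_i)/r_i\to B$, and pick $y_j\in B$, $s_j\downarrow 0$ with $y_j\to y$ and $(B-y_j)/s_j\to C$. For each fixed $j$, since $y_j\in B=\lim_i B_i$, the argument already used in the second bullet of Lemma \ref{inv_of_tan}, applied with $y$ replaced by $y_j$, yields $z_{i,j}\in A$ satisfying $(z_{i,j}-x_i)/r_i\to y_j$ and $(A-z_{i,j})/r_i\to B-y_j$ in $\CL{0}$ as $i\to\infty$. Scale invariance upgrades this to $(A-z_{i,j})/(r_is_j)\to (B-y_j)/s_j$ as $i\to\infty$ for each fixed $j$, while $(B-y_j)/s_j\to C$ as $j\to\infty$. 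Diagonalizing as in the first bullet (and also arranging $r_{i(j)}\,|(z_{i(j),j}-x_{i(j)})/r_{i(j)}|\leq 1/j$, which is cofinitely true in $i$ since $y_j$ is bounded), one obtains $z_j:=z_{i(j),j}$ and $t_j:=r_{i(j)}s_j\downarrow 0$ satisfying $(A-z_j)/t_j\to C$ in $\CL{0}$, together with
$$
z_j-x=(x_{i(j)}-x)+r_{i(j)}\cdot\frac{z_{i(j),j}-x_{i(j)}}{r_{i(j)}}\to 0.
$$
Thus $z_j\to x$ and $C\in\PsTan(A,x)$.

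The main difficulty is organizational rather than conceptual: the diagonal index $i(j)$ must be chosen large enough to meet several requirements simultaneously---metric closeness in $\CL{0}$, smallness of $r_{i(j)}$, and, for the pseudotangent case, smallness of the increment $z_j-x_{i(j)}$---but each requirement alone is a cofinite condition on $i$ for fixed $j$, so the joint choice is possible. The essential structural input that makes a metric-based diagonal argument available here is the metrizability assertion of Theorem \ref{t:aw}; without it one would have to argue directly with the scale-by-scale description of convergence, which is more cumbersome but still routine.
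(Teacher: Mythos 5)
Your proof is correct. It differs from the paper's in organization rather than substance: the paper disposes of both bullets in two lines by combining Lemma \ref{inv_of_tan} (each $B/s_j$, resp.\ each $(B-y_j)/s_j$, is already a tangent, resp.\ pseudotangent, of $A$ at $x$) with Lemma \ref{l:TanDclosed} (the collections $\Tan(A,x)$ and $\PsTan(A,x)=\Tan_{\RR^n}(A,x)$ are closed in $\CL{0}$), so that $C$, being a limit of such sets, lies in the same closed collection. You instead inline the content of those two lemmas: your diagonal extraction of $i(j)$, with $t_j=r_{i(j)}s_j$ and (in the pseudotangent case) $z_j=z_{i(j),j}$, is essentially the proof of Lemma \ref{l:TanDclosed}, and your construction of the $z_{i,j}$ repeats the translation step from the proof of Lemma \ref{inv_of_tan}. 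One cosmetic difference: you invoke the metrizability of the Attouch--Wets topology from Theorem \ref{t:aw} to run the diagonalization, whereas the paper's closedness lemma works quantitatively with the weak quasitriangle inequality and monotonicity of $\mD{x}{r}[\cdot,\cdot]$; both are legitimate, and your bookkeeping (cofinal choice of $i(j)$ ensuring metric closeness, $r_{i(j)}\le 1/j$, and $r_{i(j)}\bigl|(z_{i(j),j}-x_{i(j)})/r_{i(j)}\bigr|\le 1/j$, so that $z_j\to x$ and $t_j\downarrow 0$) is sound. What the paper's factoring buys is brevity and reuse of already-proved lemmas; what your version buys is a self-contained argument that makes the diagonal mechanism explicit.
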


\begin{proof} Suppose $B\in \Tan(A,x)$ and $C\in\Tan(B,0)$. Since $C$ is a tangent set of $B$ at $0$, there exists a sequence $r_i>0$ such that $r_i\rightarrow 0$ and $B/r_i\rightarrow C$. By Lemma \ref{inv_of_tan}, $B/r_i\in \Tan(A,x)$ for all $i\geq 1$. Therefore, $C=\lim_{i\rightarrow\infty} B/r_i\in \overline{\Tan(A,x)}=\Tan(A,x)$ by Lemma \ref{l:TanDclosed}.

Similarly, suppose $B\in\PsTan(A,x)$ and $C\in\PsTan(B,y)$ for some $y\in B$. Since $C$ is a pseudotangent of $B$ at $y$, there exist sequences $y_i\in B$ and $s_i>0$ such that $y_i\rightarrow y$, $s_i\rightarrow 0$ and $(B-y_i)/s_i\rightarrow C$. By Lemma \ref{inv_of_tan}, $(B-y_i)/s_i\in \PsTan(A,x)$ for all $i\geq 1$. Therefore, $C=\lim_{i\rightarrow\infty} (B-y_i)/s_i\in \overline{\PsTan(A,x)}=\PsTan(A,x)$ by Lemma \ref{l:TanDclosed}. \end{proof}

\begin{lemma}\label{l:bdtan} Let $A\in\CL{x}$ and let $D\subseteq\RR^n$ be nonempty. If $D$ is bounded, then for all $B\in \Tan_D(A, x)$ there exists $C\in \Tan(A,x)$ and $y\in C\cap \overline{D}$ such that $B=C-y$.\end{lemma}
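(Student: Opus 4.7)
The plan is to use the identity
\[
\frac{A - x_i}{r_i} \;=\; \frac{A - x}{r_i} - \frac{x_i - x}{r_i}
\]
to realize any pseudotangent directed along a bounded set as a shift of a bona fide tangent set at $x$. Fix witnesses $x_i \in A$, $r_i \searrow 0$ with $(A-x_i)/r_i \to B$ in $\CL{0}$ and $y_i := (x_i-x)/r_i \in D$, and set $S_i := (A-x)/r_i$. Boundedness of $D$ forces $(y_i)$ into a fixed ball, so along a subsequence $y_i \to y$ for some $y \in \overline{D}$. Since $x \in A$, we have $0 \in S_i$ for every $i$, so by sequential compactness of $\CL{0}$ (Theorem \ref{t:aw}) I may pass to a further subsequence with $S_i \to C$ in $\CL{0}$; by definition $C \in \Tan(A,x)$.

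The main technical step is to check that translation by the converging shifts $y_i$ is continuous in the Attouch-Wets topology, i.e.\ that $S_i - y_i \to C - y$ in $\CL{0}$; combined with $(A-x_i)/r_i = S_i - y_i \to B$ and uniqueness of limits, this gives $B = C - y$. To verify it, fix $r > 0$ and choose $M$ with $|y_i|, |y| \leq M$ for all $i$. Any point of $(S_i - y_i) \cap \ball(0,r)$ has the form $s - y_i$ with $s \in S_i \cap \ball(0, r+M)$, and since $S_i \to C$ one can find $c \in C$ with $|s - c| \leq (r+M)\,\mD{0}{r+M}[S_i, C]$, whence
\[
\dist(s - y_i,\, C - y) \;\leq\; (r+M)\,\mD{0}{r+M}[S_i, C] + |y_i - y|.
\]
A symmetric argument bounds $\ue((C-y)\cap \ball(0,r),\,S_i - y_i)$ by the same quantity. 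Both right-hand sides tend to $0$, so $\mD{0}{r}[S_i - y_i,\,C - y] \to 0$ for every $r>0$, and Lemma \ref{l:modes} gives $S_i - y_i \to C - y$.

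To conclude, note $y \in C$: since $x_i \in A$, each $y_i \in S_i$, and the same type of estimate applied on $\ball(0, M+1)$ gives $\dist(y,C) \leq |y-y_i| + \dist(y_i, C) \to 0$, so $y \in C$ because $C$ is closed. Combined with $y \in \overline{D}$ and $B = C - y$, this completes the argument. The whole proof hinges on boundedness of $D$ through the uniform bound $M$: without such a bound, the shifts $y_i$ could escape every ball, and translation in the Attouch-Wets topology would no longer be continuous in the sense used above---this is where I expect the only real subtlety to appear.
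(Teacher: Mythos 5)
Your proof is correct, but it is organized differently from the paper's. The paper never extracts a convergent subsequence of $S_i=(A-x)/r_i$: after arranging $(x_i-x)/r_i\to y\in\overline{D}$ (the only place boundedness of $D$ is used, exactly as in your argument), it writes down the candidate tangent $C:=B+y$ directly and verifies that $(A-x)/r_i\to B+y$ by the weak quasitriangle inequality of Lemma \ref{l:relativeWW} (applicable since $0\in(A-x_i)/r_i$), bounding $\mD{0}{2r}\bigl[(A-x)/r_i-y,(A-x_i)/r_i\bigr]$ by $\frac{1}{2r}\bigl|(x_i-x)/r_i-y\bigr|$. On that route $C\in\Tan(A,x)$ is immediate from the definition, and $y\in C$ comes for free because $0\in B$. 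You instead invoke sequential compactness of $\CL{0}$ (Theorem \ref{t:aw}) to produce some limit $C$ of a further subsequence, prove by hand that translation by convergent shifts is continuous in the Attouch--Wets topology, identify $B=C-y$ via metrizability (uniqueness of limits), and then check separately that $y\in C$. Your translation-continuity step is essentially a re-derivation of the estimate the paper extracts from the weak quasitriangle inequality; what your version buys is a reusable, self-contained statement ($S_i\to C$ and $y_i\to y$ imply $S_i-y_i\to C-y$) and a proof that does not require guessing the limit $B+y$ in advance, at the modest cost of an extra compactness/subsequence step and the additional verification that $y\in C$. Both arguments are sound; the paper's is slightly more economical.
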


\begin{proof} Suppose $D\subseteq\RR^n$ is nonempty and bounded and let $B\in \Tan_D(A,x)$. Since $B$ is a pseudotangent of $A$ at $x$ directed along $D$, there exist sequences $x_i\in A$ and $r_i>0$ with $x_i\rightarrow x$ and $r_i\rightarrow 0$ such that $(A-x_i)/r_i\rightarrow B$ and $(x_i-x)/r_i\in D$ for all $i\geq 1$. Because $D$ is bounded, we may assume---by passing to a subsequence of $(x_i,r_i)_{i=1}^\infty$---that $(x_i-x)/r_i\to y$ for some $y\in \overline{D}$. To finish the proof, it suffices to show that $(A-x)/r_i\rightarrow B+y$, for then  $C:=B+y\in \Tan(A,x)$, $y\in C\cap \overline{D}$, and $B=C-y$.

To that end, note that for all $r>0$ and $i\geq 1$,
\begin{align*}
\mD{0}{r}\left[\frac{A-x}{r_i}-y,B\right] &\leq 2\mD{0}{2r}\left[\frac{A-x}{r_i}-y,\frac{A-x_i}{r_i}\right]+2\mD{0}{2r}\left[\frac{A-x_i}{r_i},B\right]\\
&\leq \frac{1}{r}\left|\frac{x_i-x}{r_i}-y\right|+2\mD{0}{2r}\left[\frac{A-x_i}{r_i},B\right],
\end{align*} where the weak quasitriangle inequality in the first line is valid since $0\in (A-x_i)/r_i$. Thus, for all $r>0$, \begin{align*} \limsup_{i\rightarrow\infty}\mD{y}{r}\left[\frac{A-x}{r_i},B+y\right] &=\limsup_{i\rightarrow\infty} \mD{0}{r}\left[\frac{A-x}{r_i}-y,B\right]\\
&\leq \frac{1}{r}\limsup_{i\rightarrow\infty} \left|\frac{x_i-x}{r_i}-y\right|+2\limsup_{i\rightarrow\infty}\mD{0}{2r}\left[\frac{A-x_i}{r_i},B\right]=0,\end{align*}
because $(x_i-x)/r_i\to y$ and $(A-x_i)/r_i\to B$. Therefore, $(A-x)/r_i\to B+y$.\end{proof}

By a slight abuse of terminology\footnote{A bounded pseudotangent set can be an unbounded set---and vice versa.}, we call $T\in\PsTan(A,x)$ a \emph{bounded} pseudotangent set of $A$ at $x$ or an \emph{unbounded} pseudotangent set of $A$ at $x$ if
$T=\lim_{i\rightarrow\infty} (A-x_i)/r_i$ for some sequences $x_i\in A$ and $r_i>0$ (with $x_i\rightarrow x$ and $r_i\rightarrow 0$) whose  direction set $$\left\{\frac{x_i-x}{r_i}:i\geq 1\right\}$$ is bounded or unbounded, respectively. We let $\bPsTan(A,x)$ and $\uPsTan(A,x)$ denote the collections of all bounded pseudotangent sets of $A$ at $x$ and all unbounded pseudotangent sets of $A$ at $x$, respectively, so that $$\PsTan(A,x)=\bPsTan(A,x)\cup \uPsTan(A,x).$$ Using this new language, Lemma \ref{l:bdtan} says that \emph{every bounded psuedotangent set of $A$ at $x$ is the translate of a tangent set of $A$ at $x$}.

\begin{lemma}\label{l:bPsTan} Let $A\in\CL{x}$.
\begin{itemize}
\item If $B\in \bPsTan(A,x)$ and $\lambda > 0$, then $\lambda B\in \bPsTan(A,x)$.
\item If $B\in \uPsTan(A,x)$ and $\lambda>0$, then $\lambda B\in\uPsTan(A,x)$.
\item If $B\in \bPsTan(A,x)$ and $y\in B$, then ${B-y}\in\bPsTan(A,x)$.
\item If $B\in \uPsTan(A,x)$ and $y\in B$, then ${B-y}\in\uPsTan(A,x)$.
\end{itemize}\end{lemma}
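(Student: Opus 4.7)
The plan is to exploit the explicit sequences from the proof of Lemma \ref{inv_of_tan} and track how the associated direction sets transform under scaling and translation. Throughout, I will work from a witnessing representation $B = \lim_{i\to\infty}(A-x_i)/r_i$ (with $x_i \to x$, $r_i \to 0$) having the stipulated boundedness or unboundedness property of its direction set $\{(x_i-x)/r_i : i \geq 1\}$, and produce a new witnessing representation with the same property.

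For the two scaling statements, given a representation of $B$ as above with direction set bounded (resp.\ unbounded), I would set $r'_i := r_i/\lambda$. Since $r_i \to 0$ and $\lambda > 0$, still $r'_i \to 0$; and by the first bullet of Lemma \ref{inv_of_tan}, $(A-x_i)/r'_i = \lambda(A-x_i)/r_i \to \lambda B$. The new direction set is
\begin{equation*}
\left\{\frac{x_i - x}{r'_i} : i \geq 1\right\} = \left\{\lambda \cdot \frac{x_i-x}{r_i} : i \geq 1\right\},
\end{equation*}
which is just a positive dilate of the original direction set and hence inherits boundedness or unboundedness. This places $\lambda B$ in $\bPsTan(A,x)$ or $\uPsTan(A,x)$ accordingly.

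For the two translation statements, I would reuse the construction from the proof of Lemma \ref{inv_of_tan}: given $B = \lim (A-x_i)/r_i$ and $y \in B$, there exist $z_i \in A$ with $(z_i - x_i)/r_i \to y$, $z_i \to x$, and $(A - z_i)/r_i \to B - y$. The decisive identity is
\begin{equation*}
\frac{z_i - x}{r_i} = \frac{z_i - x_i}{r_i} + \frac{x_i - x}{r_i},
\end{equation*}
expressing the new direction set as the termwise sum of a convergent (hence bounded) sequence with the old direction set. Consequently, if the old direction set is bounded, so is the new one (bounded $+$ bounded), proving the third bullet; if the old direction set is unbounded, then along any subsequence on which $(x_i-x)/r_i \to \infty$ in norm we also have $(z_i-x)/r_i \to \infty$ in norm (since $(z_i-x_i)/r_i$ stays bounded), so the new direction set is unbounded, proving the fourth bullet.

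There is no substantial obstacle here: the lemma is essentially a bookkeeping observation once one recognizes that scaling $r_i$ by $\lambda^{-1}$ rescales the direction set, while the translation construction of Lemma \ref{inv_of_tan} perturbs the direction set only by an additive bounded sequence. The one point that merits care is the existential nature of the definitions of $\bPsTan$ and $\uPsTan$ (a given $B$ may admit both a bounded and an unbounded representation), so I must verify that \emph{some} witnessing sequence with the correct boundedness type is produced, rather than trying to promote every representation.
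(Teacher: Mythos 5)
Your proposal is correct and follows essentially the same route as the paper: both use Lemma \ref{inv_of_tan} (and its proof in the translation case) to produce a new witnessing sequence whose direction set is either a positive dilate of the old one or differs from it by the bounded sequence $(z_i-x_i)/r_i$, so boundedness or unboundedness is preserved. Your closing remark about working with a single witnessing representation rather than all representations matches the intended reading of the definitions.
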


\begin{proof} By Lemma \ref{inv_of_tan}, if $T$ is a pseudotangent of $A$ at $x$ directed along $\{(x_i-x)/r_i\}$ and $\lambda>0$, then $\lambda T$ is a pseudotangent of $A$ directed along $\{\lambda (x_i-x)/r_i\}$. Since $\{(x_i-x)/r_i\}$ and $\{\lambda(x_i-x)/r_i\}$ are simultaneously bounded or unbounded, it follows that $\bPsTan(A,x)$ and $\uPsTan(A,x)$ are invariant under dilations.

To show that $\bPsTan(A,x)$ and $\uPsTan(A,x)$ are invariant under translations, we consult the proof of Lemma \ref{inv_of_tan}. If $T$ is a pseudotangent set of $A$ at $x$ directed along $\{(x_i-x)/r_i\}$ and $y\in T$, then there exists a sequence $z_i\in A$ with $(z_i-x_i)/r_i\rightarrow y$ such that $T-y$ is a pseudotangent set of $A$ at $x$ directed along $$\frac{z_i-x}{r_i}=\frac{x-x_i}{r_i} + \frac{z_i-x_i}{r_i}.$$ Since $\{(z_i-x_i)/r_i\}$ is bounded, $\{(x_i-x)/r_i\}$ and $\{(z_i-x)/r_i\}$ are simultaneously bounded or unbounded. Therefore, $\bPsTan(A,x)$ and $\uPsTan(A,x)$ are translation invariant.\end{proof}

\begin{remark} Let $A\in\CL{x}$. Because $\bPsTan(A,x)=\bigcup_{j=1}^\infty \Tan_{B(0,j)}(A,x),$ a sequence $T_i$ of bounded pseudotangent sets of $A$ at $x$ may converge to an unbounded pseudotangent set $T$ of $A$ at $x$ that is not a bounded pseudotangent set of $A$ at $x$. Thus, in general, $\bPsTan(A,x)$ is not closed in $\CL{0}$. (For a specific example, consider $\bPsTan(A,0)$ where $A$ is the set given in Remark \ref{r:ubad} below.)\end{remark}

\begin{remark}\label{r:tan-closed} To avoid technicalities related to issues of convergence, we have only defined tangent and pseudotangent sets of nonempty \emph{closed} sets $A\subseteq\RR^n$. However, suppose that $A\subseteq\RR^n$ is an arbitrary nonempty set, not necessarily closed, and let $x\in A$. Furthermore, suppose that for some sequence $r_i\rightarrow 0$ we have $(\cl{A}-x)/r_i\rightarrow T$ for some $T\in\Tan(\cl{A},x)$. Then, for all $r>0$, \begin{equation}\label{e:r-c} \limsup_{i\rightarrow\infty}\mD{0}{r} \left[ \frac{A-x}{r_i},T\right]
\leq \limsup_{i\rightarrow\infty}\mD{0}{r}\left[\cl{\left.\frac{A-x}{r_i}\right.},\cl{T}\right]
=\limsup_{i\rightarrow\infty} \mD{0}{r}\left[\frac{\cl{A}-x}{r_i},T\right]=0,\end{equation} where the inequality holds by the closure property of relative Walkup-Wets distance and the final equality holds since $(\cl{A}-x)/r_i\rightarrow T$ in the Attouch-Wets topology. We interpret equation (\ref{e:r-c}) to mean that the tangent sets $T$ of $\cl{A}$ at $x\in A$ are reasonable candidates for ``geometric blow-ups" of $A$ at $x$ even though the set $A$ is not necessarily closed. \end{remark}

\section{Bilateral approximation and Reifenberg type sets}
\label{lsa}

In this section, we develop basic methods for studying the local geometry of a set $A\subseteq\RR^n$ via bilateral approximations.

\begin{definition}[Reifenberg type sets] \label{lapprox} Let $A\subseteq\RR^n$ be nonempty. \begin{enumerate}
 \item A \emph{local approximation class} $\cS$ is a nonempty collection of sets in $\CL{0}$ such that $\cS$ is a \emph{cone}; that is, for all $S\in \cS$ and $\lambda>0$, $\lambda S\in\cS$.
 \item For every $x\in\RR^n$ and $r>0$, define the \emph{(bilateral) approximability} $\Theta^{\cS}_A(x,r)$ of $A$ 
 by $\cS$ at location $x$ and scale $r$ by \begin{equation*}\Theta^{\cS}_A(x,r)=\inf_{S\in \cS}\mD{x}{r}[A,x+S]\in[0,\infty).\end{equation*}
 \item We say that $x\in A$ is an \emph{$\cS$ point of $A$} if $\lim_{r\downarrow 0} \Theta^\cS_A(x,r)=0$.
 \item We say $A$ is \emph{(bilaterally) $(\varepsilon,r_0)$-approximable by $\cS$} if $\Theta^\cS_A(x,r)\leq \varepsilon$ for all $x\in A$ and $0<r\leq r_0$.
\item We say $A$ is \emph{locally (bilaterally) $\varepsilon$-approximable by $\cS$} if for every compact set $K\subseteq A$ there exists $r_K$ such that $\Theta^\cS_A(x,r) \leq \varepsilon$ for all $x\in K$ and $0< r \leq r_K$.
\item We say $A$ is \emph{locally (bilaterally) well approximated by $\cS$} if $A$ is locally (bilaterally) $\varepsilon$-approximable by $\cS$ for all $\varepsilon>0$.
\end{enumerate}
\end{definition}

\begin{remark} \label{r:asymp_vs_local} Let $\varepsilon\geq 0$. A nonempty set $A\subseteq\RR^n$ is locally $\varepsilon'$-approximable by $\cS$ for all $\varepsilon'>\varepsilon$ if and only if $\limsup_{r\downarrow 0} \sup_{x\in K} \Theta^\cS_A(x,r) \leq \varepsilon$ for every compact set $K\subseteq A$. \end{remark}

We now collect some basic properties of approximability.

\begin{lemma}[size] \label{l:tsize} Let $\cS$ be a local approximation class. For all nonempty sets $A\subseteq\RR^n$, locations $x\in\RR^n$ and scales $r>0$, $$\frac{\dist(x,A)}{r}\leq \Theta^\cS_A(x,r) \leq 1+\frac{\dist(x,A)}{r}.$$ In particular,  $0\leq \Theta^\cS_A(x,r)\leq 1$ for all $x\in A$.\end{lemma}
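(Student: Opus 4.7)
The plan is to establish the two bounds separately, exploiting the crucial fact that every model set $S\in\cS$ contains the origin (since $\cS\subseteq\CL{0}$), which forces $x\in x+S$ for every translate appearing in the infimum defining $\Theta^\cS_A(x,r)$.

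For the lower bound, I would fix an arbitrary $S\in\cS$. Since $x\in(x+S)\cap\ball(x,r)$, monotonicity of excess gives
$$\dist(x,A)=\ue(\{x\},A)\leq \ue\bigl((x+S)\cap\ball(x,r),A\bigr)=r\,\mud{x}{r}(x+S,A),$$
and hence $\dist(x,A)/r\leq \mud{x}{r}(x+S,A)\leq \mD{x}{r}[A,x+S]$. Taking the infimum over $S\in\cS$ yields the lower bound on $\Theta^\cS_A(x,r)$.

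For the upper bound, I would fix any $S\in\cS$ (such $S$ exists because $\cS$ is nonempty by definition) and bound each of the two directional excesses in $\mD{x}{r}[A,x+S]$. For the first direction, since $x\in x+S$ and every $a\in A\cap\ball(x,r)$ satisfies $|a-x|\leq r$, we obtain $\mud{x}{r}(A,x+S)\leq 1$ (the case $A\cap\ball(x,r)=\emptyset$ being trivial via the convention $\ue(\emptyset,\cdot)=0$). For the reverse direction, given any $\delta>0$ pick $a_\delta\in A$ with $|x-a_\delta|\leq \dist(x,A)+\delta$; then for every $y\in(x+S)\cap\ball(x,r)$ the triangle inequality gives $|y-a_\delta|\leq r+\dist(x,A)+\delta$, and letting $\delta\downarrow 0$ produces $\mud{x}{r}(x+S,A)\leq 1+\dist(x,A)/r$. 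Taking the maximum of the two and passing to the infimum over $S\in\cS$ gives $\Theta^\cS_A(x,r)\leq 1+\dist(x,A)/r$.

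The final assertion is immediate by specializing $\dist(x,A)=0$ when $x\in A$, together with the nonnegativity of $\Theta^\cS_A(x,r)$ built into its definition. There is no real obstacle here; the entire argument rests on the single geometric observation that the cone hypothesis forces $0\in S$ for every $S\in\cS$, so that $x$ itself witnesses both (a) that $x+S$ intersects $\ball(x,r)$ and (b) that $A\cap\ball(x,r)$ sits within distance $r$ of $x+S$.
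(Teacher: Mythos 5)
Your proposal is correct and follows essentially the same route as the paper's proof: the lower bound comes from $0\in S$ forcing $x\in x+S$ so that $\mud{x}{r}(x+S,A)\geq\dist(x,A)/r$, and the upper bound comes from bounding the two directional excesses of $\mD{x}{r}[A,x+S]$ by $1$ and $1+\dist(x,A)/r$ via the triangle inequality. The only cosmetic difference is your $\delta$-approximation of a nearest point in $A$, where the paper simply uses $\dist(y,A)\leq|y-x|+\dist(x,A)$ directly.
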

\begin{proof} Let $A\subseteq\RR^n$ a nonempty set, $x\in\RR^n$ and $r>0$ be given. For the lower bound, simply note that $\Theta^{\cS}_A(x,r) \geq \inf_{S\in\cS} \mud{x}{r}(x+S,A)\geq \dist(x,A)/r$, since $0\in S$ for all $S\in\cS$. To verify the upper bound, pick any set $S\in\cS$. On one hand, for all $y\in (x+S)\cap \ball(x,r)$, $$\dist(y,A) \leq |y-x|+\dist(x,A) \leq r+\dist(x,A).$$ Hence $\mud{x}{r}(x+S,A)\leq 1+ \dist(x,A)/r$. On the other hand, for all $z\in A\cap \ball(x,r)$, $$\dist(z,x+S)\leq |z-x|\leq r.$$  Hence $\mud{x}{r}(A,x+S)\leq 1$. Therefore, $\Theta^{\cS}_A(x,r)\leq \mD{x}{r}[A,x+S]\leq 1+\dist(x,A)/r$.   \end{proof}

\begin{lemma}[scale and translation invariance]\label{l:theta_inv} Let $\cS$ be a local approximation class, let $A\subseteq\RR^n$ be nonempty, let $x\in\RR^n$ and let $r>0$. Then $$\Theta^\cS_{A}(x,r)=\Theta^\cS_{\lambda A}(\lambda x,\lambda r)\quad\text{for all }\lambda>0$$ and $$\Theta^\cS_{A}(x,r)=\Theta^\cS_{A+z}(x+z,r)\quad\text{for all }z\in\RR^n.$$\end{lemma}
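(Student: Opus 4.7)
The plan is to reduce both invariance statements directly to the corresponding invariance properties of the relative Walkup-Wets distance, namely equations (\ref{si}) and (\ref{ti}) in Lemma \ref{l:relativeWW}, together with the key fact that the local approximation class $\cS$ is a \emph{cone}. Unwinding the definition, $\Theta^\cS_A(x,r)=\inf_{S\in\cS}\mD{x}{r}[A,x+S]$, so each claim is really a statement about how the family $\{\mD{x}{r}[A,x+S]:S\in\cS\}$ transforms under dilation and translation.

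For scale invariance, I would fix $\lambda>0$ and $S\in\cS$ and apply (\ref{si}) to the pair of sets $A$ and $x+S$ to get
\begin{equation*}
\mD{x}{r}[A,x+S]=\mD{\lambda x}{\lambda r}[\lambda A,\lambda x+\lambda S].
\end{equation*}
Since $\cS$ is a cone, the map $S\mapsto \lambda S$ is a bijection of $\cS$ onto itself. Taking the infimum over $S\in\cS$ on the left and the corresponding infimum over $S'=\lambda S\in\cS$ on the right yields $\Theta^\cS_A(x,r)=\Theta^\cS_{\lambda A}(\lambda x,\lambda r)$.

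For translation invariance, I would similarly fix $z\in\RR^n$ and $S\in\cS$ and apply (\ref{ti}) with translation vector $z$ to the sets $A$ and $x+S$ to get
\begin{equation*}
\mD{x}{r}[A,x+S]=\mD{x+z}{r}[A+z,(x+z)+S].
\end{equation*}
Since this holds for every $S\in\cS$ and the indexing set is unchanged, taking the infimum over $S\in\cS$ on both sides yields $\Theta^\cS_A(x,r)=\Theta^\cS_{A+z}(x+z,r)$.

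There is no real obstacle here; the only point worth double-checking is that the cone hypothesis on $\cS$ is genuinely needed for the scale invariance step (so that $\lambda S$ ranges over all of $\cS$ as $S$ does), whereas translation invariance uses nothing about $\cS$ beyond that $\cS$ is a well-defined family. Consequently the proof is essentially a one-line appeal to (\ref{si}) and (\ref{ti}) in each case and can be left to the reader, or written out in the two short displays above.
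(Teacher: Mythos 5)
Your proposal is correct and matches the paper's own proof: both arguments reduce the two identities to the dilation and translation invariance of the relative Walkup-Wets distance (\ref{si}) and (\ref{ti}), using the cone property of $\cS$ so that the infimum over $\lambda S$ equals the infimum over $S$ in the scaling step. No gaps; nothing further is needed.
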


\begin{proof} To check invariance under dilation, pick $\lambda>0$. Then \begin{align*} \Theta_A^{\cS}(x,r) = \inf_{S\in\cS} \mD{x}{r}[A,x+S]
 &=\inf_{S\in\cS}\mD{\lambda x}{\lambda r}[\lambda A,\lambda x+\lambda S]\\
 &=\inf_{S\in\cS}\mD{\lambda x}{\lambda r}[\lambda A,\lambda x+S]=\Theta_{\lambda A}^{\cS}(\lambda x,\lambda r),\end{align*} where the second equality holds by dilation invariance of the relative Walkup-Wets distance and the penultimate equality holds since $\cS$ is a cone. To verify invariance under translation, let $z\in\RR^n$. Then $$\Theta_A^{\cS}(x,r)=\inf_{S\in\cS}\mD{x}{r}[A,x+S]=\inf_{S\in\cS} \mD{x+z}{r}[A+z,x+z+S]=\Theta_{A+z}^{\cS}(x+z,r)$$ by translation invariance of the relative Walkup-Wets distance. \end{proof}

\begin{lemma}[closure] \label{l:close} Let $\cS$ be a local approximation class and let $A\subseteq\RR^n$ be nonempty. For all $x\in\RR^n$ and $r>0$, $\Theta_A^\cS(x,r) \leq \Theta_{\cl{A}}^\cS(x,r) \leq (1+\delta) \Theta_A^\cS(x,(1+\delta)r)$ for all $\delta>0$.\end{lemma}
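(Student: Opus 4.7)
The plan is to apply the closure property of the relative Walkup-Wets distance from Lemma \ref{l:relativeWW} inside the infimum that defines $\Theta^{\cS}$. The essential observation is that every model set $S\in\cS\subseteq\CL{0}$ is already closed, hence $x+S=\cl{x+S}$; so when we apply the inequality $\mD{x}{r}[A',B']\leq\mD{x}{r}[\cl{A'},\cl{B'}]\leq(1+\delta)\mD{x}{r(1+\delta)}[A',B']$ to the pair $(A,x+S)$, the second slot is unchanged. This is what allows the Walkup-Wets closure inequality to transfer directly to an inequality for $\Theta^{\cS}$.

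For the first inequality, I would fix $S\in\cS$ and invoke the closure property of Lemma \ref{l:relativeWW}:
\[
\mD{x}{r}[A,x+S]\;\leq\;\mD{x}{r}[\cl{A},\cl{x+S}]\;=\;\mD{x}{r}[\cl{A},x+S],
\]
using that $x+S$ is closed in the final equality. Taking the infimum over $S\in\cS$ yields $\Theta^{\cS}_A(x,r)\leq \Theta^{\cS}_{\cl{A}}(x,r)$.

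For the second inequality, I would fix $\delta>0$ and an arbitrary $S\in\cS$, and apply the other half of the closure property of Lemma \ref{l:relativeWW} to get
\[
\mD{x}{r}[\cl{A},x+S]\;=\;\mD{x}{r}[\cl{A},\cl{x+S}]\;\leq\;(1+\delta)\,\mD{x}{r(1+\delta)}[A,x+S].
\]
Taking the infimum over $S\in\cS$ on both sides gives $\Theta^{\cS}_{\cl{A}}(x,r)\leq (1+\delta)\,\Theta^{\cS}_A(x,(1+\delta)r)$, completing the proof.

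There is no real obstacle here; the only point that requires a moment of attention is recognizing that, because $\cS$ is by definition a subcollection of $\CL{0}$, each translate $x+S$ is automatically closed, so the closure operation on the second argument in Lemma \ref{l:relativeWW} is a no-op and the inequalities for $\mD{x}{r}[\,\cdot\,,\,\cdot\,]$ pass cleanly through the infimum that defines $\Theta^{\cS}$.
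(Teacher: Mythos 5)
Your proposal is correct and follows essentially the same argument as the paper: apply the closure property of the relative Walkup-Wets distance (Lemma \ref{l:relativeWW}) to each pair $(A,x+S)$, use that $x+S$ is closed since $\cS\subseteq\CL{0}$, and take the infimum over $S\in\cS$. No gaps.
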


\begin{proof} Recall that every set in a local approximation class is closed. Let $S,T\in\cS$ and let $\delta>0$. On one hand, the closure property of the relative Walkup-Wets distance gives $$\Theta_A^\cS(x,r) \leq \mD{x}{r}[A,x+S] \leq \mD{x}{r}[\cl{A},\cl{x+S}]=\mD{x}{r}[\cl{A},x+S].$$ On the other hand, the closure property of the relative Walkup-Wets distance gives $$\Theta_{\cl{A}}^\cS(x,r) \leq \mD{x}{r}[\cl{A},x+T]=\mD{x}{r}[\cl{A},\cl{x+T}] \leq (1+\delta) \mD{x}{(1+\delta)r}[A,x+T].$$ Taking the infimum over all $S,T\in\cS$, we conclude that $$\Theta_A^\cS(x,r)\leq \Theta_{\cl{A}}^\cS(x,r) \leq (1+\delta)\Theta_{A}^\cS(x,(1+\delta)r)$$ for all $\delta>0$.\end{proof}

\begin{lemma}[monotonicity] \label{l:tmono} Let $\cS$ be a local approximation class and let $A\subseteq\RR^n$ be a nonempty set. If $\ball(x,r)\subseteq \ball(y,s)$ and $|x-y|\leq ts$, then  \begin{equation}\label{e:m1}\Theta_A^{\cS}(x,r)\leq \frac{s}{r}\left(t+(1+t)\Theta_A^{\cS}(y,(1+t)s)\right).\end{equation} In particular, if $0<r\leq s$,  then \begin{equation}\label{e:m2}\Theta_A^\cS(x,r)\leq \frac{s}{r}\Theta_A^\cS(x,s).\end{equation} \end{lemma}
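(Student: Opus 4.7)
The plan is to prove the main inequality \eqref{e:m1} first and then obtain \eqref{e:m2} as the special case $y = x$, $t = 0$ (for which the hypotheses collapse to $0 < r \leq s$). For \eqref{e:m1}, I will show that \emph{the same} set $S \in \cS$ that approximates $A$ well at $(y, (1+t)s)$ automatically approximates $A$ well at $(x,r)$, paying only a translational error of order $|x-y|/r \leq ts/r$.

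Concretely, I will establish the claim that, for every $S \in \cS$,
$$\mD{x}{r}[A, x+S] \leq \frac{s}{r}\bigl(t + (1+t)\mD{y}{(1+t)s}[A, y+S]\bigr).$$
Passing to the infimum over $S \in \cS$ on both sides (via a sequence of $\eta$-near-optimizers $S_\eta$ of $\Theta_A^{\cS}(y,(1+t)s)$ and letting $\eta \downarrow 0$) will yield \eqref{e:m1}.

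To prove the claim, I will first check that $\ball(x,r) \subseteq \ball(y,(1+t)s)$: since $\ball(x,r) \subseteq \ball(y,s)$ forces $r \leq s$, any $z \in \ball(x,r)$ satisfies $|z-y| \leq r + |x-y| \leq s + ts = (1+t)s$. For the one-sided excess $\mud{x}{r}(A,x+S)$, I will take $a \in A \cap \ball(x,r) \subseteq A\cap \ball(y,(1+t)s)$, find $p \in S$ with $|a-(y+p)|$ nearly realizing $\dist(a,y+S) \leq (1+t)s\,\mud{y}{(1+t)s}(A,y+S)$, and observe that $x+p \in x+S$ satisfies $|a-(x+p)| \leq |a-(y+p)| + |y-x|$. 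For the reverse excess $\mud{x}{r}(x+S,A)$, I will take $q = x+p \in (x+S) \cap \ball(x,r)$; since $|p| = |q-x| \leq r \leq s$, the point $y+p$ lies in $(y+S) \cap \ball(y,(1+t)s)$, so $\dist(y+p,A) \leq (1+t)s\,\mud{y}{(1+t)s}(y+S,A)$, whence $\dist(q,A) \leq |q-(y+p)| + \dist(y+p,A) \leq ts + (1+t)s\,\mud{y}{(1+t)s}(y+S,A)$. Dividing each bound by $r$ yields the claim, and trivial cases (such as $A \cap \ball(x,r) = \emptyset$) are handled by the convention $\ue(\emptyset,\cdot) = 0$.

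The main conceptual obstacle is resisting the temptation to translate the approximating set (for example, to replace $S$ by $S - p$ in order to recenter it at $x$), which is unsafe because $\cS$ is assumed only to be a cone and need not be translation invariant. The content of the argument above is that no such translation is needed: the very same $S \in \cS$ works at $(x,r)$ as at $(y,(1+t)s)$, and the translational mismatch $|y - x| \leq ts$ is absorbed into the extra additive term $t = |x-y|/s$ that appears in \eqref{e:m1}. The consequence \eqref{e:m2} then follows by taking $y = x$ and $t = 0$ (or, alternatively, directly from the monotonicity of $\mD{x}{r}$ under ball inclusion in Lemma \ref{l:relativeWW}).
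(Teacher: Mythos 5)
Your proposal is correct and follows essentially the same route as the paper: fix a single $S\in\cS$, use it as the competitor at both $(x,r)$ and $(y,(1+t)s)$, absorb the translational mismatch $|x-y|\leq ts$ into the additive term $t$, rescale by $s/r$, and take the infimum over $S$, with \eqref{e:m2} as the case $y=x$, $t=0$. The only difference is cosmetic: the paper obtains your key claim by invoking the strong quasitriangle inequality and monotonicity for $\mD{x}{r}[\cdot,\cdot]$ from Lemma \ref{l:relativeWW} (with intermediate set $y+S$), whereas you re-derive the same bound by estimating the two relative excesses directly.
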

\begin{proof} Suppose that $\ball(x,r)\subseteq \ball(y,s)$ and $|x-y|\leq ts$. Let $S\in\cS$ be fixed and write $\rho=\mud{y}{s}(A,y+S)$. Since $\mud{y}{s}(x+S,y+S)\leq t$, the strong quasitriangle inequality implies \begin{equation*}\begin{split} \mD{y}{s}[A,x+S] &\leq (1+\rho)\mD{y}{(1+\rho)s}[x+S,y+S]+(1+t)\mD{y}{(1+t)s}[A,y+S]\\
&\leq t + (1+t)\mD{y}{(1+t)s}[A,y+S].\end{split}\end{equation*} Thus, by monotonicity, \begin{equation*}\Theta_A^\cS(x,r)\leq \mD{x}{r}[A,x+S] \leq \frac{s}{r}\mD{y}{s}[A,x+S] \leq \frac{s}{r}\left(t+(1+t)\mD{y}{(1+t)s}[A,y+S]\right).\end{equation*} Taking the infimum over $S\in \cS$ yields (\ref{e:m1}).\end{proof}

\begin{lemma}[limits] \label{ApproxVsLimits} Let $\cS$ be a local approximation class and let $A,A_1,A_2,\dots\in\CL{\RR^n}$. If $A_i\rightarrow A$ in $\CL{\RR^n}$, then for all $x\in \RR^n$ and $r>0$, \begin{equation}\begin{split}\label{ApproxVsLimits.1}\frac{1}{1+\varepsilon} \limsup_{i\to\infty}\,&\Theta^\cS_{A_i}\left(x,\frac{r}{1+\varepsilon}\right)\\ \leq &\,\Theta^\cS_{A}(x,r) \leq
(1+\varepsilon) \liminf_{i\to\infty}\Theta^\cS_{A_i}(x,r(1+\varepsilon))\quad\text{for all }\varepsilon>0\end{split}\end{equation}  and
 \begin{equation}\label{ApproxVsLimits.2} \limsup_{\varepsilon\downarrow 0} \limsup_{i\to \infty}\Theta^\cS_{A_i}\left(x,\frac{r}{1+\varepsilon}\right) \leq \Theta^\cS_A(x,r) \leq \liminf_{\varepsilon\downarrow 0}\liminf_{i\to\infty}\Theta_{A_i}^{\cS}(x,r(1+\varepsilon)).\end{equation}\end{lemma}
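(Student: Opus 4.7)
The strategy mirrors the proof of Lemma \ref{QtriangleVsLimits}, but now the ``second'' argument of the Walkup-Wets distance is not fixed: when comparing $\Theta^\cS_A(x,r)$ against $\Theta^\cS_{A_i}(x,\cdot)$ the near-optimal approximating sets $S\in\cS$ may depend on $i$. I will handle this by choosing a near-minimizing sequence, controlling it via the strong quasitriangle inequality, and finally taking the infimum on only one side.

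\textbf{Upper bound in (\ref{ApproxVsLimits.1}).} Fix $\varepsilon>0$ and write $L=\liminf_{i\to\infty}\Theta^\cS_{A_i}(x,r(1+\varepsilon))$. Pass to a subsequence (still called $A_i$) along which $\Theta^\cS_{A_i}(x,r(1+\varepsilon))\to L$, and for each $i$ pick $S_i\in\cS$ with
\begin{equation*}
\mD{x}{r(1+\varepsilon)}[A_i,x+S_i]\leq\Theta^\cS_{A_i}(x,r(1+\varepsilon))+1/i.
\end{equation*}
By monotonicity (\ref{Dmono}), $\mud{x}{r}(x+S_i,A_i)\leq(1+\varepsilon)(L+2/i)\leq M:=(1+\varepsilon)(L+1)$ for all $i$ large. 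Since $A_i\to A$, also $\mud{x}{r}(A,A_i)\to 0$; say $\mud{x}{r}(A,A_i)\leq 1/i$. Apply the strong quasitriangle inequality (\ref{sqti}) with middle set $A_i$:
\begin{equation*}
\mD{x}{r}[A,x+S_i]\leq(1+M)\mD{x}{r(1+M)}[A,A_i]+(1+1/i)\mD{x}{r(1+1/i)}[A_i,x+S_i].
\end{equation*}
For $1/i\leq\varepsilon$, monotonicity bounds the last term by $(1+\varepsilon)\mD{x}{r(1+\varepsilon)}[A_i,x+S_i]$. The first term tends to $0$ because $A_i\to A$ in the Attouch-Wets topology (Lemma \ref{l:modes}), while the second term has $\limsup$ at most $(1+\varepsilon)L$ by construction. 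Since $S_i\in\cS$, $\Theta^\cS_A(x,r)\leq\mD{x}{r}[A,x+S_i]$, and taking $\limsup_{i\to\infty}$ gives $\Theta^\cS_A(x,r)\leq(1+\varepsilon)L$.

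\textbf{Lower bound in (\ref{ApproxVsLimits.1}).} Fix $\varepsilon>0$ and an arbitrary $S\in\cS$. Set $M_S=\mD{x}{r}[A,x+S]$. By monotonicity, $\mud{x}{r/(1+\varepsilon)}(x+S,A)\leq(1+\varepsilon)M_S$. Since $A_i\to A$, eventually $\mud{x}{r/(1+\varepsilon)}(A_i,A)\leq\varepsilon$. Applying the strong quasitriangle inequality (\ref{sqti}) with middle set $A$:
\begin{equation*}
\mD{x}{r/(1+\varepsilon)}[A_i,x+S]\leq(1+(1+\varepsilon)M_S)\,\mD{x}{r\cdot\frac{1+(1+\varepsilon)M_S}{1+\varepsilon}}[A_i,A]+(1+\varepsilon)\mD{x}{r}[A,x+S].
\end{equation*}
The first term vanishes as $i\to\infty$ (again Lemma \ref{l:modes}), so $\limsup_{i\to\infty}\mD{x}{r/(1+\varepsilon)}[A_i,x+S]\leq(1+\varepsilon)\mD{x}{r}[A,x+S]$. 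Since $\Theta^\cS_{A_i}(x,r/(1+\varepsilon))\leq\mD{x}{r/(1+\varepsilon)}[A_i,x+S]$, taking $\limsup$ in $i$ and then infimum over $S\in\cS$ yields $\limsup_{i\to\infty}\Theta^\cS_{A_i}(x,r/(1+\varepsilon))\leq(1+\varepsilon)\Theta^\cS_A(x,r)$.

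\textbf{Passing to (\ref{ApproxVsLimits.2}).} This is purely formal from (\ref{ApproxVsLimits.1}), exactly as at the end of the proof of Lemma \ref{QtriangleVsLimits}: if $0<\varepsilon\leq\varepsilon'$, the upper bound in (\ref{ApproxVsLimits.1}) gives $\Theta^\cS_A(x,r)\leq(1+\varepsilon')\liminf_{i\to\infty}\Theta^\cS_{A_i}(x,r(1+\varepsilon))$, so letting $\varepsilon\downarrow 0$ first and then $\varepsilon'\downarrow 0$ yields the upper bound in (\ref{ApproxVsLimits.2}); the lower bound follows by the same device.

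\textbf{Main obstacle.} The essential subtlety, absent from Lemma \ref{QtriangleVsLimits}, is that the approximating sets $S_i\in\cS$ for $A_i$ vary with $i$; one cannot simply invoke the previous lemma with a fixed $B$. The fix above is to notice that the upper bound for $\Theta^\cS_A(x,r)$ only needs the \emph{existence} of admissible test sets, so picking $S_i$ almost-optimal for $A_i$ and then using them as test sets for $A$ (after controlling the discrepancy via the strong quasitriangle inequality) is enough; and symmetrically, for the lower bound one fixes $S$ first, bounds $\mD{x}{r/(1+\varepsilon)}[A_i,x+S]$ uniformly, and only infimizes over $S$ at the very end. No closedness or compactness of $\cS$ is required.
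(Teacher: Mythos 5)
Your proof is correct, and its overall skeleton matches the paper's: for the upper bound you pick near-minimizing sets $S_i\in\cS$ for $A_i$ at scale $r(1+\varepsilon)$ and feed them back as test sets for $A$ via the strong quasitriangle inequality with $A_i$ as the middle set; for the lower bound you fix $S\in\cS$ and infimize only at the end (the paper does the same, except it simply cites Lemma \ref{QtriangleVsLimits} with $B=x+S$ rather than redoing the estimate inline). The one genuine divergence is how the uniform excess bound needed in the strong quasitriangle inequality is obtained: the paper extracts, via sequential compactness of $\CL{0}$, a convergent subsequence $S_{j_k}\rightarrow S$ and uses the limit set to show $M=\sup_k\mud{x}{r}(x+S_{j_k},A_{i_{j_k}})<\infty$, whereas you get $\mud{x}{r}(x+S_i,A_i)\leq(1+\varepsilon)\bigl(\Theta^\cS_{A_i}(x,r(1+\varepsilon))+1/i\bigr)\leq M$ directly from near-optimality plus monotonicity of the relative excess. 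Your route is more elementary and, as you note, uses no compactness (the paper's compactness is of the ambient space $\CL{0}$, not a hypothesis on $\cS$, so neither proof restricts $\cS$; your version just shows the extraction step is dispensable). Two cosmetic points: you should remark that $L<\infty$ (immediate from Lemma \ref{l:tsize} and $\dist(x,A_i)\rightarrow\dist(x,A)$, or handle $L=\infty$ trivially) before setting $M=(1+\varepsilon)(L+1)$, and the phrase ``say $\mud{x}{r}(A,A_i)\leq 1/i$'' should be read as ``let $\eta_i:=\mud{x}{r}(A,A_i)\rightarrow 0$''; neither affects the argument.
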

\begin{proof} Let $\varepsilon>0$ be given. For the lower bound in (\ref{ApproxVsLimits.1}), fix $T\in\cS$. Then, by Lemma \ref{QtriangleVsLimits}, $$\frac{1}{1+\varepsilon}\limsup_{i\rightarrow\infty} \Theta_{A_i}^\cS\left(x,\frac{r}{1+\varepsilon}\right) \leq \frac{1}{1+\varepsilon}\limsup_{i\rightarrow\infty}\mD{x}{r/(1+\varepsilon)}[A_i,x+T]\leq \mD{x}{r}[A,x+T].$$ Taking the infimum over all $T\in\cS$, we immediately obtain $$\frac{1}{1+\varepsilon}\limsup_{i\rightarrow\infty} \Theta_{A_i}^\cS\left(x,\frac{r}{1+\varepsilon}\right) \leq \Theta_A^\cS(x,r).$$ For the upper bound in (\ref{ApproxVsLimits.1}), choose a subsequence $(A_{ij})_{j=1}^\infty$ of $(A_i)_{i=1}^\infty$ such that $$\liminf_{i\rightarrow\infty} \Theta^\cS_{A_i}(x,r(1+\varepsilon)) = \lim_{j\rightarrow\infty}\Theta^\cS_{A_{ij}}(x,r(1+\varepsilon)).$$ Also choose a sequence $(S_j)_{j=1}^\infty$ of sets in $\cS$ such that \begin{equation}\label{e:avl1}\mD{x}{r(1+\varepsilon)}[A_{ij},x+S_j] \leq \Theta_{A_{ij}}^\cS(x,r(1+\varepsilon))+\frac{1}{j}\quad\text{for all }j\geq 1.\end{equation} Since $\CL{0}$ is sequentially compact, there exist a subsequence $(S_{jk})_{k=1}^\infty$ of $(S_j)_{j=1}^\infty$ and a set $S\in\CL{0}$ such that $S_{jk}\rightarrow S$ in $\CL{\RR^n}$. Let $L=\mud{x}{2r}(x+S,A)$. Then, for all $k\geq 1$, \begin{align*}\mud{x}{r}&(x+S_{jk},A_{ijk})
\leq \mud{x}{r}(x+S_{jk},x+S)+2\mud{x}{2r}(x+S,A_{ijk})\\ &\leq \mud{x}{r}(x+S_{jk},x+S)+2\mud{x}{2r}(x+S,A)+2(1+L)\mud{x}{2(1+L)r}(A,A_{ijk})\end{align*} by the weak quasitriangle inequality and strong quasitriangle inequality for relative excess, respectively. In particular, since $S_{jk}\rightarrow S$ in $\CL{\RR^n}$ and $A_{ijk}\rightarrow A$ in $\CL{\RR^n}$, $$M=\sup_k \mud{x}{r}(x+S_{jk},A_{ijk})<\infty.$$
Thus, for all $k$ sufficiently large such that $\mud{x}{r}(A,A_{ijk})\leq \varepsilon$, \begin{align*}\Theta_A^{\cS}(x,r) &\leq \mD{x}{r}[A,x+S_{jk}] \leq (1+M)\mD{x}{r(1+M)}[A,A_{ijk}]+(1+\varepsilon)\mD{x}{r(1+\varepsilon)}[A_{ijk},x+S_{jk}]\\
&\leq (1+M)\mD{x}{r(1+M)}[A,A_{ijk}]+(1+\varepsilon)\left(\Theta_{A_{ijk}}(x,r(1+\varepsilon))+\frac{1}{k}\right) \end{align*} by the strong quasitriangle inequality and (\ref{e:avl1}). It follows that $$\Theta_A^\cS(x,r) \leq (1+\varepsilon)\lim_{k\rightarrow\infty}\Theta_{A_{ijk}}(x,r(1+\varepsilon)) = (1+\varepsilon)\liminf_{i\rightarrow\infty} \Theta_{A_i}(x,r(1+\varepsilon)).$$ This establishes (\ref{ApproxVsLimits.1}). Finally, (\ref{ApproxVsLimits.2}) can be derived from (\ref{ApproxVsLimits.1}) using the same argument used to derive (\ref{QtriangleVsLimits.2}) from (\ref{QtriangleVsLimits.1}). \end{proof}

Given a local approximation class $\cS$, let $\ccS$ denote the closure of $\cS$ in $\CL{0}$. We now introduce two closely related families of local approximation classes.

\begin{definition}[bilateral $\varepsilon$-enlargements] Let $\cS$ be a local approximation class. For all $\varepsilon\geq 0$, define  $$(\cS;\varepsilon)^\Theta_{0,\infty}=\{\hat S\in\CL{0}: \Theta_{\hat S}^{\cS}(0,r)\leq\varepsilon\text{ for all }r>0\}$$ and $$(\cS;\varepsilon)^\Theta_{\RR^n,\infty}=\{\hat S\in\CL{0}: \Theta_{\hat S}^{\cS}(x,r)\leq\varepsilon\text{ for all }x\in \hat S\text{ and all }r>0\}.$$
\end{definition}

\begin{definition} A local approximation class $\cS$ is \emph{translation invariant} if for all $S\in\cS$ and $x\in S$, the translate $S-x\in \cS$. \end{definition}

\begin{lemma} \label{l:enlarge} Let $\cS$ be a local approximation class and let $\varepsilon\geq 0$. Then $(\cS;\varepsilon)^\Theta_{\RR^n,\infty}$ and $(\cS;\varepsilon)^\Theta_{0,\infty}$ are local approximation classes. Moreover, \begin{itemize}
\item $(\cS;\varepsilon)^\Theta_{\RR^n,\infty}$ and $(\cS;\varepsilon)^\Theta_{0,\infty}$ are closed in $\CL{0}$ and $(\cS;0)^\Theta_{0,\infty}=\ccS$; and,
\item $(\cS;\varepsilon)^\Theta_{\RR^n,\infty}$ is the maximal translation invariant local approximation class that is contained in $(\cS;\varepsilon)^\Theta_{0,\infty}$.
\end{itemize}\end{lemma}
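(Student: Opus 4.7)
The plan is to verify the four claims in order, relying on the invariance and limit properties of $\Theta^\cS$ established in Lemmas \ref{l:theta_inv} and \ref{ApproxVsLimits}.

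First, to see that $(\cS;\varepsilon)^\Theta_{0,\infty}$ and $(\cS;\varepsilon)^\Theta_{\RR^n,\infty}$ are local approximation classes, note that every $S\in\cS$ satisfies $\Theta_S^\cS(x,r)=0\le \varepsilon$ at every $x\in S$ and $r>0$ (taking the set $S-x$ from a translation if needed is not required here, because we just need \emph{some} element, and $\cS\subseteq(\cS;\varepsilon)^\Theta_{0,\infty}$ is trivial). For the cone property, if $\hat S$ lies in either enlargement and $\lambda>0$, then Lemma \ref{l:theta_inv} gives $\Theta_{\lambda\hat S}^\cS(\lambda x,\lambda r)=\Theta_{\hat S}^\cS(x,r)\le\varepsilon$, which covers all locations and scales for the dilated set.

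Second, for closure in $\CL{0}$, suppose $\hat S_i\in (\cS;\varepsilon)^\Theta_{0,\infty}$ and $\hat S_i\to \hat S$ in $\CL{0}$. Applying the upper bound in (\ref{ApproxVsLimits.1}) at $x=0$ gives $\Theta_{\hat S}^\cS(0,r)\le (1+\delta)\varepsilon$ for every $\delta>0$, hence $\Theta_{\hat S}^\cS(0,r)\le\varepsilon$ for every $r>0$. For $(\cS;\varepsilon)^\Theta_{\RR^n,\infty}$ one has to handle an arbitrary $x\in\hat S$. Pick $x_i\in\hat S_i$ with $x_i\to x$ (which exists since $\hat S_i\to\hat S$ and $x\in\hat S$). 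A short computation using the weak quasitriangle inequality and translation invariance of $\mD{\cdot}{\cdot}[\cdot,\cdot]$ shows $\hat S_i-x_i\to \hat S-x$ in $\CL{0}$; combined with $\Theta_{\hat S_i-x_i}^\cS(0,r)=\Theta_{\hat S_i}^\cS(x_i,r)\le\varepsilon$ (Lemma \ref{l:theta_inv}) and Lemma \ref{ApproxVsLimits}, this yields $\Theta_{\hat S-x}^\cS(0,r)\le\varepsilon$, i.e.\ $\Theta_{\hat S}^\cS(x,r)\le\varepsilon$.

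Third, for the identification $(\cS;0)^\Theta_{0,\infty}=\ccS$: the inclusion $\ccS\subseteq (\cS;0)^\Theta_{0,\infty}$ follows from the closure proved in the previous step (since $\cS\subseteq (\cS;0)^\Theta_{0,\infty}$). Conversely, if $\Theta_{\hat S}^\cS(0,r)=0$ for every $r>0$, one can pick, for each positive integer $k$, a set $S_k\in\cS$ with $\mD{0}{k}[\hat S,S_k]\le 1/k$; then criterion (iii) of Lemma \ref{l:modes} gives $S_k\to \hat S$ in $\CL{\RR^n}$, so $\hat S\in\ccS$.

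Fourth, for the maximality statement, $(\cS;\varepsilon)^\Theta_{\RR^n,\infty}\subseteq (\cS;\varepsilon)^\Theta_{0,\infty}$ is immediate by taking $x=0\in\hat S$. Translation invariance of $(\cS;\varepsilon)^\Theta_{\RR^n,\infty}$ is immediate from the definition, since if $\hat S\in(\cS;\varepsilon)^\Theta_{\RR^n,\infty}$ and $y\in\hat S$, then $\Theta_{\hat S-y}^\cS(z,r)=\Theta_{\hat S}^\cS(z+y,r)\le\varepsilon$ for every $z\in\hat S-y$. Finally, if $\cT\subseteq (\cS;\varepsilon)^\Theta_{0,\infty}$ is any translation invariant local approximation class and $\hat S\in\cT$, then for each $x\in\hat S$ one has $\hat S-x\in\cT\subseteq(\cS;\varepsilon)^\Theta_{0,\infty}$, whence $\Theta_{\hat S}^\cS(x,r)=\Theta_{\hat S-x}^\cS(0,r)\le\varepsilon$, so $\hat S\in (\cS;\varepsilon)^\Theta_{\RR^n,\infty}$.

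The only mildly subtle step is the closure of $(\cS;\varepsilon)^\Theta_{\RR^n,\infty}$, because one must check the approximability bound at points $x$ that lie in the limit set but not in any of the approximating sets; the trick is to select $x_i\in\hat S_i$ with $x_i\to x$ and reduce back to Lemma \ref{ApproxVsLimits} applied to the translated sequence $\hat S_i-x_i$. Every other step is a routine application of scale/translation invariance or the definitions.
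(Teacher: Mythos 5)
Most of your argument is correct and follows the paper's proof closely: the same use of scale invariance (Lemma \ref{l:theta_inv}) for the cone property, the same appeal to Lemma \ref{ApproxVsLimits} for closedness of $(\cS;\varepsilon)^\Theta_{0,\infty}$, the same maximality argument, and the same translation trick (pick $x_i\in\hat S_i$ with $x_i\to x$ and pass to $\hat S_i-x_i\to\hat S-x$) for closedness of $(\cS;\varepsilon)^\Theta_{\RR^n,\infty}$ --- which you run directly at each $x$ in the limit set rather than, as the paper does, by showing the closure is translation invariant and then invoking maximality; that reorganization is harmless. However, your step proving $(\cS;0)^\Theta_{0,\infty}\subseteq\ccS$ has a genuine flaw. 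From $\Theta^\cS_{\hat S}(0,k)=0$ you choose $S_k\in\cS$ with $\mD{0}{k}[\hat S,S_k]\le 1/k$ and then claim that criterion (iii) of Lemma \ref{l:modes} yields $S_k\to\hat S$. Criterion (iii) requires that for some fixed $x_0$ and some scales $r_j\to\infty$ one has $\lim_{i\to\infty}\mD{x_0}{r_j}[A_i,A]=0$ \emph{for each fixed} $j$; it is not a diagonal condition of the form $\mD{0}{r_i}[A_i,A]\to 0$. Your bound only gives, for a fixed scale $r$ and $k\ge r$, $\mD{0}{r}[\hat S,S_k]\le (k/r)\,\mD{0}{k}[\hat S,S_k]\le 1/r$ by monotonicity (\ref{Dmono}), which does not tend to $0$ as $k\to\infty$. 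And the convergence can genuinely fail for the sets you selected: let $\cS$ be the cone generated by $\RR e_1$ and $\RR e_1\cup\{(0,1)\}$ in $\RR^2$ and $\hat S=\RR e_1$; then $S_k=\RR e_1\cup\{(0,1)\}$ satisfies $\mD{0}{k}[\hat S,S_k]=1/k$ for every $k$, yet $S_k\not\to\hat S$, since $\mD{0}{2}[S_k,\hat S]=1/2$ for all $k$. More generally, diagonal smallness $\mD{0}{r_i}[A_i,A]\to 0$ with $r_i\to\infty$ does not imply Attouch--Wets convergence.

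The repair is exactly what the paper does: since $\Theta^\cS_{\hat S}(0,k)=0$ exactly, you may demand the stronger tolerance $\mD{0}{k}[\hat S,S_k]\le 1/k^2$ (any choice that is $o(1/k)$ works). Monotonicity then gives $\mD{0}{r}[\hat S,S_k]\le 1/(rk)\to 0$ for every fixed $r>0$, which verifies condition (ii) of Lemma \ref{l:modes}, so $S_k\to\hat S$ in $\CL{0}$ and $\hat S\in\ccS$. With that single change your proof is complete and is essentially the paper's.
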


\begin{proof} The enlargements $(\cS;\varepsilon)^\Theta_{\RR^n,\infty}$ and $(\cS;\varepsilon)^\Theta_{0,\infty}$ are local approximation classes, because approximability $\Theta^\cS_A(x,r)$ is scale invariant, i.e.~ $\Theta^{\cS}_{\lambda A}(\lambda x,\lambda r)=\Theta^\cS_A(x,r)$ for all $\lambda>0$.

To show that $(\cS;\varepsilon)^\Theta_{0,\infty}$ is closed in $\CL{0}$, suppose that $A_i\rightarrow A$ in $\CL{0}$ for some sequence $(A_i)_{i=1}^\infty$ in $(\cS;\varepsilon)^\Theta_{0,\infty}$. By Lemma \ref{ApproxVsLimits}, for all $r>0$, $$\Theta^\cS_A(0,r) \leq \liminf_{\delta\downarrow 0}\liminf_{i\rightarrow\infty} \Theta^\cS_{A_i}(0,(1+\delta)r)\leq \liminf_{\delta\downarrow 0}\liminf_{i\rightarrow\infty}\varepsilon=\varepsilon,$$ since each set $A_i\in(\cS;\varepsilon)^\Theta_{0,\infty}$. Hence $A\in (\cS;\varepsilon)^\Theta_{0,\infty}$, and thus,  $(\cS;\varepsilon)^\Theta_{0,\infty}$ is closed in $\CL{0}$.

In the special case $\varepsilon=0$, we have $\ccS\subseteq (\cS;0)^\Theta_{0,\infty}$, because $\cS\subseteq (\cS;0)^\Theta_{0,\infty}$ and $(\cS;0)^\Theta_{0,\infty}$ is closed. On the other hand, suppose that $A\in(\cS;0)^\Theta_{0,\infty}$. Then, for all $k\geq 1$, we can find $S_k\in\cS$ such that $\mD{0}{k}[A,S_k]\leq 1/k^2$, because $\Theta^\cS_A(0,k)=0$. By monotonicity, $$\mD{0}{r}[A,S_k] \leq (k/r)\mD{0}{k}[A,S_k]\leq 1/rk$$ for all $r>0$ and for all $k\geq r$. It follows that $S_k\rightarrow A$ in $\CL{0}$. Hence $A\in \ccS$ and $(\cS;0)^\Theta_{0,\infty}\subseteq\cl{S}$. Therefore, $(\cS;0)^\Theta_{0,\infty}=\ccS$.

Clearly $(\cS;\varepsilon)^\Theta_{\RR^n,\infty}\subseteq (\cS;\varepsilon)^\Theta_{0,\infty}$ and $(\cS;\varepsilon)^\Theta_{\RR^n,\infty}$ is translation invariant. Suppose that $\cT\subseteq (\cS;\varepsilon)^\Theta_{0,\infty}$ and $\cT$ is translation invariant. Let $T\in\cT$ and let $x\in \cT$. Then $$\Theta^\cS_T(x,r)=\Theta^{\cS}_{T-x}(0,r)\leq \varepsilon\quad\text{for all }r>0,$$ because $T-x\in \cT\subseteq (S;\varepsilon)^\Theta_{0,\infty}$. Hence $T\in(\cS;\varepsilon)^\Theta_{\RR^n,\infty}$ for all $T\in\cT$. Thus $\cT\subseteq(\cS;\varepsilon)^\Theta_{\RR^n,\infty}$ for all translation invariant $\cT\subseteq(\cS;\varepsilon)^\Theta_{0,\infty}$. Therefore, $(\cS;\varepsilon)^\Theta_{\RR^n,\infty}$ is the maximal translation invariant local approximation class contained in $(\cS;\varepsilon)^\Theta_{0,\infty}$.

To show that $(\cS;\varepsilon)^\Theta_{\RR^n,\infty}$ is closed in $\CL{0}$, it suffices by the previous paragraph to check that $\overline{(\cS;\varepsilon)^\Theta_{\RR^n,\infty}}$ is translation invariant. To that end, let $A\in \overline{(\cS;\varepsilon)^\Theta_{\RR^n,\infty}}$ and fix $x\in A$. By the sequential definition of closure, there exists a sequence $(A_i)_{i=1}^\infty$ of sets in $(\cS;\varepsilon)^\Theta_{\RR^n,\infty}$ such that $A_i\rightarrow A$. Choose points $x_i\in A_i$ for all $i\geq 1$ so that $x_i\rightarrow x$. Then each $A_i-x_i\in (\cS;\varepsilon)^\Theta_{\RR^n,\infty}$ by translation invariance of $(\cS;\varepsilon)^\Theta_{\RR^n,\infty}$. We claim that $A_i-x_i\rightarrow A-x$. Indeed, for any $r>0$, \begin{align*} \mD{-x}{r}[A-x,A_i-x_i] &\leq 2\mD{-x}{2r}[A-x,A_i-x]+2\mD{-x}{2r}[A_i-x,A_i-x_i]\\ &= 2\mD{0}{2r}[A,A_i]+2\mD{-x}{2r}[A_i-x,A_i-x_i]\\ &\leq 2\mD{0}{2r}[A,A_i]+\frac{|x-x_i|}{r},\end{align*} where the first inequality holds because $-x\in A_i-x$. Hence, since $A_i\rightarrow A$ and $x_i\rightarrow x$, $$\limsup_{i\rightarrow\infty} \mD{-x}{r}[A-x,A_i,x_i] =0\quad\text{for all }r>0.$$ Thus, $A_i-x_i\rightarrow A-x$ and $A-x\in \overline{(\cS;\varepsilon)^\Theta_{\RR^n,\infty}}$. Therefore, $(\cS;\varepsilon)^\Theta_{\RR^n,\infty}$ is closed. \end{proof}

The following theorem connects the $\cS$ points of a set $A$ with the tangent sets of $\cl{A}$.

\begin{theorem}\label{l:tan-perturb} Let $A\subseteq\RR^n$, let $x\in A$, and let $\varepsilon\geq 0$. Then $\limsup_{r\downarrow 0} \Theta^{\cS}_A(x,r)\leq \varepsilon$ if and only if $\Tan(\cl{A},x)\subseteq (\cS;\varepsilon)^\Theta_{0,\infty}$.\end{theorem}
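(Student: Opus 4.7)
The plan is to derive both implications from three ingredients already established: the scale and translation invariance of approximability (Lemma~\ref{l:theta_inv}), the closure lemma (Lemma~\ref{l:close}), and the semicontinuity of $\Theta^\cS$ under Attouch--Wets limits (Lemma~\ref{ApproxVsLimits}), together with sequential compactness of $\CL{\RR^n}$ (Theorem~\ref{t:aw}). The engine of the argument is the identity
\begin{equation*}
\Theta^\cS_{(\cl{A}-x)/r}(0,s)=\Theta^\cS_{\cl{A}}(x,rs)\qquad(r,s>0),
\end{equation*}
which is immediate from Lemma~\ref{l:theta_inv}. It converts statements about the approximability of a blow-up $T\in\Tan(\cl{A},x)$ at fixed scale into statements about the approximability of $\cl{A}$ near $x$ at vanishing scale.

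For the forward implication, I first upgrade the hypothesis $\limsup_{r\downarrow 0}\Theta^\cS_A(x,r)\leq\varepsilon$ to the corresponding statement for $\cl{A}$ using Lemma~\ref{l:close} and letting its $\delta$-parameter tend to $0$. Then, given $T\in\Tan(\cl{A},x)$ realized by $(\cl{A}-x)/r_i\to T$ with $r_i\downarrow 0$, I apply the upper bound in~(\ref{ApproxVsLimits.2}) together with the displayed identity to obtain, for every $s>0$,
\begin{equation*}
\Theta^\cS_T(0,s)\leq \liminf_{\delta\downarrow 0}\liminf_{i\to\infty}\Theta^\cS_{\cl{A}}(x,s(1+\delta)r_i)\leq \limsup_{t\downarrow 0}\Theta^\cS_{\cl{A}}(x,t)\leq \varepsilon,
\end{equation*}
which places $T$ in $(\cS;\varepsilon)^\Theta_{0,\infty}$.

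For the converse I argue contrapositively. Suppose $\limsup_{r\downarrow 0}\Theta^\cS_A(x,r)>\varepsilon$ and pick $\eta>0$ and $r_i\downarrow 0$ with $\Theta^\cS_A(x,r_i)\geq\varepsilon+\eta$. Each $A_i:=(\cl{A}-x)/r_i$ contains $0$, so Theorem~\ref{t:aw} yields a subsequence (not relabeled) with $A_i\to T$ in $\CL{0}$; by definition $T\in\Tan(\cl{A},x)$, and by assumption $\Theta^\cS_T(0,s)\leq\varepsilon$ for all $s>0$. For any $\varepsilon'>0$, applying the lower bound in~(\ref{ApproxVsLimits.1}) at radius $r=1+\varepsilon'$ and using the identity together with $\Theta^\cS_A\leq \Theta^\cS_{\cl{A}}$ (Lemma~\ref{l:close}) gives
\begin{equation*}
\varepsilon+\eta\leq \limsup_{i\to\infty}\Theta^\cS_{A_i}(0,1)\leq (1+\varepsilon')\Theta^\cS_T(0,1+\varepsilon')\leq (1+\varepsilon')\varepsilon.
\end{equation*}
Sending $\varepsilon'\downarrow 0$ yields $\eta\leq 0$, the desired contradiction.

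I expect the only real obstacle to be bookkeeping the multiplicative inflations by $(1+\delta)$ and $(1+\varepsilon')$ that appear when passing between $A$, $\cl{A}$, the rescaled sets $A_i$, and their limit $T$. Once the fundamental identity $\Theta^\cS_{(\cl{A}-x)/r}(0,s)=\Theta^\cS_{\cl{A}}(x,rs)$ is in hand, the two semicontinuity bounds of Lemma~\ref{ApproxVsLimits} match up exactly with the two halves of the closure lemma, so no additional technical input is required.
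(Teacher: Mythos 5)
Your proposal is correct and follows essentially the same route as the paper's proof: both directions rest on the invariance identity from Lemma~\ref{l:theta_inv}, the closure comparison of Lemma~\ref{l:close}, the semicontinuity bounds of Lemma~\ref{ApproxVsLimits}, and sequential compactness of $\CL{0}$ to extract a tangent set in the converse. The only differences are cosmetic (you upgrade the hypothesis to $\cl{A}$ as a separate first step and phrase the converse as a contradiction with a sequence satisfying $\Theta^\cS_A(x,r_i)\geq\varepsilon+\eta$, whereas the paper argues directly along a sequence realizing the $\limsup$), and the bookkeeping of the $(1+\delta)$, $(1+\varepsilon')$ factors checks out.
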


\begin{corollary} Let $A\subseteq\RR^n$ and let $x\in A$. \label{TanVsTheta} Then $x$ is an $\cS$ point of $A$ if and only if  $\Tan(\cl{A},x)\subseteq{\ccS}$.
\end{corollary}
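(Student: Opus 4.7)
The plan is to combine three ingredients in both directions: scale and translation invariance of approximability (Lemma \ref{l:theta_inv}), the closure comparison between $\Theta^\cS_A$ and $\Theta^\cS_{\cl{A}}$ (Lemma \ref{l:close}), and the stability of approximability under Attouch--Wets limits (Lemma \ref{ApproxVsLimits}). The linchpin is the simple identity
\begin{equation*}
\Theta^\cS_{(\cl{A}-x)/r_i}(0, s) \;=\; \Theta^\cS_{\cl{A}}(x, r_i s),
\end{equation*}
which converts statements about the blow-ups $(\cl{A}-x)/r_i$ at a fixed scale $s$ into statements about $\cl{A}$ at $x$ on vanishing scales.

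For the forward implication, I would fix $T \in \Tan(\cl{A}, x)$ and a sequence $r_i \downarrow 0$ with $(\cl{A}-x)/r_i \to T$ in $\CL{0}$. For any $r>0$ and any $\delta,\delta'>0$, the upper bound in Lemma \ref{ApproxVsLimits} yields $\Theta^\cS_T(0, r) \leq (1+\delta)\,\liminf_i \Theta^\cS_{(\cl{A}-x)/r_i}\bigl(0, r(1+\delta)\bigr)$; then the identity above and Lemma \ref{l:close} bound the right-hand side by $(1+\delta)(1+\delta')\,\liminf_i \Theta^\cS_A\bigl(x, r r_i(1+\delta)(1+\delta')\bigr)$. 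Since the argument tends to $0$, the hypothesis $\limsup_{s\downarrow 0}\Theta^\cS_A(x,s)\leq\varepsilon$ gives $\Theta^\cS_T(0,r) \leq (1+\delta)(1+\delta')\varepsilon$. Letting $\delta,\delta' \downarrow 0$ produces $\Theta^\cS_T(0,r) \leq \varepsilon$ for every $r > 0$, so $T \in (\cS;\varepsilon)^\Theta_{0,\infty}$.

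For the reverse implication I would argue by contradiction. Suppose $\limsup_{r\downarrow 0}\Theta^\cS_A(x,r) > \varepsilon$, pick $\eta>0$ and $r_i \downarrow 0$ with $\Theta^\cS_A(x,r_i) \geq \varepsilon+\eta$. Because $x\in A$, each blow-up $(\cl{A}-x)/r_i$ lies in $\CL{0}$, so by the sequential compactness in Theorem \ref{t:aw} a subsequence converges to some $T \in \Tan(\cl{A},x)$. By hypothesis $T \in (\cS;\varepsilon)^\Theta_{0,\infty}$, whence $\Theta^\cS_T(0, 1+\delta) \leq \varepsilon$ for every $\delta>0$. The lower bound in Lemma \ref{ApproxVsLimits} applied with scale $r = 1+\delta$, combined with the scale identity above and Lemma \ref{l:close}, then gives
\begin{equation*}
\varepsilon + \eta \;\leq\; \limsup_i \Theta^\cS_A(x, r_i) \;\leq\; \limsup_i \Theta^\cS_{\cl{A}}(x, r_i) \;=\; \limsup_i \Theta^\cS_{(\cl{A}-x)/r_i}(0, 1) \;\leq\; (1+\delta)\varepsilon,
\end{equation*}
and sending $\delta \downarrow 0$ contradicts $\eta>0$.

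The only mildly delicate point is the bookkeeping of the multiplicative slack factors $(1+\delta)$ and $(1+\delta')$ that Lemma \ref{ApproxVsLimits} and Lemma \ref{l:close} introduce at each step; these must be grouped so they can be driven to $1$ at the very end without interfering with the scale identity. Beyond this, the argument is essentially a direct translation of the hypothesis through the identity $\Theta^\cS_{(\cl{A}-x)/r_i}(0,s) = \Theta^\cS_{\cl{A}}(x, r_i s)$.
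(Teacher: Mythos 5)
Your argument correctly reproduces the content of Theorem \ref{l:tan-perturb}: the forward direction is essentially the paper's proof of that theorem, and your contradiction argument for the converse is an equivalent repackaging of the paper's direct argument (choose $r_i$ realizing the $\limsup$, pass to a convergent subsequence in $\CL{0}$, and apply Lemmas \ref{l:close}, \ref{l:theta_inv}, and \ref{ApproxVsLimits}). The bookkeeping of the $(1+\delta)$ factors is handled correctly. However, the corollary as stated is about $\ccS$, while your proof both concludes (in the forward direction) and assumes (in the converse) membership in the enlargement $(\cS;\varepsilon)^\Theta_{0,\infty}$; you never connect the two. The missing bridge is the identity $(\cS;0)^\Theta_{0,\infty}=\ccS$ of Lemma \ref{l:enlarge}, which is precisely how the paper passes from Theorem \ref{l:tan-perturb} (with $\varepsilon=0$) to Corollary \ref{TanVsTheta}.

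One half of this bridge is immediate with tools you already use: if $T\in\ccS$, take $S_i\in\cS$ with $S_i\to T$ and apply Lemma \ref{ApproxVsLimits} together with $\Theta^\cS_{S_i}(0,\cdot)\equiv 0$ to get $\Theta^\cS_T(0,r)=0$ for all $r>0$; this is what your converse needs in order to use the hypothesis $\Tan(\cl{A},x)\subseteq\ccS$. The other half is not a limit argument at all: knowing only that $\Theta^\cS_T(0,r)=0$ for every $r>0$, one must actually produce sets of $\cS$ converging to $T$ in the Attouch--Wets topology. The paper does this in Lemma \ref{l:enlarge} by choosing $S_k\in\cS$ with $\mD{0}{k}[T,S_k]\leq 1/k^2$ and using monotonicity of the relative Walkup--Wets distance to get $\mD{0}{r}[T,S_k]\leq 1/(rk)$ for $k\geq r$, whence $S_k\to T$ and $T\in\ccS$. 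Without this step your forward direction stops at $\Theta^\cS_T(0,r)=0$ for all $r$, which is not yet the conclusion $T\in\ccS$. Once you cite (or reprove) Lemma \ref{l:enlarge}, your argument is complete and coincides with the paper's route.
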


\begin{proof}[Proof of Theorem \ref{l:tan-perturb}] Fix a nonempty set $A\subseteq\RR^n$, $x\in A$, and $\varepsilon\geq 0$. Suppose $\limsup_{r\downarrow 0} \Theta^\cS_A(x,r)\leq \varepsilon$ and let $T\in \Tan(\cl{A},x)$, say $T=\lim_{i\rightarrow\infty} (\cl{A}-x)/r_i$ for some sequence $r_i\rightarrow 0$. Choose any scale $r>0$  and a small error $\delta>0$. By Lemma \ref{ApproxVsLimits}, Lemma \ref{l:theta_inv}, and Lemma \ref{l:close}, \begin{align*}\Theta_T^{\cS}(0,r) &\leq (1+\delta)\liminf_{i\rightarrow\infty} \Theta_{(\cl{A}-x)/r_i}^\cS(0,(1+\delta)r)= (1+\delta)\liminf_{i\rightarrow\infty} \Theta_{\cl{A}}^\cS(x,(1+\delta)rr_i)\\
&\leq (1+\delta)^2\liminf_{i\rightarrow\infty} \Theta_A^\cS(x,(1+\delta)^2rr_i) \leq (1+\delta)^2\varepsilon,\end{align*} where the last inequality holds because $\limsup_{r\downarrow 0} \Theta^\cS_A(x,r)\leq \varepsilon$. Letting $\delta\rightarrow 0$, we see that $\Theta_T^\cS(0,r)\leq \varepsilon$ for all $r>0$. Therefore, $T\in (\cS;\varepsilon)^\Theta_{0,\infty}$ for all $T\in \Tan(\cl{A},x)$.

Conversely, suppose that $\Tan(\cl{A},x)\subseteq (S;\varepsilon)^\Theta_{0,\infty}$. Choose a sequence $r_i\rightarrow 0$ such that $$\lim_{i\rightarrow\infty}\Theta^\cS_A(x,r_i)= \limsup_{r\rightarrow 0} \Theta^\cS_A(x, r).$$ Since $\CL{0}$ is sequentially compact, we may assume (by passing to a subsequence) that $(\cl{A}-x)/r_i\to T$ for some $T\in \Tan(\cl{A},x)$. By Lemma \ref{l:close}, Lemma \ref{l:theta_inv}, and  Lemma \ref{ApproxVsLimits}, $$\lim_{i\rightarrow\infty} \Theta_A^\cS(x,r_i) \leq \limsup_{i\rightarrow\infty} \Theta_{\cl{A}}^\cS(x,r_i)= \limsup_{i\rightarrow\infty} \Theta_{(\cl{A}-x)/r_i}^\cS (0,1) \leq (1+\delta) \Theta_{T}^\cS(0,1+\delta)\leq (1+\delta)\varepsilon$$ for all $\delta>0$, where the last inequality holds since $T\in(\cS;\varepsilon)^\Theta_{0,\infty}$. Letting $\delta\rightarrow 0$, we obtain $\limsup_{r\rightarrow 0} \Theta_A^\cS(x,r) = \lim_{i\rightarrow\infty} \Theta_A^\cS(x,r_i)\leq \varepsilon$.
\end{proof}

\begin{corollary}\label{TanVsTheta0} Let $A\subseteq\RR^n$, let $x\in A$, and let $\varepsilon\geq 0$. \begin{itemize}
\item If $\Tan(\cl{A},x)\cap (\cS;\varepsilon)^\Theta_{0,\infty}\neq\emptyset$, then $\liminf_{r\rightarrow 0} \Theta^\cS_A(x,r)\leq \varepsilon$.
\item If $\liminf_{r\rightarrow 0} \Theta^{\cS}_A(x,r)=0$, then $\Tan(\cl{A},x)\cap \ccS \neq\emptyset$.
\end{itemize}
\end{corollary}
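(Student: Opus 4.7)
Part~(i) is a direct one-sided variant of the converse direction in the proof of Theorem~\ref{l:tan-perturb}. Given $T\in\Tan(\cl{A},x)\cap(\cS;\varepsilon)^\Theta_{0,\infty}$, write $T=\lim_{i\to\infty}(\cl{A}-x)/r_i$ in $\CL{0}$ for some $r_i\downarrow 0$, and note $\Theta_T^\cS(0,s)\leq\varepsilon$ for all $s>0$. The lower-bound half of Lemma~\ref{ApproxVsLimits} applied along $(\cl{A}-x)/r_i\to T$, combined with scale invariance (Lemma~\ref{l:theta_inv}) and the inequality $\Theta_A^\cS\leq\Theta_{\cl{A}}^\cS$ from Lemma~\ref{l:close}, yields $\limsup_{i\to\infty}\Theta_A^\cS(x,r_i/(1+\delta))\leq(1+\delta)\varepsilon$ for each $\delta>0$. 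Since $r_i/(1+\delta)\downarrow 0$, this bounds $\liminf_{r\downarrow 0}\Theta_A^\cS(x,r)$ by $(1+\delta)\varepsilon$, and letting $\delta\downarrow 0$ finishes part~(i).

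The main obstacle in part~(ii) is that the hypothesis $\liminf_{r\downarrow 0}\Theta_A^\cS(x,r)=0$ only controls approximability along a single sequence $r_i\downarrow 0$. Rescaling by $r_i$ then controls the approximability of the blow-ups only at scales $\lesssim 1$ (via the monotonicity estimate~(\ref{e:m2})), so a single blow-up $T=\lim(\cl{A}-x)/r_i$ will generally satisfy $\Theta_T^\cS(0,r)=0$ only on a bounded range of $r$, which is not enough to place $T$ itself in $\ccS=(\cS;0)^\Theta_{0,\infty}$. My plan is to remedy this by iterating the blow-up: a further tangent set $C\in\Tan(T,0)$ stretches the bounded range where $\Theta^\cS$ vanishes to every scale, and Lemma~\ref{l:iterate} ensures that $C$ remains a tangent of $\cl{A}$ at $x$.

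Concretely, pick $r_i\downarrow 0$ with $\Theta_A^\cS(x,r_i)\to 0$ and pass to a subsequence so that $(\cl{A}-x)/r_i\to T\in\Tan(\cl{A},x)$. Fix any $r<1$ and any $\delta,\delta'>0$ small enough that $r(1+\delta)(1+\delta')\leq 1$. Chaining the upper-bound half of Lemma~\ref{ApproxVsLimits}, scale invariance, the closure estimate $\Theta_{\cl{A}}^\cS(x,s)\leq(1+\delta)\Theta_A^\cS(x,(1+\delta)s)$ from Lemma~\ref{l:close}, and the monotonicity inequality~(\ref{e:m2}) applied on the scales $r(1+\delta)(1+\delta')r_i\leq r_i$, one obtains $\Theta_T^\cS(0,r)\leq(1/r)\liminf_{i\to\infty}\Theta_A^\cS(x,r_i)=0$; hence $\Theta_T^\cS(0,r)=0$ for every $r<1$. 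Next, pick any $s_i\downarrow 0$ and pass to a subsequence so that $T/s_i\to C$ in $\CL{0}$; then $C\in\Tan(T,0)\subseteq\Tan(\cl{A},x)$ by Lemma~\ref{l:iterate}. For every $R>0$ and $\delta>0$, Lemma~\ref{ApproxVsLimits} and scale invariance give $\Theta_C^\cS(0,R)\leq(1+\delta)\liminf_{i\to\infty}\Theta_T^\cS(0,R(1+\delta)s_i)$, and the right-hand side equals $0$ because $R(1+\delta)s_i<1$ for all large $i$ and $\Theta_T^\cS(0,\cdot)$ vanishes on $(0,1)$ by the previous step. Thus $\Theta_C^\cS(0,R)=0$ for every $R>0$, which by Lemma~\ref{l:enlarge} says $C\in\ccS$, producing the desired element of $\Tan(\cl{A},x)\cap\ccS$.
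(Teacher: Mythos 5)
Your proposal is correct and follows essentially the same route as the paper: part (i) is the same blow-up-along-$r_i$ estimate via Lemmas \ref{ApproxVsLimits}, \ref{l:theta_inv}, and \ref{l:close}, and part (ii) uses the same key idea of a second blow-up—first showing the tangent $T$ satisfies $\Theta^\cS_T(0,r)=0$ on a bounded range of scales via monotonicity, then passing to a tangent of $T$ and invoking Lemma \ref{l:iterate}. The only cosmetic difference is that where the paper cites Corollary \ref{TanVsTheta} to conclude $\Tan(T,0)\subseteq\ccS$, you unpack that step by hand and use the identification $(\cS;0)^\Theta_{0,\infty}=\ccS$ from Lemma \ref{l:enlarge}.
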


\begin{proof} Suppose that $T\in \Tan(\cl{A},x)\cap (\cS;\varepsilon)^\Theta_{0,\infty}$, say $T=\lim_{i\rightarrow\infty} (\cl{A}-x)/r_i$ for some $r_i\rightarrow 0$. Then, as in the proof of Theorem \ref{l:tan-perturb}, $$\limsup_{i\rightarrow\infty}\Theta_A^\cS(x,r_i) \leq \limsup_{i\rightarrow\infty} \Theta_{(\cl{A}-x)/r_i}^\cS(0,1)\leq
(1+\delta)\Theta_T^\cS(0,1+\delta)\leq (1+\delta)\varepsilon,$$ where the last inequality holds since $T\in (\cS;\varepsilon)^\Theta_{0,\infty}$. Letting $\delta\rightarrow 0$, we conclude that $\liminf_{r\rightarrow 0} \Theta_A^\cS(x,r)\leq \limsup_{i\rightarrow\infty} \Theta_A^\cS(x,r_i) \leq \varepsilon$.

For the second statement, suppose that $x\in A\subseteq\RR^n$ and $\liminf_{r\rightarrow 0} \Theta^\cS_A(x,r)=0$. Then $\lim_{i\rightarrow \infty}\Theta^\cS_A(x,r_i)=0$ for some sequence $r_i\rightarrow 0$. Passing to a subsequence, we may assume that $(\cl{A}-x)/r_i\rightarrow B$ for some $B\in \Tan(\cl{A},x)$. For all $0<r\leq 1/4$,  \begin{align*}\Theta_B^{\cS}(0,r) &\leq 2\liminf_{i\rightarrow\infty} \Theta_{(\cl{A}-x)/r_i}^\cS(0,2r)\\
&= 2\liminf_{i\rightarrow\infty} \Theta_{\cl{A}}^\cS(x,2rr_i) \leq 4\liminf_{i\rightarrow\infty} \Theta_{A}^\cS(x,4rr_i)\leq \frac{1}{r}\liminf_{i\rightarrow\infty} \Theta^\cS_A(x,r_i)=0.\end{align*} by Lemma \ref{ApproxVsLimits}, Lemma \ref{l:theta_inv}, Lemma \ref{l:close}, and Lemma \ref{l:tmono}. Hence, by Corollary \ref{TanVsTheta}, $\Tan(B,0)\subseteq \ccS$. Pick any $C\in\Tan(B,0)$. Then $C\in \Tan(\cl{A},x)$ by Lemma \ref{l:iterate}. Therefore, $C\in \Tan(\cl{A},x)\cap \ccS$. In particular, $\Tan(\cl{A},x)\cap \ccS\neq\emptyset$.\end{proof}

The following theorem characterizes locally $\varepsilon$-approximable sets in terms of the pseudotangent sets of their closure.

\begin{theorem}\label{PsTanVsPerturb} Let $A\subseteq\RR^n$ be nonempty and let $\varepsilon\geq 0$. The following are equivalent: \begin{enumerate}
 \item $A$ is locally $\varepsilon'$-approximable by $\cS$ for all $\varepsilon' > \varepsilon$;
 \item $\PsTan(\cl{A},x)\subseteq(\cS;\varepsilon)^\Theta_{0,\infty}$ for all $x\in A$;
 \item $\PsTan(\cl{A},x)\subseteq(\cS;\varepsilon)^\Theta_{\RR^n,\infty}$ for all $x\in A$.
 \end{enumerate}
\end{theorem}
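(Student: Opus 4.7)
The plan is to establish (iii) $\Rightarrow$ (ii) trivially, (ii) $\Rightarrow$ (iii) via translation properties of pseudotangents, and the two directions of (i) $\Leftrightarrow$ (ii) by twice invoking Lemma~\ref{ApproxVsLimits} together with scale invariance (Lemma~\ref{l:theta_inv}) and closure (Lemma~\ref{l:close}). The implication (iii) $\Rightarrow$ (ii) is immediate from the inclusion $(\cS;\varepsilon)^\Theta_{\RR^n,\infty} \subseteq (\cS;\varepsilon)^\Theta_{0,\infty}$. For (ii) $\Rightarrow$ (iii), I would fix $x \in A$ and $T \in \PsTan(\cl{A}, x)$ and exploit that $\PsTan(\cl{A}, x)$ is closed under self-translation: by Lemma~\ref{inv_of_tan}, every $y \in T$ satisfies $T - y \in \PsTan(\cl{A}, x)$, so hypothesis (ii) forces $T - y \in (\cS;\varepsilon)^\Theta_{0,\infty}$, i.e., $\Theta_{T - y}^\cS(0, r) \leq \varepsilon$ for all $r > 0$. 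Translation invariance of $\Theta$ (Lemma~\ref{l:theta_inv}) then converts this into $\Theta_T^\cS(y, r) \leq \varepsilon$ for every $y \in T$ and $r > 0$, giving $T \in (\cS;\varepsilon)^\Theta_{\RR^n, \infty}$.

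For (i) $\Rightarrow$ (ii), I would fix $x \in A$ and $T = \lim_i (\cl{A} - x_i)/r_i \in \PsTan(\cl{A}, x)$, where $x_i \in \cl{A}$, $x_i \to x$, and $r_i \downarrow 0$. A preliminary perturbation---replacing each $x_i$ by some $\tilde x_i \in A$ with $|\tilde x_i - x_i|/r_i \to 0$, which is possible because $x_i \in \cl{A}$---shows that $(\cl{A} - \tilde x_i)/r_i \to T$ as well, since translating a set by a vector $w$ changes $\mD{0}{r}[\cdot, \cdot]$ by at most $|w|/r$. We may thus assume $x_i \in A$, so that $K := \{x\} \cup \{x_i : i \geq 1\}$ is a compact subset of $A$. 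For any $r > 0$ and $\delta > 0$, the upper bound in Lemma~\ref{ApproxVsLimits}, scale invariance, and closure yield
\begin{equation*}
\Theta_T^\cS(0, r) \leq (1+\delta) \liminf_{i \to \infty} \Theta_{\cl{A}}^\cS(x_i, (1+\delta) r r_i) \leq (1+\delta)^2 \liminf_{i \to \infty} \Theta_A^\cS(x_i, (1+\delta)^2 r r_i) \leq (1+\delta)^2 \varepsilon,
\end{equation*}
where the last estimate uses hypothesis (i) via Remark~\ref{r:asymp_vs_local} applied to $K$. Sending $\delta \downarrow 0$ gives $T \in (\cS;\varepsilon)^\Theta_{0,\infty}$.

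For (ii) $\Rightarrow$ (i) I would argue by contradiction. If (i) fails, Remark~\ref{r:asymp_vs_local} produces a compact $K \subseteq A$, some $\varepsilon' > \varepsilon$, and sequences $x_i \in K$ and $r_i \downarrow 0$ with $\Theta_A^\cS(x_i, r_i) > \varepsilon'$; passing to subsequences via compactness of $K$ and sequential compactness of $\CL{0}$, we obtain $x_i \to x \in K \subseteq A$ and $(\cl{A} - x_i)/r_i \to T$ for some $T \in \PsTan(\cl{A}, x)$. By (ii), $\Theta_T^\cS(0, 1+\delta) \leq \varepsilon$ for every $\delta > 0$, so the lower bound in Lemma~\ref{ApproxVsLimits}, together with Lemmas~\ref{l:close} and~\ref{l:theta_inv}, yields
\begin{equation*}
\limsup_{i \to \infty} \Theta_A^\cS(x_i, r_i) \leq \limsup_{i \to \infty} \Theta_{\cl{A}}^\cS(x_i, r_i) = \limsup_{i \to \infty} \Theta_{(\cl{A} - x_i)/r_i}^\cS(0, 1) \leq (1+\delta) \Theta_T^\cS(0, 1+\delta) \leq (1+\delta)\varepsilon.
\end{equation*}
Choosing $\delta$ so small that $(1+\delta)\varepsilon < \varepsilon'$ contradicts the assumption $\Theta_A^\cS(x_i, r_i) > \varepsilon'$. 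The main technical subtlety I anticipate is the mild mismatch between hypothesis (i), which controls $\Theta_A^\cS$ only at points of $A$, and the definition of $\PsTan(\cl{A}, x)$, which permits base points in $\cl{A} \setminus A$; the perturbation step in the second paragraph bridges this gap by exploiting the blow-up scaling $1/r_i$ to render sub-scale $A$-approximations of $x_i$ invisible in the limit. Everything else amounts to standard bookkeeping with the cited lemmas.
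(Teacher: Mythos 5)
Your proposal is correct and follows essentially the same route as the paper: the trivial inclusion for (iii)$\Rightarrow$(ii), translation invariance of $\PsTan(\cl{A},x)$ for (ii)$\Rightarrow$(iii) (you unpack what the paper cites as the maximality clause of Lemma~\ref{l:enlarge}), the same perturbation of base points from $\cl{A}$ into $A$ plus Lemmas~\ref{ApproxVsLimits}, \ref{l:theta_inv}, \ref{l:close} and Remark~\ref{r:asymp_vs_local} for (i)$\Rightarrow$(ii), and the contrapositive of the paper's direct compactness argument for (ii)$\Rightarrow$(i). The only nitpick is your claim that translating one set changes $\mD{0}{r}[\cdot,\cdot]$ by at most $|w|/r$, which is not literally exact; the paper controls this step with the weak quasitriangle inequality (picking up a factor $2$ and a doubled scale), and since $|\tilde x_i-x_i|/r_i\to 0$ the conclusion you need holds either way.
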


\begin{corollary}\label{PsTanVsApprox} Let $A\subseteq\RR^n$ be nonempty. Then $A$ is locally well approximated by $\cS$ if and only if $\PsTan(\cl{A},x)\subseteq\ccS$ for all $x\in A$.\end{corollary}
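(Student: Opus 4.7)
The plan is to derive the corollary as an immediate specialization of Theorem~\ref{PsTanVsPerturb} to the case $\varepsilon = 0$, together with the identification $(\cS;0)^\Theta_{0,\infty} = \ccS$ recorded in Lemma~\ref{l:enlarge}.

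First, I would unpack the ``locally well approximated'' hypothesis. By Definition~\ref{lapprox}(vi), $A$ is locally well approximated by $\cS$ precisely when $A$ is locally $\varepsilon$-approximable by $\cS$ for every $\varepsilon > 0$. Equivalently, $A$ is locally $\varepsilon'$-approximable by $\cS$ for all $\varepsilon' > 0$, which is exactly condition (i) of Theorem~\ref{PsTanVsPerturb} for $\varepsilon = 0$.

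Second, I would apply Theorem~\ref{PsTanVsPerturb} with $\varepsilon = 0$ to translate (i) into condition (ii), namely $\PsTan(\cl{A},x) \subseteq (\cS;0)^\Theta_{0,\infty}$ for every $x \in A$. The final step is then to invoke Lemma~\ref{l:enlarge}, which explicitly identifies $(\cS;0)^\Theta_{0,\infty}$ with the closure $\ccS$ of $\cS$ in $\CL{0}$. Substituting this identification into condition (ii) yields the desired characterization $\PsTan(\cl{A},x) \subseteq \ccS$ for all $x \in A$.

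There is no real obstacle here: the entire content of the corollary is packaged in Theorem~\ref{PsTanVsPerturb} and Lemma~\ref{l:enlarge}, and the proof is a two-line specialization. The only thing worth being careful about is ensuring that the ``for all $\varepsilon' > \varepsilon$'' formulation used in Theorem~\ref{PsTanVsPerturb} collapses correctly when $\varepsilon = 0$ to match the definition of locally well approximated; as noted, this is immediate from Definition~\ref{lapprox}(vi).
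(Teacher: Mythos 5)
Your proposal is correct and is exactly how the paper obtains the corollary: it is stated without separate proof as the $\varepsilon=0$ case of Theorem~\ref{PsTanVsPerturb}, using the identification $(\cS;0)^\Theta_{0,\infty}=\ccS$ from Lemma~\ref{l:enlarge}. Your attention to the fact that ``locally $\varepsilon'$-approximable for all $\varepsilon'>0$'' coincides with the definition of locally well approximated is the only point needing care, and you handle it correctly.
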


\begin{proof}[Proof of Theorem \ref{PsTanVsPerturb}] Fix $\varepsilon\geq 0$. On one hand, $(iii)\Rightarrow(ii)$ is immediate, because $(\cS;\varepsilon)^\Theta_{\RR^n,\infty}\subseteq(\cS;\varepsilon)^\Theta_{0,\infty}$. On the other hand, $(ii)\Rightarrow (iii)$, because $(\cS;\varepsilon)^\Theta_{\RR^n,\infty}$ is the maximal translation invariant local approximation class contained in $(\cS;\varepsilon)^\Theta_{0,\infty}$ by Lemma \ref{l:enlarge} and $\PsTan(\cl{A},x)$ is a translation invariant local approximation class by Lemma \ref{inv_of_tan}. To complete the proof, we will show that $(i)$ is equivalent to $(ii)$.

Suppose that $A$ is locally $\varepsilon'$-approximable by $\cS$ for all $\varepsilon'>\varepsilon$. Then, by Remark \ref{r:asymp_vs_local}, \begin{equation}\label{e:equiv1} \limsup_{r\downarrow 0} \sup_{x\in K} \Theta^\cS_A(x,r)\leq \varepsilon\end{equation} for every compact subset $K\subseteq A$. Let $T\in \PsTan(\cl{A},x)$ for some $x\in A$, say that $T=\lim_{i\rightarrow\infty} (\cl{A}-y_i)/r_i$ for some sequences $y_i\in \cl{A}$ and $r_i>0$ such that $y_i\to x$ and $r_i\rightarrow 0$. For each $i\geq 1$, choose $x_i\in A$ such that $|x_i-y_i|\leq r_i/i$. Then, since for all $s>0$, $$\mD{0}{s}\left[\frac{\cl{A}-x_i}{r_i},T\right] \leq \frac{1}{s}\left|\frac{x_i-y_i}{r_i}\right|+2\mD{0}{2s}\left[\frac{\cl{A}-y_i}{r_i},T\right],$$ we have that $(\cl{A}-x_i)/r_i\rightarrow T$ as well. Fix a scale $r>0$ and an error $\delta>0$. By Lemmas \ref{ApproxVsLimits}, \ref{l:theta_inv}, and \ref{l:close}, and by (\ref{e:equiv1}) applied with the compact set $K=\{x\}\cup \{x_i:i\geq 1\}\subseteq A$, \begin{align*}\Theta_T^{\cS}(0,r) &\leq (1+\delta)\liminf_{i\rightarrow\infty} \Theta_{(\cl{A}-x_i)/r_i}^\cS(0,(1+\delta)r)= (1+\delta)\liminf_{i\rightarrow\infty} \Theta_\cl{A}^\cS(x_i,(1+\delta)rr_i) \\ &\leq (1+\delta)^2\liminf_{i\rightarrow\infty} \Theta_A^\cS(x_i,(1+\delta)^2rr_i) \leq (1+\delta)^2\varepsilon.\end{align*} Letting $\delta\rightarrow 0$, we obtain $\Theta^{\cS}_T(0,r)\leq \varepsilon$ for all $r>0$. This shows that $T\in (S;\varepsilon)^\Theta_{0,\infty}$ and $\PsTan(\cl{A},x)\subseteq (\cS;\varepsilon)^\Theta_{0,\infty}$ for all $x\in A$. Therefore,    $(i)\Rightarrow(ii)$.

Finally, suppose that $\PsTan(\cl{A},x)\subseteq(S;\varepsilon)^\Theta_{0,\infty}$ for all $x\in A$.  Fix a compact set $K \subseteq A$. Let $x_i\in K$ and $r_i\rightarrow 0$ be sequences such that $$\lim_{i\rightarrow\infty} \Theta^\cS_A(x_i,r_i)= \limsup_{r\downarrow 0} \sup_{x\in K}\Theta^\cS_A(x, r).$$ Passing to a subsequence of $(x_i,r_i)_{i=1}^\infty$, we may assume (since $K$ and $\CL{0}$ are sequentially compact) that $x_i\rightarrow x$ for some $x\in K$ and $(\cl{A}-x_i)/r_i\rightarrow T$ for some $T\in\PsTan(\cl{A},x)$.  By Lemma \ref{l:close}, Lemma \ref{l:theta_inv} and  Lemma \ref{ApproxVsLimits}, \begin{align*}\lim_{i\rightarrow\infty} \Theta_A^\cS(x_i,r_i) &\leq \limsup_{i\rightarrow\infty} \Theta_{\cl{A}}^\cS(x_i,r_i) \\
&=\limsup_{i\rightarrow\infty} \Theta_{(\cl{A}-x_i)/r_i}^\cS (0,1) \leq (1+\delta) \Theta_{T}^\cS(0,1+\delta)\leq (1+\delta)\varepsilon\end{align*} for all $\delta>0$, where the last inequality holds since $T\in(\cS;\varepsilon)^\Theta_{0,\infty}$. Letting $\delta\rightarrow 0$, we obtain $\limsup_{r\rightarrow 0}\sup_{x\in K} \Theta_A^\cS(x,r) = \lim_{i\rightarrow\infty} \Theta_A^\cS(x_i,r_i)\leq \varepsilon$. Thus, by Remark \ref{r:asymp_vs_local}, $A$ is locally $\varepsilon'$-approximable by $\cS$ for all $\varepsilon'>\varepsilon$. Therefore, $(ii)\Rightarrow (i)$.
\end{proof}

\begin{remark} In Definition \ref{lapprox}, we defined sets locally $\varepsilon$-approximable by $\cS$ \emph{intrinsically} with uniform bounds on approximability in each compact subset $K\subseteq A$. Alternatively, one could define sets locally $\varepsilon$-approximable by $\cS$ \emph{extrinsically} with uniform bounds on approximability  in $K\cap A$ for each compact set $K\subset\RR^n$. The two definitions agree when $A$ is closed. The analogue of Theorem \ref{PsTanVsPerturb} in the extrinsic case reads as follows:

\emph{For all $\varepsilon'>\varepsilon$ and for all compact sets $K\subset\RR^n$ there exists $r_0>0$ such that $\Theta_A^\cS(x,r)\leq \varepsilon'$ for all $x\in A\cap K$ and for all $0<r\leq r_0$ if and only if $\PsTan(\cl{A},x)\subseteq (\cS;\varepsilon)^\Theta_{0,\infty}$ for all $x\in \cl{A}$.}
\end{remark}

\begin{remark}\label{theta_justification} We view Theorems \ref{l:tan-perturb} and \ref{PsTanVsPerturb} as validations that defining the bilateral approximability of a set using relative Walkup-Wets distance is the ``correct" approach. Analogous statements using the relative Hausdorff distance instead of the relative Walkup-Wets distance fail for general local approximation classes. For example, consider $$S =\{0\}\cup \bigcup_{i\in \mathbb{Z}} \partial \ball(0, 2^i)\subset \RR^n\quad\text{and}\quad \mathcal {S} = \{\lambda S: \lambda > 0\}.$$ Then $\cS$ is a local approximation class, and moreover, $\cS$ is closed in $\CL{0}$. Let $r_i = 2^{-i} - 3^{-i}$ for all $i\geq 1$ and let $e$ be a unit vector. Construct $A\in\CL{0}$ by adding extra points to $S$: $$A = S\cup \{ r_i e \mid i \geq 1\}.$$ Intuitively, it is clear that tangent sets of $A$ at $0$ belong to $\cS$, because the extra point $r_ie$ added to $S$ at scale $2^{-i}$ to form $A$ become relatively closer and closer to $S$ as $i\rightarrow\infty$. Indeed, since $\Theta^S_A(0,r)\to 0$ as $r\to 0$, we know $\Tan(A,0) \subseteq \cS$ by Corollary \ref{TanVsTheta}. However, for the Hausdorff distance analogue of bilateral approximability, one can show that $$\inf_{S'\in \cS} \D{0}{r_i} [S', A] \geq 1/4\quad\text{for all }i\gg 1.$$ Therefore, there is no analogue of Theorem \ref{l:tan-perturb} for the relative Hausdorff distance.
\end{remark}

\section{Connectedness of the cone of tangent sets at a point}\label{s:connect}

Our goal in this section is to prove that the cone of tangent sets at a point is connected in a certain sense. This result (Theorem \ref{bcthm}) is motivated by an analogous statement for tangent measures established by Preiss \cite{Preiss}*{Theorem 2.6}. Also, see Kenig, Preiss and Toro \cite{KPT}*{Theorem 2.1 and Corollary 2.1}.

\begin{definition}[singular class] Let $\cT$ be a local approximation class. We define the \emph{(bilateral) singular class of $\cT$} to be the local approximation class $\cTp$ given by  $$\cTp=\{C\in\CL{0}: \liminf_{r\downarrow 0}\Theta^\cT_C(0,r)>0\}=\{C\in\CL{0}:\Tan(C,0)\cap\ccT=\emptyset\}.$$\end{definition}

\begin{definition}[separation at infinity] Let $\cT$ and $\cS$ be local approximation classes. We say that \emph{$\cT$ is (bilaterally) separated at infinity in $\cS$} if there exists $\phi>0$ such that $$\limsup_{r\uparrow\infty} \Theta^{\cT}_S(0,r)\geq \phi\quad\text{for all }S\in\cSccT.$$ To emphasize a choice of some $\phi>0$, we may say that \emph{$\cT$ is $\phi$ separated at infinity in $\cS$}.
\end{definition}

\begin{theorem}[connectedness of the cone of tangent sets at a point]  Suppose that $\cT$ is separated at  \label{bcthm} infinity in $\cS$. If $\Tan(A,x)\subseteq\cS$ for some $A\in\CL{x}$, then either $\Tan(A,x)\subseteq\ccT$ or $\Tan(A,x)\subseteq \cTp$. \end{theorem}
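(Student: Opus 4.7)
The plan is to argue by contradiction and exploit a key asymmetry: one half of the dichotomy is essentially automatic. Observe first that if $\Tan(A,x)\cap\ccT=\emptyset$, then $\Tan(A,x)\subseteq\cTp$ holds for free, since any $T\in\Tan(A,x)$ with $\liminf_{r\downarrow 0}\Theta^\cT_T(0,r)=0$ would give $\Tan(T,0)\cap\ccT\neq\emptyset$ by Corollary \ref{TanVsTheta0}, which via Lemma \ref{l:iterate} would contradict $\Tan(A,x)\cap\ccT=\emptyset$. So the theorem reduces to the following statement: under the hypotheses that $\Tan(A,x)\subseteq\cS$ and $\cT$ is $\phi$-separated at infinity in $\cS$, the existence of even one tangent set in $\ccT$ forces every tangent set of $A$ at $x$ to lie in $\ccT$.

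Suppose toward contradiction that both $T_\circ\in\Tan(A,x)\cap\ccT$ and $T_\star\in\Tan(A,x)\setminus\ccT$ exist. Using Lemma \ref{inv_of_tan} (dilation invariance of $\Tan(A,x)$) together with separation at infinity, which supplies some $R>0$ with $\Theta^\cT_{T_\star}(0,R)\geq\phi$, I would replace $T_\star$ by $T_\star/R$ and so assume without loss of generality that $\Theta^\cT_{T_\star}(0,1)\geq\phi$. Writing the ``blow-up trajectory'' $g(r):=(A-x)/r$ and $f(r):=\Theta^\cT_A(x,r)=\Theta^\cT_{g(r)}(0,1)$, I would then use Lemma \ref{ApproxVsLimits} twice: from $g(s_j)\to T_\circ\in\ccT$ to extract scales $\rho_n\downarrow 0$ with $f(\rho_n)\to 0$, and from $g(r_i)\to T_\star$ with $\Theta^\cT_{T_\star}(0,1)\geq\phi$ to extract $\sigma_n\downarrow 0$ with $f(\sigma_n)\geq\phi/2$. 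Since the trajectory $g$ is Attouch--Wets continuous and $r\mapsto rf(r)$ is non-decreasing (Lemma \ref{l:tmono}), between each interleaved pair $\rho_n<\sigma_n$ I would select a transition scale $\tau_n$ at which, along a subsequence, $g(\tau_n)\to T^\#\in\Tan(A,x)\subseteq\cS$ with $0<\Theta^\cT_{T^\#}(0,1)<\phi$; then $T^\#\in\cS\setminus\ccT$, so separation at infinity applied to $T^\#$ yields $R^\#$ with $\Theta^\cT_{T^\#}(0,R^\#)\geq\phi$. Iterating this construction on $T^\#/R^\#$, and invoking Lemma \ref{l:TanDclosed} (sequential compactness and closedness of $\Tan(A,x)$) together with Lemma \ref{l:iterate} (tangents of tangents are tangents), would ultimately produce a single tangent set whose small-scale approximability by $\cT$ must simultaneously be arbitrarily close to $0$ (inherited from $T_\circ\in\ccT$) and bounded below by $\phi$ (inherited from separation), an impossibility.

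The hardest step is the construction of the transition scale $\tau_n$ with the required moderate value of $f(\tau_n)$. The function $f$ is typically discontinuous: monotonicity of $rf(r)$ forces only upper semi-continuity from the left ($f(\tau^-)\leq f(\tau)$) and lower semi-continuity from the right ($f(\tau^+)\geq f(\tau)$), so upward jumps of $f$ could in principle skip any fixed target such as $\phi/10$. The resolution I envision is to set $\tau_n:=\sup\{r\in[\rho_{j(n)},\sigma_n]:f(r)\leq\phi/10\}$: the one-sided semicontinuity then pins $f(\tau_n^-)\leq\phi/10\leq f(\tau_n^+)$, after which Lemma \ref{ApproxVsLimits} applied to $g(\tau_n)\to T^\#$ and the scale-invariance identity $\Theta^\cT_{g(\tau_n)}(0,t)=f(t\tau_n)$ pins $\Theta^\cT_{T^\#}(0,r)$ in the desired moderate range for $r$ slightly above $1$. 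A secondary obstacle is verifying that the iterative descent terminates; here the uniformity of the constant $\phi$ across $\cS\setminus\ccT$ from separation at infinity is essential, since it prevents the descent from asymptotically approaching $\ccT$ without either reaching it or being ruled out.
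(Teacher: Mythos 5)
Your first paragraph (handling the case $\Tan(A,x)\cap\ccT=\emptyset$ via Corollary \ref{TanVsTheta0} and Lemma \ref{l:iterate}) matches the paper, and the setup of the contradiction --- two tangents $T_\circ\in\ccT$, $T_\star\notin\ccT$, rescaling $T_\star$ using separation at infinity, and tracking $f(r)=\Theta^\cT_A(x,r)$ along the blow-up trajectory --- is on track. The genuine gap is the endgame. What must be contradicted is that \emph{every} $S\in\cSccT$ satisfies $\limsup_{r\uparrow\infty}\Theta^\cT_S(0,r)\geq\phi$, i.e.\ a statement about the limit set at \emph{all sufficiently large scales}. Your transition scale pins $f$ at essentially one scale per step: with $\tau_n=\sup\{r\in[\rho_{j(n)},\sigma_n]:f(r)\leq\phi/10\}$ you get $f>\phi/10$ on $(\tau_n,\sigma_n]$ (a lower bound, which points in the wrong direction) and $f\leq\phi/10$ only along a sequence approaching $\tau_n$ from below, so the blow-up $T^\#=\lim g(\tau_n)$ is at best an element of $\cSccT$ with moderate approximability near unit scale --- a configuration perfectly consistent with separation at infinity. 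Applying separation at infinity to $T^\#$ merely produces yet another scale $R^\#$ where its approximability is large, and the proposed iteration on $T^\#/R^\#$ has no decreasing quantity and no termination mechanism: each iterate is again just some element of $\cS$, and nothing forces a single set to have small-scale approximability simultaneously near $0$ and $\geq\phi$. (Indeed, a tangent with $\liminf_{r\downarrow0}\Theta^\cT(0,r)=0$ only re-proves $\Tan(A,x)\cap\ccT\neq\emptyset$, which is already assumed; it contradicts nothing.)

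The missing idea, which is how the paper closes the argument, is to orient the scales the other way and control $f$ from above on \emph{expanding intervals} of scales. Arrange the good scales $t_i$ (where $f\to0$, coming from $T_\circ$) \emph{above} the bad scales $r_i$ (where $f\geq\phi/4$, coming from the rescaled $T_\star$), with $4r_i<t_i$, and choose the transition scale $s_i\in(2r_i,t_i)$ as the largest scale of the form $2^{-m}t_i$ at which $f$ first re-enters a fixed moderate dyadic band $I_{p+3}$ when descending from $t_i$. The ``weak jump bound'' coming from monotonicity (Lemma \ref{l:tmono}: $rf(r)$ is nondecreasing, so halving the scale can raise $f$ by at most one dyadic band) guarantees that such an $s_i$ exists, that $f\leq2^{-(p+2)}$ on \emph{all} of $(s_i,t_i]$, and that $t_i/s_i\to\infty$ because $f(t_i)\to0$. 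Then $S=\lim(A-x)/s_i\in\Tan(A,x)\subseteq\cS$ satisfies, via Lemma \ref{ApproxVsLimits}, $\Theta^\cT_S(0,2c)\geq2^{-(p+4)}>0$ (so $S\notin\ccT$) while $\Theta^\cT_S(0,r)\leq2^{-(p+1)}<\phi$ for \emph{every} $r\geq c/2$, precisely because $f$ stays small on the whole range $[s_i,t_i]$ whose ratio blows up; this $S$ is the set that violates separation at infinity. A single transition scale with a moderate value of $f$, plus an unspecified iteration, cannot be patched into this conclusion without essentially redoing that interval-control argument, so as written your proof does not go through.
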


\begin{proof} Suppose that $\cT$ is $\phi$ separated at infinity in $\cS$ for some $0<\phi\leq 1$, so that \begin{equation}\label{bcthm.1}\inf_{S\in \cSccT} \limsup_{r\to\infty} \Theta^\cT_S(0,r)\geq \phi.\end{equation} Let $A\in\CL{x}$ for some $x\in\RR^n$. The conclusion of the theorem follows immediately from two claims: \begin{equation}\label{e:t2t} \Tan(A,x)\subseteq\cS\text{ and }\Tan(A,x)\cap\ccT\neq\emptyset\Longrightarrow \Tan(A,x)\subseteq\ccT\end{equation} and
 \begin{equation}\label{e:tperp} \Tan(A,x)\subseteq\cSccT\Longrightarrow \Tan(A,x)\subseteq\cTp.\end{equation}

To start, let's prove (\ref{e:tperp}). Suppose for contradiction that $\Tan(A,x)\subseteq\cSccT$, but there exists $B\in \Tan(A,x)\setminus \cTp$. First off, $\liminf_{r\rightarrow 0}\Theta^\cT_B(0,r)=0$, since $B\not\in\cTp$. Thus, by Corollary \ref{TanVsTheta0}, there exists $C\in \Tan(B,0)\cap \ccT$. However, by Lemma \ref{l:iterate}, $C\in \Tan(A,x)\subseteq\cSccT$. We have asserted that there exists $C \in \ccT$ such that $C\in \cSccT$, which is absurd. Therefore, if $\Tan(A,x)\subseteq\cSccT$, then $\Tan(A,x)\subseteq \cTp$.

It remains to verify (\ref{e:t2t}). Suppose for contradiction that $\Tan(A,x)\subseteq\cS$ and $\Tan(A,x)\cap \ccT\neq \emptyset$, but  $\Tan(A,x)\cap\cSccT\neq \emptyset$. Choose tangent sets $T\in \Tan(A,x)\cap \ccT$ and  $R\in \Tan(A,x) \cap \cSccT$. Then there exist sequences $t_i\downarrow 0$ and $r_i\downarrow 0$ such that  $(A-x)/t_i\to T$ and $(A-x)/r_i\to R$. Passing to subsequences as needed, we may assume without loss of generality that $4r_i<t_i$ for all $i\geq 1$.

Now, since $R\in\cSccT$, there exists some scale $c>0$ such that $\Theta^\cT_R(0,c)\geq \phi/2$ by (\ref{bcthm.1}). Let us abbreviate \begin{equation*}\Theta^{\cT}_A(x,tc)=\Theta^{\cT}_{(A-x)/t}(0,c)=:\Theta(t)\quad\text{for all }t>0.\end{equation*} On one hand, since $(A-x)/t_i\rightarrow T$ and $T\in\ccT$,  $$\limsup_{i\rightarrow\infty}\Theta(t_i)\leq 2\Theta^\cT_T(0,2c)=0$$ by Lemma \ref{ApproxVsLimits}.  On the other hand, since $(A-x)/r_i\rightarrow R$ and $\Theta^\cT_R(0,c)\geq \phi/2$, $$\liminf_{i\rightarrow\infty} \Theta(2r_i)=\liminf_{i\rightarrow\infty}\Theta^\cT_{(A-x)/r_i}(0,2c)\geq \frac{1}{2} \Theta_R^\cT(0,c)\geq \frac{\phi}{4}$$ by Lemma \ref{l:theta_inv} and Lemma \ref{ApproxVsLimits}.

Note that $\Theta(t)\in[0,1]$ for all $t>0$ by Lemma \ref{l:tsize}. In fact, $\Theta(t)\in(0,1]$ for all $t>0$. (Otherwise, if $\Theta(t_0)=0$ for some $t_0>0$, then $\Theta_A(x,tc)\leq (t_0/t)\Theta_A(x,t_0c)=0$ for all $t\leq t_0$ by monotonicity, forcing $\Tan(A,x)\subseteq\ccT$ by Corollary \ref{TanVsTheta}.) For all $j\geq 1$, set
$$I_j = (2^{-j}, 2^{-j+1}],\quad I_j^+ = \bigcup_{k=j}^\infty I_k = (0,2^{-j+1}],\quad\text{and }I_j^-=\bigcup_{k=1}^j I_k=(2^{-j},1].$$ For all $t>0$, there exists a unique $j\geq 1$ with $\Theta(t)\in I_j$. By monotonicity, we obtain the \emph{weak jump bound}:
\begin{equation}\label{WJB}
\Theta(t)\in I_k\Longrightarrow \Theta(\alpha t)\in I^+_{k-1}\text{ for all }\alpha\in [1/2,1]\text{ and }t >0.\end{equation}
 Let $p\geq 1$ be the unique integer such that $\phi \in I_p$.

We now aim to construct a sequence $s_i\to 0$ such that $(A-x)/s_i$ converges to some tangent set of $A$ at $x$ that lies in neither $\ccT$ nor $\cSccT$.

\begin{claim}For all $i\gg 1$, there exists $s_i\in (2 r_i, t_i)$ such that $\Theta(s_i)\in I_{p+3}$, $\Theta(t)\in I^+_{p+3}$ for all $t\in [s_i, t_i]$  and $\lim_{i\rightarrow\infty} t_i/ s_i=\infty$.\end{claim}
\begin{proof}[Proof of Claim] Choose $i_0$ large enough so that for $i\geq i_0$, $\Theta(2r_i)\geq\phi/4$ and $\Theta(t_i)\leq \phi/16$. Let $k_i$ be the unique integer such that $\Theta(t_i)\in I_{k_i}$. Note that $\Theta(t_i)\in I_{p+4}^+$ (since $\phi\in I_p$). Fix $i\geq i_0$ and consider the sequence $2^{-m} t_i$ for $m\geq 0$. Because $4r_i<t_i$, there exists an integer $M\geq 1$ such that $2^{-M-1}t_i \leq 2 r_i < 2^{-M} t_i$. By monotonicity, $$\frac{\phi}{4}\leq \Theta(2r_i)\leq 2\,\Theta(2^{-M}t_i).$$ Hence $\phi/8 \leq \Theta(2^{-M}t_i)\in I_{p+3}^-$. Therefore, by the weak jump bound (\ref{WJB}), there exists at least one integer $m\in \{1,\ldots, M\}$ such that $\Theta(2^{-m} t_i) \in I_{p+3}$. Define $m_i$ to be the smallest integer with this property and put $s_i := 2^{-m_i} t_i$.

By construction, $s_i\in(2r_i,t_i)$ and $\Theta(s_i)\in I_{p+3}$. Suppose $t\in (s_i, t_i]$. Then there exists an integer $m$ with $0\leq m<m_i$ such that $t\in (2^{-(m+1)}t_i, 2^{-m}t_i]$. By the minimality of $m_i$, $\Theta(2^{-m}t_i)\in I_{p+4}^+$. Hence, by the weak jump bound, $\Theta(t)\in I_{p+3}^+$ for all $t\in(s_i,t_i]$.

Finally, we note that $t_i/s_i = 2^{m_i}$. By the weak jump bound, $m_i\geq k_i - (p+3)$. Because $\Theta(t_i)\to 0$, we  have $k_i\to \infty$ as $i\rightarrow\infty$. Therefore, $t_i/s_i\to \infty$ as $i\rightarrow\infty$.\end{proof}

We now return to the proof of (\ref{e:t2t}). Passing to a subsequence, we may assume that $(A-x)/s_i\to S$ for some tangent set $S\in \Tan(A,x)\subseteq\cS$.
By the claim, for all $i\gg 1$, $\Theta^\cT_{(A-x)/s_i}(0,c)=\Theta(s_i)\in (2^{-(p+3)}, 2^{-(p+2)}]$. Hence, by Lemma \ref{ApproxVsLimits}, \begin{equation*} \Theta^{\cT}_S(0,2c) \geq \frac{1}{2}\limsup_{i\rightarrow\infty} \Theta_{(A-x)/s_i}^\cT(0,c)\geq 2^{-(p+4)}.\end{equation*} It follows that $S\not\in\ccT$. On the other hand, by Lemma \ref{ApproxVsLimits},  \begin{equation*} \Theta^{\cT}_S(0,r)\leq 2\liminf_{i\rightarrow\infty} \Theta^\cT_{(A-x)/s_i}(0,2r) =2\liminf_{i\rightarrow\infty}\Theta((2r/c)s_i)\leq 2^{-(p+1)}\end{equation*} for all $r\geq c/2$, since $\Theta(t)\in (0,2^{-(p+2)}]$ for all $t\in[s_i,t_i]$ and $\liminf_{i\rightarrow\infty} t_i/s_i>2 r/c$. Thus, $S\in\cSccT$, but $\limsup_{r\rightarrow\infty}\Theta^{\cT}_S(0,r)\leq 2^{-(p+1)}<\phi$. This violates (\ref{bcthm.1}). Therefore, if $\Tan(A,x)\subseteq\cS$ and $\Tan(A,x)\cap\ccT\neq\emptyset$, then $\Tan(A,x)\subseteq\ccT$.
\end{proof}

\begin{corollary} \label{c:bc} Let $\cT$ and $\cS$ be local approximation classes. If $\cT$ is separated at  infinity in $\cS$ and $\ccT$ is translation invariant, then for all $A\in\CL{x}$, \begin{equation*}\bPsTan(A,x)\subseteq\cS\text{ and }\bPsTan(A,x)\cap \ccT\neq\emptyset\Longrightarrow \bPsTan(A,x)\subseteq\ccT.\end{equation*} \end{corollary}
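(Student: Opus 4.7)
The plan is to deduce the corollary directly from Theorem \ref{bcthm}, using Lemma \ref{l:bdtan} as the bridge between bounded pseudotangents and tangents and exploiting translation invariance of $\ccT$ in both directions.

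First I would verify that $\Tan(A,x)\subseteq \bPsTan(A,x)$, since the constant choice $x_i=x$ in the definition of a tangent set gives direction set $\{0\}$, which is bounded. Hence the hypothesis $\bPsTan(A,x)\subseteq\cS$ forces $\Tan(A,x)\subseteq\cS$. To transfer the nonempty-intersection hypothesis, I would pick $B\in \bPsTan(A,x)\cap\ccT$ and apply Lemma \ref{l:bdtan} to obtain $C\in\Tan(A,x)$ and $y\in C$ with $B=C-y$. Since $C\in\CL{0}$, we have $0\in C$, and therefore $-y=0-y\in C-y=B$. Translation invariance of $\ccT$ applied to $B\in\ccT$ at the point $-y\in B$ then yields $C=B-(-y)=B+y\in\ccT$, so $\Tan(A,x)\cap\ccT\neq\emptyset$. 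With both hypotheses of Theorem \ref{bcthm} now in place, that theorem gives $\Tan(A,x)\subseteq\ccT$.

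To propagate this conclusion back to bounded pseudotangents, I would take an arbitrary $B'\in\bPsTan(A,x)$, appeal to Lemma \ref{l:bdtan} once more to write $B'=C'-y'$ with $C'\in\Tan(A,x)\subseteq\ccT$ and $y'\in C'$, and then invoke translation invariance of $\ccT$ at the point $y'\in C'$ to conclude $B'=C'-y'\in\ccT$. The only point requiring care, and the reason $\ccT$ must be translation invariant rather than merely closed, is that translation invariance is invoked in two opposite directions: first to lift the given element $B\in\ccT$ \emph{up} to its preimage tangent set $C$ (via the observation that $0\in C$ forces $-y\in B$), and then to push arbitrary tangent sets back \emph{down} to their translates. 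Aside from this bookkeeping, the argument is essentially mechanical given Lemma \ref{l:bdtan} and Theorem \ref{bcthm}; no new estimates or subsequence extractions are needed.
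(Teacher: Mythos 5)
Your proposal is correct and follows essentially the same route as the paper's own proof: use Lemma \ref{l:bdtan} together with translation invariance of $\ccT$ to lift the element of $\bPsTan(A,x)\cap\ccT$ to a tangent set in $\ccT$, apply Theorem \ref{bcthm} to get $\Tan(A,x)\subseteq\ccT$, and then push back down to all bounded pseudotangents via Lemma \ref{l:bdtan} and translation invariance again. The only difference is that you make explicit the (easy) inclusion $\Tan(A,x)\subseteq\bPsTan(A,x)$ and the observation $-y\in B$, which the paper records more tersely.
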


\begin{proof} Assume $\cT$ is separated at infinity in $\cS$ and $\ccT$ is translation invariant. Suppose $\bPsTan(A,x)\subseteq\cS$ and $\bPsTan(A,x)\cap \ccT\neq\emptyset$. Choose $B\in\bPsTan(A,x)\cap\ccT$. By Lemma \ref{l:bdtan}, there exists $C\in \Tan(A,x)$ and $y\in C$ such that $B=C-y$. Equivalently, $C=B-(-y)$ for some $-y\in B$. Hence $C\in\ccT$, because  $\ccT$ is translation invariant. Thus, $\Tan(A,x)\subseteq\bPsTan(A,x)\subseteq\cS$ and $\Tan(A,x)\cap\ccT\neq\emptyset$. Because $\cT$ is separated at infinity in $\cS$, we conclude that $\Tan(A,x)\subseteq\ccT$ by Theorem \ref{bcthm}. Therefore, $\bPsTan(A,x)\subseteq\ccT$, because every bounded pseudotangent set is the translate of a tangent set (Lemma \ref{l:bdtan}) and $\ccT$ is translation invariant.\end{proof}

\begin{remark}\label{r:ubad} The analogue of Theorem \ref{bcthm} and Corollary \ref{c:bc} for arbitrary (unbounded) pseudotangent sets is false. For example, consider $$A=\{(x,y)\in\RR^2:x=0\text{ or }y=0\},$$ which is the union of the $x$-axis $X=\{(x,0):x\in\RR\}$ and the $y$-axis $Y=\{(0,y):y\in\RR\}$ in the plane. Let $\cS$ consist of $X$, $Y$, and all translates of $A$. One can readily check that $\cS$ is a closed, translation invariant local approximation class and $\Theta^\cS_A(x,r)=0$ for all $x\in A$ and $r>0$. Hence $\PsTan(A,x)\subseteq\cS$ by Corollary \ref{PsTanVsApprox}. It is also easy to check that $\cT:=\{X\}$ is separated at infinity in $\cS$ and $\cT$ is translation invariant. Let $x_i=(1/i,0)$ and $r_i=1/i^2$. Then $X=\lim_{i\rightarrow\infty}(A-x_i)/r_i$ is an unbounded psuedotangent of $A$ at the origin. Thus, $\PsTan(A,(0,0))\cap \cT\neq\emptyset$, but $\PsTan(A,(0,0))\not\subseteq \cT$.   \end{remark}

We end the section with several criteria for checking separation at infinity.

\begin{lemma} \label{l:infsep} Let $\cT$ and $\cS$ be local approximation classes and let $\phi>0$. If for all $S\in\cS$ there is a function $\Phi_S:(0,1)\rightarrow(0,\infty)$ with $\liminf_{s\rightarrow 0+} \Phi_S(s)=0$ such that \begin{equation*}\Theta^{\cT}_S(0,r)<\phi \Longrightarrow \Theta^{\cT}_S(0,sr)< \Phi_S(s)\text{ for all }s\in(0,1),\end{equation*} then $\mathcal{T}$ is $\phi$ separated at infinity in $\mathcal{S}$.
\end{lemma}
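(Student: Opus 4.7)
The plan is to prove the contrapositive: assume $S\in\cS$ satisfies $\limsup_{r\uparrow\infty}\Theta^\cT_S(0,r)<\phi$, and then deduce that $S\in\ccT$. This reduces the lemma to the bookkeeping observation that the hypothesized implication lets us transport information from large scales down to arbitrarily small scales.

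First, the assumption on the limsup gives some $R>0$ such that $\Theta^\cT_S(0,r)<\phi$ for every $r\geq R$. For each such $r$ the hypothesis of the lemma then yields $\Theta^\cT_S(0,sr)<\Phi_S(s)$ for every $s\in(0,1)$. Now fix an arbitrary scale $r_0>0$ and, for each sufficiently small $s\in(0,1)$ (namely $s\leq r_0/R$, so that $r_0/s\geq R$), apply the previous line with $r=r_0/s$. This gives
\begin{equation*}
\Theta^\cT_S(0,r_0)=\Theta^\cT_S(0,s\cdot(r_0/s))<\Phi_S(s).
\end{equation*}
Taking the liminf as $s\downarrow 0$ and using $\liminf_{s\to 0+}\Phi_S(s)=0$, I conclude that $\Theta^\cT_S(0,r_0)=0$.

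Since $r_0>0$ was arbitrary, this means $S\in(\cT;0)^\Theta_{0,\infty}$, which by Lemma~\ref{l:enlarge} equals $\ccT$. Thus $S\in\ccT$, completing the contrapositive: every $S\in\cSccT$ must satisfy $\limsup_{r\uparrow\infty}\Theta^\cT_S(0,r)\geq\phi$, which is precisely the definition of $\cT$ being $\phi$ separated at infinity in $\cS$.

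There is no real obstacle here; the whole content of the argument is the change of variables $r\mapsto r_0/s$, which converts ``eventual smallness of $\Theta^\cT_S(0,r)$ at large $r$'' into ``smallness at a fixed scale $r_0$'' via the hypothesized modulus $\Phi_S$, and the identification $(\cT;0)^\Theta_{0,\infty}=\ccT$ from Lemma~\ref{l:enlarge} supplied earlier.
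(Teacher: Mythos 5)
Your proof is correct and rests on the same mechanism as the paper's: apply the hypothesized implication at the rescaled radius $r_0/s$ (valid once $r_0/s$ exceeds the threshold where $\Theta^\cT_S(0,\cdot)<\phi$), let $s\downarrow 0$ along a sequence with $\Phi_S(s)\to 0$, and use the identification $(\cT;0)^\Theta_{0,\infty}=\ccT$ from Lemma~\ref{l:enlarge}. The only difference is presentational: you argue by contrapositive, showing $\Theta^\cT_S(0,r_0)=0$ at every scale, whereas the paper derives a contradiction at a single scale $r_1$ where $\Theta^\cT_S(0,r_1)>0$ by choosing $s$ with $\Phi_S(s)\leq\Theta^\cT_S(0,r_1)$.
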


\begin{proof}
	Suppose for contradiction that there exists $S\in\cSccT$ and $r_0>0$ such that $\Theta^\cT_S(0,r) < \phi$ for all $r\geq r_0$. Because $S\not\in\ccT$, there exists $r_1>r_0$ such that $\Theta^\cT_S(0,r_1)>0$. Since $\liminf_{s\rightarrow 0+}\Phi_S(s)=0$, we can choose $s<1$ such that $\Phi(s) \leq \Theta^{\cT}_S(0,r_1)$. Hence $\Theta^\cT_S(0,r_1) = \Theta^\cT_S(0, s(r_1/s)) < \Phi(s) \leq \Theta^{\cT}_S(0,r_1)$, which is absurd.
\end{proof}

\begin{lemma} \label{l:infsep2} Let $\cT$ and $\cS$ be local approximation classes and let $\phi>0$. If for all $S\in\cS$ there is a function $\Phi_S:(0,1)\rightarrow(0,\infty)$ with $\lim_{s\rightarrow 0+} \Phi_S(s)=0$ such that \begin{equation*}\Theta^{\cT}_S(0,r)<\phi \Longrightarrow \Theta^{\cT}_S(0,sr)< \Phi_S(s)\text{ for all }s\in(0,1),\end{equation*} then $\inf_{S\in\cSccT}\liminf_{r\uparrow \infty}\Theta_S^{\cT}(0,r)\geq \phi$, and thus, $\mathcal{T}$ is $\phi$ separated at infinity in $\mathcal{S}$.
\end{lemma}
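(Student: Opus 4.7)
The plan is to argue by contradiction. Suppose there exists $S \in \cS\setminus\ccT$ with $\liminf_{r\uparrow\infty}\Theta_S^{\cT}(0,r) < \phi$. My goal will be to show $\Theta_S^{\cT}(0,r) = 0$ for every $r > 0$, which by Lemma \ref{l:enlarge} (specifically the identity $(\cT;0)^\Theta_{0,\infty} = \ccT$) forces $S \in \ccT$, contradicting $S \in \cS\setminus\ccT$.

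First I would extract a sequence $r_n \uparrow \infty$ with $\Theta_S^{\cT}(0,r_n) < \phi$ for every $n$; such a sequence exists because the liminf is strictly less than $\phi$. Then, fixing an arbitrary $r > 0$, for every $n$ large enough that $r_n > r$ I set $s_n = r/r_n \in (0,1)$, so that $s_n r_n = r$ and $s_n \downarrow 0$. Applying the hypothesis at scale $r_n$ with ratio $s_n$ gives
\begin{equation*}
\Theta_S^{\cT}(0,r) \;=\; \Theta_S^{\cT}(0, s_n r_n) \;<\; \Phi_S(s_n).
\end{equation*}
Since $s_n \downarrow 0$ and $\lim_{s\downarrow 0}\Phi_S(s) = 0$, letting $n \to \infty$ yields $\Theta_S^{\cT}(0,r) = 0$. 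As $r > 0$ was arbitrary, this produces the contradiction, establishing $\inf_{S\in\cSccT}\liminf_{r\uparrow\infty}\Theta_S^{\cT}(0,r) \geq \phi$. The secondary conclusion that $\cT$ is $\phi$ separated at infinity in $\cS$ is then immediate, since $\limsup_{r\uparrow\infty} \geq \liminf_{r\uparrow\infty}$ at every $S$.

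The key point distinguishing this from Lemma \ref{l:infsep} is that one must control $\Theta_S^{\cT}(0,r)$ along the \emph{specific} sequence $s_n = r/r_n$ rather than having the freedom to pick a single small $s$ where $\Phi_S$ happens to be small. The upgraded hypothesis $\lim_{s\downarrow 0}\Phi_S(s) = 0$ (in place of $\liminf$) is precisely what allows $\Phi_S(s_n) \to 0$ along this forced sequence, so there is no real obstacle once one realizes that the same scaling trick used in Lemma \ref{l:infsep}, applied along the full sequence $r_n$ rather than a single scale, delivers the stronger conclusion. No appeal to tangent sets or Attouch--Wets limits is needed; this is a purely scale-invariance argument.
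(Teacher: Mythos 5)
Your proof is correct and follows essentially the same route as the paper's: a contradiction obtained by applying the scaling hypothesis along a sequence $r_n\uparrow\infty$ where $\Theta^{\cT}_S(0,r_n)<\phi$, using $\lim_{s\downarrow 0}\Phi_S(s)=0$. The only cosmetic difference is that you show $\Theta^{\cT}_S(0,r)=0$ for every $r>0$ and then invoke $(\cT;0)^\Theta_{0,\infty}=\ccT$ from Lemma \ref{l:enlarge}, whereas the paper fixes a single scale $r_j$ with $\Theta^{\cT}_S(0,r_j)>0$ (which exists for exactly the same reason) and derives the strict self-inequality $\Theta^{\cT}_S(0,r_j)<\Theta^{\cT}_S(0,r_j)$.
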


\begin{proof}Suppose for contradiction that there exists $S\in\cSccT$ and a sequence $r_i\rightarrow\infty$ such that $\Theta^{\cT}_S(0,r_i)<\phi$ for all $i\geq 1$. Because $S\not\in\ccT$, there exists $j\geq 1$ such that $\delta=\Theta^{\cT}_S(0,r_j)>0$. Since $\lim_{s\rightarrow 0+}\Phi_S(s)=0$, we can find $k\geq j$ such that $\Phi(r_j/r_k)\leq \delta$. Hence $\Theta^{\cT}_S(0,r_j)=\Theta^{\cT}_S(0,(r_j/r_k)r_k)<\Phi_S(r_j/r_k)\leq \Theta^{\cT}_S(0,r_j)$, which is absurd. \end{proof}

The following property, which we informally call ``detectability", is a uniform version of the criterion for separation at infinity in Lemma \ref{l:infsep}. In the next section, we shall see that where separation at infinity gives pointwise information about the tangents of sets, detectability yields locally uniform information about the tangents of sets.

\begin{definition}[$\cT$ point detection property] \label{d:Tpdp} Let $\cT$, $\cS$ be local approximation classes. We say that \emph{$\cT$ points are detectable in $\cS$} if there exist a constant $\phi>0$ and a function $\Phi:(0,1)\rightarrow(0,\infty)$ with $\liminf_{s\rightarrow 0+}\Phi(s)=0$ such that if $S\in\cS$ and $\Theta^\cT_S(0,r)<\phi$, then $\Theta^\cT_S(0,sr)<\Phi(s)$ for all $s\in(0,1)$. To emphasize a choice of $\phi$ and $\Phi$, we may say that \emph{$\cT$ points are $(\phi,\Phi)$ detectable in $\cS$}. \end{definition}

\begin{example} Given $n\geq 2$ and $d\geq 1$, let $\cH(n,d)$ denote the collection of zero sets of nonconstant harmonic polynomials $p:\RR^n\rightarrow\RR$ of degree at most $d$ such that $p(0)=0$. Then $\cG(n,n-1)$ points (``flat points") are $(\delta_{n,d}, C_{n,d}s)$ detectable in $\cH(n,d)$ by \cite{Badger3}*{Theorem 1.4}.\end{example}

Detectability implies separation at infinity in a stronger sense than in Lemma \ref{l:infsep}.

\begin{lemma} \label{l:infsep3} If $\cT$ points are $(\phi,\Phi)$ detectable in $\cS$, then $\cT$ is $\phi$ separated at infinity in $\ccS$.\end{lemma}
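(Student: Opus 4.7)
My plan is to argue by contradiction: approximate a putative counterexample $S \in \ccS \setminus \ccT$ by elements of $\cS$, apply the detectability hypothesis pointwise to those approximants, and transfer the conclusion back to $S$ via Lemma \ref{ApproxVsLimits}. The detectability hypothesis only directly controls members of $\cS$, so the whole task is to promote it to the Attouch--Wets closure $\ccS$ while keeping the separation constant $\phi$ intact.

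Suppose for contradiction that there exists $S \in \ccS \setminus \ccT$ with $\limsup_{r\uparrow\infty}\Theta^\cT_S(0,r) < \phi$. I first extract the relevant data: choose $\eta \in (0,\phi)$ and $r_0 > 0$ so that $\Theta^\cT_S(0,r) \leq \phi-\eta$ for all $r \geq r_0$. Because $S \notin \ccT$, Lemma \ref{l:enlarge} applied to the class $\cT$ in place of $\cS$ (using $\ccT = (\cT;0)^\Theta_{0,\infty}$) supplies a scale $r_1 > 0$ with $\alpha := \Theta^\cT_S(0,r_1) > 0$. Fix $\varepsilon > 0$ small enough that $(1+\varepsilon)(\phi-\eta) < \phi$, and select a sequence $S_k \in \cS$ with $S_k \to S$ in $\CL{0}$.

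Next I transfer the large-scale bound to the approximants. The lower half of Lemma \ref{ApproxVsLimits} gives, for each fixed $r \geq r_0$,
\[
\limsup_{k\to\infty}\Theta^\cT_{S_k}(0,r/(1+\varepsilon)) \leq (1+\varepsilon)\,\Theta^\cT_S(0,r) < \phi,
\]
so for all sufficiently large $k$ (depending on $r$) one has $\Theta^\cT_{S_k}(0,r/(1+\varepsilon)) < \phi$. Applying the detectability hypothesis to each such $S_k \in \cS$ then yields
\[
\Theta^\cT_{S_k}(0,sr/(1+\varepsilon)) < \Phi(s) \quad \text{for every } s \in (0,1).
\]
Feeding this into the upper half of Lemma \ref{ApproxVsLimits} with scale $sr/(1+\varepsilon)^2$ produces
\[
\Theta^\cT_S(0,sr/(1+\varepsilon)^2) \leq (1+\varepsilon)\,\liminf_{k\to\infty}\Theta^\cT_{S_k}(0,sr/(1+\varepsilon)) \leq (1+\varepsilon)\,\Phi(s)
\]
for every $s \in (0,1)$ and $r \geq r_0$.

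To close the argument, I rescale onto the witness scale $r_1$. Taking $r = r_1(1+\varepsilon)^2/s$, which exceeds $r_0$ whenever $s \leq r_1(1+\varepsilon)^2/r_0$, the preceding inequality specializes to $\alpha = \Theta^\cT_S(0,r_1) \leq (1+\varepsilon)\,\Phi(s)$. Since $\liminf_{s\downarrow 0}\Phi(s) = 0$, I may choose $s$ arbitrarily small and obtain $\alpha \leq 0$, contradicting $\alpha > 0$. The only delicate point in this plan is bookkeeping the two compounding $(1+\varepsilon)$ factors from Lemma \ref{ApproxVsLimits}, one incurred at each passage between $S$ and its approximants; fixing $\varepsilon$ at the outset so that $(1+\varepsilon)(\phi-\eta) < \phi$, and exploiting the freedom to let $r \to \infty$ as $s \downarrow 0$ while keeping $r \geq r_0$, is what makes the scheme go through.
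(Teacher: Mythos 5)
Your proof is correct and follows essentially the same route as the paper: argue by contradiction, approximate $S\in\ccS\setminus\ccT$ by sets $S_k\in\cS$, use Lemma \ref{ApproxVsLimits} to verify the hypothesis $\Theta^\cT_{S_k}(0,\cdot)<\phi$ at large scales, apply detectability to each $S_k$, and transfer the improved bound back to $S$. The only cosmetic difference is in the back-transfer: the paper fixes $s$ with $\Phi(s)\leq\Theta^\cT_S(0,r_1)/4$ and uses a quantified convergence rate with the weak quasitriangle inequality, whereas you invoke the other half of Lemma \ref{ApproxVsLimits} and let $s\downarrow 0$ along a sequence realizing $\liminf_{s\to 0^+}\Phi(s)=0$.
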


\begin{proof} Suppose for contradiction that $\cT$ points are $(\phi,\Phi)$ detectable in $\cS$, but there exists $S\in\ccS\setminus \ccT$  such that $\limsup_{r\rightarrow\infty} \Theta^\cT_S(0,r)<\phi$. Then there exist $\delta>0$ and $r_0>0$ such that $\Theta_S^\cT(0,r)< \phi/(1+\delta)$ for all $r\geq r_0$. Since $S\in\ccS\setminus\ccT$, there exists a sequence $S_i\in\cSccT$ with $S_i\rightarrow S$ in $\CL{0}$. Passing to a subsequence, we may assume that $\mD{0}{i}[S_i,S]\leq 1/i^2$ for all $i\geq 1$. Then, by monotonicity, $$\mD{0}{r}[S_i,S] \leq \frac{1}{ir}\quad\text{for all }i\geq r.$$ Because $S\not\in\cT$, there exists $r_1>r_0$ such that $\Theta_S(0,r_1)>0$. Pick any $s<1$ such that $\Phi(s)\leq \Theta_S(0,r_1)/4$. By the weak quasitriangle inequality, $$\Theta^\cT_S(0,r_1)\leq 2\Theta^\cT_{S_i}(0,2r_1)+\frac{1}{ir_1}
= 2\Theta_{S_i}\left(0,s\frac{2r_1}{s}\right)+\frac{1}{ir_1}\quad\text{for all }i\geq 2r_1.$$ Note that $$\limsup_{i\rightarrow\infty} \Theta^{\cT}_{S_i}\left(0,\frac{2r_1}{s}\right)
\leq (1+\delta)\Theta^{\cT}_S\left(0,(1+\delta)\frac{2r_1}{s}\right)<\phi$$ by Lemma \ref{ApproxVsLimits}. Thus, by the detectability hypothesis, $$\Theta^\cT_S(0,r_1)\leq 2\Theta_{S_i}\left(0,s\frac{2r_1}{s}\right)+\frac{1}{ir_1}< 2\Phi(s)+\frac{1}{ir_1}\leq \frac{1}{2}\Theta_S(0,r_1)+\frac{1}{ir_1}\quad\text{for all }i\gg 2r_1.$$ We have reached a contradiction. Therefore, if $\cT$ points are $(\phi,\Phi)$ detectable in $\cS$, then $\cT$ is $\phi$ separated at infinity in $\ccS$. \end{proof}

The terminology in Definition \ref{d:Tpdp} is justified by the following statement, whose proof demonstrates the utility of Theorem \ref{bcthm}.

\begin{lemma}\label{l:Tpoint} If $\cT$ points are $(\phi,\Phi)$ detectable in $\cS$, $S\in\ccS$, and $\Theta_S^{\cT}(0,r)<\phi$ for some $r>0$, then $0$ is a $\cT$ point of $S$.
\end{lemma}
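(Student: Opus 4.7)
The goal is to show $\Theta^\cT_S(0,\rho)\to 0$ as $\rho\downarrow 0$. By Corollary \ref{TanVsTheta} (applied with $A=S$, which is closed since $S\in\ccS\subseteq\CL{0}$), this is equivalent to $\Tan(S,0)\subseteq\ccT$. My plan is to prove this containment by combining the connectedness theorem (Theorem \ref{bcthm}) with a soft argument that uses detectability to guarantee at least one tangent set lies in $\ccT$.

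First, I would verify that $\Tan(S,0)\subseteq\ccS$. Given $B\in\Tan(S,0)$, say $B=\lim_k S/r_k$ with $r_k\downarrow 0$, pick any sequence $S_i\in\cS$ with $S_i\to S$ in $\CL{0}$; since $\cS$ is a cone, $S_i/r_k\in\cS$, and dilations are continuous in the Attouch--Wets topology, so $S/r_k=\lim_i S_i/r_k\in\ccS$. Because $\ccS$ is closed in $\CL{0}$, the limit $B$ lies in $\ccS$. Next, by Lemma \ref{l:infsep3} the detectability hypothesis yields that $\cT$ is $\phi$ separated at infinity in $\ccS$, so Theorem \ref{bcthm} (applied with the local approximation class $\ccS$ in place of $\cS$ and with $A=S$, $x=0$) gives the dichotomy
\begin{equation*}
\Tan(S,0)\subseteq \ccT \quad\text{or}\quad \Tan(S,0)\subseteq \cTp.
\end{equation*}

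To rule out the second alternative, I will show $\Tan(S,0)\cap\ccT\neq\emptyset$, which is incompatible with $\Tan(S,0)\subseteq\cTp$ since $C\in\ccT$ forces $\Theta^\cT_C(0,\rho)=0$ for all $\rho>0$ by Lemma \ref{l:enlarge}, hence $C\notin\cTp$. By Corollary \ref{TanVsTheta0}, it suffices to prove $\liminf_{\rho\downarrow 0}\Theta^\cT_S(0,\rho)=0$. Here is where detectability enters quantitatively: choose $\varepsilon>0$ small enough that $(1+\varepsilon)\Theta^\cT_S(0,r)<\phi$, and write $r'=r/(1+\varepsilon)$. Lemma \ref{ApproxVsLimits} applied to $S_i\to S$ yields
\begin{equation*}
\limsup_{i\to\infty}\Theta^\cT_{S_i}(0,r')\leq (1+\varepsilon)\Theta^\cT_S(0,r)<\phi,
\end{equation*}
so $\Theta^\cT_{S_i}(0,r')<\phi$ for all $i$ large. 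Detectability (applied to $S_i\in\cS$) then gives $\Theta^\cT_{S_i}(0,sr')<\Phi(s)$ for every $s\in(0,1)$ and large $i$. A second application of Lemma \ref{ApproxVsLimits}, with a small parameter $\varepsilon'>0$, produces
\begin{equation*}
\Theta^\cT_S\!\left(0,\tfrac{sr'}{1+\varepsilon'}\right)\leq (1+\varepsilon')\liminf_{i\to\infty}\Theta^\cT_{S_i}(0,sr')\leq (1+\varepsilon')\Phi(s).
\end{equation*}
Since $\liminf_{s\downarrow 0}\Phi(s)=0$, I may pick a sequence $s_k\downarrow 0$ with $\Phi(s_k)\to 0$; then the scales $t_k=s_k r'/(1+\varepsilon')\downarrow 0$ satisfy $\Theta^\cT_S(0,t_k)\to 0$, proving $\liminf_{\rho\downarrow 0}\Theta^\cT_S(0,\rho)=0$ as required.

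Putting the pieces together: the dichotomy forces $\Tan(S,0)\subseteq\ccT$, whence Corollary \ref{TanVsTheta} delivers that $0$ is a $\cT$ point of $S$. The only place where the argument is delicate is the transfer step, because detectability is stated only for sets in $\cS$ while our hypothesis is for $S\in\ccS$; the main obstacle is handling this gap using only the weaker property $\liminf_{s\downarrow 0}\Phi(s)=0$ (as opposed to a genuine limit), which is why I extract a subsequence $s_k$ before passing to $S$ through Lemma \ref{ApproxVsLimits} rather than attempting a uniform bound.
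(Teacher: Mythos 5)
Your proof is correct and follows essentially the same route as the paper: both arguments transfer the hypothesis to an approximating sequence $S_i\in\cS$, apply detectability there, and pass the improved estimate back to $S$ to obtain $\liminf_{\rho\downarrow 0}\Theta^{\cT}_S(0,\rho)=0$, then conclude via Corollary \ref{TanVsTheta0}, Theorem \ref{bcthm}, and Corollary \ref{TanVsTheta}. The only difference is cosmetic: you package the two transfer steps through Lemma \ref{ApproxVsLimits}, whereas the paper runs the strong/weak quasitriangle inequalities and monotonicity by hand at explicitly chosen scales.
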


\begin{proof} Assume $\cT$ points are $(\phi,\Phi)$ detectable in $\cS$. Then $\cT$ is separated at infinity in $\ccS$ by Lemma \ref{l:infsep3}. Suppose $S\in\ccS$ and $\Theta_S^{\cT}(0,r)<\phi$ for some $r>0$. Then we can find $\eta>0$ such that $\Theta_S^{\cT}(0,r)<\phi/(1+\eta)$. Since $S\in\ccS$, there exists a sequence $S_i\in\cS$ such that $S_i\rightarrow S$ in $\CL{\RR^n}$.

Let $\sigma>0$ be given. Choose $s<\min\{\sigma,1\}$ such that $\Phi(s)\leq \sigma$ and fix a parameter $\varepsilon\in(0,\sigma]$ to be specified later. Since $S_i\rightarrow S$, we can pick  $j\geq 1$ large enough so that \begin{equation*}\mD{0}{r/(1+s\varepsilon)}[S,S_j]<s\varepsilon\quad\text{and}\quad \mD{0}{2r/(1+s\varepsilon)}[S,S_j]<\frac{\varepsilon}{2}.\end{equation*} Since $\mud{0}{r/(1+s\varepsilon)}(S_j,S)<s\varepsilon$ and $0\in S$, the strong quasitriangle inequality implies \begin{align*} \Theta_{S_j}^\cT\left(0,\frac{r}{1+s\varepsilon}\right) &\leq (1+s\varepsilon)\Theta_S^{\cT}(0,r) + 2\mD{0}{2r/(1+s\varepsilon)}[S,S_j]\\
&< \left(1+s\varepsilon\right)\frac{\phi}{1+\eta} + \varepsilon.\end{align*} We now specify that $\varepsilon$ was chosen so that
$\phi\left(1+s\varepsilon\right)/(1+\eta)+ \varepsilon\leq \phi.$ Then, since $S_j\in \cS$ and  $\Theta_{S_j}^\cT(0,r/(1+s\varepsilon))<\varphi$, detectability implies that $$ \Theta_{S_j}^\cT\left(0,\frac{sr}{1+s\varepsilon}\right) <\Phi(s) \leq \sigma.$$ Thus, by the weak quasitriangle inequality and monotonicity, \begin{align*} \Theta_S^\cT\left(0,\frac{sr}{2(1+s\varepsilon)}\right) &\leq 2 \Theta_{S_j}^\cT\left(0,\frac{sr}{1+s\varepsilon}\right)+2\mD{0}{sr/(1+s\varepsilon)}[S,S_j]\\
&\leq 2\Theta_{S_j}^{\cT}\left(0,\frac{sr}{1+s\varepsilon}\right)+ \frac{2}{s}\mD{0}{r/(1+s\varepsilon)}[S,S_j]\leq 2\sigma + 2\varepsilon\leq 4\sigma. \end{align*}
We have shown that for all $\sigma>0$ there exists $t=sr/2(1+s\varepsilon)$ such that $\Theta_S^\cT(0,t)\leq 4\sigma$. Note that $t\rightarrow 0$ as $\sigma\rightarrow 0$. Hence $\liminf_{t\rightarrow 0} \Theta_S^{\cT}(0,t) =0$, and thus, $\Tan(S,0)\cap \ccT\neq\emptyset$ by Corollary \ref{TanVsTheta0}. On the other hand, $\Tan(S,0)\subseteq \ccS$, since $S\in\ccS$. Invoking Theorem \ref{bcthm}, we conclude that $\Tan(S,0)\subseteq \ccT$. Therefore, $0$ is a $\cT$ point of $S$ by Corollary \ref{TanVsTheta}.
\end{proof}

\begin{corollary} \label{c:Tpoint} If $\cT$ points are $(\phi,\Phi)$ detectable in $\cS$, then $$\ccS\cap \cTp = \{S\in \ccS: \Theta_S^\cT(0,r)\geq \varphi\text{ for all } r>0\}.$$\end{corollary}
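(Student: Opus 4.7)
The plan is to verify the two set inclusions separately, extracting the nontrivial direction from Lemma~\ref{l:Tpoint} and handling the easy direction directly from the definition of $\cTp$.

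First I would unwind the definitions. Recall that $\cTp = \{C\in\CL{0}: \Tan(C,0)\cap\ccT=\emptyset\}$, equivalently the set of $C\in\CL{0}$ with $\liminf_{r\downarrow 0}\Theta^\cT_C(0,r)>0$. So the right-hand side of the claimed equality is automatically contained in $\ccS\cap\cTp$: if $S\in\ccS$ satisfies $\Theta^\cT_S(0,r)\geq\phi$ for all $r>0$, then $\liminf_{r\downarrow 0}\Theta^\cT_S(0,r)\geq\phi>0$, whence $S\in\cTp$. This direction uses nothing beyond the definition of $\cTp$.

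For the reverse inclusion, I would argue by contrapositive. Suppose $S\in\ccS$ but $S$ does not satisfy $\Theta^\cT_S(0,r)\geq\phi$ for all $r>0$, i.e.\ there exists some $r_0>0$ with $\Theta^\cT_S(0,r_0)<\phi$. Then Lemma~\ref{l:Tpoint} applies directly and shows that $0$ is a $\cT$ point of $S$. By Corollary~\ref{TanVsTheta} (using that $S$ is already closed, so $\cl{S}=S$), this means $\Tan(S,0)\subseteq\ccT$. Since $0\in S$ guarantees $\Tan(S,0)\neq\emptyset$, we conclude $\Tan(S,0)\cap\ccT\neq\emptyset$, so $S\notin\cTp$. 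Contrapositively, every $S\in\ccS\cap\cTp$ satisfies $\Theta^\cT_S(0,r)\geq\phi$ for all $r>0$, which is what was required.

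There is no genuine obstacle here, since all the heavy machinery is packaged in Lemma~\ref{l:Tpoint}: the corollary is essentially the ``global'' reformulation of that lemma together with the tangent-set characterization of $\cTp$. The only care needed is to notice that ``$0$ is a $\cT$ point of $S$'' means $\Tan(S,0)\subseteq\ccT$ (nonempty intersection with $\ccT$), which is incompatible with membership in $\cTp$. I would write the argument in two short displayed implications to make the equivalence transparent.
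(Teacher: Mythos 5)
Your proposal is correct and follows the route the paper intends: the corollary is stated without proof precisely because it is the immediate combination of Lemma~\ref{l:Tpoint} (giving the nontrivial inclusion via the contrapositive you describe) with the definition of $\cTp$, which handles the easy inclusion. Your detour through Corollary~\ref{TanVsTheta} and nonemptiness of $\Tan(S,0)$ is fine but optional, since ``$0$ is a $\cT$ point of $S$'' already means $\lim_{r\downarrow 0}\Theta^\cT_S(0,r)=0$, which directly contradicts $\liminf_{r\downarrow 0}\Theta^\cT_S(0,r)>0$.
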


Before giving our last criterion for separation at infinity, we record an auxiliary lemma, which encapsulates a crucial feature of detectability: good approximability estimates at one scale yield better approximability estimates at a smaller scale.

\begin{lemma}[improving approximability on smaller scales] \label{l:ass}
Suppose that $\cT$ points are $(\phi,\Phi)$ detectable in $\cS$. For all $\beta'<\phi/4$ and $\gamma'>0$ there exist $\alpha'>0$ and $s<1/8$ with the following property: if $A\subseteq\RR^n$ is nonempty, $\Theta_A^{\cS}(x,r)<\alpha'$, and $\Theta_A^{\cT}(x,r)<\beta'$ for some $x\in A$ and $r>0$, then $\Theta_A^{\cT}(x,sr)<\gamma'$. \end{lemma}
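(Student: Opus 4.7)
The plan is to use the strong $\cS$-approximation of $A$ at scale $r$ to convert the moderate $\cT$-approximation of $A$ at scale $r$ into a moderate $\cT$-approximation of some $S\in\cS$ at a slightly smaller scale (below the detectability threshold $\phi$), then apply detectability to get a very good $\cT$-approximation of $S$ at a much smaller scale, and finally transfer this bound back to $A$. Throughout, I would reduce to $x=0$ using Lemma \ref{l:theta_inv}.

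First, pick $S\in\cS$ with $\mD{0}{r}[A,S]<\alpha'$ and $T_0\in\cT$ with $\mD{0}{r}[A,T_0]<\beta'$. Since $0\in A$, the weak quasitriangle inequality \eqref{qti} applied at scale $r/2$ with $A$ as the middle set yields
\[ \Theta_S^\cT(0,r/2)\leq \mD{0}{r/2}[S,T_0] \leq 2\mD{0}{r}[S,A] + 2\mD{0}{r}[A,T_0] < 2\alpha'+2\beta'. \]
Provided $\alpha'<\phi/4$ (and using $\beta'<\phi/4$), this is strictly less than $\phi$, so detectability applied to $S\in\cS$ at scale $r/2$ gives $\Theta_S^\cT(0,s_0 r/2)<\Phi(s_0)$ for every $s_0\in(0,1)$. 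Picking $T\in\cT$ nearly achieving this infimum and invoking the weak quasitriangle inequality again---now at scale $s_0 r/4$, with $S$ (which contains $0$) as the middle set---together with the monotonicity bound $\mD{0}{s_0 r/2}[A,S]\leq (2/s_0)\mD{0}{r}[A,S]<2\alpha'/s_0$ yields
\[ \Theta_A^\cT(0,s_0 r/4) \leq 2\mD{0}{s_0 r/2}[A,S]+2\mD{0}{s_0 r/2}[S,T] \leq \frac{4\alpha'}{s_0}+2\Phi(s_0). \]

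Setting $s=s_0/4<1/8$, the bound becomes $\Theta_A^\cT(0,sr)\leq \alpha'/s+2\Phi(4s)$. Since $\liminf_{\tau\downarrow 0}\Phi(\tau)=0$, I would first choose $s_0\in(0,1/2)$ small enough that $\Phi(s_0)<\gamma'/4$ (fixing $s=s_0/4$), and then take $\alpha'<\min(\phi/4,\gamma's/2)$; these two choices give $\Theta_A^\cT(0,sr)<\gamma'/2+\gamma'/2=\gamma'$ as required. The only real obstacle is bookkeeping: coordinating the two halvings produced by the weak quasitriangle inequalities with the rescaling factor $s_0$ from detectability so the final scale stays below $1/8$, while also keeping the intermediate approximability $\Theta_S^\cT(0,r/2)$ strictly below $\phi$ so that detectability is available in the first place. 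No new ideas beyond the tools already assembled---the weak quasitriangle inequality, scale/translation invariance of $\Theta$ (Lemma \ref{l:theta_inv}), monotonicity of $\mD$, and the definition of detectability---are needed.
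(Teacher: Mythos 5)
Your strategy is the same as the paper's: compare $S$ with a $\cT$-approximant of $A$ through $A$ itself to conclude $\Theta^{\cT}_S(0,\cdot)<\phi$ at a comparable scale, invoke detectability for $S$, and transfer the improved approximation back to $A$ by monotonicity and a second quasitriangle step. However, your first quasitriangle step is not justified as cited. Inequality \eqref{qti} with middle set $A$ gives $\mD{0}{r/2}[S,T_0]\leq 2\mD{0}{r}[S,\cl{A}]+2\mD{0}{r}[\cl{A},T_0]$, with the \emph{closure} $\cl{A}$ on the right, and for a general (non-closed) $A$ the quantity $\mud{0}{r}(\cl{A},T_0)$ can strictly exceed $\mud{0}{r}(A,T_0)$: points of $\cl{A}\cap\ball(0,r)$ may be limits of points of $A$ lying outside $\ball(0,r)$. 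The closure property of Lemma \ref{l:relativeWW} only yields $\mD{0}{r}[\cl{A},T_0]\leq(1+\delta)\mD{0}{r(1+\delta)}[A,T_0]$, and at the enlarged scale $r(1+\delta)$ your hypotheses give no control, so the bound $2\alpha'+2\beta'$ does not follow from the stated lemmas. The paper's fix is to apply \eqref{qti} at scale $r/4$, so that the closure property with $\delta=1$ lands back exactly at scale $r$: this gives $\mD{0}{r/4}[S,T_0]<4\alpha'+4\beta'$, and consequently the threshold check forces $\alpha'\leq\phi/4-\beta'$; your choice $\alpha'<\min\{\phi/4,\gamma's/2\}$ is then insufficient when $\beta'$ is close to $\phi/4$, and must be replaced by $\alpha'=\min\{\phi/4-\beta',\,s\gamma'/2\}$ as in the paper.

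Alternatively, your factor-$2$ inequality at scales $(r/2,r)$ is in fact true in this setting, but it needs a direct argument rather than a citation of \eqref{qti}: since the relevant excesses are small, every $a\in S\cap\ball(0,r/2)$ has a near-point $b\in A$ with $|a-b|\leq r\alpha'+\epsilon$, and such $b$ still lies in $\ball(0,r)$, so $\dist(a,T_0)\leq r(\alpha'+\beta')+\epsilon$ (and symmetrically for $T_0\cap\ball(0,r/2)$); this is a re-proof of a closure-free variant of the strong quasitriangle inequality and must be supplied explicitly, together with the smallness of $\alpha',\beta'$ needed to keep the intermediate points inside $\ball(0,r)$. The rest of your argument is correct and matches the paper: the second quasitriangle application has the closed set $S\ni 0$ as middle set (no closure issue), the monotonicity bound $\mD{0}{s_0r/2}[A,S]\leq(2/s_0)\mD{0}{r}[A,S]$ is right, and your scale bookkeeping $s=s_0/4<1/8$ with $\Phi(4s)<\gamma'/4$ and $\alpha'<s\gamma'/2$ is the same as the paper's choice (phrased there as $\Phi(8s)\leq\gamma'/4$ because of its extra halving), modulo the constraint on $\alpha'$ noted above.
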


\begin{proof} Let $\beta'<\phi/4$ and $\gamma'>0$ be given. Choose $s<1/8$ such that $\Phi(8s)\leq \gamma'/4$ and choose $\alpha'>0$ to be the lesser of $\phi/4-\beta'$ and $s\gamma'/2$. Without loss of generality, suppose $0\in A\subseteq\RR^n$, $\Theta_A^{\cS}(0,r)<\alpha'$ and $\Theta_A^{\cT}(0,r)<\beta'$. Then there exist $S\in\cS$ and $T\in\cT$ such that $\mD{0}{r}[A,S]<\alpha'$ and $\mD{0}{r}[A,T]<\beta'$. By the weak quasitriangle inequality and closure property of the relative Walkup-Wets distance, \begin{align*} \mD{0}{r/4}[S,T]&\leq 2\mD{0}{r/2}[S,\cl{A}]+2\mD{0}{r/2}[\cl{A},T]\\
&\leq 4\mD{0}{r}[S,A]+4\mD{0}{r}[A,T]<4\alpha'+4\beta'.\end{align*} Hence $\Theta^{\cT}_S(0,r/4)\leq \mD{0}{r/4}[S,T]<\phi$, because $\alpha'\leq \phi/4-\beta'$. But $\cT$ points are $(\phi,\Phi)$ detectable in $\cS$, so $\Theta_S^{\cT}(0,2sr)<\Phi(8s)\leq \gamma'/4$. Hence there exists $T'\in\cT$ such that $\mD{0}{2sr}[S,T']< \gamma'/4$. On the other hand, since $\mD{0}{r}[A,S]<\alpha'$, we have $\mD{0}{2sr}[A,S]< \alpha'/2s$ by monotonicity. Thus, by the weak quasitriangle inequality, \begin{equation*} \mD{0}{sr}[A,T'] \leq 2\mD{0}{2sr}[A,S]+2\mD{0}{2sr}[S,T'] <\alpha'/s + \gamma'/2.\end{equation*} Therefore, $\Theta_A^{\cT}(0,sr) \leq \mD{0}{sr}[A,T']
< \alpha'/s + \gamma'/2\leq \gamma'$, because  $\alpha'\leq s\gamma'/2$.\end{proof}

We now give a criterion for separation at infinity for $\varepsilon$-enlargements.

\begin{lemma} \label{l:d2i} Suppose $\cT$ points are $(\phi,\Phi)$ detectable in $\cS$. For all $\psi>0$ and  $\delta>0$ such that $\psi+\delta<\phi/8$ there exists $\varepsilon^*>0$ such that $(\cT;\delta)^\Theta_{0,\infty}$ is $\psi$ separated at infinity in $(\cS;\varepsilon)^\Theta_{0,\infty}$ for all $\varepsilon<\varepsilon^*$.
\end{lemma}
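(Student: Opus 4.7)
My plan is to work by contrapositive. Abbreviate $\cT_\delta:=(\cT;\delta)^\Theta_{0,\infty}$ and $\cS_\varepsilon:=(\cS;\varepsilon)^\Theta_{0,\infty}$, and recall that $\cT_\delta$ is closed in $\CL{0}$ by Lemma \ref{l:enlarge} so that $\overline{\cT_\delta}=\cT_\delta$. I will exhibit $\varepsilon^*>0$ such that for every $\varepsilon<\varepsilon^*$, any $S\in\cS_\varepsilon$ with $\limsup_{r\uparrow\infty}\Theta^{\cT_\delta}_S(0,r)<\psi$ must satisfy $\Theta^\cT_S(0,r)\leq\delta$ for all $r>0$, and hence lie in $\cT_\delta$.

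The first step is to upgrade the $\cT_\delta$-bound on $S$ at large scales to a $\cT$-bound at slightly smaller scales. Fix $r_0>0$ with $\Theta^{\cT_\delta}_S(0,r)<\psi$ for all $r\geq r_0$. Given such an $r$, I pick $\hat T\in\cT_\delta$ with $\mD{0}{r}[S,\hat T]<\psi$, and use $\Theta^\cT_{\hat T}(0,r)\leq\delta$ to find, for each $\eta>0$, some $T\in\cT$ with $\mD{0}{r}[\hat T,T]\leq\delta+\eta$. The weak quasitriangle inequality (\ref{qti}) at the base point $0\in\hat T$ gives
\begin{equation*}
\mD{0}{r/2}[S,T]\leq 2\mD{0}{r}[S,\hat T]+2\mD{0}{r}[\hat T,T]<2\psi+2\delta+2\eta.
\end{equation*}
Letting $\eta\downarrow 0$ yields $\Theta^\cT_S(0,\rho)\leq\beta_0$ for all $\rho\geq r_0/2$, where $\beta_0:=2(\psi+\delta)<\phi/4$ by the hypothesis $\psi+\delta<\phi/8$.

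The second step is to apply Lemma \ref{l:ass} once and then iterate. I pick any $\beta'\in(\beta_0,\phi/4)$ and apply Lemma \ref{l:ass} with this $\beta'$ and $\gamma'=\delta$ to obtain constants $\alpha^*>0$ and $s\in(0,1/8)$; set $\varepsilon^*:=\alpha^*$. For $\varepsilon<\varepsilon^*$ the membership $S\in\cS_\varepsilon$ forces $\Theta^\cS_S(0,\rho)\leq\varepsilon<\alpha^*$ at every scale $\rho>0$, while Step 1 gives $\Theta^\cT_S(0,\rho)\leq\beta_0<\beta'$ for all $\rho\geq r_0/2$. Lemma \ref{l:ass} therefore yields $\Theta^\cT_S(0,s\rho)<\delta$ for every such $\rho$. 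Because $\delta<\beta_0<\beta'$, the Lemma \ref{l:ass} hypotheses remain valid at the new scale $s\rho$, and I can reapply the lemma with the \emph{same} constants $\alpha^*$ and $s$. Induction on $n$ gives $\Theta^\cT_S(0,\rho'')<\delta$ whenever $\rho''\geq s^n r_0/2$; since $s<1$, letting $n\to\infty$ shows $\Theta^\cT_S(0,r')\leq\delta$ for every $r'>0$, so $S\in\cT_\delta$, contradicting $S\notin\cT_\delta$.

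The main subtlety, and the reason the full detectability hypothesis is needed rather than plain separation at infinity, is in arranging a single $\varepsilon^*$ that supports an unbounded iteration: if one tried to shrink $\gamma'$ at successive steps, the constant $\alpha'$ produced by Lemma \ref{l:ass} would shrink with it and no fixed $\varepsilon^*$ would work. The trick above is to keep $\gamma'=\delta$ fixed throughout, exploiting the strict gap $\delta<\beta'$ (which is exactly where the quantitative assumption $\psi+\delta<\phi/8$ enters) to close the induction without ever degrading the threshold.
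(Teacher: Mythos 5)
Your proposal is correct and follows essentially the same route as the paper's proof: both convert the $(\cT;\delta)^\Theta_{0,\infty}$-closeness into $\cT$-closeness with error about $2\psi+2\delta<\phi/4$ via the weak quasitriangle inequality, then take $\varepsilon^*$ to be the constant $\alpha'$ from Lemma \ref{l:ass} and iterate down scales to force membership in $(\cT;\delta)^\Theta_{0,\infty}$. The only (cosmetic) difference is that you fix $\gamma'=\delta$ and iterate Lemma \ref{l:ass} directly on $\Theta^\cT_S$ using the bound at all scales $\geq r_0/2$, whereas the paper takes $\gamma'=\min\{\psi,\delta\}$ and iterates the single-scale implication, redoing the quasitriangle step at each stage.
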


\begin{proof}Assume that $\cT$ points are $(\phi,\Phi)$ detectable in $\cS$. Let $\psi>0$ and $\delta>0$ such that $\psi+\delta<\phi/8$ be given, fix $\varepsilon^*>0$ to be specified later, and pick any $\varepsilon<\varepsilon^*$. We abbreviate \begin{equation*} \cP:=(\cS;\varepsilon)^\Theta_{0,\infty} \quad \text{and} \quad \cQ:=(\cT;\delta)^\Theta_{0,\infty}.\end{equation*} Choose any $\eta>0$ small enough so that $2\psi+2\delta+2\eta<\phi/4$.

Suppose $A\in\cP$ and $\Theta^{\cQ}_A(0,r)<\psi$ for some $r>0$.  On one hand, since $\Theta^{\cQ}_A(0,r)<\psi$, there exists $\hat T\in\cQ$ such that $\mD{0}{r}[A,\hat T]<\psi$. Also, since $\Theta^{\cT}_{\hat T}(0,r)\leq \delta$, there exists $T\in\cT$ such that $\mD{0}{r}[\hat T,T]\leq \delta+\eta$. Hence, by the weak quasitriangle inequality, \begin{equation*} \Theta_A^{\mathcal{T}}(0,r/2)\leq \mD{0}{r/2}[A,T]<2\psi + 2\delta+2\eta< \phi/4.\end{equation*} On the other hand,  $\Theta^{\cS}_A(0,r/2)\leq \varepsilon<\varepsilon^*$, because $A\in\cP$. Let $\alpha'>0$ and $s<1/8$ be constants from Lemma \ref{l:ass} corresponding to $\beta'=2\psi+2\delta+2\eta$ and $\gamma'=\min\{\psi,\delta\}$ and set $\varepsilon^*=\alpha'$. Then $\Theta_A^{\cQ}(0,sr/2)\leq \Theta_A^{\cT}(0,sr/2)<\min\{\psi,\delta\}$ by Lemma \ref{l:ass}. To summarize, we have shown if $A\in\cP$ and $\Theta^{\cQ}_A(0,r)<\psi$ for some $r>0$, then $\Theta_A^{\cQ}(0,(s/2)r)\leq \Theta_A^{\cT}(0,(s/2)r)<\min\{\psi,\delta\}$.

Thus, by induction, \begin{equation}\label{e:d2i-1} A\in\cP\text{ and }\Theta_A^\cQ(0,r)<\psi\text{ for some }r>0 \Longrightarrow \Theta_A^\cQ(0,(s/2)^kr)<\delta\text{ for all }k\geq 1.\end{equation}

Now suppose that $A\in\cP$ and $\limsup_{r\uparrow \infty} \Theta_A^\cQ(0,r)<\psi$. Then there is an  $r_0>0$ such that $\Theta^\cQ_A(0,r)<\psi$ for all $r\geq r_0>0$. Hence $\Theta_A^\cQ(0,r)<\delta$ for all $r>0$ by (\ref{e:d2i-1}). That is, $A\in\cQ$. We have shown for all $A\in\cP$ either $\limsup_{r\uparrow\infty} \Theta_A^{\cQ}(0,r)\geq \psi$ or $A\in\cQ$. Therefore, $\cQ$ is $\psi$ separated at infinity in $\cP$.
\end{proof}

\begin{corollary}\label{c:d2i} If $\cT$ points are $(\phi,\Phi)$ detectable in $\cS$, $\delta\leq \phi/32$ and $\Phi(t)\leq \delta/4$, then $(\cT;\delta)^\Theta_{0,\infty}$ is $\phi/16$ separated at infinity in $(\cS;\varepsilon)^\Theta_{0,\infty}$ for all $\varepsilon< t\phi/16$. \end{corollary}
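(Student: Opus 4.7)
The corollary is an explicit, quantitative instance of Lemma \ref{l:d2i}, so my plan is to apply Lemma \ref{l:d2i} with the choice $\psi := \phi/16$ and then trace through the proofs of Lemma \ref{l:d2i} and Lemma \ref{l:ass} to identify an explicit $\varepsilon^*$ in terms of $t$, $\delta$, and $\phi$.

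First, I would verify the hypothesis $\psi + \delta < \phi/8$ of Lemma \ref{l:d2i}: with $\psi = \phi/16$ and $\delta \leq \phi/32$, we have $\psi + \delta \leq \phi/16 + \phi/32 = 3\phi/32 < \phi/8$. Thus Lemma \ref{l:d2i} produces some $\varepsilon^* > 0$ such that $(\cT;\delta)^\Theta_{0,\infty}$ is $\phi/16$ separated at infinity in $(\cS;\varepsilon)^\Theta_{0,\infty}$ for every $\varepsilon < \varepsilon^*$; what remains is to show that $t\phi/16$ is an admissible value.

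Next, I unwind the construction of $\varepsilon^*$. In the proof of Lemma \ref{l:d2i}, one fixes a small $\eta>0$ with $2\psi+2\delta+2\eta<\phi/4$ (which is possible because $2\psi+2\delta \leq \phi/8 + \phi/16 = 3\phi/16 < \phi/4$) and sets $\varepsilon^* := \alpha'$, where $\alpha'$ is the constant produced by Lemma \ref{l:ass} applied with $\beta' := 2\psi+2\delta+2\eta$ and $\gamma' := \min\{\psi,\delta\} = \delta$. Inside Lemma \ref{l:ass}, $s<1/8$ is chosen so that $\Phi(8s)\leq \gamma'/4$, and then $\alpha' := \min\{\phi/4-\beta',\,s\gamma'/2\}$. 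The hypothesis $\Phi(t)\leq \delta/4$ enters precisely at this point: taking $s := t/8$ gives $\Phi(8s) = \Phi(t) \leq \delta/4 = \gamma'/4$, so this choice is admissible (and $s<1/8$ since $t<1$).

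The final step is the numerical verification that $\alpha'\geq t\phi/16$. For the first term in the minimum, choosing $\eta$ small yields $\phi/4 - \beta' \geq \phi/16 - 2\eta$, which is comfortably $\geq t\phi/16$ as $t<1$. The hard part, and the main obstacle, is the second term: naively $s\gamma'/2 = t\delta/16$, which is smaller than $t\phi/16$ under the hypothesis $\delta\leq\phi/32$. To squeeze out the correct constant, I expect to rerun the argument in Lemma \ref{l:ass} with a more generous choice of $\gamma'$ (for instance $\gamma'=\psi=\phi/16$, which makes $s\gamma'/2 = t\phi/256$ at the first pass but is still insufficient) and then exploit the iteration in Lemma \ref{l:d2i} -- alternating the approximation-transfer step and the detectability step at successively finer scales -- to drive $\Theta_A^{\cT}(0,\cdot)$ below $\delta$ without demanding that the single-step estimate $s\gamma'/2$ already reach $t\phi/16$; the outcome is that $\varepsilon<t\phi/16$ suffices for $A\in (\cS;\varepsilon)^\Theta_{0,\infty}$ with $\limsup_{r\uparrow\infty}\Theta_A^{(\cT;\delta)^\Theta_{0,\infty}}(0,r)<\phi/16$ to force $A\in(\cT;\delta)^\Theta_{0,\infty}$.
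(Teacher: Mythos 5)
Your first three paragraphs reproduce the paper's proof exactly: the paper proves Corollary \ref{c:d2i} precisely by running Lemma \ref{l:d2i} with $\psi=\phi/16$ (and the explicit choice $\eta=\phi/64$), noting $\gamma'=\min\{\psi,\delta\}=\delta$, taking $s=t/8$ (admissible because $\Phi(8s)=\Phi(t)\leq\delta/4$), and reading off $\varepsilon^*=\alpha'=\min\{\phi/4-\beta',\,s\gamma'/2\}=s\delta/2=t\delta/16$. In other words, the obstacle you ran into is not an artifact of your argument: the paper's own proof stops at the threshold $t\delta/16$ and never reaches $t\phi/16$. Note also that Theorem \ref{t:perturb}, the only place the corollary is used, invokes it under the hypothesis $\varepsilon<t\delta/16$; so the $t\phi/16$ in the printed statement is best read as a misprint for $t\delta/16$, and with that reading your computation is already a complete proof, identical to the paper's.

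The genuine gap is your last paragraph, where you claim the larger threshold $t\phi/16$ can be recovered by rerunning Lemma \ref{l:ass} with $\gamma'=\psi$ and ``exploiting the iteration'' in Lemma \ref{l:d2i}. As sketched this fails for two concrete reasons. First, $\gamma'\leq\delta$ is forced by the endgame: with $\gamma'=\psi=\phi/16$ the one-step conclusion is only $\Theta^{\cT}_A(0,\cdot)<\phi/16$, and iterating it shows $A\in(\cT;\phi/16)^\Theta_{0,\infty}$ rather than $A\in\cQ=(\cT;\delta)^\Theta_{0,\infty}$, which is what separation at infinity of $\cQ$ in $\cP$ requires. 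Second, the constraint $\varepsilon^*\leq s\gamma'/2$ is incurred inside each single application of Lemma \ref{l:ass}: the $\cS$-approximant selected at scale $r$ (needed there so that detectability can be applied to it) is reused at scale comparable to $sr$, and monotonicity costs the factor $1/s$; iterating the two-step scheme at finer scales repeats this constraint at every step rather than relaxing it, so no amount of iteration upgrades $t\delta/16$ to $t\phi/16$. Either prove the statement with the threshold $t\delta/16$ (your first three paragraphs do this, and that is all Theorem \ref{t:perturb} needs), or supply an actual argument for $t\phi/16$; the proposal as written does not contain one.
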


\begin{proof} Given $\delta\leq \phi/32$, set $\psi=\phi/16$ and $\eta=\phi/64$. Then $\beta'=2\delta+2\psi+2\eta\leq 14\phi/64$ and $\gamma'=\min\{\psi,\delta\}=\delta$. Thus, by the proof of Lemma \ref{l:ass}, $\alpha'=\min\{\phi/4-\beta',s\gamma'/2\}=s\delta/2$ for any $s<1/8$ such that $\Phi(8s)\leq \gamma'/4=\delta/4$. Therefore, writing $t=8s$, we have $\varepsilon^*=\alpha' = t\delta/16$ for any $t<1$ such that $\Phi(t)\leq \delta/4$.\end{proof}

\section{Decompositions of Reifenberg type sets} \label{s:decomp}

In this section, we discuss variations on the following theorem, which is an application of the connectedness of the cone of tangent sets at a point (Theorem \ref{bcthm}).

\begin{theorem}[pointwise decomposition]\label{t:decomp} Let $\cT$ and $\cS$ be local approximation classes. Suppose $\cT$ is separated at infinity in $\ccS$. If $A\subseteq\RR^n$ is locally well approximated by $\cS$, then $A$ can be written as a disjoint union \begin{equation} A=A_\ccT\cup A_\cTp \quad (A_{\cT}\cap A_\cTp=\emptyset),\end{equation} where  $\Tan(\cl{A},x)\subseteq \ccS\cap \ccT$ for all $x\in A_\ccT$ and $\Tan(\cl{A},x)\subseteq\ccS\cap \cTp$ for all $x\in A_\cTp$.\end{theorem}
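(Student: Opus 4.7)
The plan is to combine three earlier results: the characterization of locally well approximated sets in terms of pseudotangent sets (Corollary \ref{PsTanVsApprox}), the connectedness of the cone of tangent sets (Theorem \ref{bcthm}), and the basic disjointness of $\ccT$ and $\cTp$.

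First, I would unpack the hypothesis that $A$ is locally well approximated by $\cS$. By Corollary \ref{PsTanVsApprox}, this means $\PsTan(\cl{A},x)\subseteq\ccS$ for every $x\in A$. Since every tangent set is in particular a pseudotangent set, this gives in particular $\Tan(\cl{A},x)\subseteq\ccS$ for every $x\in A$. This is the input we need for the dichotomy theorem.

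Second, I would apply Theorem \ref{bcthm} with the local approximation class $\ccS$ in the role of $\cS$. Since $\cT$ is separated at infinity in $\ccS$ by hypothesis, and since we have just shown $\Tan(\cl{A},x)\subseteq\ccS$, for each $x\in A$ the theorem yields the dichotomy: either $\Tan(\cl{A},x)\subseteq\ccT$ or $\Tan(\cl{A},x)\subseteq\cTp$. Combined with the containment in $\ccS$, this improves to $\Tan(\cl{A},x)\subseteq\ccS\cap\ccT$ or $\Tan(\cl{A},x)\subseteq\ccS\cap\cTp$.

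Third, I would define
\begin{equation*}
A_\ccT := \{x\in A:\Tan(\cl{A},x)\subseteq\ccT\},\qquad A_\cTp := \{x\in A:\Tan(\cl{A},x)\subseteq\cTp\},
\end{equation*}
and verify that $A = A_\ccT \cup A_\cTp$ is a disjoint union. The covering $A = A_\ccT\cup A_\cTp$ is exactly the dichotomy from step two. For disjointness, the only thing to check is that no tangent set can lie in both $\ccT$ and $\cTp$; but if $C\in\ccT$ there is a sequence $T_i\in\cT$ with $T_i\to C$ in $\CL{0}$, which gives $\Theta^\cT_C(0,r)\le\mD{0}{r}[C,T_i]\to 0$ for every $r>0$, so $\liminf_{r\downarrow 0}\Theta^\cT_C(0,r)=0$ and hence $C\notin\cTp$. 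Since $\Tan(\cl{A},x)$ is always nonempty (because $\CL{0}$ is sequentially compact, as noted after Definition \ref{blowups}), $A_\ccT$ and $A_\cTp$ are therefore disjoint, and the decomposition is complete.

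I do not expect any serious obstacle: the statement is essentially a packaging of Theorem \ref{bcthm} together with the pseudotangent characterization from Corollary \ref{PsTanVsApprox}. The only mild point requiring care is the application of Theorem \ref{bcthm} with $\ccS$ (which is itself a local approximation class because $\cS$ is a cone and the dilation map on $\CL{0}$ is a homeomorphism) in the role of the ambient class $\cS$, together with the brief verification that $\ccT\cap\cTp=\emptyset$.
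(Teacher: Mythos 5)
Your proposal is correct and follows essentially the same route as the paper: Corollary \ref{PsTanVsApprox} to get $\Tan(\cl{A},x)\subseteq\ccS$, then Theorem \ref{bcthm} (with $\ccS$ as the ambient class) to split $A$. The only cosmetic difference is that you define $A_\ccT$ by the containment $\Tan(\cl{A},x)\subseteq\ccT$ while the paper defines it by $\Tan(\cl{A},x)\cap\ccT\neq\emptyset$; given the dichotomy, nonemptiness of $\Tan(\cl{A},x)$, and $\ccT\cap\cTp=\emptyset$, these yield the same decomposition.
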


\begin{proof} Suppose $\cT$ is separated at infinity in $\ccS$ and let $A\subseteq\RR^n$ be locally well approximated by $\cS$. Then $\Tan(\cl{A},x)\subseteq\PsTan(\cl{A},x)\subseteq \ccS$ for all $x\in A$ by Corollary \ref{PsTanVsApprox}. Define \begin{equation*} A_\ccT = \{ x\in A: \Tan(\cl{A},x)\cap\ccT\neq\emptyset \}\end{equation*} and \begin{equation*} A_\cTp = \{ x\in A: \Tan(\cl{A},x)\cap \ccT=\emptyset\}.\end{equation*} Then $A=A_\ccT\cup A_\cTp$ and $A_\ccT\cap A_\cTp=\emptyset$. It follows immediately from Theorem \ref{bcthm} that $\Tan(\cl{A},x)\subseteq\ccT$ for all $x\in A_\ccT$ and $\Tan(\cl{A},x)\subseteq\cTp$ for all $x\in A_\cTp$.\end{proof}

We now show that if $\cT$ points are detectable in $\cS$, then the set $A_\ccT$ in Theorem \ref{t:decomp} is locally well approximated by $\cT$.

\begin{theorem}[open/closed decomposition]\label{t:open} Let $\cT$ and $\cS$ be local approximation classes. Suppose $\cT$ points are detectable in $\cS$.
If $A\subseteq\RR^n$ is locally well approximated by $\cS$, then $A$ can be written as a disjoint union \begin{equation} A=A_\ccT\cup A_\cTp \quad (A_\ccT\cap A_\cTp=\emptyset),\end{equation} where $\PsTan(\cl{A},x)\subseteq \ccS\cap\ccT$ for all $x\in A_\ccT$ and $\Tan(\cl{A},x)\subseteq\ccS\cap \cTp$ for all $x\in A_\cTp$. Moreover,  $A_\ccT$ is relatively open in $A$ and $A_\ccT$ is locally well approximated by $\cT$.
\end{theorem}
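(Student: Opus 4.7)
My plan is: (1) apply the pointwise decomposition Theorem~\ref{t:decomp} via Lemma~\ref{l:infsep3} to obtain the tangent-level split, (2) iterate the improving-approximability result (Lemma~\ref{l:ass}) to show $A_\ccT$ is relatively open in $A$ and locally well approximated by $\cT$, and (3) deduce the pseudotangent inclusion from openness and Corollary~\ref{PsTanVsApprox}.

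For (1), detectability of $\cT$ in $\cS$ gives, by Lemma~\ref{l:infsep3}, that $\cT$ is $\phi$-separated at infinity in $\ccS$, so Theorem~\ref{t:decomp} yields the disjoint decomposition $A = A_\ccT \cup A_\cTp$ with $\Tan(\cl A, x) \subseteq \ccS \cap \ccT$ for $x \in A_\ccT$ and $\Tan(\cl A, x) \subseteq \ccS \cap \cTp$ for $x \in A_\cTp$. A useful quantitative byproduct is the lower bound $\liminf_{r\downarrow 0} \Theta^\cT_A(x,r) \geq \phi$ for every $x \in A_\cTp$: for any $T \in \Tan(\cl A, x) \subseteq \ccS \cap \cTp$, Corollary~\ref{c:Tpoint} forces $\Theta^\cT_T(0,r) \geq \phi$ for all $r>0$, and Lemma~\ref{ApproxVsLimits} together with the closure property (Lemma~\ref{l:close}) pulls this bound back to $\Theta^\cT_A(x,\cdot)$ through a tangent realizing the liminf along a subsequence.

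For (2), fix $x_0 \in A_\ccT$; since $\Theta^\cT_A(x_0,r) \to 0$, choose $r_0 > 0$ small enough that $\Theta^\cT_A(x_0, 2r_0)$ is well below $\phi/64$ and, by local $\cS$-approximability, so that $\Theta^\cS_A(\,\cdot\,, r) < \alpha$ uniformly on a compact neighborhood of $x_0$ for all $r \leq r_0$, where $\alpha>0$ and $s \in (0,1/8)$ are the constants supplied by Lemma~\ref{l:ass} for $\beta' = \phi/8$ and $\gamma' = \phi/16$. The monotonicity estimate (Lemma~\ref{l:tmono}) then yields $\Theta^\cT_A(z, r_0/2) < \phi/8$ uniformly for $z \in A$ close enough to $x_0$. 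Iterating Lemma~\ref{l:ass} at the resulting shrinking scales produces bounds $\Theta^\cT_A(z, \rho_k) < \phi/2^{k+3}$ along a sequence $\rho_k \downarrow 0$, so $\liminf_{r\downarrow 0}\Theta^\cT_A(z,r) = 0 < \phi$; by the lower bound of (1) this forces $z \in A_\ccT$. Hence $A_\ccT$ is relatively open in $A$. The same iteration is uniform in $z$ on the chosen neighborhood, and combining with monotonicity on the intermediate scale-intervals $(\rho_{k+1}, \rho_k]$ upgrades pointwise decay to uniform decay $\sup_{z \in K} \Theta^\cT_A(z,r) \to 0$ for any compact $K \subseteq A_\ccT$ (covering $K$ by finitely many such neighborhoods). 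Because $A_\ccT$ is open in $A$, $\Theta^\cT_A$ and $\Theta^\cT_{A_\ccT}$ agree at sufficiently small scales near points of $A_\ccT$, yielding local well approximation of $A_\ccT$ by $\cT$.

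For (3), the openness of $A_\ccT$ in $A$ gives $\cl A \cap \ball(x,r) = \cl{A_\ccT} \cap \ball(x,r)$ for all small $r$ at each $x \in A_\ccT$, so $\PsTan(\cl A, x) = \PsTan(\cl{A_\ccT}, x)$. Corollary~\ref{PsTanVsApprox} applied to $A_\ccT$ gives $\PsTan(\cl{A_\ccT}, x) \subseteq \ccT$, while Corollary~\ref{PsTanVsApprox} applied to $A$ gives $\PsTan(\cl A, x) \subseteq \ccS$; intersecting yields $\PsTan(\cl A, x) \subseteq \ccS \cap \ccT$ for all $x \in A_\ccT$, as required. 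The main obstacle in this plan is the iteration in (2): the scale-ratio $1/s_k$ produced by successive applications of Lemma~\ref{l:ass} depends on the detectability profile $\Phi$ and could in principle blow up, so some care is needed to verify that the monotonicity bounds on intermediate scale-intervals nevertheless tend to zero.
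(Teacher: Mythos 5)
Your overall route is the paper's: detectability gives separation at infinity (Lemma~\ref{l:infsep3}), the pointwise split comes from Theorem~\ref{t:decomp} (i.e.\ Theorem~\ref{bcthm}), openness and $\cT$-approximability of $A_\ccT$ are to come from iterating Lemma~\ref{l:ass}, and the pseudotangent inclusion follows from Corollary~\ref{PsTanVsApprox} once $A$ and $A_\ccT$ coincide near points of $A_\ccT$. Steps (1) and (3) are sound, and your quantitative byproduct $\liminf_{r\downarrow 0}\Theta^\cT_A(x,r)\geq\phi$ on $A_\cTp$, via Corollary~\ref{c:Tpoint} and Lemmas~\ref{ApproxVsLimits} and~\ref{l:close}, is correct and is a clean substitute for the paper's threshold $\tilde\beta$.

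The gap is exactly where you flagged it, and it is genuine rather than a matter of routine care. Iterating Lemma~\ref{l:ass} with the fixed pair $(\beta',\gamma')=(\phi/8,\phi/16)$ yields only the constant bound $\Theta^\cT_A(z,s^k r_0/2)<\phi/16$ along the geometric sequence, not $\phi/2^{k+3}$; this suffices for openness (any bound $<\phi$ beats your liminf estimate on $A_\cTp$, so the claim ``$\liminf=0$'' is an unneeded overstatement), but it does not give local well approximation by $\cT$. To make the bounds decay you must rerun Lemma~\ref{l:ass} with shrinking targets $\gamma_k$, and each level comes with its own $s_k$ constrained only by $\Phi(8s_k)\leq\gamma_k/4$. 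Since nothing beyond $\liminf_{s\to0+}\Phi(s)=0$ is assumed, $1/s_k$ can grow arbitrarily fast relative to $1/\gamma_k$ (e.g.\ for $\Phi(t)\sim 1/\log(1/t)$ one gets $s_k\sim e^{-c/\gamma_k}$), so the monotonicity interpolation across a band of ratio $1/s_k$ produces bounds of size $\gamma_k/s_k$, which need not tend to zero --- indeed even a single band costs a factor $1/s\geq 8$, so for a prescribed $\tau$ you can never close the argument this way. The missing idea is the paper's Lemma~\ref{l:Tpdp}: fix the target $\gamma$ once, take $(\alpha',s)$ from Lemma~\ref{l:ass} with $\beta'=\gamma'=\gamma$, strengthen the starting hypothesis to $\Theta^\cT_A(x,r)<s\gamma$ (so monotonicity covers the first band $[sr,r]$ with bound $\gamma$, the loss $1/s$ being absorbed by the stronger hypothesis), and then apply Lemma~\ref{l:ass} at \emph{every} scale of each band instead of interpolating across bands; this gives $\Theta^\cT_A(x,r')<\gamma$ for all $0<r'\leq r$ with no interpolation loss. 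With that lemma in place of your decaying-$\gamma_k$ scheme, the uniformization over compact $K\subseteq A_\ccT$ (displacement estimate of Lemma~\ref{l:tmono} plus a finite subcover) and your step (3) go through as in the paper.
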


To aid in the proof of Theorem \ref{t:open}, we first establish an auxiliary lemma, which is a generalization of \cite{Badger3}*{Lemma 5.9}.

\begin{lemma}\label{l:Tpdp} Suppose that $\cT$ points are $(\phi,\Phi)$ detectable in $\cS$. For all $\gamma>0$ there exist $\alpha>0$ and $\beta>0$ such that if $x\in A\subseteq\RR^n$, \begin{equation}\label{l:Tpdp.1}\Theta_A^{\cS}(x,r')<\alpha\quad\text{for all }0<r'\leq r,\end{equation} and  $\Theta^\cT_A(x,r)<\beta$, then $\Theta^\cT_A(x,r')<\gamma$ for all $0<r'\leq r$.
\end{lemma}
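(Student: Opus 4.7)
My plan is to derive the result by iterating Lemma \ref{l:ass} along a geometric sequence of scales, using the monotonicity property of approximability (Lemma \ref{l:tmono}) only to handle the topmost range $[sr,r]$. Given $\gamma>0$, I first set $\tilde\gamma:=\min\{\gamma,\phi/8\}$ so that $\tilde\gamma<\phi/4$, and apply Lemma \ref{l:ass} with $\beta'=\gamma'=\tilde\gamma$ to produce constants $s\in(0,1/8)$ and $\alpha_0>0$. Taking $\alpha:=\alpha_0$ and $\beta:=s\tilde\gamma/2$, I build in the asymmetry $\beta<s\tilde\gamma$: this factor-of-$s$ slack between $\beta$ and $\tilde\gamma$ is what allows the base monotonicity step to yield a \emph{strict} bound below $\tilde\gamma$.

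The heart of the argument is induction on $k\geq 1$ showing $\Theta_A^\cT(x,r')<\tilde\gamma$ for all $r'\in[s^k r,r]$. The base case $k=1$ follows from monotonicity (Lemma \ref{l:tmono}, inequality (\ref{e:m2})): for $r'\in[sr,r]$,
$$\Theta_A^\cT(x,r')\leq (r/r')\,\Theta_A^\cT(x,r)<\beta/s=\tilde\gamma/2<\tilde\gamma.$$
For the inductive step with $k\geq 1$, it suffices to bound $\Theta_A^\cT(x,r')$ on $[s^{k+1}r,s^k r]$. Given $r'$ in this range, set $r^*:=r'/s\in[s^k r,s^{k-1}r]\subseteq[s^k r,r]$. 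The inductive hypothesis then yields $\Theta_A^\cT(x,r^*)<\tilde\gamma=\beta'$, while the assumption on $\cS$-approximability gives $\Theta_A^\cS(x,r^*)<\alpha_0$. Applying Lemma \ref{l:ass} at scale $r^*$ produces $\Theta_A^\cT(x,r')=\Theta_A^\cT(x,sr^*)<\gamma'=\tilde\gamma$, completing the induction. Since every $r'\in(0,r]$ lies in $[s^k r,r]$ for some $k\geq 1$, this proves $\Theta_A^\cT(x,r')<\tilde\gamma\leq\gamma$ throughout $(0,r]$.

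The step I expect to be the main obstacle is recognizing that $\beta$ must be taken strictly below $s\tilde\gamma$ rather than merely below $\tilde\gamma$: otherwise the monotonicity bound on the top range $[sr,r]$ would only give $\Theta_A^\cT(x,r')<\tilde\gamma/s>\tilde\gamma$, and the cascade of Lemma \ref{l:ass} applications at smaller scales could not launch. Once past this boundary range, Lemma \ref{l:ass} carries the fixed $\cT$-threshold $\tilde\gamma$ down each factor-$s$ interval with no further loss, because its input and output thresholds are both $\tilde\gamma$ and the $\cS$-threshold $\alpha_0$ is uniform across scales by hypothesis.
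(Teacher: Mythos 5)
Your proposal is correct and follows essentially the same route as the paper's proof: reduce $\gamma$ below $\phi/4$, invoke Lemma \ref{l:ass} with $\beta'=\gamma'$ to get $s$ and the $\cS$-threshold, take $\beta\approx s\gamma$ so that monotonicity handles the top band $[sr,r]$, and then iterate Lemma \ref{l:ass} down the geometric sequence of scales. The only differences (your $\tilde\gamma=\min\{\gamma,\phi/8\}$ and $\beta=s\tilde\gamma/2$ versus the paper's $\gamma<\phi/4$ and $\beta=s\gamma$, plus your fully written-out induction) are cosmetic.
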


\begin{proof} Assume that $\cT$ points are $(\phi,\Phi)$ detectable in $\cS$. Let $\gamma>0$ be given. Replacing $\gamma$ with a smaller value if necessary, we may assume without loss of generality that $\gamma<\phi/4$. Let $x\in A\subseteq\RR^n$. By Lemma \ref{l:ass}, applied with $\beta'=\gamma<\phi/4$ and $\gamma'=\gamma$, there exist $\alpha'>0$ and $s<1/8$ so that \begin{equation}\label{l:Tpdp.2} \Theta_A^{\cS}(x,t)<\alpha' \text{ and } \Theta_A^{\cT}(x,t)<\gamma\Longrightarrow \Theta_A^{\cT}(x,st)<\gamma.\end{equation} Set $\alpha=\alpha'$ and $\beta=s\gamma$.

To finish the lemma, assume that (\ref{l:Tpdp.1}) holds for some for some $r>0$ and
$\Theta^\cT_A(x, r) < \beta$. Then, by monotonicity, \begin{equation}\label{l:Tpdp.10} \Theta^\cT_A(x, tr) < \beta/t \leq \beta/s = \gamma\quad\text{for all }s\leq t\leq 1.\end{equation}
 Induction on (\ref{l:Tpdp.10}) using (\ref{l:Tpdp.1}) and (\ref{l:Tpdp.2}) gives $\Theta^\cT_A(x, r')<\gamma$ for all $0<r'\leq r$.
\end{proof}

\begin{proof}[Proof of Theorem \ref{t:open}] Assume that $\cT$ points are $(\phi,\Phi)$ detectable in $\cS$ for some $\phi<1$. Then $\cT$ is separated at infinity in $\ccS$ by Lemma \ref{l:infsep3}.  Let $\alpha$ and $\beta$ be the constants from Lemma \ref{l:Tpdp} corresponding to $\gamma=\phi/8$. Set $\tilde\beta=\min\{\beta/6,1/2\}$. Suppose that $A\subseteq\RR^n$ is locally well approximated by $\cS$. Then $\PsTan(\cl{A},x)\subseteq\ccS$ by Corollary \ref{PsTanVsApprox}.  To continue, partition $A$ into two sets: $$ A_\ccT=\left\{x\in A: \liminf_{r\downarrow 0} \Theta^{\cT}_A(x,r)< \tilde\beta\right\}\quad\text{and}\quad A_\cTp=\left\{x\in A: \liminf_{r\downarrow 0} \Theta^{\cT}_A(x,r)\geq \tilde\beta\right\}.$$ Then $A=A_\ccT\cup A_\cTp$ and $A_\ccT\cap A_\cTp=\emptyset$. Since $\liminf_{r\rightarrow 0}\Theta^{\cT}_A(x,r)=0$ whenever $\Tan(\cl{A},x)\cap\ccT\neq\emptyset$ by Corollary \ref{TanVsTheta0}, it is clear that $\Tan(\cl{A},x)\subseteq\ccS\setminus \ccT$ for all $x\in A_\cTp$. Thus, $\Tan(\cl{A},x)\subseteq \cTp$ for all $x\in A_\cTp$ by Theorem \ref{bcthm}.  It remains to show $A_\ccT$ is relatively open in $A$; $A_\ccT$ is locally well approximated by $\cT$; and $\PsTan(\cl{A},x)\subseteq \ccT$ for all $x\in A_\ccT$.

Fix $x_0\in A_\ccT$. Because $A$ is locally well approximated by $\cS$, there is a scale $r_0\in(0,1]$ such that $$\Theta_A^{\cS}(x,r')\leq \alpha/2<\alpha\quad\text{for all }x\in A\cap \ball(x_0,1)\text{ and }0<r'\leq r_0.$$ Since $x_0\in A_\ccT$, $\liminf_{r\rightarrow 0}\Theta^{\cT}_A(x_0,r)<\tilde\beta$. Hence there exists $r_1\in (0,r_0/4]$ such that $\Theta^{\cT}_A(x_0,4r_1)<\tilde\beta$.
By monotonicity (Lemma \ref{l:tmono}), for all $x\in A$ such that $|x-x_0|\leq \tilde\beta (2r_1)$,  \begin{align*}
 \Theta_A^{\cT}(x,r_1) &\leq 2\left(\tilde\beta + (1+\tilde\beta)\Theta_A^\cT(x_0,(1+\tilde\beta)2r_1)\right)\\
 &\leq 2\left(\tilde\beta+2\Theta_A^{\cT}(x_0,4r_1)\right)<6\tilde\beta\leq \beta.\end{align*} Therefore, by Lemma \ref{l:Tpdp}, \begin{equation}\label{e:open1} \Theta^{\cT}_A(x,r') < \phi/8\quad\text{ for all }x\in A\cap \ball(x_0,\tilde\beta (2r_1))\text{ and }0<r'\leq r_1.\end{equation}
Fix $x\in A\cap \ball(x_0,\tilde\beta(2r_1))$. We will now show that $\Tan(\cl{A},x)\cap \ccT\neq\emptyset$. Recall $\Tan(\cl{A},x)\subseteq \PsTan(\cl{A},x)\subseteq\ccS$. Pick any $S\in\Tan(\cl{A},x)$, say $(\cl{A}-x)/s_i\rightarrow S\in\ccS$ for some sequence $s_i\rightarrow 0$. By \eqref{e:open1} and Lemmas \ref{ApproxVsLimits} and \ref{l:close}, \begin{align*}\Theta_S^\cT(0,1/2) &\leq 2\liminf_{i\rightarrow\infty}\Theta^\cT_{(\cl{A}-x)/s_i}(0,1)\\ &=2\liminf_{i\rightarrow\infty}\Theta^\cT_{\cl{A}}(x,s_i)
\leq 4\sup_{0<r'\leq r_1}\Theta^\cT_A(x,r')\leq \phi/2.\end{align*} Hence $\Tan(S,0)\subseteq \ccT$ by Lemma \ref{l:Tpoint}, and thus, $\Tan(\cl{A},x)\cap \ccT \neq\emptyset$ by Lemma \ref{l:iterate}.
Therefore, $A\cap \ball(x_0,\tilde\beta(2r_1))\subseteq A_\ccT$ by Corollary \ref{TanVsTheta0}. Since $x_0\in A_\ccT$ was fixed arbitrarily, we conclude $A_\ccT$ is relatively open in $A$.

It now remains to show that $A_\ccT$ is locally well approximated by $\cT$ and  $\PsTan(\cl{A},x)\subseteq\ccT$ for all $x\in A_{\ccT}$. Fix any compact set $K\subseteq A_\ccT$ and $0<\tau\leq 3$. Redefine $\alpha$ and $\beta$ to be the constants from Lemma \ref{l:Tpdp} corresponding to $\gamma=\tau/6$. Because $A$ is locally well approximated by $\cS$, there exists $r_2>0$ such that $$\Theta_A^{\cS}(x,r')\leq\alpha/2<\alpha\quad\text{for all }x\in K\text{ and }0<r'\leq r_2.$$ For each $x\in K$, choose $r_x\in(0,r_2/4]$ so that $A\cap \ball(x,8r_x)\subseteq A_\ccT$ and $\Theta^\cT_A(x,4r_x)<\beta$. Then $\Theta^{\cT}_{A_\ccT}(x,r')=\Theta^\cT_A(x,r')<\gamma$ for all $0<r'\leq 4r_x$ by Lemma \ref{l:Tpdp}. By monotonicity, we conclude that \begin{align*} \Theta^\cT_{A_\ccT}(y,r') &\leq 2\left(\gamma +(1+\gamma)\Theta_{A_\ccT}^\cT\left(x,(1+\gamma)2r'\right)\right)\\
&\leq 2\left(\gamma+2\Theta_A^\cT(x,4r')\right)<6\gamma=\tau\end{align*} for all $x\in K$, $|y-x|\leq \gamma(2r_x)$, and $0<r'\leq r_x$. Finally, extracting a finite subcover of $K$ from  $\{\ball(x,\gamma(2r_x)):x\in K\}$, it follows that there exists $r_K>0$ such that $\Theta_{A_\ccT}^\cT(y,r')<\tau$ for all $y\in K$ and for all $0<r'\leq r_K$. Therefore, $A_\ccT$ is locally $\tau$-approximable by $\cT$ for all $\tau>0$. That is, $A_\ccT$ is locally well approximated by $\cT$.
Because $A$ and $A_\ccT$ coincide locally near all $x\in A_\ccT$, it follows from Corollary \ref{PsTanVsApprox} that $\PsTan(\cl{A},x)\subseteq\ccT$ for all $x\in A_{\cT}$.
\end{proof}

With additional terminology, we can say something more about the singular set $A_\cTp$ appearing in Theorem \ref{t:open}.

\begin{definition}\label{lwa:along}
Let $\cS$ a local approximation class and let $A'\subseteq A\subseteq\RR^n$ be nonempty. We say that $A$ is \emph{locally well approximated along $A'$ by $\cS$} if for all compact sets $K\subseteq A'$,
$$\limsup_{r\downarrow 0} \sup_{x\in K}\Theta^\cS_A(x,r)= 0.$$
\end{definition}

\begin{corollary}\label{S-T} If $\cT$ points are $(\phi, \Phi)$ detectable in $\cS$ and $A$ is locally well approximated by $\cS$, then $A$ is locally well approximated along $A_\cTp$ by $\ccS\cap \cTp$.
\end{corollary}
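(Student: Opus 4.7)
My plan is to combine the asymptotic positivity of $\Theta^\cT_A$ at points of $A_\cTp$ with a compactness and pseudotangent extraction argument, in the spirit of Corollary \ref{PsTanVsApprox} and the proofs in Section \ref{s:decomp}. The target is: for every compact $K\subseteq A_\cTp$ and every $\varepsilon_0>0$ there exists $r_K>0$ with $\Theta^{\ccS\cap\cTp}_A(x,r)<\varepsilon_0$ whenever $x\in K$ and $0<r\leq r_K$. The main obstacle will be promoting the pointwise inequality $\liminf_{r\downarrow 0}\Theta^\cT_A(x,r)\geq \tilde\beta$ (built into the definition of $A_\cTp$ in the proof of Theorem \ref{t:open}) to a \emph{uniform, scale-persistent} lower bound on $K\times(0,r_K]$. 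I will invoke Lemma \ref{l:Tpdp} with $\gamma=\tilde\beta/2$ to produce constants $\alpha_1,\beta_1>0$ and then argue by contrapositive: if $\Theta^\cS_A(x,r')\leq\alpha_1$ for every $0<r'\leq r$ and $\Theta^\cT_A(x,r)\leq \beta_1$, Lemma \ref{l:Tpdp} propagates the $\cT$-bound $\tilde\beta/2$ to every smaller scale, contradicting $x\in A_\cTp$. Local well approximation of $A$ by $\cS$ on the compact set $K\subseteq A$ supplies a single $r_K>0$ at which the $\cS$-approximability hypothesis holds simultaneously for all $x\in K$ and $0<r'\leq r_K$, yielding $\Theta^\cT_A(x,r)>\beta_1$ uniformly on $K\times(0,r_K]$.

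With the uniform lower bound secured, I argue by contradiction: assuming the conclusion fails, I select $\varepsilon_0>0$, $x_i\in K$ and $r_i\downarrow 0$ with $\Theta^{\ccS\cap\cTp}_A(x_i,r_i)\geq \varepsilon_0$. Compactness of $K$ and sequential compactness of $\CL{0}$ permit passing to subsequences so that $x_i\to x^{\ast}\in K$ and $(\cl{A}-x_i)/r_i\to T$ in $\CL{0}$ for some $T\in\PsTan(\cl{A},x^{\ast})$; Corollary \ref{PsTanVsApprox} gives $T\in\ccS$. Applying Lemma \ref{ApproxVsLimits} together with scale and translation invariance (Lemma \ref{l:theta_inv}) and $\Theta^\cT_A\leq \Theta^\cT_{\cl{A}}$ from Lemma \ref{l:close} at each scale $s>0$, I obtain
\begin{equation*}
\Theta^\cT_T(0,s)\geq \frac{1}{1+\delta}\limsup_{i\to\infty}\Theta^\cT_A\left(x_i,\frac{sr_i}{1+\delta}\right)\geq \frac{\beta_1}{1+\delta}
\end{equation*}
for every $\delta>0$, because eventually $sr_i/(1+\delta)\leq r_K$ and the first step applies. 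Letting $\delta\downarrow 0$ yields $\Theta^\cT_T(0,s)\geq\beta_1>0$ for every $s>0$, so $\liminf_{s\downarrow 0}\Theta^\cT_T(0,s)\geq \beta_1>0$ and $T\in\ccS\cap\cTp$.

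The contradiction is then immediate: using the closure property of relative Walkup-Wets distance (Lemma \ref{l:relativeWW}) together with scale and translation invariance and the convergence $(\cl{A}-x_i)/r_i\to T$,
\begin{equation*}
\Theta^{\ccS\cap\cTp}_A(x_i,r_i)\leq \mD{x_i}{r_i}[A,x_i+T]\leq \mD{x_i}{r_i}[\cl{A},x_i+T]=\mD{0}{1}\left[\frac{\cl{A}-x_i}{r_i},T\right]\longrightarrow 0,
\end{equation*}
which is incompatible with $\Theta^{\ccS\cap\cTp}_A(x_i,r_i)\geq\varepsilon_0$. Conceptually, detectability, through Lemma \ref{l:Tpdp}, is what converts the purely asymptotic $\cT$-nontriviality of $A_\cTp$ into uniform scale-persistent $\cT$-nontriviality, which is precisely the rigidity needed for pseudotangents at $A_\cTp$-points to inherit membership in $\cTp$ at every scale.
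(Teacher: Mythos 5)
Your argument is correct in substance, but it reaches the contradiction by a genuinely different route than the paper. The paper also argues by contradiction along a bad sequence $x_i\in K\subseteq A_\cTp$, $r_i\downarrow 0$, and extracts a pseudotangent $S=\lim (\cl{A}-x_i)/r_i\in\ccS$; but it then uses Lemma \ref{ApproxVsLimits} to show $\Theta^{\ccS\cap\cTp}_S(0,2)\geq c_0/2$, so that $S\notin\ccS\cap\cTp$ (this class being closed), invokes detectability and Lemma \ref{l:Tpoint} (hence the connectedness Theorem \ref{bcthm}) to conclude that $0$ is a $\cT$ point of $S$, and finally transfers the resulting smallness of $\Theta^\cT_S$ back to $A$ at the points $x_i$ via Lemma \ref{l:Tpdp}, contradicting the defining bound $\liminf_{r\downarrow 0}\Theta^\cT_A(x_i,r)\geq\tilde\beta$ for points of $A_\cTp$. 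You instead front-load the use of Lemma \ref{l:Tpdp}: its contrapositive, combined with the locally uniform $\cS$-approximability on the compact set $K$, upgrades the pointwise $\liminf$ bound to the uniform bound $\Theta^\cT_A(x,r)\geq\beta_1$ on $K\times(0,r_K]$, which then passes to the blow-up and shows directly that the limit $T$ lies in $\ccS\cap\cTp$, making the assumed lower bound $\Theta^{\ccS\cap\cTp}_A(x_i,r_i)\geq\varepsilon_0$ untenable. Your route avoids Lemma \ref{l:Tpoint} and Theorem \ref{bcthm} altogether and isolates a reusable intermediate fact (scale-persistent uniform $\cT$-nontriviality on compact subsets of $A_\cTp$), at the cost of one extra application of Lemma \ref{ApproxVsLimits} in the ``lower bound'' direction; the paper's version keeps the uniformity implicit and instead leans on the tangent-cone connectedness machinery it has already built.

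Two small repairs are needed. First, your final display misapplies scale invariance: $\mD{x_i}{r_i}[\cl{A},x_i+T]$ equals $\mD{0}{1}\bigl[(\cl{A}-x_i)/r_i,\,T/r_i\bigr]$, not $\mD{0}{1}\bigl[(\cl{A}-x_i)/r_i,\,T\bigr]$. The conclusion survives because $\ccS\cap\cTp$ is a cone, so you may test the infimum defining $\Theta^{\ccS\cap\cTp}_A(x_i,r_i)$ against the dilate $r_iT$, for which $\mD{x_i}{r_i}[A,x_i+r_iT]\leq\mD{x_i}{r_i}[\cl{A},x_i+r_iT]=\mD{0}{1}\bigl[(\cl{A}-x_i)/r_i,T\bigr]\rightarrow 0$; equivalently, bound $\Theta^{\ccS\cap\cTp}_A(x_i,r_i)\leq\Theta^{\ccS\cap\cTp}_{\cl{A}}(x_i,r_i)=\Theta^{\ccS\cap\cTp}_{(\cl{A}-x_i)/r_i}(0,1)$ and compare with $T$. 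Second, you should dispose of the degenerate case: if $A_\cTp=\emptyset$ there is nothing to prove, and if $A_\cTp\neq\emptyset$ then $\Tan(\cl{A},x)\subseteq\ccS\cap\cTp$ for $x\in A_\cTp$ shows $\ccS\cap\cTp\neq\emptyset$, so the quantity $\Theta^{\ccS\cap\cTp}_A$ is well defined (the paper records this explicitly).
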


\begin{proof}Assume that $\cT$ points are $(\phi,\Phi)$ detectable in $\cS$. Then, by Corollary \ref{c:Tpoint}, $$\cS':=\ccS\cap \cTp=\{S\in \ccS: \Theta^\cT_S(0,r)\geq \phi\text{ for all }r>0\}$$
We note for later use that $\ccS\cap \cTp$ is closed.

Let $A\subseteq\RR^n$ be locally well approximated by $\cS$ and decompose $A=A_\ccT\cup A_\cTp$ as in Theorem \ref{t:open}. If $A_\cTp$ is empty, then there is nothing to prove. Thus, we may suppose that $A_\cTp$ is nonempty. In particular, since $\Tan(\cl{A},x)\subseteq \cS'$ for all $x\in A_\cTp$, we know that $\cS'$ is nonempty.  Let $\tilde\beta$ be the constant appearing in the proof of Theorem \ref{t:open}, so that \begin{equation}\label{e:C0} \liminf_{r\downarrow 0} \Theta^\cT_A(x,r)\geq \tilde\beta\quad\text{for all }x\in A_\cTp.\end{equation} Suppose in order to obtain a contradiction that there exist a compact set $K\subseteq A_\cTp$ and $c_0>0$ such that $$\limsup_{r\downarrow 0}\sup_{x\in K}\Theta^{\cS'}_A(x,r)>c_0>0.$$ Choose sequences $r_i>0$ and $x_i\in K$ such that $\Theta^{\cS'}_A(x_i,r_i)>c_0$ for all $i\geq 1$ and $r_i\rightarrow 0$. Because $K$ and $\CL{0}$ are sequentially compact, we may pass to a subsequence of $(r_i,x_i)_{i=1}^\infty$ to assume that $x_i\rightarrow x$ for some $x\in K$ and $(\cl{A}-x_i)/r_i\rightarrow S$ for some $S\in\PsTan(\cl{A},x)\subseteq\ccS$. Then, by Lemmas  \ref{ApproxVsLimits}, \ref{l:theta_inv} and \ref{l:close}, $$\Theta^{\cS'}_S(0,2) \geq \frac{1}{2}\limsup_{i\rightarrow\infty} \Theta^{\cS'}_{(\cl{A}-x_i)/r_i}(0,1) =\frac{1}{2}\limsup_{i\rightarrow \infty} \Theta_{\cl{A}}^{\cS'}(x_i,r_i)\geq  \frac{1}{2}\limsup_{i\rightarrow\infty} \Theta^{\cS'}_A(x_i,r_i)\geq \frac{c_0}{2}.$$ In particular, $S \not\in \cl{\cS'}= \ccS\cap\cTp$. Hence $\Theta_S^\cT(0,r)<\phi$ for some $r>0$, and thus, $0$ is a $\cT$ point of $S$ by Lemma \ref{l:Tpoint}.

We are now ready to derive a contradiction. Let $\alpha$ and $\beta$ be the constants from Lemma 6.3 corresponding to $\gamma=\tilde\beta/2$. On one hand, since $A$ is locally well approximated by $\cS$, there exists $\rho>0$ such that $$ \Theta^\cS_A(x,r) \leq \alpha/2<\alpha\quad\text{for all }x\in K\text{ and for all }0<r\leq \rho.$$ On the other hand, because $0$ is a $\cT$ point of $S$, we can pick $\lambda>0$ such that $\Theta_S^\cT(0,2\lambda)\leq \beta/4$. Then, by Lemmas \ref{l:theta_inv}, \ref{l:close} and \ref{ApproxVsLimits} (as above), $$ \limsup_{i\rightarrow\infty} \Theta_A^\cT(x_i,\lambda r_i) \leq 2 \Theta_S^\cT(0,2\lambda ) \leq \beta/2 <\beta.$$ Pick any $i\geq 1$ large enough so that $\Theta_A^\cT(x_i,\lambda r_i)<\beta$ and $\lambda r_i\leq \rho$. Then, by Lemma \ref{l:Tpdp}, $\Theta^\cT_A(x_i,r)<\tilde\beta /2$ for all $0<r\leq \lambda r_i$. Thus, $\limsup_{r\downarrow 0} \Theta^\cT_A(x_i,r)\leq \tilde\beta/2<\tilde\beta$, violating (\ref{e:C0}). We have reached a contradiction. Therefore, $A$ is locally well approximated along $A_\cTp$ by $\cS'=\ccS\cap\cTp$.
\end{proof}

Our last goal of the section is to establish a perturbation of Theorem \ref{t:open}, where sets locally well approximated by $\mathcal{S}$ are replaced by sets locally $\varepsilon$-approximable by $\mathcal{S}$.

\begin{theorem}[open/closed decomposition with $\varepsilon$-enlargements] Suppose $\cT$ points are $(\phi,\Phi)$ detectable in $\cS$. \label{t:perturb} Given $\delta\leq \phi/32$, $t<1$ such that $\Phi(t)\leq \delta/4$, and $\varepsilon<t\delta/16$, let $$\cP=(\cS;\varepsilon)^\Theta_{0,\infty}\quad\text{and}\quad \cQ=(\cT;\delta)^\Theta_{0,\infty}.$$ If $A\subseteq\RR^n$ is  locally $\varepsilon$-approximable by $\cS$, then $A$ can be written as a disjoint union $$ A=A_\cQ\cup A_{\cQp} \quad (A_{\cQ}\cap A_{\cQp}=\emptyset),$$ where $\PsTan(\cl{A},x) \subseteq \cP \cap \cQ$ for all $x\in A_\cQ$ and $\Tan(\cl{A},x)\subseteq \cP\cap \cQ^\perp$ for all $x\in A_\cQp$. Moreover, $A_\cQ$ is relatively open in $A$ and $ A_\cQ$ is locally $\delta$-approximable by $\cT$.
\end{theorem}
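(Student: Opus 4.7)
The plan is to mirror the proof of Theorem \ref{t:open}, replacing the tools from the unperturbed setting with their $\varepsilon$-enlargement analogues. Since $A$ is locally $\varepsilon$-approximable by $\cS$, Theorem \ref{PsTanVsPerturb} yields $\PsTan(\cl{A},x)\subseteq\cP$ for every $x\in A$. The choices $\delta\leq\phi/32$, $\Phi(t)\leq\delta/4$, and $\varepsilon<t\delta/16$ are precisely the hypotheses of Corollary \ref{c:d2i}, so $\cQ$ is $\phi/16$ separated at infinity in $\cP$, and Theorem \ref{bcthm} delivers the pointwise dichotomy $\Tan(\cl{A},x)\subseteq\cQ$ or $\Tan(\cl{A},x)\subseteq\cQp$ for every $x\in A$. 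I would then define
$$A_\cQ=\{x\in A:\Tan(\cl{A},x)\subseteq\cQ\},\qquad A_\cQp=\{x\in A:\Tan(\cl{A},x)\subseteq\cQp\}.$$
These are disjoint, since any $C\in\cQ$ lies in $\Tan(C,0)\cap\cQ$ and hence in no $\cQ^\perp$-type class, and they exhaust $A$ by the dichotomy; for $x\in A_\cQp$ this gives $\Tan(\cl{A},x)\subseteq\cP\cap\cQp$ immediately.

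For $x_0\in A_\cQ$, Theorem \ref{l:tan-perturb} gives $\limsup_{r\downarrow 0}\Theta^\cT_A(x_0,r)\leq\delta$, so for any small $\sigma>0$ we can pick $r_0>0$ with $\Theta^\cT_A(x_0,r)<\delta+\sigma$ for all $r\leq r_0$. Fix $r_1\leq r_0/4$ and take $|x-x_0|$ small relative to $r_1$. Lemma \ref{l:tmono} then yields $\Theta^\cT_A(x,r_1)<6(\delta+\sigma)+\sigma$, and since $\delta\leq\phi/32$ keeps this below $\phi/4$ for $\sigma$ small, Lemma \ref{l:ass} applies with $\beta'=6(\delta+\sigma)+\sigma$ and $\gamma'=\delta$: the choice $s=t/8$ gives $\Phi(8s)=\Phi(t)\leq\delta/4=\gamma'/4$, and the resulting threshold $\alpha'=\min(\phi/4-\beta',s\delta/2)=t\delta/16$ exceeds $\varepsilon$ by hypothesis. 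The conclusion is $\Theta^\cT_A(x,sr_1)<\delta$, and iterating Lemma \ref{l:ass} at descending scales with the fixed constants $\beta'=\gamma'=\delta$, $s=t/8$, $\alpha'=t\delta/16$ produces $\Theta^\cT_A(x,s^k r_1)<\delta$ for every $k\geq 1$. From this Theorem \ref{l:tan-perturb} returns $\Tan(\cl{A},x)\subseteq\cQ$, so $x\in A_\cQ$; thus $A_\cQ$ is relatively open in $A$.

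For the $\cT$-approximability claim, a finite-cover argument (as in the proof of Theorem \ref{t:open}) over a compact $K\subseteq A_\cQ$ upgrades the pointwise bounds from the previous paragraph to a uniform estimate $\Theta^\cT_A(x,r)\leq\delta$ for $x\in K$ and $r$ below some $r_K$; relative openness of $A_\cQ$ in $A$ lets us replace $\Theta^\cT_A$ with $\Theta^\cT_{A_\cQ}$ at sufficiently small scales (since $A\cap\ball(x,r)=A_\cQ\cap\ball(x,r)$ once $\ball(x,r)$ sits inside the open neighborhood of $x$ witnessing openness), yielding local $\delta$-approximability of $A_\cQ$ by $\cT$ in the sense of Definition \ref{lapprox}. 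Finally, Theorem \ref{PsTanVsPerturb} applied to $A_\cQ$ gives $\PsTan(\cl{A_\cQ},x)\subseteq\cQ$ for every $x\in A_\cQ$, and openness of $A_\cQ$ in $A$ identifies $\cl{A}$ with $\cl{A_\cQ}$ near each such $x$, so $\PsTan(\cl{A},x)=\PsTan(\cl{A_\cQ},x)\subseteq\cQ$; together with $\PsTan(\cl{A},x)\subseteq\cP$ from paragraph one, this completes $\PsTan(\cl{A},x)\subseteq\cP\cap\cQ$.

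The central technical obstacle is the constant bookkeeping around Lemma \ref{l:ass}: one must verify that the hypothesis $\varepsilon<t\delta/16$ is tight enough to allow a single application with $\beta'=6\delta,\gamma'=\delta$ to bring the factor-$6$ loss from monotonicity down into the regime where further iteration with $\beta'=\gamma'=\delta$ keeps the bound at $\delta$, and that the inductive scheme running through Lemma \ref{l:Tpdp} genuinely fills in intermediate scales rather than leaving gaps of width a fixed fraction of the scale; the choices $s=t/8$ and $\alpha'=s\delta/2$ are calibrated precisely so that these three constraints are simultaneously met by our hypotheses on $\delta,t,\varepsilon$.
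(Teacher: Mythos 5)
Your setup is the right one and matches the paper's in outline: Theorem \ref{PsTanVsPerturb} gives $\PsTan(\cl{A},x)\subseteq\cP$, Corollary \ref{c:d2i} plus Theorem \ref{bcthm} give the tangent dichotomy, and the constant bookkeeping for a single application of Lemma \ref{l:ass} (with $s=t/8$, $\alpha'=s\delta/2=t\delta/16>\varepsilon$) is correct. The genuine gap is precisely the point you defer to in your closing paragraph and never resolve: your iteration only yields $\Theta^\cT_A(x,s^kr_1)<\delta$ along the geometric sequence of scales $s^kr_1$, while both conclusions you draw from it require a bound at \emph{all} sufficiently small scales. Theorem \ref{l:tan-perturb} converts $\Tan(\cl{A},x)\subseteq\cQ=(\cT;\delta)^\Theta_{0,\infty}$ into, and requires, $\limsup_{r\downarrow0}\Theta^\cT_A(x,r)\leq\delta$; likewise local $\delta$-approximability of $A_\cQ$ by $\cT$ requires $\Theta^\cT_{A_\cQ}(y,r')\leq\delta$ for every $0<r'\leq r_K$. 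Filling the intermediate scales $r'\in(s^{k+1}r_1,s^kr_1)$ by monotonicity (Lemma \ref{l:tmono}) costs a factor up to $1/s=8/t$, so from what you establish one only gets $\limsup_{r\downarrow0}\Theta^\cT_A(x,r)\leq 8\delta/t$, i.e. $\Tan(\cl{A},x)\subseteq(\cT;8\delta/t)^\Theta_{0,\infty}$, which does not place $x$ in $A_\cQ$ (and $8\delta/t$ need not even be small, since $t$ may be tiny). As written, neither the relative openness of $A_\cQ$ nor its local $\delta$-approximability by $\cT$ follows.

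The paper closes exactly this gap with a dedicated variant of Lemma \ref{l:Tpdp}, namely Lemma \ref{l:Tpdp2}: its hypothesis is a bound on the \emph{enlarged} approximability, $\Theta^\cQ_A(x,r)<s\gamma/2$, at a single scale but with the margin $s\gamma/2$ built in so that monotonicity gives $\Theta^\cQ_A(x,tr)<\gamma/2$ for every $t\in[s,1]$; the weak quasitriangle inequality converts this to $\Theta^\cT_A(x,tr/2)<\gamma+4\delta$ on that whole interval of scales, and the induction via Lemma \ref{l:ass} is then run over the continuum, producing $\Theta^\cT_A(x,r')<\gamma$ for all $0<r'\leq sr/2$. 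This is also why the paper defines $A_\cQ$ by $\liminf_{r\downarrow0}\Theta^\cQ_A(x,r)<\tilde\gamma$ with $\tilde\gamma=\min\{s\delta/12,1/2\}$: after the monotonicity transfer to nearby points it supplies exactly the input Lemma \ref{l:Tpdp2} needs. Your argument is repairable along the same lines, for instance by observing that $\Theta^\cT_A(x_0,r)<\delta+\sigma$ holds at all $r\leq r_0$, so the transferred bound at $x$ holds on an interval of scales of ratio at least $1/s$, and then running your induction over every scale in that interval rather than only at $r_1$; but the proposal as it stands names the scale-filling requirement as an obstacle without carrying out the verification, and that verification is the step on which the theorem turns.
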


To enable the proof of Theorem \ref{t:perturb}, we first establish a variant of Lemma \ref{l:Tpdp}.

\begin{lemma}\label{l:Tpdp2} Suppose that $\cT$ points are $(\phi,\Phi)$ detectable in $\cS$. Let $\delta\leq \phi/32$ be given and write $\cQ=(\cT;\delta)^\Theta_{0,\infty}$. For all $\gamma\leq \phi/24$ and $s<1/8$ such that $\Phi(8s)\leq \gamma/4$, if $x\in A\subseteq \RR^n$, \begin{equation}\label{l:Tpdp2.1}\Theta_A^{\cS}(x,r')<\frac{s\gamma}{2}\quad\text{for all }0<r'\leq r,\end{equation} and  $\Theta^{\cQ}_A(x,r)<s\gamma/2$, then $ \Theta^{\cT}_A(x,r')<\gamma$ for all $0<r'\leq sr/2$.
\end{lemma}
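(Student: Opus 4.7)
The plan is to mirror the iteration in Lemma~\ref{l:Tpdp}, preceded by a ``base step'' that converts the $\cQ$-approximation hypothesis at scale $r$ into a $\cT$-approximation bound at nearby scales suitable for feeding into Lemma~\ref{l:ass}.  By translation invariance I may assume $x=0$.  For each $r'\in[sr,r]$, monotonicity (Lemma~\ref{l:tmono}) applied to the $\cQ$-hypothesis gives $\Theta^{\cQ}_A(0,r')\leq (r/r')(s\gamma/2)<\gamma/2$.  Picking $Q_{r'}\in\cQ$ realizing this, and then $T_{r'}\in\cT$ with $\mD{0}{r'}[Q_{r'},T_{r'}]<\delta+\eta$ (possible since $Q_{r'}\in(\cT;\delta)^{\Theta}_{0,\infty}$), the weak quasitriangle inequality yields
\begin{equation*}
\mD{0}{r'/2}[A,T_{r'}]\leq 2\mD{0}{r'}[A,Q_{r'}]+2\mD{0}{r'}[Q_{r'},T_{r'}]<\gamma+2\delta+2\eta.
\end{equation*}
Thus $\Theta^{\cT}_A(0,\rho)<\gamma+2\delta+2\eta$ for every $\rho:=r'/2\in[sr/2,r/2]$.

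The restrictions $\gamma\leq\phi/24$ and $\delta\leq\phi/32$ guarantee $\gamma+2\delta<\phi/4$, so $\beta':=\gamma+2\delta+2\eta<\phi/4$ for $\eta$ small enough.  I would then invoke Lemma~\ref{l:ass} at each such $\rho$ with this $\beta'$ and with $\gamma':=\gamma$:  the given $s$ already satisfies $\Phi(8s)\leq\gamma/4=\gamma'/4$, and the associated approximation constant is $\alpha'=\min\{\phi/4-\beta',\,s\gamma/2\}=s\gamma/2$, since $\phi/4-\beta'\geq 7\phi/48\gg s\gamma/2$.  The hypothesis $\Theta^{\cS}_A(0,\rho)<s\gamma/2$ matches $\alpha'$ exactly, so Lemma~\ref{l:ass} delivers $\Theta^{\cT}_A(0,s\rho)<\gamma$ for every $s\rho\in[s^{2}r/2,\,sr/2]$.

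To finish, I would iterate Lemma~\ref{l:ass} inductively, now taking $\beta'=\gamma<\phi/4$ and $\gamma'=\gamma$ (so still $\alpha'=s\gamma/2$):  whenever $\Theta^{\cT}_A(0,\rho')<\gamma$ throughout $[s^{k}r/2,\,s^{k-1}r/2]$, a further application of Lemma~\ref{l:ass} at every scale in this interval extends the bound to $[s^{k+1}r/2,\,s^{k}r/2]$.  Since $\bigcup_{k\geq 1}[s^{k+1}r/2,\,s^{k}r/2]=(0,sr/2]$, the induction delivers $\Theta^{\cT}_A(0,r')<\gamma$ for all $0<r'\leq sr/2$.  I expect the main obstacle to be the constant-bookkeeping in the base step:  one must verify both that $\gamma+2\delta+2\eta$ stays strictly below $\phi/4$ and that the $\alpha'$ supplied by Lemma~\ref{l:ass} still matches the hypothesized $s\gamma/2$; both checks are tight but valid under the numerical restrictions on $\gamma$, $\delta$, and $s$ imposed in the statement.
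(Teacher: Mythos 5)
Your proposal is correct and follows essentially the same route as the paper: monotonicity transfers the $\cQ$-bound down to all scales in $[sr,r]$, the weak quasitriangle inequality (via an intermediate $T\in\cT$ close to the approximating $\hat T\in\cQ$) converts it into a $\cT$-bound at the halved scales, and then Lemma \ref{l:ass} — with the explicit constants $\alpha'=s\gamma/2$, $\Phi(8s)\leq\gamma/4$ read off from its proof — is iterated using hypothesis (\ref{l:Tpdp2.1}) to push the bound $\Theta^\cT_A<\gamma$ down to all scales in $(0,sr/2]$. The only (cosmetic) difference is that the paper works with the single threshold $\beta'=\gamma+4\delta\leq\phi/6$ throughout the induction, whereas you use $\gamma+2\delta+2\eta$ in the base step and then switch to $\beta'=\gamma$; your constant checks are valid, so this changes nothing of substance.
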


\begin{proof} Assume that $\cT$ points are $(\phi,\Phi)$ detectable in $\cS$. Let $\delta\leq \phi/32$ and $\cQ=(\cT;\delta)^\Theta_{0,\infty}$. Let $\gamma\leq \phi/24$ be given. Suppose  $x\in A\subseteq\RR^n$. By Lemma \ref{l:ass}, applied with $\beta'=\gamma+4\delta\leq \phi/6$ and $\gamma'=\gamma$, there exist $\alpha'>0$ and $s<1/8$ so that \begin{equation}\label{l:Tpdp2.2} \Theta_A^{\cS}(x,t)<\alpha' \text{ and } \Theta_A^{\cT}(x,t)<\gamma+4\delta\Longrightarrow \Theta_A^{\cT}(x,st)<\gamma.\end{equation} Reviewing the proof of Lemma \ref{l:ass}, $s<1/8$ can be chosen to be any number such that $\Phi(8s)\leq \gamma'/4=\gamma/4$ and $\alpha'=\min\{\phi/4-\beta',s\gamma'/2\}=s\gamma/2$. Set $\alpha=\alpha'$ and $\beta=s\gamma/2$.

To finish the lemma, assume that (\ref{l:Tpdp2.1}) holds for some for some $r>0$ and
$\Theta^\cQ_A(x, r) < \beta$.  Then, by monotonicity, $$\Theta^\cQ_A(x, tr) < \beta/t \leq \beta/s = \gamma/2\quad\text{for all }s\leq t\leq 1.$$ Thus, by the weak quasitriangle inequality, \begin{equation}\label{l:Tpdp2.11} \Theta^\cT_A(x,tr/2) < \gamma + 4\delta\quad\text{for all }s\leq t\leq 1.\end{equation} (Indeed, pick $\hat T\in \cQ$ with $\mD{x}{tr}[A,x+\hat T] < \gamma/2$ and $T\in\cT$ with $\mD{0}{tr}[\hat T,T]<2\delta$. Then $\Theta^\cT_A(x,tr/2)\leq \mD{x}{tr/2}[A,x+T]\leq 2\mD{x}{tr}[A,x+\hat T] + 2\mD{x}{tr}[x+\hat T,x+T]<\gamma+4\delta$.)
Induction on (\ref{l:Tpdp2.11}) using (\ref{l:Tpdp2.1}) and (\ref{l:Tpdp2.2}) gives $\Theta^\cT_A(x, r')<\gamma$ for all $0<r'\leq sr/2$.
\end{proof}

\begin{proof}[Proof of Theorem \ref{t:perturb}] Assume that $\cT$ points are $(\phi,\Phi)$ detectable in $\cS$ for some $\phi<1$. Let $\delta\leq \phi/32$, $t<1$ such that $\Phi(t)\leq \delta/4$, and $\varepsilon<t\delta/16$ be given, and define $\cP=(\cS;\varepsilon)^\Theta_{0,\infty}$ and $\cQ=(\cT;\delta)^\Theta_{0,\infty}$. Note that $\cQ$ is separated at infinity in $\cP$ by Corollary \ref{c:d2i}. Let $s=t/8$ so that $\Phi(8s)=\Phi(t)\leq \delta/4$, and put $\tilde\gamma=\min\{s\delta/12,1/2\}$. Now suppose that $A\subseteq\RR^n$ is locally $\varepsilon$-approximable by $\cS$. Note that $\PsTan(\cl{A},x)\subseteq \cP$ for all $x\in A$ by Theorem \ref{PsTanVsPerturb}. To proceed, partition $A$ into two sets: $$ A_\cQ=\left\{x\in A: \liminf_{r\downarrow 0} \Theta^{\cQ}_A(x,r)< \tilde\gamma\right\}\quad\text{and}\quad A_{\cQp}=\left\{x\in A: \liminf_{r\downarrow 0} \Theta^{\cQ}_A(x,r)\geq \tilde\gamma\right\}.$$ Then $A=A_\cQ\cup A_{\cQp}$ and $A_\cQ\cap A_{\cQp}=\emptyset$. Since $\liminf_{r\rightarrow 0}\Theta^{\cQ}_A(x,r)=0$ whenever $\Tan(\cl{A},x)\cap\cQ\neq\emptyset$ by Corollary \ref{TanVsTheta0}, it is clear that $\Tan(\cl{A},x)\subseteq\cPcQ$ for all $x\in A_{\cQp}$. It follows that $\Tan(\cl{A},x)\subseteq\cQ^\perp$ for all $x\in A_{\cQp}$ by Theorem \ref{bcthm}.  To complete the proof of the theorem, we must show that $A_\cQ$ is relatively open in $A$; $A_\cQ$ is locally $\delta$-approximable by $\cT$; and $\PsTan(\cl{A},x)\subseteq \cQ$ for all $x\in A_\cQ$.

Fix $x_0\in A_\cQ$. Since $A$ is locally $\varepsilon$-approximable by $\cS$, there exists $r_0\in(0,1]$ such that $$\Theta_A^\cS(x,r')\leq \varepsilon<s\delta/2\quad\text{for all }x\in A\cap \ball(x_0,1)\quad\text{and}\quad 0<r'\leq r_0.$$ Since $x_0\in A_\cQ$, $\liminf_{r\rightarrow 0}\Theta^\cQ_A(x_0,r)< \tilde\gamma$. Hence there exists $r_1\in (0,r_0/4]$ such that $\Theta^\cQ_A(x_0,4r_1)<\tilde\gamma$. By monotonicity, for all $x\in A$ such that $|x-x_0|\leq \tilde\gamma(2r_1)$, \begin{align*} \Theta_A^\cQ(x,r_1) &\leq 2\left(\tilde\gamma + (1+\tilde\gamma)\Theta^\cQ_A(x_0,(1+\tilde\gamma)2r_1)\right)\\
&\leq 2\left(\tilde\gamma+2\Theta_A^{\cQ}(x_0,4r_1)\right)<6 \tilde\gamma \leq s\delta/2.\end{align*} Therefore, by Lemma \ref{l:Tpdp2} with $\gamma=\delta$, \begin{equation} \label{e:Bopen1} \Theta^{\cT}_A(x,r')<\delta\quad\text{ for all }x\in A\cap \ball(x_0,\tilde\gamma(2r_1))\text{ and }0<r'\leq sr_1/2.\end{equation} Fix $x\in A\cap \ball(x_0,\tilde\gamma(2r_1))$. Then $\limsup_{r\rightarrow 0} \Theta_A^\cT(x,r)\leq \delta$ by (\ref{e:Bopen1}). Thus, $\Tan(\cl{A},x)\subseteq\cQ$ by Theorem \ref{l:tan-perturb},  and $\liminf_{r\rightarrow 0} \Theta_A^\cQ(x,r)=0$ by Corollary \ref{TanVsTheta0}. Hence $A\cap\ball(x_0,\tilde\gamma(2r_1))\subseteq A_\cQ$. Since $x_0\in A_\cQ$ was fixed arbitrarily, we conclude $A_\cQ$ is relatively open in $A$.

It now remains to show that $A_\cQ$ is locally $\delta$-approximable by $\cT$ and $\PsTan(\cl{A},x)\subseteq\cQ$ for all $x\in A_{\cQ}$. Let $K$ be a compact subset of $A_\cQ$. By (\ref{e:Bopen1}), for every $x\in A_\cQ$ there exists $r_x>0$ such that $\Theta^\cT_{A_Q}(y,r')=\Theta^\cT_A(y,r')<\delta$ for all $y\in A_Q\cap \ball(x,\tilde\gamma r_x)$ and for all $0<r'\leq sr_x/8$.   Thus, by extracting a finite subcover of $K$ from  $\{\ball(x,\tilde\gamma r_x):x\in K\}$, it follows that there exists $r_K>0$ such that $\Theta_{A_\cQ}^\cT(y,r')<\delta$ for all $y\in K$ and for all $0<r'\leq r_K$. Therefore, $A_\cQ$ is locally $\delta$-approximable by $\cT$. Because $A$ and $A_\cQ$ coincide locally near all $x\in A_\cQ$, $\PsTan(\cl{A},x)\subseteq\cQ$ for all $x\in A_{\cQ}$ by Theorem \ref{PsTanVsPerturb}.
\end{proof}

\section{Unilateral approximation and Mattila-Vuorinen type sets}

The following definition generalizes Jones' beta numbers and Mattila and Vuorinen's linear approximation property to unilateral approximation of a set $A$ by closed sets in an arbitrary local approximation class $\cS$.

\begin{definition}[Mattila-Vuorinen type sets]\label{MVsets}
Let $\cS$ be a local approximation class and let $A\subseteq\RR^n$.
\begin{enumerate}
\item For all $x\in\RR^n$ and $r>0$, define the \emph{unilateral approximability $\beta^\cS_A(x,r)$} of $A$ by $\cS$ at location $x$ and scale $r$ by
$$\beta^\cS_A(x,r) = \inf_{S\in \cS} \mud x r(A, x+S).$$
\item We say that $A$ is \emph{unilaterally $(\varepsilon,r_0)$-approximable by $\cS$} if $\beta^\cS_A(x,r)\leq \varepsilon$ for all $x\in A$ and $0<r\leq r_0$.
\item We say that $A$ is \emph{locally unilaterally $\varepsilon$-approximable by $\cS$} if for all compact sets $K\subseteq A$, there exists $r_K>0$ such that $\beta^\cS_A(x,r)\leq \varepsilon$ for all $x\in K$ and $0<r\leq r_K$.
\item We say that $A$ is \emph{locally unilaterally well approximated by $\cS$} if $A$ is locally unilaterally $\varepsilon$-approximable by $\cS$ for all $\varepsilon>0$.
\end{enumerate}
\end{definition}

\begin{lemma}[properties of $\beta^\cS_A(x,r)$] Let \label{l:bp} $\cS$ be a local approximation class, let $A\subseteq\RR^n$ be nonempty, let $x,y\in \RR^n$ and let $r,s>0$. \begin{itemize}
\item comparison with bilateral approximability: $\beta^\cS_A(x,r) \leq \Theta^\cS_A(x,r)$.
\item size: $0\leq \beta^\cS_A(x,r)\leq 1$; $\beta^\cS_A(x,r)=0$ if $A\cap \ball(x,r)=\emptyset$.
\item scale invariance: $\beta^\cS_A(x,r)=\beta^\cS_{\lambda A}(\lambda x,\lambda r)$ for all $\lambda>0$.
\item translation invariance: $\beta^\cS_A(x,r)=\beta^\cS_{A+z}(x+z,r)$ for all $z\in\RR^n$.
\item closure: $\beta^\cS_A(x,r)\leq \beta^\cS_{\cl{A}}(x,r) \leq (1+\delta)\beta^\cS_{A}(x,(1+\delta)r)$ for all $\delta>0$.
\item monotonicity: If $A\subseteq A'$, $\ball(x,r)\subseteq \ball(y,s)$ and $|x-y|\leq ts$, then \begin{equation}\label{beta-mono}\beta_A^\cS(x,r) \leq \frac{s}{r}\left(t+ \beta^\cS_{A'}(y,s)\right).\end{equation}
\item limits: If $A,A_1,A_2,\dots\in \CL{\RR^n}$ and $A_i\rightarrow A$ in $\CL{\RR^n}$, then \begin{equation}\begin{split}\label{betaLimits.1}\frac{1}{1+\varepsilon} \limsup_{i\to\infty}\,&\beta^\cS_{A_i}\left(x,\frac{r}{1+\varepsilon}\right)\\ \leq &\,\beta^\cS_{A}(x,r) \leq
(1+\varepsilon) \liminf_{i\to\infty}\beta^\cS_{A_i}(x,r(1+\varepsilon))\quad\text{for all }\varepsilon>0.\end{split}.\end{equation}
\end{itemize}
\end{lemma}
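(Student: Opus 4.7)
The plan is to derive each assertion in parallel with the corresponding property of $\Theta^\cS_A(x,r)$ established in Section \ref{lsa}, exploiting the fact that $\beta^\cS_A(x,r)=\inf_{S\in\cS}\mud{x}{r}(A,x+S)$ differs from $\Theta^\cS_A(x,r)=\inf_{S\in\cS}\mD{x}{r}[A,x+S]$ only by replacing the symmetric relative Walkup-Wets distance with the one-sided relative excess. In each case the desired property will come from the analogous property of $\mud{x}{r}(\cdot,\cdot)$ in Lemma \ref{l:relativeExcess} (or, for the limit property, a one-sided version of Lemma \ref{QtriangleVsLimits}) followed by taking the infimum over $S\in\cS$.

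The comparison $\beta^\cS_A\leq\Theta^\cS_A$ is immediate since $\mud{x}{r}(A,x+S)\leq \mD{x}{r}[A,x+S]$. The size bounds follow because $0\in S$ for every $S\in\cS$, so $x\in x+S$ and $\dist(a,x+S)\leq|a-x|\leq r$ for every $a\in A\cap \ball(x,r)$; when $A\cap \ball(x,r)=\emptyset$ the excess is zero by convention. Scale and translation invariance of $\beta$ follow line-by-line from the corresponding invariances of relative excess, together with the cone property $\lambda\cS=\cS$ used for scale invariance. The closure inequalities reduce to the closure property of relative excess: taking the infimum over $S\in\cS$ in $\mud{x}{r}(A,x+S)\leq \mud{x}{r}(\cl A,x+S)\leq(1+\delta)\mud{x}{r(1+\delta)}(A,x+S)$ yields the displayed chain.

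For monotonicity, fix $S\in\cS$ and suppose $\ball(x,r)\subseteq \ball(y,s)$ with $|x-y|\leq ts$. Since every point of $x+S$ has the form $b'+(x-y)$ with $b'\in y+S$, the pointwise bound $\dist(a,x+S)\leq \dist(a,y+S)+|x-y|$ holds for every $a\in\RR^n$. Combining this with the monotonicity of relative excess in both the ball and the first argument,
\begin{equation*}
\mud{x}{r}(A,x+S)\leq \frac{s}{r}\mud{y}{s}(A',x+S)\leq \frac{s}{r}\left(\mud{y}{s}(A',y+S)+\frac{|x-y|}{s}\right)\leq \frac{s}{r}\bigl(t+\mud{y}{s}(A',y+S)\bigr),
\end{equation*}
and taking the infimum over $S\in\cS$ yields (\ref{beta-mono}).

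For the limit property, rerunning the proof of Lemma \ref{QtriangleVsLimits} while invoking only the strong quasitriangle inequality for relative excess (in place of the one for Walkup-Wets distance) gives the one-sided analog
\begin{equation*}
\tfrac{1}{1+\varepsilon}\limsup_{i\to\infty}\mud{x}{r/(1+\varepsilon)}(A_i,B)\leq \mud{x}{r}(A,B)\leq (1+\varepsilon)\liminf_{i\to\infty}\mud{x}{r(1+\varepsilon)}(A_i,B)
\end{equation*}
for every nonempty $B\subseteq\RR^n$ whenever $A_i\to A$ in $\CL{\RR^n}$. Applying this with $B=x+T$ for a fixed $T\in\cS$ and taking the infimum over $T$ produces the lower bound in (\ref{betaLimits.1}). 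For the upper bound, pass along a subsequence $i_j$ realizing the $\liminf$, select $S_j\in\cS$ with $\mud{x}{r(1+\varepsilon)}(A_{i_j},x+S_j)\leq \beta^\cS_{A_{i_j}}(x,r(1+\varepsilon))+1/j$, extract via Theorem \ref{t:aw} a subsequential limit $S\in\CL{0}$ of $(S_j)$, and transfer the estimate to $\mud{x}{r}(A,x+S)\geq \beta^\cS_A(x,r)$ using the strong quasitriangle inequality for excess together with $A_{i_j}\to A$ and $x+S_j\to x+S$. The main technical point is this final compactness/limit step, which essentially reproduces the argument used in the proof of Lemma \ref{ApproxVsLimits}; the remaining properties are direct translations of facts already established for $\mud{x}{r}$ or for $\Theta^\cS_A$.
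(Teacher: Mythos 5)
Your proposal is correct and takes essentially the same approach as the paper, whose proof of this lemma simply states that the properties follow by modifying the bilateral arguments of \S4 and leaves the details to the reader---which is exactly what you carry out, property by property, via Lemma \ref{l:relativeExcess} and a one-sided version of Lemma \ref{QtriangleVsLimits}. One small remark: in the limit property the compactness extraction of a subsequential limit $S\in\CL{0}$ of the approximants $S_j$ is not actually needed, because the excess is one-sided: the strong quasitriangle inequality applied with middle set $A_{i_j}$ gives $\beta^\cS_A(x,r)\leq \mud{x}{r}(A,A_{i_j})+(1+\varepsilon)\bigl(\beta^\cS_{A_{i_j}}(x,r(1+\varepsilon))+1/j\bigr)$ directly, so the unilateral upper bound is simpler than its bilateral counterpart and avoids the minor wrinkle in your sketch that the limit $S$ lies only in $\ccS$ rather than $\cS$ (which your transfer step would otherwise have to handle by passing back through the $S_j$).
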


\begin{proof} The properties of unilateral approximability can be verified by modifying the proofs of the properties of bilateral approximability given in \S\ref{lsa}. Details are left to the reader.\end{proof}

\begin{remark} It immediately follows from the definitions that any subset of a Reifenberg type set is a Matilla-Vuorinen type set: \emph{If $A$ is locally bilaterally $\varepsilon$-approximable by $\cS$ and $B\subseteq A$, then $B$ is locally unilaterally $\varepsilon$-approximable by $\cS$.} The converse of this fact is generally false, as shown by the following example due to David and Toro.\end{remark}

\begin{example}[\cite{DT}*{Example 12.4}] Let $\cG(3,2)$ denote the set of 2-dimensional linear subspaces of $\RR^3$, which is a closed linear approximation class. For all $0< \delta\ll 1$ and for all $0<\varepsilon\ll \delta$, there exists a M\"obius strip $M\subset\RR^3$ with the following property: $M$ is unilaterally $(\varepsilon,r_0)$-approximable by $\cG(3,2)$ for all $r_0>0$, but $M$ cannot be extended to a set $N$, which is bilaterally $(\delta,r_1)$-approximable by $\cG(3,2)$, for any $r_1\gtrsim 1$.
\end{example}

Our next goal is to characterize unilaterally approximable sets in terms of the tangent and pseudotangent sets of their closure.

\begin{definition}[unilateral $\varepsilon$-enlargements] Let $\cS$ be a local approximation class. For all $\varepsilon\geq 0$, define  $$(\cS;\varepsilon)^\beta_{0,\infty}=\{A\in\CL{0}: \beta_{A}^{\cS}(0,r)\leq\varepsilon\text{ for all }r>0\}$$ and $$(\cS;\varepsilon)^\beta_{\RR^n,\infty}=\{A\in\CL{0}: \beta_{A}^{\cS}(x,r)\leq\varepsilon\text{ for all }x\in A\text{ and all }r>0\}.$$
\end{definition}

\begin{lemma} \label{l:enlarge2} Let $\cS$ be a local approximation class and let $\varepsilon\geq 0$. Then $(\cS;\varepsilon)^\beta_{\RR^n,\infty}$ and $(\cS;\varepsilon)^\beta_{0,\infty}$ are closed local approximation classes, and $(\cS;\varepsilon)^\beta_{\RR^n,\infty}$ is the maximal translation invariant local approximation class that is contained in $(\cS;\varepsilon)^\beta_{0,\infty}$.\end{lemma}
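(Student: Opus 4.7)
The plan is to essentially mirror the proof of Lemma \ref{l:enlarge}, replacing the bilateral approximability $\Theta$ with the unilateral approximability $\beta$ throughout, while drawing on the parallel properties of $\beta$ already recorded in Lemma \ref{l:bp}. The structure is: first verify the cone property, then closedness of the ``$0$'' enlargement, then identify the ``$\RR^n$'' enlargement as the maximal translation invariant sub-class, and finally derive closedness of the ``$\RR^n$'' enlargement from closedness of its translation invariant closure.

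First I would check that $(\cS;\varepsilon)^\beta_{0,\infty}$ and $(\cS;\varepsilon)^\beta_{\RR^n,\infty}$ are local approximation classes. Since by Lemma \ref{l:bp} the unilateral approximability is scale invariant, i.e.\ $\beta^\cS_{\lambda A}(\lambda x,\lambda r)=\beta^\cS_A(x,r)$ for all $\lambda>0$, dilating any $A$ in either enlargement produces another element of the same enlargement; combined with $0\in A$ being preserved under dilation, this yields the cone property. To show $(\cS;\varepsilon)^\beta_{0,\infty}$ is closed, take $A_i\to A$ in $\CL{0}$ with each $A_i\in(\cS;\varepsilon)^\beta_{0,\infty}$ and apply the limits property (\ref{betaLimits.1}) to obtain, for every $r>0$,
\[\beta^\cS_A(0,r)\leq \liminf_{\delta\downarrow 0}\liminf_{i\to\infty}(1+\delta)\beta^\cS_{A_i}(0,r(1+\delta))\leq \varepsilon.\]

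Next I would establish the maximality statement. The inclusion $(\cS;\varepsilon)^\beta_{\RR^n,\infty}\subseteq(\cS;\varepsilon)^\beta_{0,\infty}$ is immediate from the definitions, and translation invariance of $(\cS;\varepsilon)^\beta_{\RR^n,\infty}$ follows at once: if $T\in(\cS;\varepsilon)^\beta_{\RR^n,\infty}$ and $y\in T$, then $0\in T-y$ and for all $z\in T-y$, $r>0$, the translation invariance of $\beta$ from Lemma \ref{l:bp} gives $\beta^\cS_{T-y}(z,r)=\beta^\cS_T(z+y,r)\leq\varepsilon$. Conversely, suppose $\cT\subseteq(\cS;\varepsilon)^\beta_{0,\infty}$ is translation invariant and let $T\in\cT$, $x\in T$. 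Then $T-x\in\cT\subseteq(\cS;\varepsilon)^\beta_{0,\infty}$, so by translation invariance of $\beta$, $\beta^\cS_T(x,r)=\beta^\cS_{T-x}(0,r)\leq\varepsilon$ for all $r>0$. Hence $T\in(\cS;\varepsilon)^\beta_{\RR^n,\infty}$, proving maximality.

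Finally, I would deduce that $(\cS;\varepsilon)^\beta_{\RR^n,\infty}$ is closed in $\CL{0}$. By the maximality property, it suffices to check that $\overline{(\cS;\varepsilon)^\beta_{\RR^n,\infty}}$ is translation invariant, because that closure is automatically contained in the closed set $(\cS;\varepsilon)^\beta_{0,\infty}$, and is itself a translation invariant local approximation class. Let $A\in\overline{(\cS;\varepsilon)^\beta_{\RR^n,\infty}}$, fix $x\in A$, and choose $A_i\in(\cS;\varepsilon)^\beta_{\RR^n,\infty}$ with $A_i\to A$. Since $A_i\to A$ and $x\in A$, one can pick $x_i\in A_i$ with $x_i\to x$. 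Translation invariance gives $A_i-x_i\in(\cS;\varepsilon)^\beta_{\RR^n,\infty}$, and the same computation as in the proof of Lemma \ref{l:enlarge}, namely
\[\mD{-x}{r}[A-x,A_i-x_i]\leq 2\mD{0}{2r}[A,A_i]+\frac{|x-x_i|}{r}\]
via the weak quasitriangle inequality (using $-x\in A_i-x$ for $i$ large), shows $A_i-x_i\to A-x$; hence $A-x\in\overline{(\cS;\varepsilon)^\beta_{\RR^n,\infty}}$.

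The main obstacle here is the translation step inside the closure argument; the rest of the proof consists of routine scale/translation invariance checks and a direct application of the limit inequality (\ref{betaLimits.1}). No new ideas beyond those used in Lemma \ref{l:enlarge} appear to be needed, since $\beta$ satisfies all the same closure, monotonicity, invariance, and limit properties as $\Theta$ modulo the obvious one-sided modifications.
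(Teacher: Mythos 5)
Your proposal is correct and follows essentially the same route as the paper's proof: cone property via scale invariance of $\beta$, closedness of $(\cS;\varepsilon)^\beta_{0,\infty}$ via the limit inequality, maximality via translation invariance of $\beta$, and closedness of $(\cS;\varepsilon)^\beta_{\RR^n,\infty}$ by showing its closure is translation invariant (the Lemma \ref{l:enlarge} computation) and invoking maximality. The only cosmetic difference is that you write out the translation step that the paper handles by referring back to the proof of Lemma \ref{l:enlarge}.
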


\begin{proof} The enlargements $(\cS;\varepsilon)^\beta_{\RR^n,\infty}$ and $(\cS;\varepsilon)^\beta_{0,\infty}$ are local approximation classes, because unilateral approximability $\beta^\cS_A(x,r)$ is scale invariant by Lemma \ref{l:bp}.

To show that $(\cS;\varepsilon)^\beta_{0,\infty}$ is closed in $\CL{0}$, suppose that $A_i\rightarrow A$ in $\CL{0}$ for some sequence $(A_i)_{i=1}^\infty$ in $(\cS;\varepsilon)^\beta_{0,\infty}$. By Lemma \ref{l:bp}, for all $r>0$, $$\beta^\cS_A(0,r) \leq \liminf_{\delta\downarrow 0}\liminf_{i\rightarrow\infty} \beta^\cS_{A_i}(0,(1+\delta)r)\leq \liminf_{\delta\downarrow 0}\liminf_{i\rightarrow\infty}\varepsilon=\varepsilon,$$ since each set $A_i\in(\cS;\varepsilon)^\beta_{0,\infty}$. Hence $A\in (\cS;\varepsilon)^\beta_{0,\infty}$, and thus,  $(\cS;\varepsilon)^\beta_{0,\infty}$ is closed in $\CL{0}$.

It is clear that $(\cS;\varepsilon)^\beta_{\RR^n,\infty}\subseteq (\cS;\varepsilon)^\beta_{0,\infty}$ and $(\cS;\varepsilon)^\beta_{\RR^n,\infty}$ is translation invariant. Suppose that $\cT\subseteq (\cS;\varepsilon)^\beta_{0,\infty}$ and $\cT$ is translation invariant. Let $T\in\cT$ and let $x\in \cT$. Then $$\beta^\cS_T(x,r)=\beta^{\cS}_{T-x}(0,r)\leq \varepsilon\quad\text{for all }r>0,$$ because $T-x\in \cT\subseteq (S;\varepsilon)^\beta_{0,\infty}$. Hence $T\in(\cS;\varepsilon)^\beta_{\RR^n,\infty}$ for all $T\in\cT$. Thus $\cT\subseteq(\cS;\varepsilon)^\beta_{\RR^n,\infty}$ for all translation invariant $\cT\subseteq(\cS;\varepsilon)^\beta_{0,\infty}$. Therefore, $(\cS;\varepsilon)^\beta_{\RR^n,\infty}$ is the maximal translation invariant local approximation class contained in $(\cS;\varepsilon)^\beta_{0,\infty}$. Finally, since $(\cS;\varepsilon)^\beta_{\RR^n,\infty}$ is translation invariant, its closure $(\cS;\varepsilon)^\beta_{\RR^n,\infty}$ is also translation invariant (see the proof of Lemma \ref{l:enlarge}). By maximality of $(\cS;\varepsilon)^\beta_{\RR^n,\infty}$, it follows that $(\cS;\varepsilon)^\beta_{\RR^n,\infty}$ is closed.\end{proof}

\begin{lemma} \label{l:0b} $(\cS;0)^\beta_{0,\infty}=\{A\in\CL{0}:A\subseteq S$ for some $S\in\ccS\}$.\end{lemma}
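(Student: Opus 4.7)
My plan is to prove the two containments separately, leveraging the sequential compactness of $\CL{0}$ (Theorem \ref{t:aw}) and monotonicity of relative excess.

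For the easy containment $\supseteq$, suppose $A\in\CL{0}$ and $A\subseteq S$ for some $S\in\ccS$. Pick a sequence $S_i\in\cS$ with $S_i\to S$ in $\CL{0}$. Since $A\subseteq S$, monotonicity of relative excess gives $\mud{0}{r}(A,S_i)\leq \mud{0}{r}(S,S_i)$ for every $r>0$, and the right-hand side tends to $0$ by the definition of Attouch-Wets convergence. Hence $\beta^\cS_A(0,r)\leq \mud{0}{r}(A,S_i)\to 0$, so $\beta^\cS_A(0,r)=0$ for all $r>0$ and therefore $A\in(\cS;0)^\beta_{0,\infty}$.

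For the harder containment $\subseteq$, let $A\in(\cS;0)^\beta_{0,\infty}$. The plan is to manufacture a single $S\in\ccS$ containing $A$ from a diagonal sequence of near-optimal approximants. For each integer $k\geq 1$, use $\beta^\cS_A(0,k)=0$ to choose $S_k\in\cS$ with
\begin{equation*}
\mud{0}{k}(A,S_k)<\frac{1}{k^2},\qquad\text{equivalently}\qquad \ue(A\cap \ball(0,k),S_k)<\frac{1}{k}.
\end{equation*}
Each $S_k\in\CL{0}$, so by sequential compactness of $\CL{0}$ (Theorem \ref{t:aw}) we may pass to a subsequence $S_{k_j}\to S$ in $\CL{0}$, giving $S\in\ccS$.

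To see that $A\subseteq S$, fix $a\in A$ with $|a|=R$. For every $k_j\geq R$, we have $a\in A\cap \ball(0,k_j)$, so there exists $s_j\in S_{k_j}$ with $|a-s_j|<1/k_j$. In particular $s_j\in S_{k_j}\cap\ball(0,R+1)$ for all large $j$. By Attouch-Wets convergence, $\ue(S_{k_j}\cap \ball(0,R+1),S)\to 0$, so we can select $s'_j\in S$ with $|s_j-s'_j|\to 0$. The triangle inequality yields $|a-s'_j|\to 0$, and since $S$ is closed we conclude $a\in S$. Thus $A\subseteq S\in\ccS$.

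The only mild subtlety is choosing the scales carefully so that $a$ lies inside the ball used to control the excess; the rest follows mechanically from the definitions and the Attouch-Wets description of convergence in $\CL{0}$. No step looks like a real obstacle.
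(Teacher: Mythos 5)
Your proposal is correct and follows essentially the same route as the paper: choose near-optimal approximants $S_k\in\cS$ at scales $k\to\infty$, extract a limit $S\in\ccS$ by sequential compactness of $\CL{0}$, and verify $A\subseteq S$ (the paper phrases this last step via the quasitriangle and containment properties of relative excess rather than your pointwise triangle-inequality argument, but the content is the same). The easy inclusion is likewise handled equivalently, your direct monotonicity argument replacing the paper's appeal to $\beta\leq\Theta$ and $(\cS;0)^\Theta_{0,\infty}=\ccS$.
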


\begin{proof} If $A\subseteq S$ for some $S\in\ccS$, then $\beta^\cS_A(0,r) \leq \beta^\cS_S(0,r)\leq \Theta^\cS_S(0,r)=0$ for all $r>0$ by Lemma \ref{l:bp} and Lemma \ref{l:enlarge}. Thus, $\{A\in\CL{0}:A\subseteq S$ for some $S\in\ccS\}\subseteq (\cS;0)^\beta_{0,\infty}$.

For the reverse inclusion, suppose that $A\in (\cS;0)^\beta_{0,\infty}$. Then, for all $i\geq 1$, we can find $S_i\in\cS$ such that $\mud{0}{i}(A,S_i)\leq 1/i^2$. Passing to a subsequence of $(S_i)_{i=1}^\infty$, we may assume that $S_i\rightarrow S$ for some $S\in \ccS$. Passing to a further subsequence, we may also assume that $\mD{0}{2i}[S_i,S]\leq 1/i^2$ for all $i\geq 1$. It follows that for all $r>0$ and integers $i\geq r$, \begin{align*} \mud{0}{r}(A,S) &\leq \mud{0}{r}(A,S_i) + 2\mud{0}{2r}(S_i,S)\\
&\leq \frac{i}{r}\mud{0}{i}(A,S_i) + \frac{2i}{r}\mud{0}{2i}(S_i,S) \leq \frac{3}{ri},\end{align*} where the first inequality holds by the weak quasitriangle inequality for relative excess and the second inequality holds by monotonicity for relative excess. Letting $i\rightarrow\infty$, we conclude that $\mud{0}{r}(A,S)=0$ for all $r>0$. Hence, by the containment property of the relative excess, $A\subseteq S$. Therefore, $(\cS;0)^\beta_{0,\infty}\subseteq \{A\in\CL{0}:A\subseteq S$ for some $S\in\ccS\}.$\end{proof}

The following theorem and its corollary are unilateral variants of Theorem \ref{l:tan-perturb} and Corollary \ref{TanVsTheta0}.

\begin{theorem} \label{t:betatan} Let $\cS$ be a local approximation class, let $A\subseteq\RR^n$ and let $x\in A$. Then $\limsup_{r\downarrow 0}\beta^\cS_A(x,r)\leq \varepsilon$ if and only if $\Tan(\cl{A},x)\subseteq (\cS;\varepsilon)^\beta_{0,\infty}$.\end{theorem}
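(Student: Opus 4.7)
My plan is to prove Theorem~\ref{t:betatan} by imitating the proof of Theorem~\ref{l:tan-perturb}, replacing bilateral approximability $\Theta^\cS$ with unilateral approximability $\beta^\cS$ throughout. The essential ingredients — scale/translation invariance, closure, and the relationship with Attouch--Wets limits — are all available for $\beta^\cS$ from Lemma~\ref{l:bp}, so both directions should go through essentially verbatim.

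For the forward implication, assume $\limsup_{r\downarrow 0}\beta^\cS_A(x,r)\leq\varepsilon$ and fix $T\in\Tan(\cl{A},x)$, say $(\cl{A}-x)/r_i\to T$ for some $r_i\downarrow 0$. For any $r>0$ and $\delta>0$, I chain together the limit inequality, scale/translation invariance, and closure from Lemma~\ref{l:bp}:
\begin{align*}
\beta_T^\cS(0,r)
&\leq (1+\delta)\liminf_{i\to\infty}\beta^\cS_{(\cl{A}-x)/r_i}(0,(1+\delta)r)\\
&= (1+\delta)\liminf_{i\to\infty}\beta^\cS_{\cl{A}}(x,(1+\delta)r\,r_i)\\
&\leq (1+\delta)^2\liminf_{i\to\infty}\beta^\cS_A(x,(1+\delta)^2 r\,r_i)\leq (1+\delta)^2\varepsilon,
\end{align*}
where the last inequality uses $(1+\delta)^2 r\,r_i\downarrow 0$ and the hypothesis on $\limsup$. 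Letting $\delta\downarrow 0$ gives $\beta^\cS_T(0,r)\leq\varepsilon$ for all $r>0$, i.e., $T\in(\cS;\varepsilon)^\beta_{0,\infty}$.

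For the reverse implication, assume $\Tan(\cl{A},x)\subseteq(\cS;\varepsilon)^\beta_{0,\infty}$ and choose $r_i\downarrow 0$ along which $\beta^\cS_A(x,r_i)\to\limsup_{r\downarrow 0}\beta^\cS_A(x,r)$. Since $\CL{0}$ is sequentially compact, after passing to a subsequence we may assume $(\cl{A}-x)/r_i\to T$ for some $T\in\Tan(\cl{A},x)$. Applying closure, scale/translation invariance, and the limit inequality from Lemma~\ref{l:bp} in reverse order, for any $\delta>0$,
\begin{align*}
\lim_{i\to\infty}\beta^\cS_A(x,r_i)
&\leq \limsup_{i\to\infty}\beta^\cS_{\cl{A}}(x,r_i)
= \limsup_{i\to\infty}\beta^\cS_{(\cl{A}-x)/r_i}(0,1)\\
&\leq (1+\delta)\beta^\cS_T(0,1+\delta)\leq (1+\delta)\varepsilon,
\end{align*}
where the last step uses $T\in(\cS;\varepsilon)^\beta_{0,\infty}$. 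Letting $\delta\downarrow 0$ yields $\limsup_{r\downarrow 0}\beta^\cS_A(x,r)\leq\varepsilon$.

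There is really no hard step here, since the unilateral analogues of every tool used in the proof of Theorem~\ref{l:tan-perturb} are already in place: the only subtle point is making sure that the closure estimate $\beta^\cS_{\cl{A}}(x,r)\leq(1+\delta)\beta^\cS_A(x,(1+\delta)r)$ is applied with the correct factor of $(1+\delta)$ so that all scales we evaluate $\beta^\cS_A$ at still tend to $0$, and that the sequential compactness of $\CL{0}$ is invoked to extract a tangent set to which Lemma~\ref{l:bp} can be applied. Both are straightforward to arrange as above.
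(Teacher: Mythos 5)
Your proof is correct and follows essentially the same route as the paper's own argument: both directions chain the closure, scale/translation invariance, and Attouch--Wets limit inequalities for $\beta^\cS$ from Lemma~\ref{l:bp}, exactly paralleling the proof of Theorem~\ref{l:tan-perturb}, with the tangent set in the reverse direction extracted via sequential compactness of $\CL{0}$ along a sequence realizing the $\limsup$. No gaps.
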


\begin{proof} Assume $x\in A\subseteq\RR^n$ and let $\varepsilon\geq 0$. Suppose that $\Tan(\cl{A},x)\subseteq (\cS;\varepsilon)^\beta_{0,\infty}$. Choose a sequence $r_i>0$ such that $r_i\rightarrow 0$ and $$\lim_{i\rightarrow\infty} \beta_A^\cS(x,r_i) = \limsup_{r\downarrow 0} \beta_A^\cS(x,r).$$ Since $\CL{0}$ is sequentially compact, there exists a subsequence $(r_{i_j})_{j=1}^\infty$ of $(r_i)_{i=1}^\infty$ such that $(\cl{A}-x)/r_{i_j}\rightarrow T$ for some $T\in\Tan(\cl{A},0)$. Then, by Lemma \ref{l:bp}, \begin{align*} \lim_{j\rightarrow\infty} \beta_A^\cS(x,r_{i_j}) &\leq \limsup_{j\rightarrow\infty} \beta_{\cl{A}}^\cS(x,r_{i_j}) =\limsup_{j\rightarrow\infty} \beta_{(\cl{A}-x)/r_{i_j}}^\cS(0,1) \\&\leq (1+\delta) \beta^\cS_{T}(0,1+\delta)\leq (1+\delta)\varepsilon\end{align*} for all $\delta>0$, where the last inequality holds because $T\in(\cS;\varepsilon)^\beta_{0,\infty}$. Therefore, letting $\delta\rightarrow 0$, we see that $\limsup_{r\downarrow 0} \beta_A^\cS(x,r) = \lim_{j\rightarrow\infty} \beta_A^\cS(x,r_{i_j})\leq \varepsilon$.

Conversely, suppose that $\limsup_{r\downarrow 0} \beta_A^\cS(x,r)\leq \varepsilon$. Let $T\in \Tan(\cl{A},x)$, say $(\cl{A}-x)/r_i\rightarrow T$ for some sequence $r_i\rightarrow 0$. Let $s>0$. Then, by Lemma \ref{l:bp}, \begin{align*}\beta_T^\cS(0,s) &\leq (1+\delta)\liminf_{i\rightarrow\infty} \beta_{(\cl{A}-x)/r_i}^\cS(0,(1+\delta)s) = (1+\delta)\liminf_{i\rightarrow\infty} \beta_{\cl{A}}^\cS(x,(1+\delta)sr_i) \\ &\leq (1+\delta)^2 \liminf_{i\rightarrow\infty} \beta_{A}^\cS(x,(1+\delta)^2sr_i) \leq (1+\delta)^2\limsup_{r\downarrow 0} \beta_A^\cS(x,r)\leq (1+\delta)^2\varepsilon\end{align*} for all $\delta>0$. Thus, $\beta_T^\cS(0,r)\leq \varepsilon$ for all $r>0$, for all $T\in \Tan(\cl{A},x)$. Therefore, $\Tan(\cl{A},x)\subseteq (\cS;\varepsilon)^\beta_{0,\infty}$.
\end{proof}

\begin{corollary} Let $\cS$ be a local approximation class, let $x\in A\subseteq\RR^n$, and let $\varepsilon\geq 0$. \begin{itemize}
\item If $\Tan(\cl{A},x)\cap (\cS;\varepsilon)^\beta_{0,\infty}\neq\emptyset$, then $\liminf_{r\downarrow 0} \beta^\cS_A(x,r)\leq \varepsilon$.
 \item If $\liminf_{r\downarrow 0} \beta^\cS_A(x,r)=0$, then $\Tan(\cl{A},x)\cap (\cS;0)^\beta_{0,\infty}\neq\emptyset$.\end{itemize} \end{corollary}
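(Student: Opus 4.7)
The plan is to model both bullets on the proof of Corollary~\ref{TanVsTheta0}, substituting the unilateral approximability $\beta^\cS$ for the bilateral approximability $\Theta^\cS$ throughout and invoking the unilateral analogues collected in Lemma~\ref{l:bp} (closure, scale/translation invariance, monotonicity, limits) in place of Lemmas~\ref{l:theta_inv}, \ref{l:close}, \ref{l:tmono}, and~\ref{ApproxVsLimits}; Theorem~\ref{t:betatan} will play the role of Theorem~\ref{l:tan-perturb} and Corollary~\ref{TanVsTheta}. In both parts I will exploit sequential compactness of $\CL 0$ to pass to convergent subsequences of rescalings of $\cl A$ at $x$.

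For the first bullet, I will fix $T\in\Tan(\cl A,x)\cap(\cS;\varepsilon)^\beta_{0,\infty}$, realize it as $T=\lim_{i\to\infty}(\cl A-x)/r_i$ with $r_i\downarrow 0$, and chain the closure inequality $\beta^\cS_A(x,r_i)\le\beta^\cS_{\cl A}(x,r_i)$, the scale and translation invariance identity $\beta^\cS_{\cl A}(x,r_i)=\beta^\cS_{(\cl A-x)/r_i}(0,1)$, and the upper limit estimate from Lemma~\ref{l:bp} to obtain
\[
\limsup_{i\to\infty}\beta^\cS_A(x,r_i)\le(1+\delta)\,\beta^\cS_T(0,1+\delta)\le(1+\delta)\varepsilon
\]
for every $\delta>0$, where the last inequality uses $T\in(\cS;\varepsilon)^\beta_{0,\infty}$. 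Sending $\delta\downarrow 0$ yields $\liminf_{r\downarrow 0}\beta^\cS_A(x,r)\le\varepsilon$.

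For the second bullet, I will choose $r_i\downarrow 0$ with $\beta^\cS_A(x,r_i)\to 0$ and extract a subsequence along which $(\cl A-x)/r_i\to B$ for some $B\in\Tan(\cl A,x)\subseteq\CL 0$. Applying the upper limit estimate (with parameter $1$), scale and translation invariance, closure (with parameter $1$), and finally monotonicity of $\beta^\cS$ will give, for every $0<r\le 1/4$,
\begin{align*}
\beta^\cS_B(0,r) &\le 2\liminf_{i\to\infty}\beta^\cS_{(\cl A-x)/r_i}(0,2r) = 2\liminf_{i\to\infty}\beta^\cS_{\cl A}(x,2r\,r_i)\\
&\le 4\liminf_{i\to\infty}\beta^\cS_A(x,4r\,r_i)\le\tfrac{1}{r}\liminf_{i\to\infty}\beta^\cS_A(x,r_i)=0,
\end{align*}
so $\limsup_{r\downarrow 0}\beta^\cS_B(0,r)=0$. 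Theorem~\ref{t:betatan} then gives $\Tan(B,0)\subseteq(\cS;0)^\beta_{0,\infty}$, and picking any $C\in\Tan(B,0)$, Lemma~\ref{l:iterate} produces $C\in\Tan(\cl A,x)\cap(\cS;0)^\beta_{0,\infty}$. The one subtle point, which I view as the main obstacle, is that the chained estimate only forces $\beta^\cS_B(0,r)=0$ at small scales $r\le 1/4$, so one cannot directly conclude $B\in(\cS;0)^\beta_{0,\infty}$; the remedy, borrowed from the second half of the proof of Corollary~\ref{TanVsTheta0}, is precisely to iterate tangents via Lemma~\ref{l:iterate}, upgrading small-scale vanishing of $\beta^\cS_B$ at $0$ to all-scale vanishing of $\beta^\cS_C$ at $0$ by way of Theorem~\ref{t:betatan}.
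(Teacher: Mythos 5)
Your proposal is correct and follows essentially the same route as the paper: for the first bullet the identical chain of closure, invariance, and limit estimates from Lemma \ref{l:bp} applied to a sequence realizing $T$, and for the second bullet the same small-scale vanishing of $\beta^\cS_B(0,r)$ for $r\leq 1/4$, upgraded via Theorem \ref{t:betatan} and the iteration Lemma \ref{l:iterate} to produce a tangent $C\in\Tan(\cl{A},x)\cap(\cS;0)^\beta_{0,\infty}$. The subtle point you flag (that the estimate only controls small scales, necessitating the passage to a tangent of the tangent) is exactly how the paper handles it.
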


\begin{proof} Assume $x\in A\subseteq\RR^n$ and let $\varepsilon\geq 0$. Suppose that $\Tan(\cl{A},x)\cap (\cS;\varepsilon)^\beta_{0,\infty}\neq\emptyset$. Choose $T\in\Tan(A,x)\cap (\cS;\varepsilon)^\beta_{0,\infty}$, say $T=\lim_{i\rightarrow\infty} (\cl{A}-x)/r_i$ for some sequence $r_i\rightarrow 0$. Then, by Lemma \ref{l:bp}, \begin{align*} \liminf_{r\downarrow 0}\beta^\cS_A(x,r)&\leq \limsup_{i\rightarrow\infty} \beta_A^\cS(x,r_i) \leq \limsup_{i\rightarrow\infty} \beta_{\cl{A}}^\cS(x,r_i)
= \limsup_{i\rightarrow\infty} \beta_{(\cl{A}-x)/r_i}^\cS(0,1) \\&\leq (1+\delta) \beta^\cS_{T}(0,1+\delta)\leq (1+\delta)\varepsilon\end{align*} for all $\delta>0$, where the last inequality holds because $T\in(\cS;\varepsilon)^\beta_{0,\infty}$. Therefore, letting $\delta\rightarrow 0$, we see that $\liminf_{r\downarrow 0} \beta_A^\cS(x,r) \leq  \varepsilon$.

Conversely, suppose that $\liminf_{r\rightarrow 0} \beta^\cS_A(x,r)=0$. Then we can pick $r_i\rightarrow 0$ such that $\lim_{i\rightarrow \infty}\beta^\cS_A(x,r_i)=0$. Passing to a subsequence, we may assume that $(\cl{A}-x)/r_i\rightarrow B$ for some $B\in \Tan(\cl{A},x)$. Then, for all $0<r\leq 1/4$,  \begin{align*}\beta_B^{\cS}(0,r) &\leq 2\liminf_{i\rightarrow\infty} \beta_{(\cl{A}-x)/r_i}^\cS(0,2r)\\
&= 2\liminf_{i\rightarrow\infty} \beta_{\cl{A}}^\cS(x,2rr_i) \leq 4\liminf_{i\rightarrow\infty} \beta_{A}^\cS(x,4rr_i)\leq \frac{1}{r}\liminf_{i\rightarrow\infty} \beta^\cS_A(x,r_i)=0\end{align*} by Lemma \ref{l:bp}. Hence, by Theorem \ref{t:betatan}, $\Tan(B,0)\subseteq (\cS;0)^\beta_{0,\infty}$. Pick any $C\in\Tan(B,0)$. Then $C\in \Tan(\cl{A},x)$ by Lemma \ref{l:iterate}. Therefore, $C\in \Tan(\cl{A},x)\cap (\cS;0)^\beta_{0,\infty}$. In particular, $\Tan(\cl{A},x)\cap (\cS;0)^\beta_{0,\infty}\neq\emptyset$.\end{proof}

The next result is a unilateral variant of Theorem \ref{PsTanVsPerturb}.

\begin{theorem} \label{t:lua} Let $\cS$ be a local approximation class and let $A\subseteq\RR^n$ be a nonempty set. Then the following are equivalent:
\begin{enumerate}
\item $A$ is locally unilaterally $\varepsilon'$-approximable by $\cS$ for all $\varepsilon'>\varepsilon$;
\item $\PsTan(\cl{A},x)\subseteq(\cS;\varepsilon)^\beta_{0,\infty}$ for all $x\in A$;
\item $\PsTan(\cl{A},x)\subseteq(\cS;\varepsilon)^\beta_{\RR^n,\infty}$ for all $x\in A$.\end{enumerate} \end{theorem}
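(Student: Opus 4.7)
My plan is to imitate the proof of Theorem \ref{PsTanVsPerturb} almost verbatim, since the unilateral analogues of every tool used there have already been established: Lemma \ref{l:enlarge2} gives the maximality of $(\cS;\varepsilon)^\beta_{\RR^n,\infty}$ inside $(\cS;\varepsilon)^\beta_{0,\infty}$; Lemma \ref{inv_of_tan} gives translation invariance of $\PsTan(\cl{A},x)$; and Lemma \ref{l:bp} gives the size, scale-invariance, closure, monotonicity, and limits properties of $\beta^\cS$. In particular, Lemma \ref{l:bp} provides the unilateral counterpart of Lemma \ref{ApproxVsLimits}, which is the only nontrivial ingredient needed for the sequential compactness arguments below.

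The equivalence of $(ii)$ and $(iii)$ is essentially formal. $(iii)\Rightarrow (ii)$ is immediate from the inclusion $(\cS;\varepsilon)^\beta_{\RR^n,\infty}\subseteq (\cS;\varepsilon)^\beta_{0,\infty}$. For $(ii)\Rightarrow(iii)$, I would observe that $\PsTan(\cl{A},x)$ is translation invariant (Lemma \ref{inv_of_tan}) and then invoke Lemma \ref{l:enlarge2} to conclude $\PsTan(\cl{A},x)\subseteq (\cS;\varepsilon)^\beta_{\RR^n,\infty}$. Both steps are identical to their bilateral analogues.

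For $(i)\Rightarrow(ii)$, I would first note that the unilateral analogue of Remark \ref{r:asymp_vs_local} holds by the same one-line argument, so $(i)$ is equivalent to $\limsup_{r\downarrow 0}\sup_{x\in K}\beta^\cS_A(x,r)\leq \varepsilon$ for every compact $K\subseteq A$. Given $T\in\PsTan(\cl{A},x)$ expressed as $T=\lim_i (\cl{A}-y_i)/r_i$, I would choose $x_i\in A$ with $|x_i-y_i|\leq r_i/i$ and verify $(\cl{A}-x_i)/r_i\to T$ using the weak quasitriangle inequality (just as in the proof of Theorem \ref{PsTanVsPerturb}). Applying the limits, scale-invariance, and closure properties of $\beta$ from Lemma \ref{l:bp} to the compact set $K=\{x\}\cup\{x_i:i\geq 1\}\subseteq A$ yields, for each scale $r>0$ and each $\delta>0$,
\begin{equation*}
\beta^\cS_T(0,r)\leq (1+\delta)\liminf_{i\to\infty}\beta^\cS_{(\cl{A}-x_i)/r_i}(0,(1+\delta)r)\leq (1+\delta)^2\liminf_{i\to\infty}\beta^\cS_A(x_i,(1+\delta)^2 rr_i)\leq (1+\delta)^2\varepsilon,
\end{equation*}
and sending $\delta\downarrow 0$ places $T\in (\cS;\varepsilon)^\beta_{0,\infty}$.

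For $(ii)\Rightarrow (i)$, I would argue by contradiction. Fix a compact $K\subseteq A$ and pick $x_i\in K$, $r_i\downarrow 0$ realizing $\limsup_{r\downarrow 0}\sup_{x\in K}\beta^\cS_A(x,r)$. By sequential compactness of $K$ and of $\CL{0}$, pass to a subsequence so that $x_i\to x\in K$ and $(\cl{A}-x_i)/r_i\to T\in \PsTan(\cl{A},x)\subseteq (\cS;\varepsilon)^\beta_{0,\infty}$. Then Lemma \ref{l:bp} gives, for each $\delta>0$,
\begin{equation*}
\lim_{i\to\infty}\beta^\cS_A(x_i,r_i)\leq \limsup_{i\to\infty}\beta^\cS_{(\cl{A}-x_i)/r_i}(0,1)\leq (1+\delta)\beta^\cS_T(0,1+\delta)\leq (1+\delta)\varepsilon,
\end{equation*}
and $\delta\downarrow 0$ finishes the argument. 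There is no real obstacle here beyond double-checking that each cited property in Lemma \ref{l:bp} applies as stated; the entire proof is a formal translation of the bilateral case, and I expect no new difficulty to arise because the unilateral quantity $\beta^\cS$ shares all of the relevant stability properties of $\Theta^\cS$ that were used in Theorem \ref{PsTanVsPerturb}.
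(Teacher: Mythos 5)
Your proposal is correct and follows essentially the same route as the paper's proof: (ii)$\Leftrightarrow$(iii) via Lemma \ref{l:enlarge2} and translation invariance of $\PsTan(\cl{A},x)$, and (i)$\Leftrightarrow$(ii) by the compactness/limit arguments from Theorem \ref{PsTanVsPerturb} with $\Theta^\cS$ replaced by $\beta^\cS$ using Lemma \ref{l:bp}. The only cosmetic difference is that you frame (ii)$\Rightarrow$(i) as a contradiction, whereas the paper bounds $\limsup_{r\downarrow 0}\sup_{x\in K}\beta^\cS_A(x,r)$ directly by the identical chain of inequalities.
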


\begin{proof} Let $A\subseteq\RR^n$ be nonempty and fix $\varepsilon\geq 0$. Statements $(ii)$ and $(iii)$ are equivalent by Lemma \ref{l:enlarge2}, because $\PsTan(\cl{A},x)$ is a translation invariant local approximation class. To complete the proof, we will show that $(i)$ is equivalent to $(ii)$.

Suppose that $A$ is locally unilaterally $\varepsilon'$-approximable by $\cS$ for all $\varepsilon'>\varepsilon$. Then \begin{equation}\label{e:bquiv1} \limsup_{r\downarrow 0} \sup_{x\in K} \beta^\cS_A(x,r)\leq \varepsilon\end{equation} for every compact subset $K\subseteq A$. Let $T\in \PsTan(\cl{A},x)$ for some $x\in A$, say that $T=\lim_{i\rightarrow\infty} (\cl{A}-y_i)/r_i$ for some sequences $y_i\in \cl{A}$ and $r_i>0$ such that $y_i\to x$ and $r_i\rightarrow 0$. For each $i\geq 1$, choose $x_i\in A$ such that $|x_i-y_i|\leq r_i/i$. Then, since for all $s>0$, $$\mD{0}{s}\left[\frac{\cl{A}-x_i}{r_i},T\right] \leq \frac{1}{s}\left|\frac{x_i-y_i}{r_i}\right|+2\mD{0}{2s}\left[\frac{\cl{A}-y_i}{r_i},T\right],$$ we have that $(\cl{A}-x_i)/r_i\rightarrow T$ as well. Fix a scale $r>0$ and an error $\delta>0$. By Lemma \ref{l:bp} and (\ref{e:bquiv1}) applied with the compact set $K=\{x\}\cup \{x_i:i\geq 1\}\subseteq A$, \begin{align*}\beta_T^{\cS}(0,r) &\leq (1+\delta)\liminf_{i\rightarrow\infty} \beta_{(\cl{A}-x_i)/r_i}^\cS(0,(1+\delta)r)= (1+\delta)\liminf_{i\rightarrow\infty} \beta_\cl{A}^\cS(x_i,(1+\delta)rr_i) \\ &\leq (1+\delta)^2\liminf_{i\rightarrow\infty} \beta_A^\cS(x_i,(1+\delta)^2rr_i) \leq (1+\delta)^2\varepsilon.\end{align*} Letting $\delta\rightarrow 0$ yields $\beta^{\cS}_T(0,r)\leq \varepsilon$ for all $r>0$. This shows that $T\in (S;\varepsilon)^\beta_{0,\infty}$ and $\PsTan(\cl{A},x)\subseteq (\cS;\varepsilon)^\beta_{0,\infty}$ for all $x\in A$. Therefore,    $(i)\Rightarrow(ii)$.

Conversely, suppose that $\PsTan(\cl{A},x)\subseteq(S;\varepsilon)^\beta_{0,\infty}$ for all $x\in A$.  Fix a compact set $K \subseteq A$. Let $x_i\in K$ and $r_i\rightarrow 0$ be sequences such that $$\lim_{i\rightarrow\infty} \beta^\cS_A(x_i,r_i)= \limsup_{r\downarrow 0} \sup_{x\in K}\beta^\cS_A(x, r).$$ Passing to a subsequence of $(x_i,r_i)_{i=1}^\infty$, we may assume (since $K$ and $\CL{0}$ are sequentially compact) that $x_i\rightarrow x$ for some $x\in K$ and $(\cl{A}-x_i)/r_i\rightarrow T$ for some $T\in\PsTan(\cl{A},x)$.  By Lemma \ref{l:bp}, \begin{align*}\lim_{i\rightarrow\infty} \beta_A^\cS(x_i,r_i) &\leq \limsup_{i\rightarrow\infty} \beta_{\cl{A}}^\cS(x_i,r_i) \\
&=\limsup_{i\rightarrow\infty} \beta_{(\cl{A}-x_i)/r_i}^\cS (0,1) \leq (1+\delta) \beta_{T}^\cS(0,1+\delta)\leq (1+\delta)\varepsilon\end{align*} for all $\delta>0$, where the last inequality holds since $T\in(\cS;\varepsilon)^\beta_{0,\infty}$. Letting $\delta\rightarrow 0$ implies that $\limsup_{r\rightarrow 0}\sup_{x\in K} \beta_A^\cS(x,r) = \lim_{i\rightarrow\infty} \beta_A^\cS(x_i,r_i)\leq \varepsilon$. That is, $A$ is locally unilaterally $\varepsilon'$-approximable by $\cS$ for all $\varepsilon'>\varepsilon$. Therefore, $(ii)\Rightarrow (i)$.
\end{proof}

\begin{remark}By substituting beta numbers for theta numbers in relevant definitions and proofs, one can obtain unilateral variants of the results in \S\S \ref{s:connect}--\ref{s:decomp}. For example, unilateral connectedness of the cone of tangents sets: \emph{Suppose there exists $\phi>0$ such that $\limsup_{r\rightarrow\infty} \beta^\cT_S(0,r)\geq \phi$ for all $S\in \cS\setminus (\cT;0)^\beta_{0,\infty}$. If $A\in\CL{x}$ and $\Tan(A,x)\subseteq \cS$, then} $$\Tan(A,x)\subseteq (\cT;0)^\beta_{0,\infty}\quad\text{or}\quad\Tan(A,x)\subseteq \{S\in\cS: \liminf_{r\downarrow 0} \beta_S^\cT(0,r)>0\}.$$ The underlying reason that results about bilateral approximation can be transferred to results about unilateral approximation is that theta and beta numbers satisfy the same essential properties such as scale invariance, monotonicity, etc. We leave the details to the interested reader. \end{remark}

\subsection{Unilateral approximation of the singular parts of Reifenberg type sets}
For the remainder of this section, we shall assume that $\cS$ and $\cT$ are local approximation classes such that \begin{equation} \label{e:rs} \hbox{$\ccS$ is translation invariant, and $\cT$ points are $(\phi,\Phi)$ detectable in $\cS$.}\end{equation}
Since $\ccS$ is translation invariant, every $X\in\ccS$ is (globally) bilaterally well approximated by $\cS$ by Lemma \ref{l:enlarge}. Thus, since $\cT$ points are $(\phi,\Phi)$ detectable in $\cS$, Theorem \ref{t:open} implies that every $X\in\ccS$ can be decomposed $X=X_\ccT\cup X_\cTp$, where $$X_\ccT=\{x\in X:\PsTan(X,x)\subseteq\ccS\cap \ccT\}$$ is a relatively open set in $X$ and $$X_{\cTp}=\{x\in X:\Tan(X,x)\subseteq\ccS\cap \cTp\}$$ is a closed set in $\RR^n$ (because $X$ is closed in $\RR^n$).

\begin{definition}\label{sing} Assume (\ref{e:rs}). We define the class of \emph{$\cT$ singular parts of sets in $\ccS$} by
$\sing\cT\ccS = \{X_\cTp: X\in \ccS \text{ and } 0\in X_\cTp\}.$
\end{definition}

\begin{example} \label{ex:m32}
Let $\cM$ denote the collection of all translates of 2-dimensional Almgren minimal cones in $\RR^3$. This class has three types of sets---planes, $Y$-type sets, and $T$-type sets---as described in the introduction and redisplayed below in Figure \ref{figure:re:minCones}.

\begin{figure}[htb]
	\begin{center}
		\includegraphics[width=.7\textwidth]{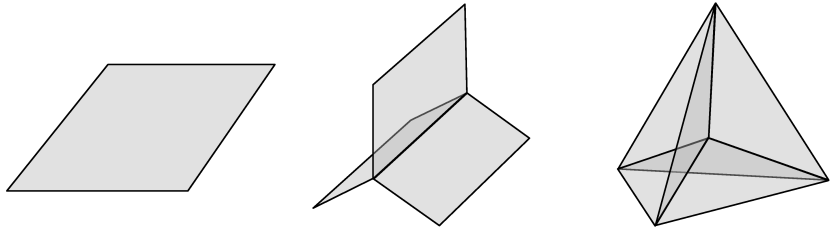}
	\end{center}
	\caption{The class $\cM$ consists of planes, $Y$-type sets, and $T$-type sets.}
	\label{figure:re:minCones}
\end{figure}

\noindent Let $\cG=\cG(3,2)$ denote the collection of planes in $\cM$, and let $\cY$ denote the collection of (translates of) $Y$-type sets in $\cM$. The reader may verify that $\cG$ points are detectable in $\cG\cup\cY$ and in $\cM$; and $\cG\cup\cY$ points are detectable in $\cM$. The class $\sing\cG (\cG\cup\cY)= \cG(3,1)$ consists of lines through the origin. The class $\sing\cG\cM$ consists of lines through the origin and spines of $T$-type sets (see Figure \ref{figure:singGM}). Meanwhile, the class $\sing{\cG\cup\cY}\cM=\{\{0\}\}$ solely consists of a singleton at the origin.
\begin{figure}[htb]
	\begin{center}
		\includegraphics[width=.38\textwidth]{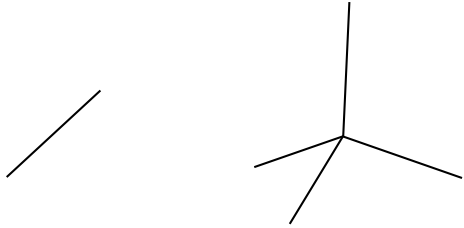}
	\end{center}
	\caption{The class $\sing\cG\cM$ consists of lines and spines of $T$-type sets.}
	\label{figure:singGM}
\end{figure}
\end{example}

\begin{theorem}\label{singapprox} Assume (\ref{e:rs}). If $A\subseteq\RR^n$ is locally bilaterally well approximated by $\cS$, then $A_\cTp$ is locally unilaterally well approximated by $\sing\cT\ccS$.
\end{theorem}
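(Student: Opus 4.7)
The plan is to reduce the claim to a statement about pseudotangent sets via Theorem~\ref{t:lua}. Combining Theorem~\ref{t:lua} (with $\varepsilon=0$) with Lemma~\ref{l:0b}, it suffices to verify that for every $x\in A_\cTp$ and every $T'\in\PsTan(\cl{A_\cTp},x)$ there exists $S\in\cl{\sing\cT\ccS}$ with $T'\subseteq S$. Fix such $x$ and $T'$. Choose sequences $\tilde y_i\in A_\cTp$ and $r_i\downarrow 0$ so that $\tilde y_i\to x$ and $(\cl{A_\cTp}-\tilde y_i)/r_i\to T'$ (the perturbation trick from the proof of Theorem~\ref{PsTanVsPerturb} allows us to take $\tilde y_i$ in $A_\cTp$ rather than merely in $\cl{A_\cTp}$). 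Passing to a subsequence using sequential compactness of $\CL{0}$, we may also arrange $(\cl{A}-\tilde y_i)/r_i\to T$ for some $T$. Since $\cl{A_\cTp}\subseteq\cl{A}$, the closure-and-monotonicity properties of relative excess force $T'\subseteq T$.

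The candidate approximating set will be $T_\cTp$. By Corollary~\ref{S-T}, $A$ is locally well approximated along $A_\cTp$ by the closed local approximation class $\ccS\cap\cTp$. The argument of Theorem~\ref{PsTanVsPerturb}, adapted to sequences in $A_\cTp$ and the class $\ccS\cap\cTp$, then yields $T\in\ccS\cap\cTp$. Hence by Corollary~\ref{c:Tpoint} (which applies because $\cT$ points are detectable in $\cS$), $\Theta^\cT_T(0,r)\geq\phi$ for all $r>0$, so $0\in T_\cTp$ and $T_\cTp\in\sing\cT\ccS$.

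It remains to show $T'\subseteq T_\cTp$. Given $b\in T'$, pick $a_i\in A_\cTp$ with $(a_i-\tilde y_i)/r_i\to b$ (such points exist because $b\in T'=\lim (\cl{A_\cTp}-\tilde y_i)/r_i$ and $A_\cTp$ is dense in its closure relative to $A$). Writing $(\cl{A}-a_i)/r_i=(\cl{A}-\tilde y_i)/r_i-(a_i-\tilde y_i)/r_i$ and using that translation is continuous in the Attouch-Wets topology, we obtain $(\cl{A}-a_i)/r_i\to T-b$. Since $a_i\to x\in A_\cTp$ and $a_i\in A_\cTp$, a second invocation of Corollary~\ref{S-T} in its pseudotangent-limit form gives $T-b\in\ccS\cap\cTp$. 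By translation invariance of the relative Walkup-Wets distance, $\Theta^\cT_T(b,r)=\Theta^\cT_{T-b}(0,r)\geq\phi$ for all $r>0$, so $b\in T_\cTp$ by Corollary~\ref{c:Tpoint}. This completes the verification, since then $T'\subseteq T_\cTp\in\sing\cT\ccS$ and Theorem~\ref{t:lua} applies.

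The main technical hurdle is articulating the ``pseudotangent form'' of Corollary~\ref{S-T} that is invoked twice above: from the locally uniform approximation of $A$ along $A_\cTp$ by $\ccS\cap\cTp$, one must deduce that every Attouch-Wets limit of sequences $(\cl{A}-a_i)/r_i$ with $a_i\in A_\cTp$ converging to some $y\in A_\cTp$ and $r_i\downarrow 0$ lies in $\ccS\cap\cTp$. This is a transparent adaptation of the $(i)\Rightarrow(ii)$ implication of Theorem~\ref{PsTanVsPerturb}---applied to the closed cone $\ccS\cap\cTp$ with sequences confined to $A_\cTp$---exploiting the fact, already noted in the proof of Corollary~\ref{S-T}, that $\ccS\cap\cTp$ is closed in $\CL{0}$.
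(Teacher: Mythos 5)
Your proof is correct and follows essentially the same route as the paper's: reduce via Theorem \ref{t:lua} and Lemma \ref{l:0b} to showing each pseudotangent of $\cl{A_\cTp}$ is contained in some $X_\cTp$ with $X\in\ccS$, extract the ambient limit $T$ along the same sequence, and use Corollary \ref{S-T} together with the limit/closure/scale-invariance lemmas and Corollary \ref{c:Tpoint} to place $T-b$ in $\ccS\cap\cTp$ for each $b$ in the pseudotangent. The only cosmetic difference is that you run the Corollary \ref{S-T} limit argument twice (once for $T$, once for each translate $T-b$), whereas the paper handles all points $y\in\Sigma$ in a single pass; the substance is identical.
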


\begin{proof} We will show that $\PsTan(\cl{A_{\cTp}},x)\subseteq (\sing\cT\ccS;0)^\beta_{0,\infty}$ for all $x\in A_\cTp$, so that $A_\cTp$ is locally unilaterally well approximated by $\sing\cT\ccS$ by Theorem \ref{t:lua}. Let $x\in A_\cTp$ and let $\Sigma\in \PsTan(\cl{A_\cTp},x)$, say $\Sigma=\lim_{i\rightarrow\infty}(\cl{A_\cTp}-x_i)/r_i$ for some sequences $x_i\in \cl{A_\cTp}$ and $r_i>0$ such that $x_i\rightarrow x$ and $r_i\rightarrow 0$. Passing to a subsequence of $(x_i,r_i)_{i=1}^\infty$, we assume that $(\cl{A}-x_i)/r_i\rightarrow X$ for some $X\in \PsTan(\cl{A},x)\subseteq\ccS$. To proceed, we will show that $\Sigma\subseteq X_\cTp$.

Let $y\in \Sigma$. Because $(\cl{A_\cTp}-x_i)/r_i\rightarrow \Sigma$, there exists a sequence $y_i=(z_i-x_i)/r_i$ with $z_i\in \cl{A_\cTp}$ such that $y_i\rightarrow y$. Replacing each $z_i$ by some $z_i'\in A_\cTp$ such that $|z_i'-z_i|\leq r_i/i$, we may assume that $z_i\in A_\cTp$ for all $i\geq 1$. Note that $z_i\rightarrow x$ and  $$\frac{\cl{A}-z_i}{r_i} = \frac{\cl{A}-x_i}{r_i} + \frac{x_i-z_i}{r_i}\to X - y \quad\text{in }\CL{\RR^n},$$ because $(\cl{A}-x_i)/r_i\rightarrow X$ in $\CL{\RR^n}$ and $(z_i-x_i)/{r_i}\rightarrow y$ (see e.g.~ the proof of Lemma \ref{inv_of_tan}). Let $\cS' := \ccS\cap \cTp=\{S\in\ccS: \Theta^\cT_S(0,r)\geq \phi$ for all $r>0\}$. By Corollary \ref{S-T}, the set $A$ is locally bilaterally well approximated along $A_\cTp$ by $\cS'$. Thus, since $K=\{x\}\cup\{z_i:i\geq 1\}$ is a compact subset of $A_\cTp$ (because $z_i\rightarrow x$), we obtain $$\limsup_{i\rightarrow\infty}\Theta_{\cl A}^{\cS'}(z_i,sr_i)\leq 2\limsup_{i\rightarrow\infty}\Theta^{\cS'}_A(z_i,2sr_i)\leq 2 \limsup_{r\downarrow 0}\sup_{z\in K} \Theta^{\cS'}_A(z,r)=0\quad\text{for all }s>0.$$ By Lemma \ref{l:bp}, it follows that $\Theta^{\cS'}_{X-y}(0,s)=0$ for all $s>0$, whence $X-y \in \overline{\cS'}=\cS'$ and $\Theta^\cT_X(y,r)=\Theta^\cT_{X-y}(0,r)\geq \phi$ for all $r>0$. Hence $y\in X_\cTp$ for all $y\in\Sigma$. Thus, $\Sigma\subseteq X_\cTp$. In particular, note that $0\in X_\cTp$, since $0\in \Sigma$. Therefore, $\Sigma\subseteq X_\cTp\in \sing\cT\ccS$.

We have shown that for all $x\in A_\cTp$ and for all pseudotangent sets $\Sigma\in \PsTan(\cl{A_\cTp},x)$, there exists $X_{\cTp}\in \sing\cT\ccS$ such that $\Sigma\subseteq X_\cTp$. Thus, $\PsTan(\cl{A_\cTp},x)\subseteq (\sing\cT\ccS;0)^{\beta}_{0,\infty}$ for all $x\in A_{\cTp}$ by Lemma \ref{l:0b}. Therefore, $A_\cTp$ is locally unilaterally well approximated by $\sing\cT\ccS$ by Theorem \ref{t:lua}.\end{proof}

\section{Dimension bounds for Mattila-Vuorinen type sets}

The defining property of Mattila-Vuorinen type sets interacts naturally with the concept of Minkowski dimension. There are several equivalent definitions of Minkowski dimension; for general background, we refer the reader to \cite{Mattila95}.

\begin{definition}[covering numbers] Let $A\subseteq\RR^n$, let $x\in\RR^n$, and let $r,s>0$. Define the \emph{(intrinsic) $s$-covering number of $A$} by \begin{equation*}\covplain(A,s)=\min\left\{k\geq 0:A\subseteq \bigcup_{i=1}^k \ball(a_i,s)\text{ for some }a_i\in A\right\},\end{equation*} and define
the \emph{$s$-covering number of $A$ in $\ball(x,r)$} by $\cov{x}{r}(A,s)= \covplain(A\cap \ball(x,r),s)$.
\end{definition}

\begin{definition}[Minkowski dimension via covering numbers] For bounded sets $A\subseteq\RR^n$, the \emph{upper Minkowski dimension of $A$} is given by \begin{equation*}
 \udim_M(A)=\limsup_{s\downarrow 0} \frac{\log\left(\covplain(A,s)\right)}{\log(1/s)}.
\end{equation*} For unbounded sets $A\subseteq\RR^n$, the \emph{upper Minkowski dimension of $A$} is given  by $$\udim_M(A)=\lim_{t\uparrow\infty} \left(\udim_M A\cap \ball(0,t)\right).$$\end{definition}

\begin{remark} To compute the upper Minkowski dimension of a bounded set, it suffices to examine covering numbers along a geometric sequence of scales: $$\udim_M(A)=\limsup_{k\rightarrow\infty} \frac{\log(\covplain(A,\lambda^k r_0))}{\log (1/(\lambda^kr_0))}\quad\text{for all }0<\lambda<1\text{ and }r_0>0.$$\end{remark}

In order to control Minkowski dimension using unilateral approximability, we will need to assume uniform control on local covering numbers.

\begin{definition}[covering profiles]\label{coverings}
Let $\cS$ be a local approximation class. For all $\alpha>0$, $C>0$, and $s_0\in(0,1]$, we say that $\cS$ has an \emph{$(\alpha, C, s_0)$ covering profile} if $\cov{0}{r}(S,sr) \leq Cs^{-\alpha}$ for all $S\in\cS$, $r>0$, and $s\in (0,s_0]$.
\end{definition}

The following lemma underpins the dimension estimates that we prove below.

\begin{lemma}\label{CoveringLm}
Let $\cS$ be a local approximation class with an $(\alpha, C, s_0)$ covering profile. Given $\delta>0$ such that $C^{1/\alpha}\delta\leq s_0$, let \begin{equation}\label{e:ldef} \lambda=\delta\left(2+2C^{1/\alpha}(1+\delta)\right).\end{equation} If $x\in A\subseteq\RR^n$ and $\beta^\cS_A(x,r)<\delta$ for some $r>0$, then $\cov{x}{r}(A, \lambda r) \leq 1/\delta^\alpha$.
\end{lemma}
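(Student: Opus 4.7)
The plan is to convert the unilateral approximation of $A$ by some $S \in \cS$ into an efficient cover of $A \cap \ball(x,r)$ by first covering the relevant part of $S$ via the covering profile and then transferring this cover to $A$. Throughout, I would work under the standing assumption $\beta^\cS_A(x,r) < \delta$, which gives a set $S \in \cS$ with $\mud{x}{r}(A, x+S) < \delta$; equivalently, every point of $A \cap \ball(x,r)$ lies within $r\delta$ of some point of $x+S$.

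First I would identify the part of $S$ that is actually needed. If $a \in A \cap \ball(x,r)$ and $y \in S$ satisfies $|a - (x+y)| < r\delta$, then $|y| \leq |a-x| + r\delta \leq r(1+\delta)$, so only points in $S \cap \ball(0, r(1+\delta))$ participate. Next I would apply the covering profile at scale $r' = r(1+\delta)$ with the parameter $s := C^{1/\alpha}\delta$; the hypothesis $C^{1/\alpha}\delta \leq s_0$ guarantees $s \in (0, s_0]$, and the definition yields
\begin{equation*}
\cov{0}{r(1+\delta)}\bigl(S,\, sr(1+\delta)\bigr) \leq Cs^{-\alpha} = C \cdot C^{-1}\delta^{-\alpha} = 1/\delta^\alpha.
\end{equation*}
Thus there exist points $y_1, \ldots, y_N \in S$ with $N \leq 1/\delta^\alpha$ such that the balls $\ball(y_i, C^{1/\alpha}\delta r(1+\delta))$ cover $S \cap \ball(0, r(1+\delta))$.

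Next I would chain the two approximations. Any $a \in A \cap \ball(x,r)$ is within $r\delta$ of some $x+y$ with $y \in S \cap \ball(0, r(1+\delta))$, and $y$ is within $C^{1/\alpha}\delta r(1+\delta)$ of some $y_i$, so
\begin{equation*}
\bigl|a - (x + y_i)\bigr| \leq r\delta + C^{1/\alpha}\delta r(1+\delta) = r\delta\bigl(1 + C^{1/\alpha}(1+\delta)\bigr).
\end{equation*}
This produces an exterior cover by $N \leq 1/\delta^\alpha$ balls of radius $r\delta(1+C^{1/\alpha}(1+\delta))$, but centered at points of $x+S$ rather than of $A$, so it does not yet bound $\cov{x}{r}(A, \lambda r)$.

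Finally I would promote this to an intrinsic cover by the standard trick: for each index $i$ such that $\ball(x+y_i, r\delta(1+C^{1/\alpha}(1+\delta))) \cap A \cap \ball(x,r)$ is nonempty, pick any $a_i$ in this intersection; then by the triangle inequality every $a$ in that same ball satisfies $|a - a_i| \leq 2r\delta(1+C^{1/\alpha}(1+\delta)) = \lambda r$. Discarding the indices with empty intersection leaves at most $N \leq 1/\delta^\alpha$ centers in $A \cap \ball(x,r)$ whose $\lambda r$-balls cover $A \cap \ball(x,r)$, which is exactly $\cov{x}{r}(A, \lambda r) \leq 1/\delta^\alpha$. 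There is no real obstacle here; the only point requiring care is that the covering profile is stated with centers in $S$, so the argument handles the two sources of error — the $r\delta$ approximation error and the $sr(1+\delta)$ covering radius — separately before combining them and accounting for the shift from $x+S$-centers to $A$-centers.
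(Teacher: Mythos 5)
Your proof is correct and follows the paper's argument essentially verbatim: cover $S\cap \ball(0,r(1+\delta))$ using the covering profile at parameter $s=C^{1/\alpha}\delta$, chain the $r\delta$ approximation error with the $C^{1/\alpha}\delta r(1+\delta)$ covering radius to get balls of radius $\lambda r/2$ centered on $x+S$, and then re-center at points of $A\cap\ball(x,r)$ at the cost of doubling the radius to $\lambda r$. No gaps; the bookkeeping matches the definition of $\lambda$ exactly.
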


\begin{proof} Suppose that $\cS$ is a local approximation class with an $(\alpha, C,s_0)$ covering profile. Without loss of generality, suppose that $x=0$ and $r=1$, $0\in A\subseteq\RR^n$, and $\beta^\cS_A(0,1)<\delta$ for some $\delta\in (0, s_0/C^{1/\alpha}]$. Because $\beta^\cS_A(0,1)<\delta$, there exists $S\in\cS$ such that $\mud 0 1 (A, S) < \delta$.
To continue, let $\mathcal{B}=\{B_1,\dots, B_k\}$ be a minimal cover of $S\cap \ball(0,1+\delta)$ by balls of radius $C^{1/\alpha}\delta(1+\delta)$ with centers in $S\cap \ball(0,1+\delta)$. Because $\cS$ has an $(\alpha, C, s_0)$ covering profile and $C^{1/\alpha}\delta \leq s_0$, it follows that
$$k=\cov 0 {1+\delta}(S, C^{1/\alpha}\delta(1+\delta))\leq C (C^{1/\alpha}\delta)^{-\alpha}  = 1/\delta^{\alpha}.$$ For each $1\leq i\leq k$, let $z_i$ denote the center of $B_i$.
We claim that $\{\ball(z_i, \lambda/2):1\leq i\leq k\}$ covers $A\cap \ball(0,1)$. To check this, pick any $y\in A\cap \ball(0,1)$. Then, since $\mud{0}{1}(A,S)<\delta$, there exists $x\in S\cap \ball(0,1+\delta)$ such that $|x-y|\leq \delta$. But $S\cap \ball(0,1+\delta)\subseteq\bigcup \mathcal{B}$, so there exists $1\leq i\leq k$ such that $|x-z_i|\leq  C^{1/\alpha}\delta(1+\delta).$ By the triangle inequality, $|y-x_i|\leq\delta + C^{1/\alpha}\delta(1+\delta)=\lambda/2$. Thus, for all $y\in A\cap\ball(0,1)$ there exists $1\leq i\leq k$ such that $y\in \ball(z_i,\lambda/2)$. Finally, for each $1\leq i\leq k$ such that $\ball(z_i,\lambda/2)$ intersects $A\cap \ball(0,1)$, pick $a_i\in \ball(z_i,\lambda/2)\cap \left(A\cap \ball(0,1)\right)$. Then $A\cap \ball(0,1) \subseteq \bigcup_i \ball(a_i,\lambda)$ by the triangle inequality. Therefore, $N^{0,1}(A,\lambda) \leq k\leq 1/\delta^{\alpha}$.\end{proof}

We now give a dimension bound, which is the main result of this section.

\begin{theorem}\label{t:dim} For all covering profiles $(\alpha, C, s_0)$, there exist $\varepsilon_0>0$ and $C'>0$  such that if $\cS$ is a local approximation class with an $(\alpha,C, s_0)$ covering profile and $A\subseteq\RR^n$ is unilaterally $(\varepsilon,r_0)$-approximable by $\cS$ for some $0<\varepsilon\leq \varepsilon_0$, then \begin{equation}\label{e:dim} \udim_M(A) \leq \alpha + \frac{C'}{\log (1/\varepsilon)}.\end{equation}\end{theorem}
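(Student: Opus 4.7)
The plan is to iterate the covering lemma (Lemma \ref{CoveringLm}) along a geometric sequence of scales. First I would choose $\varepsilon_0>0$ small enough to satisfy three conditions simultaneously: $C^{1/\alpha}\varepsilon_0\leq s_0$ (so that Lemma \ref{CoveringLm} applies with $\delta=\varepsilon$), $\lambda(\varepsilon_0)<1$ (so that the iteration actually shrinks the scale), and $\log M\leq \tfrac{1}{2}\log(1/\varepsilon_0)$ where $M=2+2C^{1/\alpha}(1+\varepsilon_0)$. With this choice, for every $\varepsilon\leq\varepsilon_0$ we have $\lambda:=\varepsilon(2+2C^{1/\alpha}(1+\varepsilon))\leq M\varepsilon<1$.

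The key point for iteration is that Lemma \ref{CoveringLm} produces covers with centers in $A$: given $x\in A$ and $r\leq r_0$, since $\beta^{\cS}_A(x,r)\leq\varepsilon$, the lemma gives points $a_1,\dots,a_N\in A\cap \ball(x,r)$ with $N\leq \varepsilon^{-\alpha}$ and $A\cap\ball(x,r)\subseteq\bigcup_{i=1}^N\ball(a_i,\lambda r)$. Since each $a_i\in A$ and $\lambda r\leq r_0$, I can apply the lemma again at each $a_i$ with scale $\lambda r$, and inductively I obtain
\begin{equation}\label{eq:plan-cov}
\cov{x}{r}(A,\lambda^k r)\leq \varepsilon^{-\alpha k}\quad\text{for all }k\geq 1.
\end{equation}

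Next I would convert \eqref{eq:plan-cov} into a Minkowski dimension bound. For $s\in(\lambda^{k+1}r,\lambda^k r]$, monotonicity of covering numbers gives $\cov{x}{r}(A,s)\leq \varepsilon^{-\alpha(k+1)}$, while $\log(1/s)\geq k\log(1/\lambda)-\log r$. Hence
\begin{equation*}
\udim_M(A\cap\ball(x,r))\leq \frac{\alpha\log(1/\varepsilon)}{\log(1/\lambda)}\leq\frac{\alpha\log(1/\varepsilon)}{\log(1/\varepsilon)-\log M}
\leq\alpha+\frac{2\alpha\log M}{\log(1/\varepsilon)},
\end{equation*}
where the last step uses the elementary inequality $1/(1-x)\leq 1+2x$ for $x\in[0,1/2]$, valid because $\log M/\log(1/\varepsilon)\leq 1/2$ by our choice of $\varepsilon_0$. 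This gives the desired bound with $C'=2\alpha\log M$, which depends only on the covering profile $(\alpha,C,s_0)$.

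Finally, for unbounded $A$, I would reduce to the bounded case. Given $t>0$, if $A\cap\ball(0,t)=\emptyset$ there is nothing to show; otherwise, since $A\cap\ball(0,t)$ is totally bounded, I can cover it by finitely many balls $\ball(y_1,r_0),\dots,\ball(y_N,r_0)$ with $y_j\in A$ (by passing to an $(r_0/2)$-net in $A\cap\ball(0,t)$ and enlarging the radius if necessary). Iterating \eqref{eq:plan-cov} at each center gives $\cov(A\cap\ball(0,t),\lambda^k r_0)\leq N\varepsilon^{-\alpha k}$; since $N$ does not depend on $k$, the factor $\log N$ is absorbed into the $\limsup$ and the same bound $\alpha+C'/\log(1/\varepsilon)$ holds. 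Taking $t\uparrow\infty$ in the definition $\udim_M(A)=\lim_{t\uparrow\infty}\udim_M(A\cap\ball(0,t))$ completes the proof.

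The main obstacle is purely bookkeeping: verifying that $\varepsilon_0$ can be chosen so that all three conditions hold simultaneously, and verifying that the constant $\lambda$ stays bounded away from $1$ uniformly over $\varepsilon\leq\varepsilon_0$ so that the iteration produces genuinely finer scales. No additional ideas beyond the quantitative content of Lemma \ref{CoveringLm} are required.
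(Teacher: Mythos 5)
Your plan is correct and follows essentially the same route as the paper: iterate Lemma \ref{CoveringLm} along geometric scales $\lambda^k r_0$ (possible because the lemma's covering balls are centered in $A$), convert the covering bound to a Minkowski dimension estimate using $\log(1/\lambda)\geq\log(1/\varepsilon)-\log M$ together with $1/(1-x)\leq 1+2x$, and handle unbounded $A$ by truncating to $A\cap\ball(0,t)$ and using finite stability of Minkowski dimension. The only cosmetic difference is that Lemma \ref{CoveringLm} requires the strict inequality $\beta^\cS_A(x,r)<\delta$ while $(\varepsilon,r_0)$-approximability only gives $\beta^\cS_A(x,r)\leq\varepsilon$, so the paper runs the argument with $\delta\in(\varepsilon,2\varepsilon_0]$ and lets $\delta\downarrow\varepsilon$ at the end; your application with $\delta=\varepsilon$ should be adjusted by the same limiting device.
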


\begin{corollary}\label{c:dim} If $\cS$ is a local approximation class with an $(\alpha,C, s_0)$ covering profile and $A\subseteq\RR^n$ is unilaterally $(\varepsilon,r_\varepsilon)$-approximable by $\cS$ for all $\varepsilon>0$, then $\udim_M A\leq \alpha$.
\end{corollary}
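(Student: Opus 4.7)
The plan is to deduce Corollary \ref{c:dim} directly from the quantitative dimension estimate in Theorem \ref{t:dim} by sending the approximation error to zero. The key observation is that, although the hypothesis of the corollary only supplies, for each $\varepsilon>0$, some possibly tiny scale $r_\varepsilon>0$ at which $A$ is unilaterally $(\varepsilon,r_\varepsilon)$-approximable (with no uniform scale working simultaneously for all errors), Theorem \ref{t:dim} requires only a single scale per choice of $\varepsilon$. We are therefore free to invoke Theorem \ref{t:dim} separately for each small $\varepsilon$.

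Concretely, I would first apply Theorem \ref{t:dim} to the fixed covering profile $(\alpha,C,s_0)$ to extract the constants $\varepsilon_0>0$ and $C'>0$ provided by that theorem; these depend only on $(\alpha,C,s_0)$ and not on $A$ or $\varepsilon$. For each $\varepsilon\in(0,\varepsilon_0]$, the hypothesis of the corollary supplies a scale $r_\varepsilon>0$ such that $A$ is unilaterally $(\varepsilon,r_\varepsilon)$-approximable by $\cS$, so Theorem \ref{t:dim} applies and yields
$$\udim_M(A) \leq \alpha + \frac{C'}{\log(1/\varepsilon)}.$$
The left-hand side is independent of $\varepsilon$, so the inequality can be optimized by sending $\varepsilon\downarrow 0$. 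Since $C'/\log(1/\varepsilon)\to 0$ as $\varepsilon\downarrow 0$, we conclude that $\udim_M(A)\leq\alpha$.

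There is no real obstacle here: the nontrivial work has been absorbed into Theorem \ref{t:dim}, and Corollary \ref{c:dim} simply extracts the asymptotic consequence. What makes the argument go through is precisely the favorable dependence of the defect term on $\varepsilon$ in Theorem \ref{t:dim}; any bound of the form $\alpha+o(1)$ as $\varepsilon\downarrow 0$ would suffice, but the explicit $C'/\log(1/\varepsilon)$ decay delivered by Theorem \ref{t:dim} is more than enough. Had the defect been instead a fixed positive constant, this deduction would fail; it is the limiting behavior as $\varepsilon\downarrow 0$, not the bound at any fixed $\varepsilon$, that yields the sharp conclusion $\udim_M(A)\leq\alpha$.
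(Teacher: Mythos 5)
Your proof is correct and is exactly the intended deduction: the paper leaves this corollary without a separate proof precisely because it follows by applying Theorem \ref{t:dim} at each sufficiently small $\varepsilon$ (using the scale $r_\varepsilon$ supplied by the hypothesis) and letting $\varepsilon\downarrow 0$, which is what you do. Nothing is missing.
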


\begin{proof}[Proof of Theorem \ref{t:dim}] Let $\alpha>0$, $C>0$, and $s_0\in(0,1]$ be given, and fix a parameter $\varepsilon_0>0$ to be specified later. Let $\cS$ be any local approximation class with an $(\alpha, C, s_0)$ covering profile, and let $A\subseteq\RR^n$ be unilaterally $(\varepsilon,r_0)$-approximable by $\cS$ for some $0<\varepsilon\leq \varepsilon_0$ and $r_0>0$.

Choose $t>0$ large enough so that $A^t=A\cap \ball(0,t)\neq\emptyset$. Since $\ball(0,t)$ is compact, its subset $A^t$ is totally bounded. Thus, we can cover $A^t$ by finitely many balls $\mathcal{B}=\{B_1,\dots,B_l\}$ of radius $r_0$ with centers $\{a_1,\dots,a_l\}$ in $A^t$. Let $\delta\in (\varepsilon,2\varepsilon_0]$ be arbitrary, but eventually close to $\varepsilon$, and define $\lambda$ by (\ref{e:ldef}). We now specify that $\varepsilon_0>0$ be chosen sufficiently small so that \begin{equation}\label{e:r1} \lambda \leq \delta\left(2+2C^{1/\alpha}(1+2\varepsilon_0)\right) \leq 2\varepsilon_0\left(2+2C^{1/\alpha}(1+2\varepsilon_0)\right)\leq 1/2\end{equation} and \begin{equation}\label{e:r2} C^{1/\alpha}\delta \leq C^{1/\alpha}(2\varepsilon_0)\leq s_0.\end{equation} Fix $1\leq i \leq l$. Because $\cS$ has an $(\alpha, C,s_0)$ covering profile, $A$ is unilaterally $(\varepsilon,r_0)$-approximable by $\cS$, $\varepsilon<\delta$, and (\ref{e:r2}) holds, induction on Lemma \ref{CoveringLm} yields
$$\cov{a_i}{r_0}(A^t, \lambda^k r_0) \leq 1/\delta^{k\alpha}\quad\text{for all }k=1,2,\dots.$$ Hence, for all $1\leq i\leq l$, we have $$\udim_M(A^t\cap B_i)=\limsup_{k\rightarrow\infty} \frac{\log\left(\covplain(A^t\cap B_i, \lambda^kr_0)\right)}{\log( 1/(\lambda^kr_0))}\leq \limsup_{k\rightarrow\infty} \frac{\log(1/\delta^{k\alpha})}{\log(1/(\lambda^kr_0))}=\alpha\left(\frac{\log(1/\delta)}{\log(1/\lambda)}\right).$$ Thus, $\udim_M (A^t)\leq \alpha \log(1/\delta)/\log(1/\lambda)$, because Minkowski dimension is stable under finite unions. Observe that $\log \lambda \leq \log \delta+ \log \left(2+2C^{1/\alpha}(1+2\varepsilon_0)\right)$ by (\ref{e:r1}).  Writing $$\log (\mu_0)=\log\left(2+2C^{1/\alpha}(1+2\varepsilon_0)\right),$$ it follows that $$\udim_M(A^t) \leq \alpha\left(\frac{\log(1/\delta)}{\log(1/\delta)-\log(\mu_0)}\right) = \alpha\left/\left(1-\frac{\log(\mu_0)}{\log(1/\delta)}\right)\right..$$ We now declare that, in addition to (\ref{e:r1}) and (\ref{e:r2}), the parameter $\varepsilon_0$ be chosen sufficiently small so that \begin{equation}\label{e:r3}\frac{\log(\mu_0)}{\log(1/\delta)} \leq \frac{\log(\mu_0)}{\log(1/(2\varepsilon_0))} \leq 1/2.\end{equation} Therefore, since $1/(1-x)\leq 1+2x$ for all $0\leq x\leq 1/2$, $$\udim_M(A^t) \leq \alpha+\frac{2\alpha\log\mu_0}{\log(1/\delta)}\quad\text{for all }\delta\in (\varepsilon,2\varepsilon_0]\text{ and }t\gg 1.$$ Letting $\delta\downarrow \varepsilon$ and $t\uparrow\infty$, we conclude that $\udim_M(A) \leq \alpha + C'/\log(1/\varepsilon)$, where $C'=2\alpha \log(\mu_0)=2\alpha\log\left(2+2C^{1/\alpha}(1+2\varepsilon_0)\right)$ ultimately depends only on $\alpha$, $C$, and $s_0$.\end{proof}

\begin{corollary}\label{c:dim2} If $\cS$ is a local approximation class with an $(\alpha, C, s_0)$ covering profile and $A\subseteq\RR^n$ is closed and locally unilaterally $\varepsilon$-approximable by $\cS$ for some $0<\varepsilon\leq \varepsilon_0$, then  (\ref{e:dim}) holds.\end{corollary}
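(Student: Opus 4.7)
The plan is to deduce Corollary \ref{c:dim2} from Theorem \ref{t:dim} by exhausting $A$ with compact subsets on which the uniform hypothesis of Theorem \ref{t:dim} can be verified. Let $\cS$ be a local approximation class with an $(\alpha, C, s_0)$ covering profile, let $\varepsilon_0$ and $C'$ be the constants produced by Theorem \ref{t:dim}, and suppose $A\subseteq\RR^n$ is closed and locally unilaterally $\varepsilon$-approximable by $\cS$ for some $0<\varepsilon\leq\varepsilon_0$.

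For each $t>0$, set $B_t:=A\cap \ball(0,t)$. Since $A$ is closed and $\ball(0,t)$ is compact, the set $B_t$ is a compact subset of $A$. By the definition of local unilateral $\varepsilon$-approximability applied to the compact set $K=B_t$, there exists $r_t>0$ such that
\begin{equation*}
\beta^\cS_A(x,r)\leq\varepsilon\quad\text{for all }x\in B_t\text{ and }0<r\leq r_t.
\end{equation*}
Because $B_t\subseteq A$, the monotonicity property in Lemma \ref{l:bp} (applied with $A'=A$, $y=x$, $s=r$, and $t=0$) gives $\beta^\cS_{B_t}(x,r)\leq\beta^\cS_A(x,r)\leq\varepsilon$ for every $x\in B_t$ and $0<r\leq r_t$. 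Hence $B_t$ itself is unilaterally $(\varepsilon,r_t)$-approximable by $\cS$, which is precisely the hypothesis of Theorem \ref{t:dim}. Applying that theorem yields
\begin{equation*}
\udim_M(B_t)\leq\alpha+\frac{C'}{\log(1/\varepsilon)}.
\end{equation*}

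Finally, the definition of upper Minkowski dimension for unbounded sets gives $\udim_M(A)=\lim_{t\uparrow\infty}\udim_M(A\cap \ball(0,t))=\lim_{t\uparrow\infty}\udim_M(B_t)$, and since each $\udim_M(B_t)$ is bounded above by $\alpha+C'/\log(1/\varepsilon)$ with a bound independent of $t$, we conclude that (\ref{e:dim}) holds.

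There is no serious obstacle in this argument; the only point requiring care is verifying that the \emph{local} hypothesis on $A$ indeed promotes to a \emph{uniform} hypothesis on each bounded slice $B_t$, which is exactly where closedness of $A$ (guaranteeing compactness of $B_t$) and monotonicity of $\beta^\cS$ under set inclusion come into play.
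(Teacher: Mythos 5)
Your proposal is correct and follows essentially the same route as the paper's proof: truncate $A$ by closed balls, use closedness of $A$ to get compact slices $A\cap\ball(0,t)$ on which local approximability gives a uniform scale $r_t$, apply Theorem \ref{t:dim} to each truncation, and let $t\uparrow\infty$ via the definition of $\udim_M$ for unbounded sets. Your explicit appeal to the monotonicity property of $\beta^\cS$ in Lemma \ref{l:bp} to pass from $\beta^\cS_A$ to $\beta^\cS_{B_t}$ is a point the paper leaves implicit, and it is handled correctly.
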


\begin{corollary}\label{c:dim3} If $\cS$ is a local approximation class with an $(\alpha, C, s_0)$ covering profile and $A\subseteq\RR^n$ is closed and locally unilaterally well approximated by $\cS$, then $\udim_M(A)\leq \alpha$.\end{corollary}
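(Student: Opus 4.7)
The plan is to deduce the qualitative dimension bound $\udim_M(A) \leq \alpha$ from its quantitative counterpart in Corollary \ref{c:dim2} by sending the approximation error to zero. By Definition \ref{MVsets}, the hypothesis that $A$ is locally unilaterally well approximated by $\cS$ means exactly that $A$ is locally unilaterally $\varepsilon$-approximable by $\cS$ for every $\varepsilon > 0$, so the quantitative bound is available at arbitrarily small approximation errors, not just at a single prescribed one.

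Concretely, let $\varepsilon_0$ and $C'$ be the constants furnished by Theorem \ref{t:dim} depending only on the covering profile $(\alpha, C, s_0)$. For each $\varepsilon \in (0, \varepsilon_0]$, Corollary \ref{c:dim2} applied to the closed set $A$---which by hypothesis is locally unilaterally $\varepsilon$-approximable by $\cS$---yields
\begin{equation*}
\udim_M(A) \leq \alpha + \frac{C'}{\log(1/\varepsilon)}.
\end{equation*}
The left-hand side is a fixed quantity independent of $\varepsilon$, while the right-hand side decreases to $\alpha$ as $\varepsilon \downarrow 0$. Taking the infimum over $\varepsilon \in (0, \varepsilon_0]$ gives $\udim_M(A) \leq \alpha$, as desired.

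There is no genuine obstacle in this deduction: the entire analytic content has been absorbed into Corollary \ref{c:dim2}, whose proof in turn rests on the covering iteration in Lemma \ref{CoveringLm} (each uniform unilateral approximation bound produces a uniform refinement of covering numbers at a fixed geometric rate). The only thing to verify is that the hypotheses of Corollary \ref{c:dim2} are satisfied for every admissibly small $\varepsilon > 0$, and this is built into what it means for $A$ to be locally unilaterally well approximated by $\cS$. The step worth flagging for the reader---so that the reduction is transparent---is the observation that ``well approximated'' encodes a universal quantifier over $\varepsilon$, which is what permits the limit $\varepsilon \downarrow 0$ to be taken at the end.
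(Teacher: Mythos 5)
Your proof is correct and matches the paper's (implicit) argument: Corollary \ref{c:dim3} is intended as an immediate consequence of Corollary \ref{c:dim2}, applied for every $\varepsilon\in(0,\varepsilon_0]$ and letting $\varepsilon\downarrow 0$, exactly as you do.
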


\begin{proof}[Proof of Corollary \ref{c:dim2}] Assume that $\cS$ has an $(\alpha, C,s_0)$ covering profile, $A\subseteq\RR^n$ is closed, and $A$ is locally $\varepsilon$-approximable by $\cS$ for some $0<\varepsilon\leq \varepsilon_0$. Fix $t>0$. The truncation $A^t=A\cap \ball(0,t)$ is a compact subset of $A$, because $A$ is closed. Hence $A^t$ is unilaterally $(\varepsilon,r_t)$-approximable by $\cS$ for some $r_t>0$, since $A$ is locally unilaterally $\varepsilon$-approximable by $\cS$. Thus, we have $\udim(A^t)\leq \alpha+C'/\log(1/\varepsilon)$ by Theorem \ref{t:dim}. Letting $t\rightarrow\infty$ yields (\ref{e:dim}).\end{proof}

\begin{remark} Recall that when $\cS=\cG(n,m)$, which admits an $(m,C,s_0)$ covering profile, Mattila and Vuorinen \cite{MV} proved that if $A$ is unilaterally $(\varepsilon,r_0)$-approximable by $\cS$ for some $\varepsilon>0$ sufficiently small, then $\udim_M(A)\leq m+ C''\varepsilon^2$. By contrast, Theorem \ref{t:dim} only guarantees the weaker bound $\udim_M(A)\leq m+ C'/\log(1/\varepsilon)$. The reason for this disparity is that the proof the Mattila-Vuorinen bound relies on the Hilbert space geometry of $\RR^n$, while the proof of Theorem \ref{t:dim} holds more generally in proper metric spaces that are translation and dilation invariant in the sense described in Remark \ref{r:geo1}. \end{remark}

For all $A\subseteq\RR^n$, let $\dim_H(A)$ denote the Hausdorff dimension of $A$ (see e.g.~ \cite{Mattila95}). It is well known that Hausdorff dimension is dominated by Minkowski dimension, $$\dim_H (A)\leq \udim_M (A),$$ and Hausdorff dimension is stable under countable unions, $$\dim_H \left(\bigcup_{i=1}^\infty A_i\right) = \sup \{\dim_H (A_i):i=1,2,\dots\}.$$ These properties allow one to transfer the Minkowski dimension bound in Theorem \ref{t:dim} to a Hausdorff dimension bound on a large class of locally unilaterally approximable sets that satisfy a certain topological size condition. In particular, the following dimension bounds may be applied to the set $A_\cQ$ in Theorem \ref{t:perturb} and the set $A_\ccT$ in Theorem \ref{t:open}, whenever $A$ is closed.

\begin{corollary}\label{c:dim4} Let $\cS$ be a local approximation class with an $(\alpha,C,s_0)$ covering profile, and let $A\subseteq\RR^n$. If $A$ is locally unilaterally $\varepsilon$-approximable by $\cS$ for some $0<\varepsilon\leq \varepsilon_0$ and the subspace topology on $A$ is $\sigma$-compact, then $$\dim_H (A) \leq \alpha + \frac{C'}{\log 1/\varepsilon}.$$\end{corollary}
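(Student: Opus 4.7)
The plan is to reduce Corollary \ref{c:dim4} to the Minkowski bound of Theorem \ref{t:dim} via the $\sigma$-compact decomposition of $A$, then use the standard fact that Hausdorff dimension is dominated by Minkowski dimension and stable under countable unions. The scheme proceeds as follows.

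First, using that the subspace topology on $A$ is $\sigma$-compact, write $A = \bigcup_{i=1}^\infty K_i$ where each $K_i$ is a compact subset of $A$ (equivalently, compact in $\RR^n$, since $\RR^n$ is Hausdorff). Since $A$ is locally unilaterally $\varepsilon$-approximable by $\cS$, the definition gives for each $i$ some $r_i>0$ such that $\beta^\cS_A(x,r)\leq \varepsilon$ for all $x\in K_i$ and $0<r\leq r_i$. The key observation is that for the compact set $K_i\subseteq A$, monotonicity of the relative excess (Lemma \ref{l:relativeExcess}) yields $\mud{x}{r}(K_i,x+S)\leq \mud{x}{r}(A,x+S)$ for every $S\in\cS$, so $\beta^\cS_{K_i}(x,r)\leq \beta^\cS_A(x,r)\leq \varepsilon$ for all $x\in K_i$ and $0<r\leq r_i$. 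Hence each $K_i$ is unilaterally $(\varepsilon,r_i)$-approximable by $\cS$ in the sense of Definition \ref{MVsets}.

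Next, apply Theorem \ref{t:dim} to each $K_i$ separately. Choosing $\varepsilon_0$ and $C'$ exactly as in that theorem (which depend only on the covering profile $(\alpha,C,s_0)$), we obtain
\begin{equation*}
\udim_M(K_i) \;\leq\; \alpha + \frac{C'}{\log(1/\varepsilon)} \quad\text{for each }i\geq 1.
\end{equation*}
Since Hausdorff dimension is bounded above by upper Minkowski dimension on bounded sets, $\dim_H(K_i)\leq \alpha + C'/\log(1/\varepsilon)$ for each $i$. Finally, countable stability of Hausdorff dimension gives
\begin{equation*}
\dim_H(A) \;=\; \dim_H\!\left(\bigcup_{i=1}^\infty K_i\right) \;=\; \sup_{i\geq 1}\dim_H(K_i)\;\leq\; \alpha + \frac{C'}{\log(1/\varepsilon)},
\end{equation*}
as claimed.

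The only potentially delicate step is verifying that the local-to-global uniform bound on $\beta^\cS_A$ transfers from $A$ to $K_i$, and in particular that it survives the replacement of $A$ by its compact piece $K_i$ inside the $\beta$ functional. This is handled by monotonicity: shrinking the first argument of the relative excess only decreases the quantity, so any upper bound for $\beta^\cS_A$ on $K_i$ immediately yields the same bound for $\beta^\cS_{K_i}$. Once this observation is made, the rest of the argument is a mechanical combination of Theorem \ref{t:dim} with two textbook properties of Hausdorff dimension.
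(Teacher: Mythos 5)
Your proposal is correct and is exactly the argument the paper intends: decompose $A$ into countably many compact pieces via $\sigma$-compactness, note each piece is unilaterally $(\varepsilon,r_i)$-approximable (monotonicity of $\beta$ in the approximated set), apply Theorem \ref{t:dim} to each, and finish with $\dim_H\leq\udim_M$ plus countable stability of Hausdorff dimension. The paper leaves this proof implicit in the discussion preceding the corollary, and your write-up, including the monotonicity observation, fills it in correctly.
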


\begin{corollary}\label{c:dim5} Let $\cS$ be a local approximation class with an $(\alpha,C,s_0)$ covering profile, and let $A\subseteq\RR^n$. If $A$ is locally unilaterally well approximated by $\cS$ and the subspace topology on $A$ is $\sigma$-compact, then $\dim_H (A)\leq \alpha$.\end{corollary}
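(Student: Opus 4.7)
The plan is a short limiting argument built on top of Corollary \ref{c:dim4}. By definition, \emph{locally unilaterally well approximated by $\cS$} means locally unilaterally $\varepsilon$-approximable by $\cS$ for every $\varepsilon>0$. So for each $\varepsilon\in(0,\varepsilon_0]$, where $\varepsilon_0$ is the constant furnished by Theorem \ref{t:dim}, the hypotheses of Corollary \ref{c:dim4} are satisfied (the $\sigma$-compactness hypothesis on the subspace topology of $A$ is inherited from the statement), and that corollary gives
\[
\dim_H(A)\leq \alpha+\frac{C'}{\log(1/\varepsilon)}.
\]
Since this inequality holds simultaneously for every $\varepsilon\in(0,\varepsilon_0]$, letting $\varepsilon\downarrow 0$ sends the right-hand side to $\alpha$, yielding $\dim_H(A)\leq \alpha$.

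For context, the engine of Corollary \ref{c:dim4} (which itself reduces to Theorem \ref{t:dim}) runs as follows. Using $\sigma$-compactness of the subspace topology on $A$, one writes $A=\bigcup_{i=1}^\infty K_i$ with each $K_i$ compact in $\RR^n$. For a fixed $\varepsilon$, local unilateral $\varepsilon$-approximability produces some $r_i>0$ with $\beta^\cS_A(x,r)\leq \varepsilon$ for all $x\in K_i$ and $0<r\leq r_i$; the monotonicity property of $\beta^\cS$ in its set argument (Lemma \ref{l:bp}) gives $\beta^\cS_{K_i}(x,r)\leq \beta^\cS_A(x,r)\leq \varepsilon$, so $K_i$ is itself unilaterally $(\varepsilon,r_i)$-approximable by $\cS$. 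Theorem \ref{t:dim} then bounds $\udim_M(K_i)$ by $\alpha+C'/\log(1/\varepsilon)$, and the standard inequalities $\dim_H\leq \udim_M$ together with countable stability of Hausdorff dimension promote this to $\dim_H(A)\leq \alpha+C'/\log(1/\varepsilon)$.

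There is no genuine obstacle: the main work has already been carried out in Theorem \ref{t:dim} and Corollary \ref{c:dim4}. The only thing to observe is that the sub-optimal dimension bound $\alpha+C'/\log(1/\varepsilon)$ at each finite $\varepsilon$ can be sharpened to $\alpha$ upon taking $\varepsilon\downarrow 0$, which is legitimate precisely because the uniform-in-$\varepsilon$ hypothesis of \emph{locally well} approximated (as opposed to merely \emph{$\varepsilon$-approximable} for one $\varepsilon$) gives us the freedom to choose $\varepsilon$ as small as we like.
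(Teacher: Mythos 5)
Your proposal is correct and is essentially the intended argument: the paper gives no separate proof of this corollary precisely because it follows from Corollary \ref{c:dim4} (itself obtained from Theorem \ref{t:dim} via $\sigma$-compactness, the monotonicity of $\beta^\cS$ in the set argument, $\dim_H\leq\udim_M$, and countable stability of Hausdorff dimension) by letting $\varepsilon\downarrow 0$, exactly as you do. Your observation that the scales $r_K$ may depend on $\varepsilon$ is harmless, since Theorem \ref{t:dim} only needs some positive scale for each fixed $\varepsilon$, so there is nothing to add.
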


\section{Applications and future directions} \label{s:app}

To conclude, we illustrate the use of results developed above in a few specific instances. We then discuss a few open problems and directions for future research.

\subsection{Asymptotically optimally doubling measures}

Let $\cC_{KP}$ denote the collection of all rotations of the light cone $\{x\in\RR^n:x_1^2+x_2^2+x_3^2=x_4^2\}$.

\begin{theorem} Suppose $n\geq 4$. If $\mu$ is an $(n-1)$-asymptotically optimally doubling measure in $\RR^n$, then $\spt \mu = G\cup S$, where $\PsTan(\spt\mu, x)\subseteq \cG(n,n-1)$ for all $x\in G$ and $\Tan(\spt\mu,x)\subseteq \cC_{KP}$ for all $x\in S$. Moreover, $S$ is closed and locally unilaterally well approximated by $\cG(n,n-4)$, and hence, $S$ has upper Minkowski dimension at most $n-4$. \end{theorem}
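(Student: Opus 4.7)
The plan is to apply, in sequence, Example \ref{KPex} (which identifies the relevant bilateral model class), Theorem \ref{t:open} (open/closed decomposition), Theorem \ref{singapprox} (unilateral approximation of the singular part), and Corollary \ref{c:dim5} (the Minkowski dimension bound for Mattila--Vuorinen type sets). Set $\cS = \cU(n,n-1)$ and $\cT = \cG(n,n-1)$. By Example \ref{KPex}, $\spt\mu$ is locally bilaterally well approximated by $\cS$. By the Kowalski--Preiss classification, $\ccS = \cU(n,n-1) = \cG(n,n-1) \cup \cC_{KP}$; this class is closed and translation invariant in $\CL{0}$, as is the subclass $\cT$.

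First I would verify that $\cT$ points are detectable in $\cS$. Every $C \in \cC_{KP}$ is scale invariant through the origin (since $0$ lies on the spine) and every $P \in \cG(n,n-1)$ is scale invariant, so $\Theta^\cT_C(0,r)$ is independent of $r$; since the light cone is not a plane and $\cC_{KP}$ consists of orthogonal rotations of a single cone, this constant value is a universal $2\phi > 0$. Hence whenever $S \in \cS$ satisfies $\Theta^\cT_S(0,r) < \phi$ we must have $S \in \cG(n,n-1)$, and then $\Theta^\cT_S(0,sr) = 0$ for every $s \in (0,1)$, yielding detectability with, say, $\Phi(s) = s$. Theorem \ref{t:open} now produces a disjoint decomposition $\spt\mu = G \cup S$ with $G$ relatively open in $\spt\mu$, $\PsTan(\spt\mu,x) \subseteq \ccS \cap \ccT = \cG(n,n-1)$ for $x \in G$, and $\Tan(\spt\mu,x) \subseteq \ccS \cap \cTp$ for $x \in S$. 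Because every plane $P$ satisfies $\Tan(P,0) = \{P\} \subseteq \ccT$, while every $C \in \cC_{KP}$ satisfies $\Tan(C,0) = \{C\}$ with $C \notin \ccT$, one checks $\ccS \cap \cTp = \cC_{KP}$, as required. The set $S$ is closed in $\RR^n$ because $\spt\mu$ is closed and $G$ is relatively open in $\spt\mu$.

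For the dimension claim, Theorem \ref{singapprox} gives that $S$ is locally unilaterally well approximated by $\sing{\cT}\ccS$, and the next step is to identify $\sing{\cT}\ccS \subseteq \cG(n,n-4)$. Since planes have empty $X_\cTp$, only light cones contribute. For $C \in \cC_{KP}$ with $0 \in C_\cTp$, a direct computation shows that $C$ is a real-analytic $(n-1)$-manifold away from the $(n-4)$-dimensional linear subspace $V$ obtained by rotating $\{x : x_1 = x_2 = x_3 = x_4 = 0\}$; tangent sets at points $y \in C \setminus V$ are thus $(n-1)$-planes (so $y \in C_\ccT$), while at points $y \in V$ the translation invariance of $C$ along $V$ gives $\Tan(C,y) = \{C\}$, so $y \in C_\cTp$. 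Hence $C_\cTp = V \in \cG(n,n-4)$. Because unilateral approximability decreases when the target class is enlarged, $S$ is also locally unilaterally well approximated by $\cG(n,n-4)$; and $\cG(n,n-4)$ evidently admits an $(n-4, C_n, 1)$ covering profile (an $(n-4)$-plane inside $\ball(0,r)$ needs $\lesssim s^{-(n-4)}$ balls of radius $sr$ to cover it). Corollary \ref{c:dim5} then yields $\udim_M(S) \leq n-4$.

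The main obstacle is the concrete geometric step of identifying $\sing{\cT}\ccS$, in particular verifying that tangent sets of a rotated light cone at smooth points are $(n-1)$-planes and that the non-smooth locus is exactly the $(n-4)$-subspace described. The remaining ingredients (detectability, the open/closed split, the covering profile, and the dimension bound) are formal consequences of the framework developed above once the Kowalski--Preiss dichotomy is in hand.
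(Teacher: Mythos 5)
Your overall route is the same as the paper's (bilateral approximation by supports of uniform measures from \cite{Lewis}, then Theorem \ref{t:open}, Theorem \ref{singapprox}, and the covering-profile dimension bound), but there is a genuine gap in the one step you chose to argue from scratch: the detectability of $\cG(n,n-1)$ points in $\cU(n,n-1)$. You identify $\ccS=\cU(n,n-1)$ with $\cG(n,n-1)\cup\cC_{KP}$, i.e.\ with planes and cones whose spine passes through the origin. That is not the right class: the supports of $(n-1)$-uniform measures containing $0$ include all \emph{translates} of cones in $\cC_{KP}$ that pass through $0$, and these must be kept, since otherwise the class is not translation invariant --- and translation invariance of $\ccS$ is exactly the hypothesis (\ref{e:rs}) you need in order to invoke Theorem \ref{singapprox}. (Your write-up asserts both translation invariance and the identification with $\cG(n,n-1)\cup\cC_{KP}$, which are incompatible.) For a translated cone $S=C-z$ with $z$ a smooth point of $C$, the quantity $\Theta^{\cG(n,n-1)}_S(0,r)$ is not independent of $r$: it is small for $r$ much less than $\dist(0,\mathrm{spine})$ and bounded below by a universal constant for large $r$. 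So your dichotomy ``$\Theta^{\cT}_S(0,r)<\phi$ for some $r$ implies $S$ is a plane, hence $\Theta^{\cT}_S(0,sr)=0$'' is false, and detectability with $\Phi(s)=s$ does not follow from scale invariance alone. Detectability of flat points in $\cU(n,n-1)$ is true, but it requires a quantitative flatness-improvement estimate near smooth points of the (translated) light cone; the paper does not reprove it and instead cites \cite{Lewis}*{Lemma 2.5}. The same omission of translated cones silently enters your identifications $\ccS\cap\cTp=\cC_{KP}$ and $\sing{\cT}{\ccS}\subseteq\cG(n,n-4)$; both conclusions are correct, but one must check that translated cones with $0$ a smooth point are excluded (by $\Tan(\cdot,0)$ being a plane in the first case, and by the requirement $0\in X_\cTp$ in Definition \ref{sing} in the second).

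A smaller point: you invoke Corollary \ref{c:dim5} for the final bound, but that corollary gives a Hausdorff dimension estimate; the statement you must prove is an upper Minkowski dimension bound. Since you have already shown $S$ is closed, locally unilaterally well approximated by $\cG(n,n-4)$, and $\cG(n,n-4)$ has an $(n-4,C_n,1)$ covering profile, the correct reference is Corollary \ref{c:dim3}, which is what the paper uses.
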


\begin{proof} Suppose $n\geq 4$. Let $\cU(n,n-1)$ denote the collection of supports of $(n-1)$-uniform measures in $\RR^n$ that contain the origin. Then $\cU(n,n-1)$ is a closed translation invariant local approximation class. Recall that $\cU(n,n-1)$ consists of hyperplanes and translates of cones in $\cC_{KP}$ by the classification of Kowalski and Preiss \cite{KP}. Since the light cone $\{x\in\RR^n:x_1^2+x_2^2+x_3^2=x_4^2\}$ is smooth away from $\{0\}^{4}\times\RR^{n-4}$, it follows that $$\sing{\cG(n,n-1)}{\cU(n,n-1)}= \cG(n,n-4).$$ Now assume that $\mu$ is an $(n-1)$-asymptotically optimally doubling measure in $\RR^n$. Then $\spt\mu$ is locally bilaterally well approximated by $\cU(n,n-1)$ by \cite{Lewis}*{Theorem 3.8}. But $\cG(n,n-1)$ points are detectable in $\cU(n,n-1)$ by \cite{Lewis}*{Lemma 2.5}. By Theorem \ref{t:open} and Theorem \ref{singapprox}, it follows that $\spt\mu=(\spt\mu)_\ccG \cup (\spt\mu)_\cGp$, where \begin{itemize} \item $\PsTan(\spt\mu,x)\subseteq\cG(n,n-1)$ for all $x\in (\spt \mu)_{\ccG}$, \item $\Tan(\spt\mu,x)\subseteq \cU(n,n-1)\cap \cG(n,n-1)^\perp=\cC_{KP}$ for all $x\in(\spt\mu)_\cGp$, and \item $(\spt\mu)_\cGp$ is closed and locally unilaterally well approximated by $\cG(n,n-4)$.\end{itemize} Therefore, $(\spt\mu)_\cGp$ has upper Minkowski dimension at most $n-4$ by Corollary \ref{c:dim3}. \end{proof}

\subsection{Free boundary regularity for harmonic measure from two sides} For all $n\geq 2$ and $d\geq 1$, let $\cH^*(n,d)$ denote the collection of zero sets of nonconstant harmonic polynomials $p:\RR^n\rightarrow\RR$ of degree at most $d$ such that $p(0)=0$ and such that the positive set $\{p>0\}$ and the negative set $\{p<0\}$ are connected. For all $n\geq 2$ and $1\leq k\leq d$, let $\cF^*(n,k)\subset\cH^*(n,d)$ denote the subcollection of zero sets of \emph{homogeneous} harmonic polynomials of degree $k$. For general background and an explanation of the terminology in the following theorem, see \cite{Badger3}*{\S6}.

\begin{theorem}[\cite{Badger3}*{Theorem 6.8}] Assume \label{t:bad3} that $\Omega\subset\RR^n$ is a 2-sided NTA domain with interior harmonic measure $\omega^+$ on $\Omega^+=\Omega$ and exterior harmonic measure $\omega^-$ on $\Omega^-=\RR^n\setminus\overline{\Omega}$. If $\omega^+\ll\omega^-\ll\omega^+$ and $\log(d\omega^-/d\omega^+)\in\mathrm{VMO}(d\omega^+)$, then there is $d_0>0$ such that $\partial\Omega$ is locally bilaterally well approximated by $\cH^*(n,d_0)$. Moreover, $\partial\Omega$ can be partitioned into sets $\Gamma_d$ ($1\leq d\leq d_0$), $$\partial\Omega=\Gamma_1\cup\dots\cup\Gamma_{d_0},\quad i\neq j\Longrightarrow \Gamma_i\cap \Gamma_j=\emptyset,$$ with the following properties. \begin{enumerate}
\item $\Tan(\partial\Omega,x)\subseteq \cF^*(n,d)$ for all $x\in\Gamma_d$.
\item $\Gamma_1$ is relatively open and dense in $\partial\Omega$, $\Gamma_1$ is locally bilaterally well approximated by $\cF(n,1)=\cG(n,n-1)$, and hence, $\Gamma_1$ has Hausdorff dimension at most $n-1$.
\item The set of ``singularities" $\partial\Omega\setminus\Gamma_1 = \Gamma_2\cup\dots\cup\Gamma_{d_0}$ is closed and has harmonic measure zero.
\end{enumerate}
\end{theorem}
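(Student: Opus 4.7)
The starting point is analytic: from the hypothesis that $\Omega$ is 2-sided NTA and $\log(d\omega^-/d\omega^+)\in\mathrm{VMO}(d\omega^+)$, results of Kenig--Toro \cite{KT06} (refined by Badger \cite{Badger1}) produce some $d_0\geq 1$ such that every pseudotangent of $\partial\Omega$ is the zero set of a harmonic polynomial of degree at most $d_0$ whose positive and negative phases are connected; equivalently, $\partial\Omega$ is locally bilaterally well approximated by $\cH^*(n,d_0)$. This is the only place where genuine PDE input enters; everything else is extracted from the abstract framework of \S\ref{lsa}--\S\ref{s:decomp}.

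With the approximation in hand, Corollary \ref{PsTanVsApprox} yields $\PsTan(\partial\Omega,x)\subseteq\overline{\cH^*(n,d_0)}$ for every $x\in\partial\Omega$, so in particular $\Tan(\partial\Omega,x)\subseteq\overline{\cH^*(n,d_0)}$. To extract the degree stratification, I would invoke the structural fact from \cite{Badger3} that for each $k\in\{1,\dots,d_0\}$ the class $\cF^*(n,k)$ satisfies the $\cF^*(n,k)$-point detection property in $\cH^*(n,d_0)$. By Lemma \ref{l:infsep3} this gives separation at infinity; combining this with Lemma \ref{l:iterate} (tangents of tangents are tangents) and Theorem \ref{bcthm} (connectedness of the cone of tangent sets) applied successively with $\cT=\cF^*(n,1),\dots,\cF^*(n,d_0)$ forces $\Tan(\partial\Omega,x)$ to lie in a single stratum $\overline{\cF^*(n,d)}$ for a well-defined $d\in\{1,\dots,d_0\}$. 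Defining $\Gamma_d$ to be the set of $x$ where this common degree equals $d$ produces a disjoint partition satisfying (i).

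For property (ii), apply Theorem \ref{t:open} with $\cS=\cH^*(n,d_0)$ and $\cT=\cF^*(n,1)=\cG(n,n-1)$: the set $A_{\ccT}$ of that theorem is precisely $\Gamma_1$, which is therefore relatively open in $\partial\Omega$ and locally bilaterally well approximated by $\cG(n,n-1)$. Since $\cG(n,n-1)$ admits an obvious $(n-1,C,s_0)$ covering profile with $C$ and $s_0$ depending only on $n$, Corollary \ref{c:dim5} gives $\dim_H\Gamma_1\leq n-1$. Density of $\Gamma_1$ in $\partial\Omega$ reduces by Lemma \ref{l:iterate} to the Federer-type assertion that every set in $\overline{\cH^*(n,d_0)}$ has at least one flat tangent point, proved inside \cite{Badger3}. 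Property (iii) is half-formal and half-analytic: $\Gamma_{\geq 2}$ is closed as the complement of $\Gamma_1$, while the vanishing of $\omega^\pm$ on it follows from the local bilateral approximation of $\Gamma_1$ by hyperplanes, the resulting mutual absolute continuity of $\omega^\pm$ with surface measure on $\Gamma_1$, and a further dimension bound on each $\Gamma_d$ ($d\geq 2$) obtained by repeating the argument just given with $\cT=\cF^*(n,d)$ in place of $\cT=\cG(n,n-1)$.

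The main obstacle lies in the detectability input: establishing for each $k$ that there exist $\phi_k>0$ and $\Phi_k:(0,1)\to(0,\infty)$ with $\liminf_{s\downarrow 0}\Phi_k(s)=0$ such that $\Theta^{\cF^*(n,k)}_S(0,r)<\phi_k$ implies $\Theta^{\cF^*(n,k)}_S(0,sr)<\Phi_k(s)$ for all $S\in\cH^*(n,d_0)$ requires delicate compactness and unique-continuation estimates on spaces of harmonic polynomials of bounded degree. By contrast, once this quantitative rigidity is available, the partition, openness, and Hausdorff dimension bound are produced essentially mechanically by Theorems \ref{bcthm} and \ref{t:open} and Corollary \ref{c:dim5}.
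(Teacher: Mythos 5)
You are measuring your sketch against a statement the paper never proves: Theorem \ref{t:bad3} is imported verbatim from \cite{Badger3}*{Theorem 6.8} and is only \emph{used} here (to deduce Theorem \ref{t:ssdim}), so the comparison must be with the argument in \cite{Badger3}. There your proposal has a genuine gap at its central step: the claimed ``structural fact'' that $\cF^*(n,k)$ points are detectable in $\cH^*(n,d_0)$ for every $k\le d_0$ is false for $k\ge 2$ and appears nowhere in \cite{Badger3}; the only detectability input available (and the only one this paper ever invokes) is that flat points, i.e.\ $\cG(n,n-1)=\cF(n,1)$ points, are detectable in $\cH(n,d)$ \cite{Badger3}*{Theorem 1.4}. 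To see the failure, take $\Sigma_p\in\cF^*(n,k)$ with $k\ge 2$, let $z\neq 0$ be a smooth point of $\Sigma_p$, and set $\Sigma_q=\Sigma_p-\epsilon z$, the zero set of $q(x)=p(x+\epsilon z)$, which lies in $\cH^*(n,k)$ and contains $0$. For $r$ comparable to $1$ one has $\Theta^{\cF^*(n,k)}_{\Sigma_q}(0,r)$ small, while for $r\ll\epsilon$ the set $\Sigma_q$ is nearly a hyperplane through $0$, so $\Theta^{\cF^*(n,k)}_{\Sigma_q}(0,r)$ is bounded below independently of $\epsilon$; since $\epsilon$ is arbitrary, no pair $(\phi,\Phi)$ as in Definition \ref{d:Tpdp} can exist (the type~2 versus type~3 sets in \S9.3 exhibit the same phenomenon). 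Hence your route to the stratification (i) --- Lemma \ref{l:infsep3} plus repeated applications of Theorem \ref{bcthm} with $\cT=\cF^*(n,k)$ --- collapses; and even if separation at infinity of each stratum were proved by other means, Theorem \ref{bcthm} only gives a dichotomy for each fixed $k$ and does not show that every tangent set is \emph{homogeneous}. In \cite{Badger3} the homogeneity and the well-defined degree $d(x)$ come from the tangent-\emph{measure} analysis of \cite{KT06} and \cite{Badger1} (blow-ups of $\omega^\pm$ are harmonic measures of homogeneous polynomials, of a single degree at each point), which is then transferred to tangent sets; this is exactly the extra hypothesis ``$A$ looks pointwise on small scales like the zero set of a homogeneous harmonic polynomial'' recorded in Example \ref{Bex}.

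Part (iii) is also not reachable by your argument: local bilateral approximation of $\Gamma_1$ by hyperplanes gives no mutual absolute continuity of $\omega^\pm$ with surface measure (a Reifenberg-type set need not even have locally finite $(n-1)$-dimensional Hausdorff measure), and a Hausdorff or Minkowski dimension bound on $\Gamma_d$, $d\ge 2$, does not by itself force $\omega^\pm(\Gamma_d)=0$. In \cite{Badger1}/\cite{Badger3} the nullity of the singular set is a separate measure-theoretic argument (at $\omega^\pm$-a.e.\ point some tangent measure is flat, and degree constancy then puts a.e.\ point in $\Gamma_1$), and density of $\Gamma_1$ follows from this together with $\spt\omega^\pm=\partial\Omega$; your reduction of density to Lemma \ref{l:iterate} does not work as stated, since flat points of a tangent set at $x$ give no points of $\Gamma_1$ near $x$ --- one needs a finite-scale argument using flat-point detectability. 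The parts of your sketch that do align with the actual proof are the analytic input from \cite{KT06}/\cite{Badger1}, the use of flat-point detectability with (the analogue of) Theorem \ref{t:open} to get that $\Gamma_1$ is relatively open and locally bilaterally well approximated by $\cG(n,n-1)$, and the covering-profile bound giving $\dim_H\Gamma_1\le n-1$.
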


Combining the results of \S\S 7.1--8 from above with estimates on the Minkowski content of zero sets of harmonic functions from Naber and Valtorta \cite{NV14} yields  new bounds on the dimension of the free boundary $\partial\Omega$ and the singular set $\partial\Omega\setminus \Gamma_1$ in Theorem \ref{t:bad3}.

\begin{theorem} \label{t:ssdim} Under the setup of Theorem \ref{t:bad3}, $\partial\Omega$ has upper Minkowski dimension at most $n-1$ and the singular set $\partial\Omega\setminus \Gamma_1$ has upper Minkowski dimension at most $n-2$.
\end{theorem}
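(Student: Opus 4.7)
The plan is to apply the unilateral Minkowski dimension bound from \S 8 twice---once to $\partial\Omega$ itself to get the $n-1$ bound, and once to the singular set $\partial\Omega\setminus\Gamma_1$ to get the $n-2$ bound. In each case, the essential input is a uniform covering profile for the relevant model class, which is where the Naber--Valtorta estimates on zero sets of harmonic functions enter.

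For the first bound, Theorem \ref{t:bad3} asserts that $\partial\Omega$ is locally bilaterally well approximated by $\cH^*(n,d_0)$, so in particular locally unilaterally well approximated by $\cH^*(n,d_0)$. Since $\partial\Omega$ is closed in $\RR^n$, Corollary \ref{c:dim3} yields $\udim_M(\partial\Omega)\leq n-1$ provided that $\cH^*(n,d_0)$ admits an $(n-1,C,s_0)$ covering profile in the sense of Definition \ref{coverings}. To establish this, I would invoke the quantitative stratification estimates of Naber and Valtorta \cite{NV14}: for every nonconstant harmonic polynomial $p:\RR^n\to\RR$ of degree at most $d_0$, one has a uniform Minkowski content bound
\begin{equation*}
\cov{0}{r}(Z(p),sr)\leq C\,s^{-(n-1)}\quad\text{for all }r>0\text{ and }s\in(0,s_0],
\end{equation*}
with $C=C(n,d_0)$ and $s_0=s_0(n,d_0)$. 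Scale invariance of $\cH^*(n,d_0)$ reduces the verification to a single scale, and the bound passes to the closure $\overline{\cH^*(n,d_0)}$ since covering numbers are upper semicontinuous under Attouch--Wets convergence of the relevant sets.

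For the second bound, I would apply Theorem \ref{singapprox} with $\cS=\cH^*(n,d_0)$ and $\cT=\cG(n,n-1)$. This requires that $\ccS$ be translation invariant (which holds by \cite{Badger3}) and that $\cG(n,n-1)$ points be detectable in $\cH^*(n,d_0)$; the latter is inherited from the detectability in $\cH(n,d_0)$ proved in \cite{Badger3}*{Theorem 1.4}, since $\cH^*(n,d_0)\subseteq\cH(n,d_0)$ and detectability is preserved under passage to a subclass. Since $\Gamma_1$ coincides with $(\partial\Omega)_{\ccG}$ (pseudotangents being hyperplanes at every flat point), the closed set $\partial\Omega\setminus\Gamma_1=(\partial\Omega)_{\cGp}$ is then locally unilaterally well approximated by the class $\sing{\cG(n,n-1)}{\overline{\cH^*(n,d_0)}}$. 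Elements of this class are contained in the singular loci of limits of zero sets of harmonic polynomials of degree at most $d_0$, and the Naber--Valtorta estimates on the singular strata give a uniform
\begin{equation*}
\cov{0}{r}(Z(p)_{\mathrm{sing}},sr)\leq C'\,s^{-(n-2)}
\end{equation*}
for all $r>0$ and $s\in(0,s_0']$, with $C'=C'(n,d_0)$. This yields an $(n-2,C',s_0')$ covering profile for $\sing{\cG(n,n-1)}{\overline{\cH^*(n,d_0)}}$, and a second application of Corollary \ref{c:dim3} gives $\udim_M(\partial\Omega\setminus\Gamma_1)\leq n-2$.

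The main obstacle is the careful invocation of Naber--Valtorta to produce the two covering profiles with constants depending only on $n$ and $d_0$. The subtlety is twofold: first, one must verify that the singular stratum in the sense of \S 7 (consisting of non-flat points of zero sets) is contained in the classical singular set of the algebraic hypersurface, so that the Naber--Valtorta bound is applicable; and second, one must check that the profiles survive the Attouch--Wets closure, as $\overline{\cH^*(n,d_0)}$ contains limits of zero sets that need not be algebraic. Both points are routine once formulated, but they are where the bulk of the technical work lies.
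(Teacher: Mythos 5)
Your proposal is correct and follows essentially the same route as the paper's proof: detectability of flat points in the harmonic-polynomial class (Theorem 1.4 of \cite{Badger3}), Theorem \ref{singapprox} to show $\partial\Omega\setminus\Gamma_1$ is locally unilaterally well approximated by the class of singular parts, the Naber--Valtorta Minkowski-content estimates to furnish $(n-1)$- and $(n-2)$-covering profiles, and two applications of Corollary \ref{c:dim3}. The only (cosmetic) difference is that the paper works with the larger class $\cH(n,d)$ and its singular sets $\cS\cH(n,d)=\{\Sigma_p\cap |Dp|^{-1}(0)\}$ rather than with $\cH^*(n,d_0)$ and its closure---legitimate since bilateral approximability by $\cH^*(n,d_0)$ trivially implies approximability by $\cH(n,d_0)$---which sidesteps the closure and ``is the singular stratum the classical singular set'' verifications that you correctly flag as the residual technical points in your version.
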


\begin{proof} Let $\cH(n,d)$ denote the zero sets $\Sigma_p$ of nonconstant harmonic polynomials $p$ in $\RR^n$ of degree at most $d$, and let $\cS\cH(n,d)=\{S_p=\Sigma_p\cap |Dp|^{-1}(0):\Sigma_p\in\cH(n,d),\ 0\in S_p\}$ denote the singular sets of nonconstant harmonic polynomials in $\RR^n$ of degree at most $d$. Applied to harmonic polynomials, \cite{NV14}*{Theorem 3.37} says that \begin{equation}\mathrm{Vol}\left(\{x\in \ball(0,1/2): \dist(x,S_p)\leq r\}\right) \leq C(n)^{d^2}r^2\quad\text{for all }S_p\in\cS\cH(n,d).\end{equation} In addition, \cite{NV14}*{Theorem A.3} gives  \begin{equation}\label{e:Mc2} \mathrm{Vol}\left(\{x\in \ball(0,1/2):\dist(x,\Sigma_p)\leq r\}\right)\leq (C(n)d)^dr\quad\text{for all } \Sigma_p\in\cH(n,d).\end{equation} It easily follows that $\cS\cH(n,d)$ admits an $(n-2,C_{n,d},1)$ covering profile and $\cH(n,d)$ admits an $(n-1,\widetilde C_{n,d},1)$ covering profile for some constants $C_{n,d}, \widetilde{C}_{n,d}>1$ depending only on $n$ and $d$ (see e.g. \cite{Mattila95}*{(5.4) and (5.6)}).

Suppose that $\Omega\subseteq\RR^n$ satisfies the hypothesis of Theorem \ref{t:bad3}. Then $\partial\Omega$ is closed and locally bilaterally well approximated by $\cH(n,d)$ by Theorem \ref{t:bad3}. Hence $\udim_M(\partial\Omega)\leq n-1$ by Corollary \ref{c:dim3}, since $\cH(n,d)$ has an $(n-1,\widetilde{C}_{n,d},1)$ covering profile. Next note that the singular set $\partial\Omega\setminus\Gamma_1 = \partial\Omega_\cGp$ is closed and locally unilaterally well approximated by $$\sing{\cG(n,n-1)}{\cH(n,d)}=\cS\cH(n,d)$$ by Theorem \ref{t:open} and Theorem \ref{singapprox}, because $\cG(n,n-1)=\cF(n,1)$ points (``flat points") are detectable in $\cH(n,d)$ by \cite{Badger3}*{Theorem 1.4}. Therefore, $\udim_M(\partial\Omega\setminus\Gamma_1)\leq n-2$ by Corollary \ref{c:dim3}, since $\cS\cH(n,d)$ has an $(n-2,C_{n,d},1)$ covering profile. \end{proof}

\begin{remark} We do not currently know if the dimension bound for the singular set $\partial\Omega\setminus\Gamma_1$ in Theorem \ref{t:ssdim} is sharp, and in fact, the first author conjectured in \cite{Badger3}*{Remark 6.17} that $\partial\Omega\setminus\Gamma_1$ has dimension at most $n-3$. The reasoning behind the conjecture is that $\partial\Omega$ is actually locally bilaterally well approximated by $\cH^*(n,d)$ rather than just by $\cH(n,d)$. Examples (see \cite{Badger3}) suggest that the extra topological condition imposed on zero sets in $\cH^*(n,d)$ (i.e. that $\RR^n\setminus \Sigma$ has exactly two connected components when $\Sigma\in\cH^*(n,d)$) should force the smaller dimension bound. Thus, it would be interesting to know whether  \begin{equation} \mathrm{Vol}\left(\{x\in \ball(0,1/2): \dist(x,S_p)\leq r\}\right) \lesssim_{n,d}r^3\quad\text{for all }S_p\in\cS\cH^*(n,d),\end{equation} where $\cS\cH^*(n,d)=\{S_p:\Sigma_p\in\cH^*(n,d), 0\in S_p\}$ denotes the singular sets of harmonic polynomials in $\RR^n$ of degree at most $d$ whose zero set separates $\RR^n$ into two components.
\end{remark}

\subsection{Approximations by $\cH(2,2)$} Up to similarity, the sets in $\cH(2,2)$ are the zero sets $\Sigma_j$ of one of four harmonic polynomials $h_j:\RR^2\rightarrow\RR$ of degree 1 or 2: $h_1(x,y) = x$, $h_2(x,y) = xy$, $h_3(x,y) = (x+1)y$, and $h_4(x,y) = xy - (x+y)$. See Figure \ref{d2geom}. Although $\Sigma_2$ and $\Sigma_3$ are are translates of each other, the sets are distinguished by the fact that the origin is a \emph{flat point} ($\cG(2,1)$ point) of $\Sigma_3$, but the origin is not a flat point of $\Sigma_2$. Let $\cH'(2,2)\subset\cH(2,2)$ denote the subcollection of zero sets similar to $\Sigma_1$, $\Sigma_2$ or $\Sigma_3$; that is, $\cH'(2,2)$ excludes zero sets of type 4.
\begin{figure}[htb]
\begin{center}\includegraphics[width=.85\textwidth]{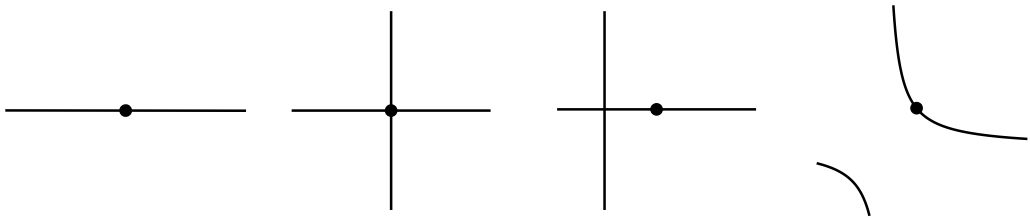}\end{center}
\caption{Zero sets of planar harmonic polynomials of degree 1 or 2. From left to right, they are types 1, 2, 3, and 4.}\label{d2geom}\end{figure}

By Theorem \ref{t:open}, if $A\subseteq\RR^2$ is locally bilaterally well approximated by $\cH(2,2)$, then $A=A_1\cup A_2$, where all pseudotangent sets of $\cl{A}$ at $x\in A_1$ are zero sets of type $1$ and all tangent sets of $\cl{A}$ at $x\in A_2$ are zero sets of type 2. Thus, all bounded psuedotangent sets of $\cl{A}$ at $x\in A_2$ are zero sets of type 2 or 3 by Lemma \ref{l:bdtan}. However, one may still ask what do unbounded pseudotangent sets of $\cl{A}$ look like at $x\in A_2$? In particular, can zero sets of type 4 appear as unbounded pseudotangent sets of $\cl{A}$ at $x\in A_2$? The answer is no as shown by the following theorem.

\begin{theorem} \label{t:fan} If $A\subseteq\RR^2$ is locally bilaterally well approximated by $\cH(2,2)$, then $A$ is locally bilaterally well approximated by $\cH'(2,2)$.\end{theorem}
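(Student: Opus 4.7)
The plan is to verify that $\PsTan(\cl{A},x) \subseteq \overline{\cH'(2,2)}$ for every $x \in A$ and then invoke Corollary~\ref{PsTanVsApprox}. Since $\cG(2,1)$ points are detectable in $\cH(2,2)$ by \cite{Badger3}*{Theorem 1.4}, Theorem~\ref{t:open} partitions $A = A_1 \cup A_2$ with $\PsTan(\cl{A},x)\subseteq \overline{\cG(2,1)}\subseteq \overline{\cH'(2,2)}$ for $x \in A_1$, and $\Tan(\cl{A},x)\subseteq \cH(2,2)\cap \cG(2,1)^\perp$ for $x\in A_2$. A direct case analysis over the four similarity classes in Figure~\ref{d2geom} shows that a set in $\cH(2,2)$ lies in $\cG(2,1)^\perp$ precisely when it has type~2: the Preiss blow-up of a type~3 set at $0$ loses the far line to infinity and becomes the flat line through the origin; the blow-up of a type~4 hyperbola at $0$ is its flat tangent line at the origin; whereas a type~2 set is a cone equal to its own blow-up, which is non-flat. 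Lemma~\ref{l:bdtan} then shows that every bounded pseudotangent at $x\in A_2$ is a translate $T-y$ of a type~2 set $T$ by some $y\in T$, hence of type~2 (if $y=0$) or type~3 (if $y\ne 0$), and so lies in $\cH'(2,2)$.

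The main work is to handle unbounded pseudotangents at $x\in A_2$. Let $T=\lim_i (\cl{A}-y_i)/r_i$ with $y_i\to x$, $r_i\downarrow 0$, and $s_i:=|y_i-x|$ satisfying $M_i:=s_i/r_i\to\infty$ (in particular $y_i\ne x$ eventually). Passing to subsequences, $(y_i-x)/s_i\to u$ with $|u|=1$, and $(\cl{A}-x)/s_i\to \tilde T = L_1\cup L_2$ for some type~2 tangent set; since $u\in\tilde T$ and $|u|=1$, we may take $u\in L_1$. As $\dist(u,L_2)=1$, the set $\tilde T$ coincides with $L_1$ inside $\ball(u,1/5)$. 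The Attouch--Wets convergence $(\cl{A}-x)/s_i\to\tilde T$ then forces, for $i$ large, the intersection of $(\cl{A}-x)/s_i$ with $\ball((y_i-x)/s_i,1/10)$ to lie in an $o(1)$-neighbourhood of $L_1$. Translating by $-(y_i-x)/s_i$ and rescaling by~$10$ converts this into the approximation of $(\cl{A}-y_i)/(s_i/10)$ at scale $1$ by the line $L_1-10v_i$, where $v_i$ denotes the component of $(y_i-x)/s_i$ orthogonal to $L_1$, and $v_i\to 0$. This yields the key estimate $\Theta^{\cG(2,1)}_{\cl{A}}(y_i,s_i/10)\to 0$ as $i\to\infty$.

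With this estimate in hand, apply Lemma~\ref{l:Tpdp} to $\cl{A}$ with $\cT=\cG(2,1)$ and $\cS=\cH(2,2)$. Given $\gamma>0$ and the corresponding $\alpha,\beta>0$, the local well approximability of $\cl{A}$ by $\cH(2,2)$ (using Lemma~\ref{l:close} to pass from $A$ to $\cl{A}$) supplies $\Theta^{\cH(2,2)}_{\cl{A}}(y_i,r')<\alpha$ for all $r'\leq s_i/10$ once $i$ is large, while the previous paragraph supplies $\Theta^{\cG(2,1)}_{\cl{A}}(y_i,s_i/10)<\beta$. Lemma~\ref{l:Tpdp} then yields $\Theta^{\cG(2,1)}_{\cl{A}}(y_i,r')<\gamma$ for all $r'\leq s_i/10$. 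Fixing any $r>0$ and $\delta>0$, one has $r(1+\delta)r_i\leq s_i/10$ for $i$ large since $M_i\to\infty$; combining scale invariance (Lemma~\ref{l:theta_inv}) with Lemma~\ref{ApproxVsLimits} gives $\Theta^{\cG(2,1)}_T(0,r)\leq (1+\delta)\gamma$. Letting $\gamma,\delta\downarrow 0$ forces $T\in (\cG(2,1);0)^\Theta_{0,\infty}=\overline{\cG(2,1)}$, so $T$ is a line through the origin. Combined with the bounded case, this gives $\PsTan(\cl{A},x)\subseteq \overline{\cH'(2,2)}$ for all $x\in A$, and Corollary~\ref{PsTanVsApprox} completes the proof. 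The main obstacle is the two-scale geometry of the second paragraph: threading the Attouch--Wets convergence at scale $s_i$ around $x$ down to the intermediate scale $s_i/10$ around $y_i$, where flatness first appears, after which detectability propagates it to scale $r_i$.
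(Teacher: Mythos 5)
Your argument follows the paper's own route: reduce via Theorem \ref{t:open}, Lemma \ref{l:bdtan}, and Corollary \ref{PsTanVsApprox} to showing that unbounded pseudotangent sets at points of $A_2$ are lines; obtain flatness at the intermediate scale $s_i=|y_i-x|$ from the fact that tangent sets at $x$ are type~2 crosses and $(y_i-x)/s_i$ stays at unit distance from the crossing point; propagate it down to scale $r_i$ with Lemma \ref{l:Tpdp}; and pass to the limit with Lemmas \ref{l:theta_inv} and \ref{ApproxVsLimits}. Your compactness argument at the scales $s_i$ (blowing up at $x$ and using that $\tilde T$ coincides with a single line near $u$) is a clean substitute for the paper's explicit estimate with a fixed type-2 approximant $W$, but it is the same idea.

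There is, however, one genuine gap, in the step ``the local well approximability of $\cl{A}$ by $\cH(2,2)$ (using Lemma \ref{l:close} to pass from $A$ to $\cl{A}$) supplies $\Theta^{\cH(2,2)}_{\cl{A}}(y_i,r')<\alpha$ for all $r'\leq s_i/10$.'' The base points $y_i$ are only known to lie in $\cl{A}$, and Lemma \ref{l:close} compares $\Theta_A$ and $\Theta_{\cl{A}}$ at the \emph{same} base point; it does not upgrade the hypothesis (uniform smallness of $\Theta^{\cH(2,2)}_A(z,r)$ for $z$ in compact subsets of $A$) to control at points of $\cl{A}\setminus A$ at arbitrarily small scales. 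Indeed, with the intrinsic definition used in this paper, local well approximability of $A$ does not imply that of $\cl{A}$: a union of circles of radius $4^{-k}$ centered at the points $(2^{-k},0)$, $k\geq 2$, is locally well approximated by $\cG(2,1)$, yet its closure (which adds the origin) fails to be bilaterally approximable at the origin at the scales $2^{-k}$ because of the gaps between consecutive circles. So, as written, the hypothesis of Lemma \ref{l:Tpdp} at $y_i$ is not verified when $y_i\in\cl{A}\setminus A$. The repair is the standard maneuver the paper itself uses: replace each $y_i$ by a point $x_i\in A$ with $|x_i-y_i|\leq r_i/i$ (as in the proof of Theorem \ref{PsTanVsPerturb}); this changes neither $T$, nor $u$, nor your $o(1)$ estimate for $\Theta^{\cG(2,1)}$ at scale $s_i/10$, while now $\{x\}\cup\{x_i:i\geq 1\}$ is a compact subset of $A$, so the hypothesis together with Lemma \ref{l:close} yields $\Theta^{\cH(2,2)}_{\cl{A}}(x_i,r')<\alpha$ for all $0<r'\leq s_i/10$ once $i$ is large. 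With this adjustment the rest of your argument goes through.
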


\begin{proof} It remains to demonstrate that zero sets of type 4 do not appear as unbounded pseudotangent sets of $\cl{A}$ at $x\in A_2$. In fact, we will prove that $\uPsTan(\cl{A},x)\subseteq\cG(2,1)$ for all $x\in A_2$.  Thus, $\PsTan(\cl{A},x)\subseteq \cH'(2,2)$ for all $x\in A$, whence $A$ is locally bilaterally well approximated by $\cH'(2,2)$ by Corollary \ref{PsTanVsApprox}.

Without loss of generality, assume $0\in A_2$ and let $T$ be an unbounded pseudotangent set of $\cl{A}$ at $0$, say $T=\lim_{i\rightarrow\infty} (\cl{A}-x_i)/r_i$ for some sequences $x_i\in \cl{A}$ and $r_i>0$ such that $x_i\rightarrow 0$, $r_i\rightarrow 0$, and $|x_i|/r_i\rightarrow\infty$. Replacing each $x_i$ with a nearby $x_i'\in A$, we may assume without loss of generality that $x_i\in A$ for all $i\geq 1$ (see e.g.~ the proof of Theorem \ref{PsTanVsPerturb}). We will show that $\lim_{i\rightarrow\infty} \Theta^{\cG(2,1)}_A(x_i,sr_i)=0$ for all $s>0$, which implies that $T\in\cG(2,1)$ by Lemmas \ref{l:close}, \ref{ApproxVsLimits}, and \ref{l:enlarge}.

Fix $s>0$ (large) and $\gamma>0$ (small). Recall that $\cG(2,1)$ points are detectable in $\cH(2,2)$.  By Lemma \ref{l:Tpdp}, there exist numbers $\alpha,\beta>0$ such that if  $z\in A$, $\Theta^{\cH(2,2)}_A(z,r')<\alpha$ for all $0<r'\leq r$ and $$\Theta^{\cG(2,1)}_A(z,r)< \beta,$$ then $\Theta^{\cG(2,1)}_A(z,r')<\gamma$ for all $0<r'\leq r$.
We will apply this fact with $z=x_i$ and $r=|x_i|/16$ for large $i$. On one hand, because $A$ is locally well approximated by $\cH(2,2)$ and $K=\{x\}\cup \{x_i:i\geq 1\}$ is a compact subset of $A$, there exists $t_0>0$ such that $\Theta^{\cH(2,2)}_A(x_i,r')\leq \alpha/2<\alpha$ for all $i\geq 1$ and $0<r'\leq t_0$.  On the other hand, because $0\in A_2$, we can find $t_1\in(0,t_0]$ such that $\Theta_A^{\cT_2}(0,t)<\beta/80$ for all $0<t\leq t_1$, where $\cT_2$ denotes the class of sets of type 2.
\begin{figure}[htb]
\begin{center}\includegraphics[width=.6\textwidth]{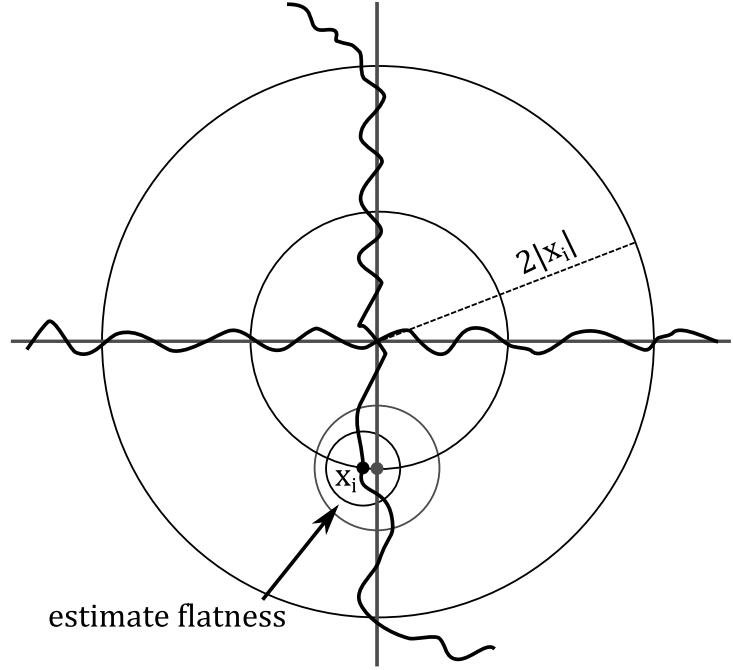}\end{center}
\caption{Estimating $\Theta^{\cG(2,1)}_A(x_i,|x_i|/16)$}\label{f:fan}\end{figure}
Because $x_i\rightarrow 0$ and $|x_i|/r_i\rightarrow\infty$, we can choose $i_0\geq 1$ so that $2|x_i|\leq t_1$ and $sr_i\leq |x_i|/16$ for all $i\geq i_0$. We claim that $\Theta^{\cG(2,1)}_A(x_i,|x_i|/16)<\beta$ for all $i\geq i_0$. To see this (refer to Figure 9.2), fix $i\geq i_0$ and pick a type 2  set $W\in\cT_2$ such that $\mD{0}{2|x_i|}[A,W]<\beta/80$. Then we can find $w\in W$ such that $|w-x_i|<(\beta/40)|x_i|$. Note that $W\cap B(w,|x_i|/2)$ is a line segment. Now, by monotonicity (Lemma \ref{l:tmono}), \begin{align*}\Theta^{\cG(2,1)}_A(x_i,|x_i|/16) &\leq 2\left(\beta/10+(1+\beta/10) \Theta^{\cG(2,1)}_A(w,(1+\beta/10)|x_i|/8)\right)\\ &\leq \beta/5 + 4\Theta_A^{\cG(2,1)}(w,|x_i|/4),\end{align*} since $B(x_i,|x_i|/16)\subseteq B(w,|x_i|/8)$ and $|x_i-w|<(\beta/10)|x_i|/8$. By the weak quasitriangle inequality and monotonicity for the relative Walkup-Wets distance, $$\Theta_A^{\cG(2,1)}(w,|x_i|/4) \leq 2\mD{w}{|x_i|/2}[A,W]+2\Theta^{\cG(2,1)}_W(w,|x_i|/2)
\leq 8\mD{0}{2|x_i|}[A,W]<\beta/10,$$ where $\Theta^{\cG(2,1)}_W(w,|x_i|/2)=0$ because $W\cap B(w,|x_i|/2)$ is a line segment. All together, $$\Theta_A^{\cG(2,1)}(x_i,|x_i|/16)\leq \beta/5 + 4(\beta/10) <\beta\quad\text{for all }i\geq i_0,$$ as claimed. Thus, $\Theta_A^{\cG(2,1)}(x_i,sr_i)< \gamma$ for all $i\geq i_0$ by Lemma \ref{l:Tpdp}, because $sr_i\leq |x_i|/16$ when $i\geq i_0$. Letting $\gamma\rightarrow 0$, we obtain $\lim_{i\rightarrow\infty} \Theta^{\cG(2,1)}_A(x_i,sr_i)=0$ for all $s>0$. Therefore, $T\in \cG(2,1)$ for all $T\in \uPsTan(\cl{A},0)$. As noted above, this completes the proof. \end{proof}

For any nonempty set $A\subseteq\RR^n$, let $$\PsTan(A)=\bigcup_{x\in A}\PsTan(\cl{A},x)$$ denote the \emph{pseudotangent sets of $A$} and let $$\Tan(A)=\bigcup_{x\in A}\Tan(\cl{A},x)$$ denote the \emph{tangent sets of $A$} (see Remark \ref{r:tan-closed}).
Theorem \ref{t:fan} says that sets $A\subseteq\RR^2$ that are locally well approximated by $\cH(2,2)$ exhibit a special phenomenon---\emph{every pseudotangent set of $A$ is the translate of a tangent set of $A$}. It would be interesting to know when this phenomenon holds in general.

\begin{problem} Classify nonempty sets $A\subseteq\RR^n$ that are locally bilaterally well approximated by translates of tangent sets of $A$.\end{problem}

\subsection{Additional applications} We expect the results of \S\S6--8 above will be applicable in a variety of additional situations where complete or at least partial information about minimizers can be established. For example, even without obtaining a full description, David \cite{David2dmin1} proves that if $A\subseteq\RR^n$ is a 2-dimensional Almgren minimal cone, then $A\cap \partial \ball(0,1)$ is composed of a finite number of great circles and arcs of great circles $C_j$ satisfying the following constraints:
\begin{enumerate}[(i)]
	\item if $e$ is the endpoint of some arc $C_j$, then there are exactly three arcs which meet at mutual 120$^\circ$ angles at $e$;
	\item the length of each $C_j$ is bounded below by some minimal length $\ell_0(n)>0$; and,
	\item if $x\in C_j$ and $y\in C_k$ satisfy $|x-y|\leq \eta_0(n)$, then $C_j$ and $C_k$ are arcs of great circles sharing a common endpoint in $\ball(x, |x-y|)$.
\end{enumerate} From the existence of $\ell_0$ and $\eta_0$, one can show (just as in the case $n=3$) that $\cG=\cG(n,2)$ points and $\cG\cup \cY$ points are detectable in the class $\cM(n,2)$ of translates of 2-dimensional Almgren minimal cones in $\RR^n$ (compare to Example \ref{ex:m32}). Research on the classification of $m$-dimensional Almgren minimal cones in $\RR^n$ is still ongoing, but see Liang \cites{Liang,Liang2} and Luu \cite{Luu13} for recent progress on 2- and 3-dimensional minimal cones in $\RR^4$.

\subsection{Directions for future research} There are many avenues for continued research. Here we restrict our discussion to just a few problems. One important, open line of inquiry is to decide when parameterization theorems hold for sets that are locally approximable by a class $\cS$. Ideally one would like to construct parameterizations at various grades of regularity.

\begin{problem}\label{p:Reif} Classify local approximation classes $\cS$ with the property that closed sets $A$ that are locally bilaterally $\varepsilon$-approximable by $\cS$ ($0<\varepsilon\leq \varepsilon_\cS$) admit local Reifenberg type (tame, $C^{0,\alpha}$) parameterizations by open subsets of sets in $\cS$; cf.~ \cites{Reifenberg,DDT}. \end{problem}

\begin{problem} Develop higher regularity (e.g.~ $C^{0,1}$, $C^{1,\alpha}$, or $C^\infty$) local parameterization theorems for sets that are locally well approximated by classes $\cS$ from Problem \ref{p:Reif}; cf.~ \cites{Toro,DT} ($C^{0,1}$ regularity) and \cites{DKT,PTT,Lewis} ($C^{1,\alpha}$ regularity).\end{problem}

A second frontier for future research is to study local set approximation in additional geometric settings. As noted in Remark \ref{r:geo1}, we expect that the main results of this paper remain valid in the Heisenberg group and other self-similar geometries. However, in principle one may also examine Reifenberg type sets in a sphere $S^n$ or in a Riemannian (or sub-Riemannian) manifold whose metric tangents are Euclidean (Carnot).

\begin{problem} \label{p:geo} Develop a theory of local set approximation (tangents, approximability\dots) in Riemannian manifolds, sub-Riemannian manifolds, or other geometric settings.\end{problem}

\bibliography{lsa-refs}{}
\bibliographystyle{amsalpha}
\end{document}